\providecommand{\tabularnewline}{\\}
\numberwithin{equation}{section}
\numberwithin{figure}{section}
\theoremstyle{plain}
\newtheorem{thm}{\protect\theoremname}[section]
\theoremstyle{remark}
\newtheorem{claim}[thm]{\protect\claimname}
\theoremstyle{plain}
\newtheorem{lem}[thm]{\protect\lemmaname}
\theoremstyle{definition}
\newtheorem{problem}[thm]{\protect\problemname}
\theoremstyle{definition}
\newtheorem{defn}[thm]{\protect\definitionname}
\theoremstyle{plain}
\newtheorem{prop}[thm]{\protect\propositionname}
\theoremstyle{plain}
\newtheorem{cor}[thm]{\protect\corollaryname}
\theoremstyle{definition}
\newtheorem{example}[thm]{\protect\examplename}
\theoremstyle{plain}
\newtheorem*{thm*}{\protect\theoremname}
\theoremstyle{plain}
\newtheorem{conjecture}[thm]{\protect\conjecturename}
\newtheorem{theoremalpha}{Theorem}
\newtheorem{problemalpha}[theoremalpha]{Problem}
\newtheorem{lemmaalpha}[theoremalpha]{Lemma}
\newtheorem{propalpha}[theoremalpha]{Proposition}
\providecommand{\claimname}{Claim}
\providecommand{\conjecturename}{Conjecture}
\providecommand{\corollaryname}{Corollary}
\providecommand{\definitionname}{Definition}
\providecommand{\examplename}{Example}
\providecommand{\lemmaname}{Lemma}
\providecommand{\problemname}{Problem}
\providecommand{\propositionname}{Proposition}
\providecommand{\theoremname}{Theorem}
\begin{document}
\title{Computable analysis on the space of marked groups}
\author{Emmanuel Rauzy}
\begin{abstract}
We begin the systematic study of decision problems for finitely generated
groups given by a solution to their word problem. We relate this to
the study of computable analysis on the space of marked groups. We
point out that several distinct approaches to computable analysis,
some of which are sometimes considered obsolete, yield relevant results.
In particular, we give necessary and sufficient conditions in terms
of Banach-Mazur computability for the existence of a finitely presented
group with solvable word problem but whose subgroups with a certain
property cannot be recognized. 

We classify group properties in different effective Borel hierarchies.
For most common group properties, the classical and effective Borel
classifications coincide. However, we show that the set of LEF groups
is a closed set that is computably a $G_{\delta}$, but not computably
closed. 

Finally, we show that the space of marked groups is a Polish space
which is not \emph{computably Polish, }because it does not admit a
dense and computable sequence. This poses several interesting problems
in terms of computable topology. The space of marked groups is the
first natural example of this kind. 
\end{abstract}

\maketitle
\tableofcontents{}

\section{Introduction }

When Max Dehn introduced in 1911 \cite{Dehn1911,Dehn1987} the isomorphism
problem for groups, he was concerned solely with finitely presented
groups. The problem he considered was thus: given two finite presentations,
decide whether or not they define isomorphic groups. 

Max Dehn likely hoped for a positive solution to the isomorphism problem
for finitely presented groups. 

By the Adian-Rabin Theorem \cite{Rabin1958,NybergBrodda2022}, not
only the isomorphism problem for finitely presented groups, but even
the problem of deciding whether a finite presentation defines the
trivial group are undecidable. 

Because of this theorem, group theorists have learned to expect that
``most properties'' of finitely presented groups are undecidable,
and that ``almost nothing can be said of a group'' given a finite
presentation of it. 

Consider however the following theorem of Groves and Wilton: 
\begin{thm}
[\cite{Groves2009}] \label{thm:GrovesWilton-1}There exists an
algorithm that, given as input a presentation for a group $G$ and
a solution to the word problem in $G$, determines whether or not
$G$ is free.
\end{thm}

This theorem is non-trivial, the proof given in \cite{Groves2009}
relies on a deep understanding of the universal theory of free groups
and of its models, the limit groups. Another proof of that result
was given by Touikan in \cite{Touikan2018}. 

Our interpretation of this theorem is the following: 
\begin{claim}
\label{claim: Basic claim}The correct description to study decision
problems for finitely generated groups is the one associated to ``finite
presentations together with a solution to the word problem''. 
\end{claim}

We will discuss at length how this sentence can be formalized. 

In light of Claim \ref{claim: Basic claim}, the Adian-Rabin Theorem
does not show that infinite groups cannot be manipulated by algorithms,
but simply that we should consider finite presentations as incomplete
descriptions, lacking a full solution to the word problem. In \cite{Rauzy21},
we show that the usual proof of the Adian-Rabin Theorem provides strictly
no information about decision problems for groups when a solution
to the word problem is available. 

The description of finitely presented groups associated to Claim \ref{claim: Basic claim}
is a composite one: it can be expressed as the \emph{conjunction}
of two simpler descriptions: groups given by finite presentations,
and groups given by their solutions to the word problem. (We will
see that this ``conjunction'' has a precise meaning in the lattice
of subnumberings types of marked groups, it corresponds to the \emph{join}
\emph{of two subnumbering types}.) A natural and necessary step towards
understanding this composite description is to understand both of
its halves. While plenty is known about decision problems for finitely
presented groups, virtually nothing is known about decision problems
for groups given by their solution to the word problem. 

The present article is the first to address this problem. 

Our main method is to use tools that were developed in computable
analysis and to apply them to the space of marked groups. 

However, it turns out that many interesting phenomena occur in this
context, non-trivial interactions between computability, topology
and group theory, that do not naturally occur in computable analysis.
In particular, the space of marked groups is the first natural example
of a Polish space which is not computably Polish with respect to its
natural \emph{representation}, i.e. with respect to the natural notion
of computability that we have on it.

\subsection{Groups given by a solution to their word problem }

The first step of our study is to formalize the concept of ``a group
being given by a solution to its word problem''. 

For $k\in\mathbb{N}$, a $k$\emph{-marked group }is a countable group
$G$ together with a generating tuple $(s_{1},...,s_{k})\in G^{k}$.
Two $k$-marked groups $(G,(s_{1},...,s_{k}))$ and $(H,(s'_{1},...,s'_{k}))$
are isomorphic if the function $s_{i}\mapsto s_{i}'$ extends to a
group isomorphism. Most finite descriptions of finitely generated
groups (and in particular finite presentations) are actually descriptions
of marked groups. 

By fixing a free group $\mathbb{F}_{k}$ with basis $(x_{1},...,x_{k})$,
we can identify any marked group $(G,(s_{1},...,s_{k}))$ with the
kernel of the morphism $\mathbb{F}_{k}\rightarrow G$ defined by $x_{i}\mapsto s_{i}$.
In other words: a marked group is uniquely determined by its set of
\emph{relations}. 

Consider a bijection $\theta_{k}:\mathbb{N}\rightarrow\mathbb{F}_{k}$.
The set of relations satisfied by the $k$-marked group $(G,S)$ can
be seen as a binary sequence $(u_{n})_{n\in\mathbb{N}}\in\{0,1\}^{\mathbb{N}}$,
defined by 
\[
u_{n}=1\iff\theta_{k}(n)\text{ is a relation in }(G,S).
\]
We call $(u_{n})_{n\in\mathbb{N}}$ the \emph{binary expansion} of
$(G,S)$. 

It now appears that, in order to define functions that are ``computable
for groups given by a solution to the word problem'', it is necessary
and sufficient to be able to define computability on the Cantor space
$\{0,1\}^{\mathbb{N}}$. 

We will thus turn our attention to the mathematical field where computability
on Cantor space and on other spaces with cardinality that of the continuum
is studied: \emph{computable analysis}. 

\medskip

Defining computable functions on the Cantor space and on the set of
real numbers is a problem that goes back to Turing himself, who, in
\cite{Turing1936}, right after having defined the machines that now
bear his name, introduced those real numbers whose decimal expansion
can be output by such a machine, the \emph{computable real numbers},
and proposed a notion of a computable function from the computable
reals to the computable reals. 

Several other authors have proposed notions of computable functions
on the real numbers or on the Cantor space. Contrary to what happens
for functions defined on the natural numbers, for which there is a
single unanimously acclaimed notion of computable function, in the
case of functions defined on $\{0,1\}^{\mathbb{N}}$ or on $\mathbb{R}$,
there isn't a single definition that we will be able to choose and
use throughout. 

We will present here three notions: Banach-Mazur computability, Markov
computability (also called Type 1 computability), and Type 2 computability.
A detailed historical account can be found in \cite{Avigada}.

\subsection*{Type 2 computability}

Type 2 computability is a concept that goes back to Kleene. The general
framework we now present is the one developed by Weihrauch between
1985 and 2000 \cite{Kreitz1985,Weihrauch2000}, it was later on extended
and rendered more robust by Schröder \cite{Schroeder2001}. Modern
references are \cite{Pauly2016,Iljazovic2021,Schroeder2021,Brattka2021}. 

To define Type 2 computability, we start by defining computability
on Baire space $\mathbb{N}^{\mathbb{N}}$. 

A partial function $f:\,\subseteq\mathbb{N}^{\mathbb{N}}\rightarrow\mathbb{N}^{\mathbb{N}}$
is called \emph{Type 2 computable }if there is a partial functions
$F:\,\subseteq\mathbb{N}^{*}\rightarrow\mathbb{N}^{*}$ (where $\mathbb{N}^{*}$
designates the set of words over $\mathbb{N}$) such that 
\begin{itemize}
\item $F$ is computable (in the usual Church-Turing sense);
\item $F$ is prefix increasing: if $u$ is a prefix of $v$, $F(u)$ is
a prefix of $F(v)$; 
\item And $F(u_{0}...u_{n})\underset{n\to\infty}{\rightarrow}f((u_{n})_{n\in\mathbb{N}})$. 
\end{itemize}
Notice that it follows immediately from the definition that a Type
2 computable function $f:\,\subseteq\mathbb{N}^{\mathbb{N}}\rightarrow\mathbb{N}^{\mathbb{N}}$
is continuous in the Baire space topology, and even effectively continuous:
there is a Turing machine which, given a finite sequence of natural
numbers $w\in\mathbb{N}^{*}$, produces a sequence $(u_{i})_{i\in\mathbb{N}}\in(\mathbb{N}^{*})^{\mathbb{N}}$
such that 
\[
f^{-1}(w\mathbb{N}^{\mathbb{N}})=\text{dom}(f)\cap\bigcup_{i\in\mathbb{N}}u_{i}\mathbb{N}^{\mathbb{N}}.
\]
Thus the preimage of a basic clopen set of Baire space can be computably
written as a union of basic clopen sets. 

Once computability is defined on Baire space, we can extend it to
other sets by encoding their elements using sequences of natural numbers.
A \emph{representation} of a set $X$ is a partial surjection $\rho:\,\subseteq\mathbb{N}^{\mathbb{N}}\rightarrow X$.
A function $f:X\rightarrow Y$ between represented spaces $(X,\rho)$
and $(Y,\tau)$ is called \emph{computable, }or\emph{ $(\rho,\tau)$-computable,
}if there is a computable function $F:\,\subseteq\mathbb{N}^{\mathbb{N}}\rightarrow\mathbb{N}^{\mathbb{N}}$
such that for all $p\in\text{dom}(\rho)$, $f(\rho(p))=\tau(F(p))$.
The situation is summed up in the following diagram:

\begin{center}
\begin{tikzcd} 
X \arrow[r, "f"] & Y \\ 
\mathbb{N}^\mathbb{N} \arrow[u, "\rho"] \arrow[r, "F" swap] & \mathbb{N}^\mathbb{N} \arrow[u, "\tau"] 
\end{tikzcd}
\end{center}

In this case, the function $F$ is called a \emph{computable realizer}
of $f$. When $\rho(p)=x$, $p$ is called a $\rho$\emph{-name} of
$x$. 

Type 2 computability is strongly related to topology, because, as
was said before, computable functions on Baire space are continuous,
and conversely, continuous functions on Baire space are computable
modulo some oracle. This relationship (i.e. continuity being ``computability
modulo some oracle'') extends to all topological spaces that admit
an \emph{admissible representation}, these were characterized by Schröder
\cite{Schroeder2001,Schroeder2021} as the $\text{T}_{0}$ quotients
of countably based spaces, they include all second countable spaces. 

In particular, every represented set $(X,\rho)$ is naturally equipped
with the final topology of the representation $\rho$, and we get
the following relation between computability and topology: the open
sets of the final topology of $\rho$ are exactly the sets that are
\emph{semi-decidable modulo some oracle. }

\subsection*{Markov computability}

Markov computability, sometimes also called Type 1 computability,
is the notion of computability that was considered by Turing in \cite{Turing1936}.
It is named after Andreï Andreïevitch Markov\footnote{Markov was the son of the famous probabilist after whom Markov chains
are named. Both are called Andreï Andreïevitch Markov.}, who was the leader of the ``Russian school of constructivism''
who considered that a mathematical object exists only if we can exhibit
a Turing machine that can, in some sense, represent this object. 

The idea of Markov computability is very similar to that of Type 2
computability, except that we only allow finite descriptions of objects.
We thus consider as base set $\mathbb{N}$, with computability notion
that of Church and Turing, and extend computability to other sets
via \emph{numberings}. A \emph{numbering} of a set $X$ is a partial
surjection $\nu:\,\subseteq\mathbb{N}\rightarrow X$. As for representations,
if $\nu(n)=x$, we say that $n$ is a $\nu$-name of $x$. A function
$f:X\rightarrow Y$ between numbered sets $(X,\nu)$ and $(Y,\mu)$
is called \emph{Markov computable, }or\emph{ $(\nu,\mu)$-computable,
}if there is a computable function $F:\,\subseteq\mathbb{N}\rightarrow\mathbb{N}$
such that for all $n\in\text{dom}(\nu)$, $f(\nu(n))=\mu(F(n))$.

While, in Type 2 computable analysis, computable functions are by
definition effectively continuous, some of the most important theorems
in Markovian computable analysis are effective continuity theorems:
theorems that state that Markov computable functions between sets
with some specified properties are automatically effectively continuous.
These include results due to Myhill and Shepherdson \cite{Myhill1955},
Kreisel, Lacombe and Schoenfield \cite{KLS57}, Ceitin \cite{Ceitin1967},
Moschovakis \cite{Moschovakis1964} and Spreen \cite{Spreen1998}. 

The easiest one to quote is the following one:
\begin{thm}
[Kreisel-Lacombe-Schoenfield, \cite{KLS57}]Let $\text{Tot}$ be
the subset of $\mathbb{N}^{\mathbb{N}}$ consisting of total computable
functions. Any Markov computable function $f:\text{Tot}\rightarrow\text{Tot}$
is effectively continuous. And this statement is uniform: from the
code of a computable function $f$, one can recover a code for a function
that witnesses effective continuity of $f$. 
\end{thm}

The theorem of Ceitin extends this result to Type 1 computable Polish
spaces, that of Moschovakis to spaces that admit a certain Computable
Choice Axiom (see Definition \ref{def:Moschovakis (B) condition}).
Finally, Spreen gives in \cite{Spreen1998} a unified proof of the
results of Ceitin and of Myhill and Shepherdson, generalizing them
both. 

Other references on Markov computability are: \cite{Aberth1980,Kushner1984,Markov1963,Markov54}. 

\subsection*{Banach-Mazur computability}

Banach-Mazur computability was invented by Banach and Mazur in Lwów
before the second world war, but their results were not published
before much later (see \cite{Mazur63}). A modern account of their
results is given in \cite{Hertling2001}. 

The general setting of Banach-Mazur computability is the same one
as that of Markov computability: the basic notion of computability
is given by Church-Turing computability on $\mathbb{N}$, and is then
transferred to other sets via numberings. However, the definition
of a computable function changes: a function $f:X\rightarrow Y$ between
numbered sets $(X,\nu)$ and $(Y,\mu)$ is called \emph{Banach-Mazur
computable }if it maps $\nu$-computable sequences to $\mu$-computable
sequences. In other words: for every $\nu$-computable sequence\footnote{A computable sequence is a $(\text{id}_{\mathbb{N}},\nu)$-computable
function $g:\mathbb{N}\rightarrow X$. There is only only one notion
of computable sequence, whether working in Type 2, Markov or Banach-Mazur
computability.} $(u_{n})_{n\in\mathbb{N}}$, the sequence $(f(u_{n}))_{n\in\mathbb{N}}$
is computable. 

Markov computable functions are Banach-Mazur computable. Proving that
the converse does not hold is difficult, two examples were given,
one by Friedberg \cite{Friedberg1958a} (for a function $\text{Tot}\rightarrow\mathbb{N}$),
one by Hertling in \cite{Hertling2005} (for a function $\mathbb{R}_{c}\rightarrow\mathbb{R}_{c}$
defined on the computable reals), and Bauer and Simpson have shown
in \cite{Bauer2004} how to extend these results to computable Polish
spaces that are ``effectively without isolated points''. 

\subsection{Choosing a framework}

It is by now widely accepted that Type 2 computability is the better
approach to computable analysis. In particular, it can be used to
study computability on sets with cardinality that of the continuum,
without having to restrict one's attention to points that have finite
descriptions. Type 2 computability literature is much more abundant
than Markov and Banach-Mazur literature. 

On the other hand, Banach-Mazur computability is by many aspects not
a good notion. Contrary to what happens in Type 2 computability and
in Type 1 computability, we cannot define a Cartesian closed category
using numbered sets as objects and Banach-Mazur computable functions
as morphisms: Banach-Mazur computable functions are not associated
to finite descriptions. Even worse: there exist some sets on which
continuously many Banach-Mazur computable functions exist. Consider
an infinite subset $A$ of $\mathbb{N}$ which contains no infinite
c.e. set. Then every function from $A$ to $\mathbb{N}$ is Banach-Mazur
computable: a computable sequence in $A$ has finitely many elements.
In this case, the notion of Banach-Mazur function is vacuous. 

In accordance, much less has been written about Banach-Mazur computability
than about Markov computability.

However, we prove the following result:

\begin{theoremalpha}	\label{thm: Theorem A Banach Mazur Iff }Let
$P$ be a property of marked groups. The following are equivalent: 
\begin{itemize}
\item There exists a finitely presented group $G$ with solvable word problem,
but where the problem of determining if a finitely generated subgroup
of $G$ has $P$ is not semi-decidable;
\item $P$ is not Banach-Mazur semi-decidable (with respect to the numbering
associated to groups given by word problem algorithms). 
\end{itemize}
\end{theoremalpha}

This result is very interesting in that it shows that Banach-Mazur
computability, despite all of its flaws, remains a valid notion that
has to be studied. 

This is the first general theorem of its kind, it deals at once with
a wide range of group properties. This result can be used in conjunction
with Markov's Lemma (Lemma \ref{lem:Markov's-Lemma-for Groups Intro}),
it then applies to all properties that are ``effectively not open''
in the space of marked groups. Such a result was asked for in \cite{Duda_2022}.
In \cite{Duda_2022}, a finitely presented group in which amenability
of subgroups is not semi-decidable was obtained thanks to an application
of results in the theory of intrinsically computable relations \cite{ASH1998167}.
The fact that we obtain necessary and sufficient conditions shows
that computable analysis methods are the correct tools to tackle such
problems. 

\subsection{Relationship between the three frameworks}

Type 2, Markov and Banach-Mazur computability are related via the
following implications:
\[
\text{Type 2 computable\ensuremath{\implies} Markov computable \ensuremath{\implies} Banach-Mazur computable}.
\]
Notice that because of this sequence of implications, it is natural
to expect that proving a function Type 2 computable is harder than
proving it Type 1 computable which should be harder than proving it
Banach-Mazur computable. However, this is not the case, and what happens
in practice is that proving a function Markov computable without in
fact proving that it is Type 2 computable is very hard. The main tool
to do this is to use Kolmogorov complexity, see Theorem \ref{thm:Hoyrup Continuous Iff }
of this introduction. Similarly, proving a function Banach-Mazur computable
without in fact proving that it is Markov computable is difficult,
this was achieved by Friedberg \cite{Friedberg1958a} and Hertling
\cite{Hertling2005}. 

Let us now look at the contrapositive implications: 
\[
\text{Not Banach-Mazur computable\ensuremath{\implies} Not Markov computable \ensuremath{\implies} Not Type 2 computable}.
\]
Looking at this sequence of implications, we expect that proving that
a function is not Banach-Mazur computable is harder than proving that
it is not Markov computable which is harder than proving that it is
not Type 2 computable. In this case, what happens is the following:
most of the time, when a function is proved not Markov computable,
it is in fact also proved not Banach-Mazur computable. The reason
for this is simple: proving that a function is not computable by using
a reduction to the halting problem amounts to finding a computable
sequence on which this function is not computable, thereby proving
that it is not Banach-Mazur computable. However, proving that a function
is not Type 2 computable is not only in theory but also in practice
easier than proving that it is not Markov computable. Indeed, Type
2 computable functions being continuous, topological arguments are
sufficient to prove that a function is not Type 2 computable. 

A typical instance of this phenomenon is the problem of recognizing
a fixed group $G$. By compactness, as soon as $G$ is infinite, there
must be a group that is different from $G$ which is adherent to the
set of markings of $G$. This is sufficient to say that the function
which is $1$ on markings of $G$ and $0$ elsewhere is not Type 2
computable. However, this argument far from proves that that same
function is not Markov computable: for it to be useful, we have to
know that in the closure of the set of markings of $G$, there is
a group, different from $G$, \emph{and which has solvable word problem.
}There is no automatic way of effectivizing this topological argument.\emph{
}Theorem \ref{thm:LEF groups not computably closed } gives another
instance for which establishing a Type 1/Banach-Mazur undecidability
result is harder than establishing its Type 2 counterpart. 

\subsection{Partial conclusion }

The conclusion of the above considerations is the following. In the
study of decision problems for groups given by their solution to the
word problem, 
\begin{itemize}
\item decidability results have to be established in terms of Type 2 computability,
\item undecidability results have to be established in terms of Banach-Mazur
computability. 
\end{itemize}
In both cases, we ask for the stronger result. We stress that obtaining
Banach-Mazur undecidability results is important because of Theorem
\ref{thm: Theorem A Banach Mazur Iff }. 

\bigskip

Despite the above conclusion, the present article is mostly written
in the context of Markov computability: this is a middle ground between
Type 2 computability and Banach-Mazur computability which provides
a good solution in order to not work with several frameworks at the
same time. But it will be clearly indicated throughout which undecidability
results hold for Banach-Mazur computability, and all of our decidability
results in fact hold for Type 2 computability. 

We now present in more details our results. We start by introducing
more precisely the objects we will talk about. 

\subsection{Representation and numbering associated to word problems}

For each $k$, consider a free group $\mathbb{F}_{k}$ with basis
$(x_{1},...,x_{k})$. Denote by $\theta_{k}:\mathbb{N}\rightarrow\mathbb{F}_{k}$
the bijection associated to the shortlex order on $\mathbb{F}_{k}$. 

In this context, as explained already, each marked group $(G,(s_{1},...,s_{k}))$
is determined by a unique binary expansion $(u_{n})_{n\in\mathbb{N}}\in\{0,1\}^{\mathbb{N}}$,
given by 
\[
u_{n}=1\iff\theta_{k}(n)\text{ is a relation of }(G,S).
\]
We thus define a representation $\rho_{WP}:\,\subseteq\mathbb{N}^{\mathbb{N}}\rightarrow\mathcal{G}$
of the set $\mathcal{G}$ of isomorphism classes of marked groups
as follows:
\[
\rho_{WP}((v_{n})_{n\in\mathbb{N}})=(G,(s_{1},...,s_{k}))\text{ if and only if }v_{0}=k\text{ and (\ensuremath{v_{n})_{n\ge1}} is the binary expansion of \ensuremath{(G,(s_{1},...,s_{k}))}}
\]
Let $(n,m)\mapsto\langle n,m\rangle$ denote Cantor's pairing function.
Denote by $(\varphi_{0},\varphi_{1},\varphi_{2},...)$ a standard
enumeration of all partial computable functions \cite{Rogers1987}.
We now define the numbering induced by the representation $\rho_{WP}$,
denoted $\nu_{WP}$, and which will interest us throughout. 

We define $\nu_{WP}$ by 
\[
\nu_{WP}(\langle k,i\rangle)=(G,S)\iff(k,\varphi_{i}(0),\varphi_{i}(1),\varphi_{i}(2)...)\in\mathbb{N}^{\mathbb{N}}\text{ is a \ensuremath{\rho_{WP}}-name of }(G,(s_{1},...,s_{k})).
\]
We denote by $\mathcal{G}^{+}$ the set of marked groups with solvable
word problem. 

\subsection{Topology of the space of marked groups}

As we have seen in the previous paragraph, the set of $k$-marked
groups can be seen as a subset of $\{0,1\}^{\mathbb{N}}$. It thus
inherits the product topology of the Cantor space. This topology comes
from the ultrametric distance $d$ given by: for $(u_{n})_{n\in\mathbb{N}}$
and $(v_{n})_{n\in\mathbb{N}}$ elements of $\{0,1\}^{\mathbb{N}}$,
let $n_{0}=\inf(n,u_{n}\ne v_{n})\in\mathbb{N}\cup\{+\infty\}$, and
put $d((u_{n})_{n\in\mathbb{N}},(v_{n})_{n\in\mathbb{N}})=2^{-n_{0}}$. 

We denote $\mathcal{G}_{k}$ the topological space defined this way,
and $\mathcal{G}$ the disjoint union of the $\mathcal{G}_{k}$, $k\ge1$,
equipped with the disjoint union topology. The topology of $\mathcal{G}$
is also metrizable, as the distance $d$ can be extended to $\mathcal{G}$
by imposing that groups marked by families of different cardinalities
be far away\footnote{Note that we do not embed $\mathcal{G}_{n}$ into $\mathcal{G}_{n+1}$
by identifying a marking with the marking obtained by adding the identity
as a redundant generator, as is usually done, see for instance \cite{Champetier2005}.
This is inconsequential.}, say at distance exactly $2$. 

The topology of the space of marked groups admits a natural basis.
If $(r_{1},...,r_{m};\,s_{1},...,s_{m'})$ is a pair of tuples of
elements of $\mathbb{F}_{k}$, we denote by $\Omega_{r_{1},...,r_{m};s_{1},...,s_{m'}}^{k}$
the set of $k$-marked groups in which $(r_{1},...,r_{m})$ are indeed
relations while $(s_{1},...,s_{m'})$ are not relations. Sets of this
form are called the \emph{basic clopen sets.}

We call a group an \emph{abstract group }when we want to emphasize
the fact that it is not a marked group. 

\subsection{A non-computably Polish space}

The space of marked groups is a Polish space, i.e. it has a complete
metric and is separable. It is also $\sigma$-compact, as the set
of $k$-marked groups is compact. 

By equipping the space of marked groups with a representation and
an induced numbering, we have equipped it with a notion of computability.
We can then ask which of the above facts hold effectively: is the
space of $k$-marked groups computably compact? Is its metric computable?
Is it computably separable? Is it computably complete? 

In fact, each of these properties is easily seen to hold computably,
except for separability: the space of marked groups is not computably
separable. 

Let us first quickly define the effective terms above, precise definitions
are found in Section \ref{sec:Main-results-in}: 
\begin{itemize}
\item A represented set has a \emph{computable metric} if there is an algorithm
that, given the names of two points $x$ and $y$ and some $n\in\mathbb{N}$,
produces a rational approximation of $d(x,y)$ within $2^{-n}$.
\item A represented set is \emph{computably complete} if there is a program
which, given a Cauchy sequence $(x_{n})_{n\in\mathbb{N}}$ and a \emph{regulator}
for it, i.e. a function $g$ which, given $n$, produces $N$ such
that $\forall p,q>N,\,d(x_{p},x_{q})<2^{-n}$, produces a name for
its limit. 
\item A second countable space $X$ with numbered basis $(B_{i})_{i\in\mathbb{N}}$
is \emph{computably compact} if there is a program that, given a sequence
of basic sets whose union covers $X$, outputs a finite subset which
already covers $X$. (See \cite{Pauly2016} for a general definition
that does not rely on second countability.)
\item A represented space is \emph{computably separable} if it admits a
computable and dense sequence.
\end{itemize}
The above definitions are set in Type 2, the corresponding definition
in Type 1 computability are obtained by replacing represented spaces
by numbered sets. Note however that there is no convincing notion
of computable compactness that works well in Type 1 computability.
In particular, the set of computable elements of $\{0,1\}^{\mathbb{N}}$
is not closed, thus not classically compact. We could however have
expected a result such as ``a \emph{computable} union of basic clopen
sets that covers the set of computable points $\{0,1\}^{\mathbb{N}}$
must contain a finite sub-cover'', but this is contradicted by a
famous example of Kleene: there is a computable union of basic clopen
sets whose union contains all computable points of $\{0,1\}^{\mathbb{N}}$,
but whose complement is infinite. See also \cite{Bauer2012} where
constructive notions of compactness for metric spaces are discussed. 

Following \cite{Moschovakis1964}, a numbered set with a computable
metric is called \emph{a recursive metric space}. We could maybe also
call this a ``Type 1 computable metric space''. However, the commonly
used Type 2 notion of \emph{computable metric space} \cite{Brattka2003,Iljazovic2021}
asks strictly more than a computable metric: a computable metric space
is a computably separable subset of a computably Polish space.
\begin{lem}
The represented space $(\mathcal{G},\rho_{WP})$ has a computable
metric, it is computably complete, and $\mathcal{G}_{k}$ is a computably
compact subset of it.
\end{lem}

The following lemma gives the corresponding Type 1 statement, which
is an immediate consequence of the above. 
\begin{lem}
The triple $(\mathcal{G}^{+},d,\nu_{WP})$ is a computably complete
recursive metric space. 
\end{lem}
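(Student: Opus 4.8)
The plan is to verify the two assertions separately: first that $(\mathcal{G}_{WP}, d, \Lambda_{WP})$ is a recursive metric space, i.e. that $d$ is $(\Lambda_{WP}\times\Lambda_{WP}, c_{\mathbb{R}})$-computable, and then that it is effectively complete. Throughout I will work with the concrete numbering $\nu_{WP}$ in the class $\Lambda_{WP}$, since computability statements depend only on the numbering type; a name of a marked group is (a code for) a pair $\langle k,m\rangle$ where $\varphi_m$ solves the word problem in a $k$-marked group.

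First I would address computability of $d$. Given $\nu_{WP}$-names $\langle k,m\rangle$ and $\langle k',m'\rangle$ of two marked groups, if $k\neq k'$ the distance is $2$ by definition and we output a trivial $c_{\mathbb{R}}$-name. If $k=k'$, recall that $d$ compares the two normal subgroups $N, N'$ of $\mathbb{F}_k$ as elements of $\{0,1\}^{\mathbb{F}_k}\cong\{0,1\}^{\mathbb{N}}$ via the fixed bijection $i_k$, and $d = 2^{-n_0}$ where $n_0$ is the first index at which the indicator sequences differ. The point is that from the word problem algorithms $\varphi_m$ and $\varphi_{m'}$ we can, for each $j$, decide whether $i_k(j)\in N$ and whether $i_k(j)\in N'$ (these halt because the algorithms are total), hence decide whether the two sequences agree at position $j$. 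Since both groups have solvable word problem, $n_0$ is genuinely computable when it is finite, and it is $+\infty$ exactly when the groups coincide. To produce a $c_{\mathbb{R}}$-name of $2^{-n_0}$ uniformly — without first deciding whether $n_0=+\infty$, which we cannot do — I would output the standard "approximation from above" Cauchy name: having checked agreement up to position $j$, emit the rational approximation $2^{-(j+1)}$ as the $n$-th term for all $n\le$ some bound depending on $j$, and refine downward only when a genuine disagreement is found. This is the usual trick showing that $d$ on $\{0,1\}^{\mathbb{N}}$ restricted to pairs of points with decidable membership is computable, and it transfers verbatim. I expect this step to be routine once the bookkeeping between $i_k$, the sequence encoding of $\{0,1\}^{\mathbb{F}_k}$, and Cauchy names is set up carefully.

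Next I would address effective completeness, which is the substantive part. We are given a code for a $\Lambda_{WP}$-computable sequence $((G_n, S_n))_{n\in\mathbb{N}}$ — that is, a computable function $n\mapsto \langle k, m_n\rangle$ with each $\varphi_{m_n}$ solving the word problem in $(G_n,S_n)$ (note $k$ must eventually be constant along a Cauchy sequence, and in fact constant throughout once distances drop below $2$, so we may assume a single $k$) — together with a computable modulus $\beta$ with $d((G_n,S_n),(G_{n'},S_{n'})) < 2^{-N}$ whenever $n,n'\ge\beta(N)$. The limit is the normal subgroup $N_\infty\subseteq\mathbb{F}_k$ whose indicator sequence is the pointwise limit of the indicator sequences of the $N_n$; concretely, $i_k(j)\in N_\infty$ iff $i_k(j)\in N_{\beta(j+1)}$, because the modulus guarantees the $j$-th coordinate has stabilized by stage $\beta(j+1)$. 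This gives a decision procedure for membership in $N_\infty$: on input $j$, compute $\beta(j+1)$, then $m_{\beta(j+1)}$, then run $\varphi_{m_{\beta(j+1)}}$ on the code for $i_k(j)$. All steps halt and the composite is total computable, so it is a word problem algorithm for the marked group $(G_\infty, S_\infty) := N_\infty$, and from it we read off a $\nu_{WP}$-name $\langle k, \cdot\rangle$ uniformly in the given data. It remains to check that $N_\infty$ is indeed a normal subgroup (so that the limit really is a marked group and not merely a point of $\{0,1\}^{\mathbb{F}_k}$) — this follows because the set of normal subgroups is closed in $\{0,1\}^{\mathbb{F}_k}$ and the sequence is Cauchy, hence convergent in the ambient compact space with limit in the closed subset — and that this $N_\infty$ is the metric limit, which is immediate from the coordinatewise description of $d$.

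The main obstacle, and the only place requiring care, is the uniformity and the handling of the $+\infty$ case in computing $d$: one must emit a Cauchy name for $2^{-n_0}$ without ever deciding whether the two groups are equal. Resolving this is exactly the standard monotone-approximation argument sketched above, so I anticipate no real difficulty, only attention to detail. A secondary point worth stating explicitly is that effective completeness as defined only promises to compute the limit of sequences that are \emph{presented} as Cauchy via a modulus; we are not claiming to recognize Cauchyness, so no undecidability issue arises. I would also remark that the argument shows the stronger uniform statement: there is a single algorithm taking a code for the sequence and a code for $\beta$ to a $\Lambda_{WP}$-name of the limit, which is what "effectively complete" requires.
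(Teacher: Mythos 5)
Your proposal is correct, and both halves of it are sound: the monotone digit-comparison argument for computing $d$ (including the handling of the case $n_0=+\infty$ by approximation from above) and the construction of the limit's word problem algorithm via $i_k(j)\in N_\infty \iff i_k(j)\in N_{\beta(j+1)}$ are exactly the computations that make the statement true. The paper packages these same computations differently: it first proves that the computable points of the Cantor space form an effectively complete RMS, observes that $\Lambda_{WP}$ is the numbering induced on $\mathcal{G}$ by its embedding into a disjoint union of Cantor spaces, and then invokes two general lemmas (a subset of an RMS with the induced numbering is an RMS; a closed subset of an effectively complete space is effectively complete), the closedness of $\mathcal{G}_k$ in $\{0,1\}^{\mathbb{N}}$ doing the work that your normal-subgroup check does. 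The paper itself remarks that effective completeness "could easily have been proved directly," and your argument is precisely that direct proof. What your route buys is self-containedness and an explicit description of the limit's word problem algorithm, which is the form in which the fact is actually used later (e.g. in the direct proof of Markov's Lemma for groups); what the paper's route buys is reusable general lemmas and a cleaner separation between the Cantor-space computability and the group theory. The only points needing care in your write-up are bookkeeping: the off-by-one between "agree on the first $n$ digits" and the strict inequality $|q_n - d|<2^{-n}$ required of a $c_{\mathbb{R}}$-name, and the observation that the $k_n$ are constant from index $\beta(0)$ on; neither is a genuine gap.
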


The Boone-Novikov Theorem, which states that there exists a finitely
presented group with unsolvable word problem, easily translates to
the following: 
\begin{thm}
[Boone-Novikov,  reformulated]\label{thm:No--computable-sequence Type 2}No
$\rho_{WP}$-computable sequence is dense in $\mathcal{G}$, and thus
$(\mathcal{G},d,\rho_{WP})$ is not a computable Polish space. 
\end{thm}

Note that the above result is a Type 2 computability result. In the
vocabulary of \cite{Brecht2020}, the represented space $(\mathcal{G},\rho_{WP})$
is a precomputable quasi-Polish space which is not overt. The corresponding
result in Type 1 computability is obtained by reformulating the Boone-Rogers
Theorem \cite{Boone1966}, which says that the word problem is not
uniformly solvable on the set of finitely presented groups with solvable
word problem. 
\begin{thm}
[Boone-Rogers,  reformulated]\label{thm:No--computable-sequence Type 1}No
$\nu_{WP}$-computable sequence is dense in $\mathcal{G}^{+}$, and
thus $(\mathcal{G}^{+},d,\nu_{WP})$ is not a recursive Polish space. 
\end{thm}

Theorem \ref{thm:No--computable-sequence Type 2} relies on the fact
that the problem ``is $\Omega_{R,T}^{k}$ the empty set'' is not
co-semi-decidable, Theorem \ref{thm:No--computable-sequence Type 1}
on the fact that the problem ``is $\Omega_{R,T}^{k}\cap\mathcal{G}^{+}$
the empty set'' is not co-semi-decidable. However, we show that while
``is $\Omega_{R,T}^{k}$ the empty set'' is semi-decidable, ``is
$\Omega_{R,T}^{k}\cap\mathcal{G}^{+}$ the empty set'' is not. This
appears as Theorem \ref{thm:Use Miller Wisely -1} in the text.

\begin{theoremalpha}	

No algorithm can stop exactly on the basic clopen sets $\Omega_{R,T}^{k}$
that do not contain a group with solvable word problem. 

\end{theoremalpha}	

A common approach to studying computable Polish spaces is via notions
of ``computable presentations of Polish spaces''. The first of these
is due to Moschovakis \cite{Moschovakis1980}. There are several essentially
equivalent notions of computable presentations, see \cite{GREGORIADES2016}
for the different definitions and their equivalence. We will use the
following definition: a \emph{computable presentation} of a Polish
space $X$ with distance $d$ is a dense sequence $(u_{n})_{n\in\mathbb{N}}\in X^{\mathbb{N}}$
for which the map 
\begin{align*}
d:\, & \mathbb{N}\times\mathbb{N}\rightarrow\mathbb{R}\\
 & (n,m)\mapsto d(u_{n},u_{m})
\end{align*}
is computable. In other words, there is an algorithm that, given $(n,m,p)\in\mathbb{N}^{3}$,
produces a rational approximation of $d(u_{n},u_{m})$ within $2^{-p}$.
The use of presentations can be understood as a means of defining
computable Polish spaces without going through the steps of defining
spaces with a computable metric, computably complete and computably
separable: all this is done in one single concise step. This approach
is thus not adapted to our setting, since the space of marked groups,
while being not computably separable, has a computable metric which
is computably complete: we are making distinctions that are finer
than what notions of computable presentations would allow us to do. 

Let us quote a sentence of Moschovakis from \cite[Chapter 3]{Moschovakis1980},
who, after defining a recursive presentation of a Polish space, states:
\begin{onehalfspace}
\begin{center}
``Not every Polish space admits a recursive presentation---but every
interesting space certainly does.''
\par\end{center}
\end{onehalfspace}

We prove in Theorem \ref{thm:NoRecPres}: 

\begin{theoremalpha}	

\label{thm: No Polish Presentation}The space of marked groups $\mathcal{G}$
associated to its ultrametric distance $d$ does not have a recursive
presentation. 

\end{theoremalpha}	

Moschovakis' statement should be understood as the belief that the
only examples of Polish spaces that do not admit recursive presentations
will be artificially built counterexamples, while the Polish spaces
that occur naturally in mathematics, and whose definitions do not
involve Turing machines, always have those presentations. Thus the
fact that a naturally arising Polish space exists for which this fails
is very interesting. 

The difference between Theorem \ref{thm: No Polish Presentation}
and Theorems \ref{thm:No--computable-sequence Type 2} and \ref{thm:No--computable-sequence Type 1}
is that for this last result we do not demand a priori that groups
should be given by their solutions to the word problem. We do not
know how to obtain the above result with respect to any metric that
induces the topology of the space of marked groups, so as to have
a result that concerns the space of marked groups purely as a topological
space. Note however that this question has no bearing on the present
study: whether or not the topological space of marked groups has a
certain representation $\rho$ that makes it computably Polish has
no consequence at all on the algorithmic problem ``what can be said
about a group, given a solution to its word problem''. The representation
$\rho_{WP}$ is the only \emph{realistic} one, in the sense that it
is the only one which tells us what can on cannot be performed on
actual computer algebra systems like GAP or Magma. 

\subsection{Continuity problems on the space of marked groups }

We will now discuss what we know about continuity and effective continuity
of computable functions on the space of marked groups. 

A function $f:X\rightarrow Y$ between numbered sets with computable
metrics is called \emph{effectively metric continuous}\footnote{Note that in the context of recursive Polish spaces, this notion was
shown by Moschovakis \cite[Theorem 11]{Moschovakis1964} to be equivalent
to the one obtained by effectivizing the idea that the preimage of
an open set is open. In general it could be weaker. This is investigated
in detail in \cite{Rauzy2023}. } if there is a program that, given $x\in X$ and $n\in\mathbb{N}$,
produces $p\in\mathbb{N}$ such that if $d(y,x)<2^{-p}$, $d(f(x),f(y))<2^{-n}$. 

\subsection*{Type 2 computability }

In Type 2 computability, computable functions are by definition effectively
continuous, and thus this remains true on the space of marked groups. 

\subsection*{Markov computability }

We ask both whether Markov computable functions are continuous or
effectively metric continuous on the space of marked groups. 

Markov computable functions are effectively metric continuous on Type
1 Computable Polish spaces by Ceitin's theorem, but by Theorem \ref{thm:No--computable-sequence Type 1}
this theorem does not apply to $(\mathcal{G}^{+},d,\nu_{WP})$. 

In \cite{Moschovakis1964}, Moschovakis proved a generalization of
Ceitin's Theorem, which gives, as of today, the weakest known conditions
on a recursive metric space, that are sufficient in order for the
functions defined on this space to be effectively metric continuous.
The hypotheses of this theorem are detailed in Section \ref{subsec:KLS-Ceitin-Theorem,}. 

We however prove that the hypotheses of Moschovakis' Effective Continuity
Theorem are not satisfied by $(\mathcal{G}^{+},d,\nu_{WP})$. This
a consequence of the following result (which appears as Theorem \ref{thm:No-Completion-Theorem}
in the text):

\begin{theoremalpha}	[Failure of an Effective Axiom of Choice for
groups] No algorithm can, on input a basic clopen set $\Omega_{r_{1},...,r_{m};s_{1},...,s_{m'}}^{k}$
such that $\Omega_{r_{1},...,r_{m};s_{1},...,s_{m'}}^{k}\cap\mathcal{G}^{+}\neq\emptyset$,
produce a word problem algorithm for a group in this basic clopen
set. 

\end{theoremalpha}	

The following theorem, given to us by Mathieu Hoyrup, gives necessary
and sufficient conditions for existence of a discontinuous Markov
computable function defined on the space of marked groups. It is based
on results of \cite{Hoyrup2016} and appears as Theorem \ref{thm:Continuity IFF Kolmogorov }
in the text. 
\begin{thm}
\label{thm:Hoyrup Continuous Iff }There exists a $\nu_{WP}$-computable
and discontinuous function $f:\mathcal{G}^{+}\rightarrow\{0,1\}$
if and only if there is a marked group $(G,S)$ with solvable word
problem, which is not isolated in $\mathcal{G}^{+}$, and a computable
function $g:\mathbb{N}\rightarrow\mathbb{N}$ such that for each $n\in\mathbb{N}$
bigger than the smallest $\nu_{WP}$-name of $(G,S)$, $(G,S)$ is
the only marked group in $B((G,S),2^{-g(n)})$ to have a $\nu_{WP}$-name
inside $\{0,1,...,n\}.$

Or again: 
\[
\forall n\in\text{dom}(\nu_{WP}),\,\nu_{WP}(n)\neq(G,S)\implies d(\nu_{WP}(n),(G,S))>2^{-g(n)}.
\]
\end{thm}

In other words, $(G,S)$, while not isolated, can only be approached
by groups of very high Kolmogorov complexity. This is a strong negation
of ``$(G,S)$ is the limit of a computable sequence of marked groups
that are distinct from it''. 

\subsection*{Banach-Mazur computability }

Banach-Mazur computable functions defined on a recursive Polish space
are continuous \cite{Mazur63,Hertling2001}. 

However, even on the computable reals or on the computable points
of $\{0,1\}^{\mathbb{N}}$, continuous but not effectively metric
continuous Banach-Mazur functions can exist. In \cite{Hertling2006},
Hertling has even constructed a Banach-Mazur computable function on
the computable reals which is at no point effectively metric continuous.
We extend this to the space of marked groups by using a computable
retraction on $\{0,1\}^{\mathbb{N}}$. The following appears as Corollary
\ref{cor:BM not Markov on G}:

\begin{propalpha}	There exists a Banach-Mazur computable function
$\mathcal{G}^{+}\rightarrow\mathbb{N}$ which is continuous but not
effectively continuous. \end{propalpha}	

Whether or not Banach-Mazur computable functions must be continuous
on the space of marked groups remains an open problem. There is no
known characterization similar to that of Theorem \ref{thm:Hoyrup Continuous Iff }
for continuity of Banach-Mazur computable functions. It is not clear
whether the methods of \cite{Hoyrup2016} can be applied to Banach-Mazur
computability. 

Note also that there are countably many Banach-Mazur computable functions
$\mathbb{R}_{c}\rightarrow\mathbb{R}_{c}$: by continuity, each such
function is determined by its behavior on $\mathbb{Q}$, and because
$\mathbb{Q}$ can be computably enumerated, each Banach-Mazur computable
function $f$ on $\mathbb{Q}$ has a finite description: the code
of a machine that produces a computable enumeration of $f(\mathbb{Q})$.
On the other hand, if $A$ is an infinite subset of $\mathbb{N}$
with no infinite c.e. subset, there are continuously many Banach-Mazur
computable functions $f:A\rightarrow\mathbb{N}$. What of Banach-Mazur
functions of the space of marked groups? 

\subsection*{Summary }

We summarize the situation in the following table: 

 	\begin{table}[h] 		\centering 		\begin{tabular}{|c|c|c|c|c|c|c|c|c|c|} 			\hline 	Notion of computability:	&	\multicolumn{3}{|c|}{\textbf{Banach-Mazur}}&	\multicolumn{3}{|c|}{\textbf{Markov}}&	\multicolumn{3}{|c|}{\textbf{Type 2}} 			\\ \hline 			&$C^0$ & Eff. $C^0$ & Cardinality &$C^0$ & Eff. $C^0$ & Cardinality &$C^0$ & Eff. $C^0$ & Cardinality  				\\ \hline 			Arbitrary set &$\times$&$\times$&Can be $2^{\aleph_0}$&$\times$&$\times$&$\aleph_0$&\checkmark&\checkmark&$\aleph_0$ 				\\ \hline 			Space of marked groups &?&$\times$&?&?&?&$\aleph_0$&\checkmark&\checkmark&$\aleph_0$ 							\\ \hline 			Computable Polish space &\checkmark &$\times$&$\aleph_0$&\checkmark&\checkmark&$\aleph_0$&\checkmark&\checkmark&$\aleph_0$ 							\\ \hline
	\end{tabular} \end{table}

The main problem arising from our study is thus the following one:

\begin{problemalpha}	[Main Problem]\label{conj:Main-1} Are Banach-Mazur
computable functions defined on $\mathcal{G}^{+}$ continuous? How
many of these are there? Are Markov computable functions defined on
$(\mathcal{G}^{+},\nu_{WP})$ continuous or effectively continuous? 

\end{problemalpha}	

A very interesting consequence of a positive answer to the first question
would be that it would be possible to apply Theorem \ref{thm: Theorem A Banach Mazur Iff }
to any non-clopen property of $\mathcal{G}^{+}$. 

\subsection{Markov's Lemma }

As discussed above, the more advanced continuity theorems of Type
1 and of Banach-Mazur computable analysis cannot be applied to the
space of marked groups.

However, the fact that $(\mathcal{G}^{+},d,\nu_{WP})$ is an effectively
complete recursive metric space provides us with some basic results
coming from computable analysis that can be used to obtain group theoretical
results. The most useful result in this regard is Markov's Lemma \cite[Theorem 4.2.2]{Markov1963},
as applied to the space of marked groups. 

\begin{lemmaalpha}	[Markov's Lemma for groups]\label{lem:Markov's-Lemma-for Groups Intro}If
$((G_{n},S_{n}))_{n\in\mathbb{N}}$ is a $\nu_{WP}$-computable sequence
of marked groups that converges to a marked group $(H,S)$ with solvable
word problem, with $(G_{n},S_{n})\ne(H,S)$ for each $n$, then there
is a $\nu_{WP}$-computable sequence $((\Gamma_{n},T_{n}))_{n\in\mathbb{N}}\in(\{(H,S)\}\cup\{(G_{n},S_{n}),n\in\mathbb{N}\})^{\mathbb{N}}$
such that $\{n\in\mathbb{N},\,(\Gamma_{n},T_{n})=(H,S)\}$ is not
a c.e. subset of $\mathbb{N}$. \end{lemmaalpha}	

In other words: if $(H,S)$ is the limit of a $\nu_{WP}$-computable
sequence of groups in some set $A$ with $(H,S)\notin A$, then $\{(H,S)\}$
is not a Banach-Mazur $\nu_{WP}$-semi-decidable subset of $\{(H,S)\}\cup A$. 

This result is elementary, and in fact one can find in the literature
results that are obtained by ``manual'' applications of this lemma
to different effectively converging sequences. See for instance \cite{McCool_1970}
and \cite{Lockhart1981}. 

Markov's Lemma is our main tool to study decidability on the space
of marked groups. 

\subsection{Effective Borel hierarchies }

The study of the Borel hierarchy on $\mathcal{G}$ is a difficult
topic related to many deep questions. For instance the fact that the
set of groups with polynomial growth is open cannot, to the best of
our knowledge, be proved without Gromov's Theorem. And the problem
of deciding which identities of the form $\forall w,\,w^{n}=1$ define
a clopen set is related to the Burnside problem: for $n\in\{1,2,3,4,6\}$
the set of groups satisfying $\forall w,\,w^{n}=1$ is clopen, and
for $n\gg1$ the construction of infinite Burnside groups shows that
it is closed but not open. The remaining cases are not classified. 

Some results about the Borel hierarchy on $\mathcal{G}$ were obtained
in \cite{Benli2019}, but, again, many group properties remain unclassified. 

By using Type 2 computability and the representation of $\mathcal{G}$,
we can refine its Borel hierarchy. Indeed, a set is open in $\mathcal{G}$
if and only if it is $\rho_{WP}$-semi-decidable modulo ``some oracle''.
In concrete examples, it is natural to ask which oracle is required.
 (Note that, by Theorem \ref{thm: No Polish Presentation}, Moschovakis'
approach to defining an effective Borel hierarchy \cite{Moschovakis1980}
is not applicable to the space of marked groups.)

Because of Theorem \ref{thm: Theorem A Banach Mazur Iff }, we must
also investigate Markov and Banach-Mazur computability. This is done
by restricting our attention to the set $\mathcal{G}^{+}$ of groups
with solvable word problem, and by using the numbering $\nu_{WP}$
instead of the representation $\rho_{WP}$. 

And thus each group property has four classifications: in the classical
Borel hierarchy, in the effective Borel hierarchy for $\rho_{WP}$,
in terms of the arithmetical hierarchy with respect to $\nu_{WP}$
(Type 1 classification), and in terms of a possible ``Banach-Mazur
arithmetical hierarchy''. It seems that such an object has yet to
be defined, but this is not a problem for us since we will only consider
the first levels of such a hierarchy: semi-decidable, co-semi-decidable,
and both or neither of these. 

Properties are classified as clopen, open, closed, or ``above'',
and similarly, properties are classified as decidable, semi-decidable,
co-semi-decidable, or ``above''. Note that the main reason why we
restrict our attention to these levels is that we are \emph{unaware
of differences (or conjectured differences) between the Borel hierarchy
and its effective counterparts that occur above the very first levels.
}We know that the set of LEF groups is a closed but not computably
closed set, and we have several more candidates of such properties,
but we are not aware of a natural $G_{\delta}$ which would not be
computably a $G_{\delta}$, and so on.

\[ \begin{tabular}{|c|c|c|c|} 	\hline 	 Borel hierarchy& Effective Borel hierarchy on $\mathcal{G}$& Type 1 hierarchy on $\mathcal{G}^{+}$& Banach-Mazur hierarchy on $\mathcal{G}^{+}$ 	\\ 	\hline 	\text{Clopen} &  \text{\ensuremath{\rho_{WP}}-decidable}  	& \text{\ensuremath{\nu_{WP}}-decidable}  &\text{Banach-Mazur \ensuremath{\nu_{WP}}-decidable} \\ 	\text{Open} &  \text{\ensuremath{\rho_{WP}}-semi-decidable}  &\text{\ensuremath{\nu_{WP}}-semi-decidable}  &\text{Banach-Mazur \ensuremath{\nu_{WP}}-semi-decidable} \\ 	\text{Closed} &  \text{\ensuremath{\rho_{WP}}-co-semi-decidable}  &\text{\ensuremath{\nu_{WP}}-co-semi-decidable}  &\text{Banach-Mazur \ensuremath{\nu_{WP}}-co-semi-decidable}\\ \hline \end{tabular} \]
\medskip

In most explicit cases, the four classifications will perfectly coincide.
We thus expect a natural property $P\subseteq\mathcal{G}$ to be open
if and only if it is $\rho_{WP}$-semi-decidable if and only if it
is $\nu_{WP}$-semi-decidable if and only if it is Banach-Mazur $\nu_{WP}$-semi-decidable.
Notice that establishing such correspondences will also involve proving
negative results: the correspondence works only if a non-open property
is also not $\nu_{WP}$-semi-decidable. 

The implications that are known to hold between the four classifications
are represented below: 
\begin{align*}
\text{clopen}\impliedby\text{\ensuremath{\rho_{WP}}-decidable} & \implies\text{\ensuremath{\nu_{WP}}-decidable}\implies\text{Banach-Mazur \ensuremath{\nu_{WP}}-decidable}\\
\text{open}\impliedby\text{\ensuremath{\rho_{WP}}-semi-decidable} & \implies\text{\ensuremath{\nu_{WP}}-semi-decidable}\implies\text{Banach-Mazur \ensuremath{\nu_{WP}}-semi-decidable }\\
\text{closed}\impliedby\text{\ensuremath{\rho_{WP}}-co-semi-decidable} & \implies\text{\ensuremath{\nu_{WP}}-co-semi-decidable}\implies\text{Banach-Mazur \ensuremath{\nu_{WP}}-co-semi-decidable }
\end{align*}

Partial solutions to Problem \ref{conj:Main-1} could add more implications.
Indeed, a non-effective continuity theorem, either for Banach-Mazur
or for Markov-computable functions, would permit to establish the
corresponding implication:
\[
\text{Banach-Mazur \ensuremath{\nu_{WP}}-decidable}\implies\text{clopen in }\text{\ensuremath{\mathcal{G}^{+}}},
\]
\[
\text{\ensuremath{\nu_{WP}}-decidable}\implies\text{clopen in }\text{\ensuremath{\mathcal{G}^{+}}}.
\]
An effective continuity theorem for Markov-computable functions would
permit to establish the implication:
\[
\text{\ensuremath{\nu_{WP}}-decidable}\implies\text{\ensuremath{\rho_{WP}}-decidable in }\text{\ensuremath{\mathcal{G}^{+}}}.
\]
There are no other actual implications between the four notions. (In
particular, establishing a Markov effective continuity result would
\emph{not} imply that a $\nu_{WP}$-semi-decidable property is open
in $\mathcal{G}^{+}$, this is known to be false.) 

However, there is the following heuristic:
\begin{itemize}
\item A natural $\nu_{WP}$-semi-decidable property is (\emph{very much})
expected to be $\rho_{WP}$-semi-decidable and open in $\mathcal{G}^{+}$,
and a natural $\nu_{WP}$-co-semi-decidable property is (\emph{very
much}) expected to be $\rho_{WP}$-co-semi-decidable and closed in
$\mathcal{G}^{+}$.
\end{itemize}
Semi-decidable properties that are not open do exist, but they are
built using Kolmogorov complexity, those are sets one does not expect
to run into when dealing with properties defined by algebraic or geometric
constructions. (See Section \ref{subsec:Differences-with-Borel} for
an example.)
\begin{itemize}
\item A natural open property in $\mathcal{G}$ is (\emph{a little}) expected
to be $\rho_{WP}$-semi-decidable, and a natural closed property in
$\mathcal{G}$ is (\emph{a little}) expected to be $\rho_{WP}$-co-semi-decidable.
\end{itemize}
This second fact is rather of an empirical nature, and is justified
by the results given in Section \ref{subsec:The Table}. Indeed,
there is given a table containing around 30 group properties for which
we prove that the four classifications coincide perfectly. Theorem
\ref{thm: Theorem A Banach Mazur Iff } can be used with all those
that are not clopen/decidable. 

\subsection{Differences between the Borel hierarchy and its effective counterpart}

The main interest of studying effective versions of the Borel hierarchy
on the space of marked groups lies in the fact that there are natural
group properties which, while open, are not computably open, or while
closed, are not computably closed. 

We in fact know of only a single example of such a group property,
but we have several other candidates. 

The set of LEF groups (for Locally Embeddable into Finite groups)
is the closure in $\mathcal{G}$ of the set of finite groups. It was
introduced in \cite{VershikGordon1998}. 

\begin{theoremalpha}	\label{thm:LEF groups not computably closed }The
set of LEF groups is a closed set which is not computably closed,
in the sense that its complement cannot be written as a computable
union of basic clopen sets. However, it is computably a $G_{\delta}$.
\end{theoremalpha}	

The main ingredient of the proof of this theorem is Slobodskoi's Theorem
stating that the universal theory of finite groups is undecidable
\cite{Slobodskoi1981}. 

Remark the following. By definition, a marked group $(G,S)$ is LEF
if every ball in the labeled Cayley graph of $(G,S)$ is also a ball
in the labeled Cayley graph of a certain finite group. This condition
is naturally expressed by a $\forall\exists$ statement:
\[
\forall n\in\mathbb{N},\exists(F,S')\text{ finite marked group,}\,B_{(G,S)}(n)=B_{(F,S')}(n).
\]
Upon closer examination, one realizes that the existential quantifier
above is a bounded one: there are finitely many labeled graphs of
diameter at most $2n$ and with edges having degree at most $\vert S\vert$,
some of them are a part of the Cayley graph of a finite group, others
are not, and there is a certain number $g(n)$ which is the least
number such those that do belong to some finite group belong to a
finite group of size at most $g(n)$. And thus the existential quantifier,
being bounded, defines a finite union of clopen sets, itself clopen,
the universal quantification then gives an intersection of clopen
sets, which is thus closed. 

However, Slobodskoi's Theorem implies, in particular, that the function
$g$ defined above cannot be computable (or even majored by a computable
function). And thus from the point of view of effective mathematics,
the $\forall\exists$ statement that defines the set of LEF groups
cannot be reduced to a single $\forall$ statement. The fact that
$g$ cannot be majored by a computable function was remarked by Bradford
in \cite{Bradford2022}. 

We want to confront Theorem \ref{thm:LEF groups not computably closed }
with the following problems: 
\begin{problem}
[Gromov]Is there a non-LEF hyperbolic group? 
\begin{problem}
[Zelmanov]Are infinite free Burnside groups LEF?
\end{problem}

\end{problem}

The computably closed properties are exactly those which it is algorithmically
possible to refute. Theorem \ref{thm:LEF groups not computably closed }
can thus be seen as explaining why these two problems are so difficult.

Note finally that Theorem \ref{thm:LEF groups not computably closed }
is a Type 2 result: it says that the set of LEF groups is not $\rho_{WP}$-co-semi-decidable.
We do not know how to obtain the corresponding Type 1 or Banach-Mazur
results: 

\begin{problemalpha}	The set of LEF groups with solvable word problem
is not $\nu_{WP}$-co-semi-decidable nor Banach-Mazur $\nu_{WP}$-co-semi-decidable.
\end{problemalpha}	

Other candidates for properties that would be open/closed but not
computably so are given in the following conjecture: 

\begin{problemalpha}	The sets of finitely presented simple groups
and of isolated groups are open but not computably open, and also
not $\nu_{WP}$-semi-decidable or Banach-Mazur $\nu_{WP}$-semi-decidable. 

The closure of the set of hyperbolic groups, the closure of the set
of finite nilpotent groups are closed but not computably closed, and
also not $\nu_{WP}$-co-semi-decidable nor Banach-Mazur $\nu_{WP}$-co-semi-decidable.
\end{problemalpha}	

The set of sofic groups could also be a closed but not computably
closed subset of $\mathcal{G}$. 

\subsection{Contents of this paper}

In Section \ref{sec:The-topological-space}, we describe the space
of marked groups, and give a few results related to computability:
impossibility of deciding whether or not a basic clopen set is empty,
impossibility of deciding whether a basic clopen set contains a group
with solvable word problem. 

In Section \ref{sec:Vocabulary-about-numberings}, we fix the vocabulary
about numberings that is required to present concepts from Type 1
and Banach-Mazur computable analysis. 

In Section \ref{sec:The-word-problem-Numbering-Type}, we give several
equivalent definitions that formalize the concept that a group is
described by a word problem algorithm, or by its labeled Cayley graph. 

In Section \ref{sec:Main-results-in}, we describe some results of
Markovian computable analysis, giving proofs for some of them and
references for the others. We prove Markov's Lemma and Mazur's Continuity
Theorem, and quote the Kreisel-Lacombe-Schoenfield-Ceitin Theorem,
and Moschovakis' extension of this theorem. We also include examples
of computable but discontinuous functions built using Kolmogorov complexity. 

In Section \ref{sec:Limits-of-applicability}, we start investigating
the space of marked groups as a recursive metric space. We prove that
none of the continuity results given in the previous section can be
applied to the space of marked groups. 

In Section \ref{sec:First-results-for}, we give a wide range of examples
of group properties whose classical and effective Borel classifications
coincide. 

In Section \ref{sec:Two-candidates-for failure}, we prove that the
sets of LEF groups is closed but not computably closed. We propose
several other properties as possible failures of the correspondence
between the Borel hierarchy and its effective counterparts. 

In Section \ref{sec:Witnessing-results-in FP groups}, we prove Theorem
\ref{thm: Theorem A Banach Mazur Iff } and give examples of its applications.

\section{\label{sec:The-topological-space}The topological space of marked
groups}

\subsection{Definitions}

Let $k$ be natural number. A\emph{ $k$-marked group} is a finitely
generated group $G$ together with a $k$-tuple $S=(s_{1},...,s_{k})$
of elements of $G$ that generate it. We call $S$ a \emph{generating
family}. Note that repetitions are allowed in $S$, the order of the
elements matters, and $S$ could contain the identity element of $G$.
A \emph{morphism of marked groups} between $k$-marked groups $(G,(s_{1},...,s_{k}))$
and $(H,(t_{1},...,t_{k}))$ is a group morphism $\varphi$ between
$G$ and $H$ that additionally satisfies $\varphi(s_{i})=t_{i}$.
It is an \emph{isomorphism of marked groups} if $\varphi$ is a group
isomorphism. Marked groups are considered up to isomorphism. We call
a group an \emph{abstract group }when we want to emphasize the fact
that it is not a marked group. 

It is in fact convenient, when studying $k$-marked groups, to fix
a free group $\mathbb{F}_{k}$ of rank $k$, together with a basis
$S$ for $\mathbb{F}_{k}$. A $k$-marking of a group $G$ can then
be seen as an epimorphism $\varphi:\mathbb{F}_{k}\rightarrow G$,
the image of $S$ by $\varphi$ defines a marking with respect to
the previous definition. Two $k$-marked groups are then isomorphic
if they are defined by morphisms with identical kernels: the isomorphism
classes of $k$-marked groups are classified by the normal subgroups
of a rank $k$ free group. The set $S$ can be thought of as a set
of generating symbols, and we often consider that all groups are generated
by those letters. 

Remark that a word problem algorithm for a group $G$ is thus a description
of a \emph{marking} of $G$, and similarly, a presentation of a group
defines a marked group. 

We note $\mathcal{G}_{k}$ the set of isomorphism classes of $k$-marked
groups, and $\mathcal{G}$ the disjoint union of the $\mathcal{G}_{k}$. 

Note that some authors consider that each set $\mathcal{G}_{k}$ is
embedded in the set $\mathcal{G}_{k+1}$, identifying a marking $(G,(g_{1},...,g_{k}))$
with the same marking where the identity $e_{G}$ of $G$ is added
as a last generator: the $k$-marking $(G,(g_{1},...,g_{k}))$ is
identified with the $k+1$-marking $(G,(g_{1},...,g_{k},e_{G}))$.
We do not adhere to this convention, for reasons that appear clearly
in \cite[Proposition 59]{Rauzy21}: adding generators that define
the identity to a marking can change whether or not a marked group
is recognizable from finite presentations. It is thus detrimental
in the study of decision problems for groups to identify a marking
to the markings obtained by adding trivial generators. 

For an abstract group $G$, we denote $\left[G\right]_{k}$ the set
of all its markings in $\mathcal{G}_{k}$, and $\left[G\right]$ the
set of all its markings in $\mathcal{G}$ (as in \cite{Champetier2005}).
If $(G,S)$ is a marking of $G$, we also define $\left[(G,S)\right]_{k}$
and $\left[(G,S)\right]$, those are identical to $\left[G\right]_{k}$
and $\left[G\right]$ respectively. 

\subsection{Topology on $\mathcal{G}$}

We define a topology on $\mathcal{G}$ by equipping each separate
space $\mathcal{G}_{k}$ with a topology, the topology we then consider
on $\mathcal{G}$ is the disjoint union topology of the $\mathcal{G}_{k}$. 

For each $k$, consider a finite set $\{s_{1},...,s_{k}\}$, choose
arbitrarily an order on the set $\{s_{1},...,s_{k}\}\cup\{s_{1}^{-1},...,s_{k}^{-1}\}$,
and enumerate by length and then lexicographically the elements of
the free group $\mathbb{F}_{k}$ over $S$ (i.e. following the \emph{shortlex
order}). 

Denote by $\theta_{k}(n)$ the $n$th element obtained in this enumeration,
$\theta_{k}$ is thus a bijection between $\mathbb{N}$ and $\mathbb{F}_{k}$. 

To a normal subgroup $N$ of $\mathbb{F}_{k}$ we can associate its
characteristic function $\chi_{N}:\mathbb{F}_{k}\rightarrow\{0,1\}$,
and composing it with the bijection $\theta_{k}$, we obtain an element
of the Cantor space $\mathcal{C}=\{0,1\}^{\mathbb{N}}$. This defines
an embedding:
\[
\Phi_{k}:\begin{cases}
\mathcal{G}_{k} & \longrightarrow\,\,\,\,\left\{ 0,1\right\} ^{\mathbb{N}}\\
N\vartriangleleft\mathbb{F}_{k} & \longmapsto\,\,\,\,\chi_{N}\circ\theta_{k}
\end{cases}
\]
of the space of $k$-marked groups into the Cantor space. We call
the image $\Phi_{k}((G,S))$ of a $k$-marked group $(G,S)$ the \emph{binary
expansion} of $(G,S)$. 

With the Cantor set being equipped with its usual product topology,
the topology we will study on $\mathcal{G}_{k}$ is precisely the
topology induced by this embedding. 

It is easy to see that $\Phi_{k}(\mathcal{G}_{k})$ is a closed subset
of $\left\{ 0,1\right\} ^{\mathbb{N}}$ of empty interior. It is thus
compact. 

The product topology on $\left\{ 0,1\right\} ^{\mathbb{N}}$ admits
a basis which consists of clopen sets: given any finite set $A\subseteq\mathbb{N}$
and any function $f:A\rightarrow\{0,1\}$, define the set $\Omega_{f}$
by: $(u_{n})_{n\in\mathbb{N}}\in\Omega_{f}\Longleftrightarrow\forall n\in A;\,u_{n}=f(n)$.
Sets of the form $\Omega_{f}$ are clopen and form a basis for the
topology of $\left\{ 0,1\right\} ^{\mathbb{N}}$. 

Sets of the form $\mathcal{G}_{k}\cap\Omega_{f}$ thus define a basis
for the topology of $\mathcal{G}_{k}$. 

The set $\mathcal{G}_{k}\cap\Omega_{f}$ is defined as a set of marked
groups that must satisfy some number of imposed relations, while on
the contrary a fixed set of elements must be different from the identity. 

We fix the following notation. For $m$ and $m'$ natural numbers,
and elements $r_{1},...,r_{m};\,s_{1},...,s_{m'}$ of $\mathbb{F}_{k}$,
we note $\Omega_{r_{1},...,r_{m};s_{1},...,s_{m'}}^{k}$ the set of
$k$-marked groups that satisfy the relations $r_{1},...,r_{m}$,
while they do not satisfy $s_{1},...,s_{m'}$. We call $s_{1},...,s_{m'}$
\emph{irrelations. }

The sets $\Omega_{r_{1},...,r_{m};s_{1},...,s_{m'}}^{k}$ are called
the \emph{basic clopen sets. }

The following lemma from \cite{Cornulier2007} is very useful in many
situation. For instance, it implies that if $A$ is a set of marked
groups closed under group isomorphism, then so are its closure and
its interior. 
\begin{lem}
[\cite{Cornulier2007}, Lemma 1]\label{Ycor prop-1} Consider two
markings $(G,S_{1})\in\mathcal{G}_{m_{1}}$ and $(G,S_{2})\in\mathcal{G}_{m_{2}}$
of a same group. Then there are clopen neighborhoods $V_{i}$, $i=1,2$
of $(G,S_{i})$ in $\mathcal{G}_{m_{i}}$ and a homeomorphism $\phi:V_{1}\rightarrow V_{2}$
mapping $(G,S_{1})$ to $(G,S_{2})$ and preserving isomorphism classes,
i.e. such that, for every $(H,T)\in V_{1}$, $\phi((H,T))=(H,T')$
for some $T'$.
\end{lem}

In what follows, we call the set $r_{1},...,r_{m};\,s_{1},...,s_{m'}$
of relations and irrelations \emph{coherent }if $\Omega_{r_{1},...,r_{m};s_{1},...,s_{m'}}^{k}$
is not empty. The Boone-Novikov theorem, which implies that there
exists a finitely presented group with unsolvable word problem, directly
implies the following:
\begin{thm}
[Boone-Novikov reformulated]\label{thm:Boone-Novikov-reformulated}No
algorithm can decide whether or not a given finite set of relations
and irrelations is coherent. More precisely, there is an algorithm
that stops exactly on incoherent sets of relations and irrelations,
but no algorithm can stop exactly on coherent sets of relations and
irrelations. 
\end{thm}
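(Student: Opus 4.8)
The plan is to reduce this statement to the Boone--Novikov theorem in its standard form: there exists a finitely presented group $G_0 = \langle s_1,\dots,s_k \mid R_0 \rangle$ with recursively enumerable but undecidable word problem. First I would address the easy half. A set $r_1,\dots,r_m;\,s_1,\dots,s_{m'}$ of relations and irrelations is \emph{incoherent} precisely when every $k$-marked group satisfying $r_1,\dots,r_m$ also satisfies at least one of $s_1,\dots,s_{m'}$; equivalently, in the group $\langle s_1,\dots,s_k \mid r_1,\dots,r_m\rangle$, at least one of the words $s_1,\dots,s_{m'}$ represents the identity. Since this group is finitely presented, its word problem is recursively enumerable, so one can semidecide ``$s_j = 1$ in $\langle s_1,\dots,s_k\mid r_1,\dots,r_m\rangle$'' for each $j$ by naive enumeration of consequences of the relators; running all $m'$ of these in parallel gives an algorithm that halts exactly on incoherent tuples. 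This is the ``algorithm that stops exactly on incoherent sets'' asserted in the statement.

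For the hard half --- that no algorithm halts exactly on the coherent tuples --- I would argue by contradiction, using such an algorithm to solve the word problem in a fixed finitely presented group with unsolvable word problem. Let $G_0 = \langle s_1,\dots,s_k \mid R_0\rangle$ with $R_0 = \{r_1,\dots,r_m\}$ be a Boone--Novikov group. Given a word $w$ over $\{s_1,\dots,s_k\}^{\pm1}$, consider the single-irrelation tuple $(r_1,\dots,r_m;\,w)$. I claim this tuple is coherent if and only if $w \neq 1$ in $G_0$. Indeed, if $w \neq 1$ in $G_0$, then $G_0$ itself is a $k$-marked group witnessing coherence (it satisfies all $r_i$ and fails the irrelation $w$); conversely, if $w = 1$ in $G_0$, then $w$ lies in the normal closure of $R_0$ in $\mathbb{F}_k$, hence $w = 1$ in \emph{every} group satisfying $r_1,\dots,r_m$, so no marked group can have $w$ as an irrelation and the tuple is incoherent. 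Now combine the hypothetical semidecision procedure for coherence with the semidecision procedure for the word problem of $G_0$ (which exists since $G_0$ is finitely presented): given $w$, run in parallel the coherence-tester on $(r_1,\dots,r_m;w)$ and the word-problem enumerator on $w$; by the claim exactly one halts, and which one tells us whether $w \neq 1$ or $w = 1$. This decides the word problem of $G_0$, contradicting Boone--Novikov.

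The main obstacle, such as it is, lies in setting up the correspondence cleanly rather than in any deep difficulty: one must be careful that the relations $r_i$ in the Boone--Novikov presentation live in the fixed free group $\mathbb{F}_k$ with its fixed basis $S = (s_1,\dots,s_k)$ matching the marking convention of Section~\ref{sec:The-topological-space}, and that ``coherence'' of $(r_1,\dots,r_m;w)$ is exactly non-emptiness of the basic clopen set $\Omega^k_{r_1,\dots,r_m;\,w}$, which by definition contains the marked group $\langle S \mid r_1,\dots,r_m\rangle$ iff $w$ is not in the normal closure of $\{r_1,\dots,r_m\}$. One should also note the argument uses only a \emph{single} irrelation, which makes the statement sharper; the general many-irrelation case reduces to it since a tuple with irrelations $s_1,\dots,s_{m'}$ is coherent iff for \emph{some} $j$ the single-irrelation tuple $(r_1,\dots,r_m;s_j)$ is coherent, but this finite disjunction is not obviously reducible to one instance, so it is cleaner to run the word-problem reduction directly as above. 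Finally, the phrasing ``no algorithm can stop exactly on coherent sets'' is exactly the statement that the coherent tuples do not form a recursively enumerable set, which is what the contradiction establishes.
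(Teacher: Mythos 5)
Your proposal is correct and follows essentially the same route as the paper: both halves rest on the observation that $(r_1,\dots,r_m;\,s_1,\dots,s_{m'})$ is coherent iff the finitely presented group $\langle S\mid r_1,\dots,r_m\rangle$ itself lies in $\Omega^k_{r_1,\dots,r_m;\,s_1,\dots,s_{m'}}$, and the hard half reduces coherence-detection for single-irrelation tuples $(R_0;w)$ to the word problem in a Boone--Novikov group. One small slip in your closing remark: a many-irrelation tuple is coherent iff the single-irrelation tuple $(r_1,\dots,r_m;s_j)$ is coherent for \emph{every} $j$ (not for \emph{some} $j$), but since your main argument never uses that reduction, the proof stands.
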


\begin{proof}
It is always possible, given a finite set $r_{1},...,r_{m}$ of relations,
to enumerate their consequences, thus it can be found out if one of
the irrelations $s_{1},...,s_{m'}$ is a consequence of the relations
$r_{1},...,r_{m}$, in which case $r_{1},...,r_{m};\,s_{1},...,s_{m'}$
is incoherent. 

A finite set of relations and irrelations $r_{1},...,r_{m};\,s_{1},...,s_{m'}$
is coherent if and only if $\Omega_{r_{1},...,r_{m};s_{1},...,s_{m'}}^{k}$
is non-empty, but $\Omega_{r_{1},...,r_{m};s_{1},...,s_{m'}}^{k}$
is non-empty if and only if it contains the finitely presented group
$\langle S\,\vert\,r_{1},...,r_{m}\rangle$, i.e. if and only if the
relations $s_{1},...,s_{m'}$ are not satisfied by $\langle S\,\vert\,r_{1},...,r_{m}\rangle$. 

Thus solving the word problem in $\langle S\,\vert\,r_{1},...,r_{m}\rangle$
is equivalent to producing an algorithm that stops on non-empty basic
clopen sets of the form $\Omega_{r_{1},...,r_{m};w}^{k}$, as $w$
varies. By the Boone-Novikov Theorem there exists a group for which
no such algorithm exists. 
\end{proof}
We will call a set $r_{1},...,r_{m};\,s_{1},...,s_{m'}$ of relations
and irrelations \emph{word problem coherent}, or \emph{wp-coherent},
if the basic clopen set $\Omega_{r_{1},...,r_{m};s_{1},...,s_{m'}}^{k}$
contains a group with solvable word problem. The remarkable fact that
the notions of coherence and wp-coherence differ follows from a theorem
of Miller \cite[Corollary 3.9]{Miller1992} which we present in details
in Section \ref{subsec:Two Applications of Miller } (see Theorem
\ref{thm:Miller,}). 

The fact that coherence and wp-coherence differ can be equivalently
formulated as: ``groups with solvable word problem are not dense
in $\mathcal{G}$''. 

This remark calls for the following theorem:
\begin{thm}
[Boone-Rogers reformulated] \label{thm:Boone-Rogers-reformulated-}No
algorithm can stop exactly on wp-coherent sets of relations and irrelations. 
\end{thm}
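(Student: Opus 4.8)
The plan is to derive this from the Boone--Rogers Theorem \cite{Boone1966}, used in the form: there is no algorithm that solves the word problem uniformly over the class of finitely presented groups with solvable word problem. Precisely, no algorithm can, given a finite presentation $\langle s_1,\dots,s_k\mid r_1,\dots,r_m\rangle$ of a group $G$ that has solvable word problem together with a word $w\in\mathbb{F}_k$, decide whether $w=1$ in $G$; no requirement is placed on the behaviour of the algorithm on presentations of groups with unsolvable word problem.

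Suppose, for contradiction, that $\mathcal{A}$ is an algorithm halting exactly on the wp-coherent tuples $(r_1,\dots,r_m;s_1,\dots,s_{m'})$. The key observation is that a group with solvable word problem can serve as its own certificate of wp-coherence: for a finite presentation $P=\langle s_1,\dots,s_k\mid r_1,\dots,r_m\rangle$ and a word $w\in\mathbb{F}_k$, the group $G_P$ that $P$ defines satisfies $r_1,\dots,r_m$ and fails $w$ exactly when $w\ne 1$ in $G_P$, so in that case $G_P\in\Omega_{r_1,\dots,r_m;w}^k$. Hence, when $G_P$ has solvable word problem, $w\ne 1$ in $G_P$ if and only if $(r_1,\dots,r_m;w)$ is wp-coherent (left to right witnessed by $G_P$ itself, the converse being trivial). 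On the other side, $w=1$ in $G_P$ exactly when $(r_1,\dots,r_m;w)$ is incoherent, and by Theorem~\ref{thm:Boone-Novikov-reformulated} there is an algorithm $\mathcal{B}$---enumerate the consequences of $r_1,\dots,r_m$ and wait for $w$ to appear---that halts precisely on the incoherent tuples.

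From this I would assemble a uniform word-problem algorithm as follows: on input $(P,w)$, run $\mathcal{A}(r_1,\dots,r_m;w)$ and $\mathcal{B}(r_1,\dots,r_m;w)$ in parallel, and output ``$w\ne 1$'' if $\mathcal{A}$ halts, ``$w=1$'' if $\mathcal{B}$ halts. When $G_P$ has solvable word problem, the dichotomy of the previous paragraph guarantees that exactly one of the two threads halts and that the returned answer is correct; this is therefore a uniform solution to the word problem over the finitely presented groups with solvable word problem, contradicting Boone--Rogers.

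The argument is a short reduction, so there is no serious obstacle; the one place that needs care---and where the ``non-uniformity'' clause in Boone--Rogers is genuinely used---is the witness direction: one must check that a group with solvable word problem sitting inside a basic clopen set immediately certifies that set as wp-coherent, which is precisely the extra information (beyond plain coherence, which is undecidable by Boone--Novikov) that an oracle for wp-coherence would hand over. It is also worth noting that $\mathcal{A}$ never halts on an incoherent tuple, so the two parallel threads can never disagree.
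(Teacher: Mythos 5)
Your proof is correct and follows the same route as the paper: the paper's proof also reduces to the Boone--Rogers theorem by observing that a recognizer for wp-coherent tuples, run in parallel with the recognizer for incoherent tuples from Theorem \ref{thm:Boone-Novikov-reformulated}, would yield a uniform word-problem algorithm on finitely presented groups with solvable word problem. You have merely spelled out the dichotomy ($w\ne1$ in $G_P$ iff $(r_1,\dots,r_m;w)$ is wp-coherent, $w=1$ iff it is incoherent) that the paper leaves implicit.
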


\begin{proof}
This follows from the Boone-Rogers theorem \cite{Boone1966} which
states that there is no uniform solution to the word problem on the
set of finitely presented groups with solvable word problem. Indeed,
an effective way of recognizing wp-coherent sets of relations and
irrelations would provide a uniform algorithm for the word problem
on finitely presented groups with solvable word problem, by an argument
similar to that of Theorem \ref{thm:Boone-Novikov-reformulated}. 
\end{proof}
We show in Theorem \ref{thm:Use Miller Wisely -1} that detecting
wp-coherence is strictly more complex than detecting coherence: it
is neither semi-decidable nor co-semi-decidable. 

We finally include a lemma which will be useful in Section \ref{sec:Two-candidates-for failure}:
\begin{lem}
\label{lem:Inclusion-between-basic sets is semi-decidable}Inclusion
between basic clopen sets is semi-decidable. And inclusion between
finite unions of basic clopen sets is semi-decidable. 
\end{lem}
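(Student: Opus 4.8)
The plan is to reduce the inclusion question to the semi-decidability of \emph{emptiness} of a basic clopen set, which is exactly the semi-decidability of incoherence established in Theorem~\ref{thm:Boone-Novikov-reformulated}. The key observation is the trivial identity $A\subseteq B \iff A\setminus B=\emptyset$, combined with the fact that the class of finite unions of basic clopen sets is closed \emph{effectively} under the relevant Boolean operations: the intersection of two basic clopen sets lying in the same $\mathcal{G}_k$ is again basic clopen, since $\Omega_{R;S}^{k}\cap\Omega_{R';S'}^{k}=\Omega_{R\cup R';\,S\cup S'}^{k}$, and the complement of $\Omega_{t_1,\ldots,t_p;u_1,\ldots,u_q}^{k}$ inside $\mathcal{G}_k$ is the finite union $\bigcup_{i=1}^{p}\Omega_{\emptyset;t_i}^{k}\cup\bigcup_{j=1}^{q}\Omega_{u_j;\emptyset}^{k}$ (a $k$-marked group lies outside $\Omega_2$ precisely when it fails one of the relations $t_i$ or satisfies one of the irrelations $u_j$). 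All of these rewritings are produced uniformly from the finite data defining the sets.

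First I would handle the inclusion $\Omega_1\subseteq\Omega_2$ of two basic clopen sets. If $\Omega_1=\Omega_{r_1,\ldots,r_m;s_1,\ldots,s_{m'}}^{k}$ and $\Omega_2=\Omega_{t_1,\ldots,t_p;u_1,\ldots,u_q}^{k'}$ lie in different components, i.e.\ $k\neq k'$, then they are disjoint, so $\Omega_1\subseteq\Omega_2$ holds if and only if $\Omega_1=\emptyset$. If $k=k'$, the identities above give
\[
\Omega_1\setminus\Omega_2 \;=\; \bigcup_{i=1}^{p}\Omega_{r_1,\ldots,r_m;\,s_1,\ldots,s_{m'},\,t_i}^{k}\ \cup\ \bigcup_{j=1}^{q}\Omega_{r_1,\ldots,r_m,\,u_j;\,s_1,\ldots,s_{m'}}^{k},
\]
a finite union of basic clopen sets computed effectively from the input. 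Hence $\Omega_1\subseteq\Omega_2$ if and only if each of these finitely many basic clopen sets is empty, i.e.\ if and only if each of the corresponding finite systems of relations and irrelations is incoherent. Running the algorithm of Theorem~\ref{thm:Boone-Novikov-reformulated} in parallel on each of them, and halting once all these runs have halted, yields a semi-decision procedure for $\Omega_1\subseteq\Omega_2$.

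For finite unions, I would first note that $\bigcup_a A_a\subseteq\bigcup_b B_b$ holds if and only if $A_a\subseteq\bigcup_b B_b$ for every $a$, so it suffices to semi-decide $A\subseteq\bigcup_b B_b$ for a single basic clopen set $A$, say $A\subseteq\mathcal{G}_k$. Those $B_b$ lying in a component $\mathcal{G}_{k'}$ with $k'\neq k$ are disjoint from $A$ and may be discarded; for the remaining ones $\mathcal{G}_k\setminus B_b$ is a finite union of basic clopen sets by the formula above, so, distributing the intersection $A\cap\bigcap_b(\mathcal{G}_k\setminus B_b)$ over these unions and using $\Omega_{R;S}^{k}\cap\Omega_{R';S'}^{k}=\Omega_{R\cup R';\,S\cup S'}^{k}$, the set $A\setminus\bigcup_b B_b$ is again a finite union of basic clopen sets, produced effectively. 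As before, $A\subseteq\bigcup_b B_b$ if and only if each member of this union is empty, which is semi-decidable by Theorem~\ref{thm:Boone-Novikov-reformulated}; since only finitely many indices $a$ are involved, the whole inclusion is semi-decidable.

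There is no serious obstacle here: the only non-formal ingredient is the semi-decidability of emptiness of a basic clopen set, already available via Theorem~\ref{thm:Boone-Novikov-reformulated}, and everything else is routine effective Boolean bookkeeping in the algebra generated by the basic clopen sets. The one point worth stating carefully is the reduction itself, namely that a set-difference of (finite unions of) basic clopen sets is again, effectively, a finite union of basic clopen sets, so that ``$\subseteq$'' turns into finitely many instances of ``$=\emptyset$''.
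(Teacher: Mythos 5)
Your proof is correct and takes essentially the same route as the paper: both reduce the inclusion to the emptiness of a computable finite collection of basic clopen sets, which is semi-decidable by Theorem~\ref{thm:Boone-Novikov-reformulated}. Your explicit Boolean bookkeeping (complement and intersection formulas giving $\Omega_{1}\setminus\Omega_{2}$ as an effective finite union of basic clopen sets) is just a cleaner, more systematic version of the paper's decomposition over the ``total support,'' and you additionally handle the case of sets living in different components $\mathcal{G}_{k}$, which the paper leaves implicit.
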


This means that there is an algorithm that takes as inputs two finite
sets of tuples of relations and irrelations, and stops if and only
if the union of the basic clopen sets defined by the first set is
a subset of the union of the basic clopen sets defined by the second
set. 
\begin{proof}
This lemma is in fact a simple corollary of the fact that emptiness
of basic open sets is semi-decidable. We first give an example. 

Consider two generators $a$ and $b$. To determine whether the basic
open set $\Omega_{(ab)^{2}\ne1}$ is a subset of $\Omega_{b^{2}=1}$,
we write both basic open sets as disjoint unions of basic open sets
with the same elements of the free groups appearing as relations and
irrelations:
\[
\Omega_{(ab)^{2}\ne1}=\Omega_{b^{2}=1;\,(ab)^{2}\ne1}\sqcup\Omega_{b^{2}\ne1,\,(ab)^{2}\ne1},
\]
\[
\Omega_{b^{2}=1}=\Omega_{b^{2}=1;\,(ab)^{2}\ne1}\sqcup\Omega_{b^{2}=1,\,(ab)^{2}=1}.
\]

It is then clear that $\Omega_{(ab)^{2}\ne1}\subseteq\Omega_{b^{2}=1}$
if and only if $\Omega_{(ab)^{2}\ne1,\,b^{2}\ne1}$ is empty. 

In general, given two tuples of basic open sets, define the total
support to be the set of all elements of the free group that appear
either as relations or irrelations in one of these basic open sets.
Decompose all the basic open sets as disjoint union as above. Inclusion
of the union of the first tuple inside the second tuple is then equivalent
to emptiness of a certain finite number of basic sets, which is semi-decidable
by Theorem \ref{thm:Boone-Novikov-reformulated}. 
\end{proof}

\subsection{Different distances }

The topology defined above on the space of marked groups is metrizable.
We describe here two possible distances which generate this topology,
and which we may most of the time use interchangeably throughout this
paper. 

Those distances are defined on each $\mathcal{G}_{k}$ separately,
the distances between groups marked by generating families that have
different cardinalities can be fixed arbitrarily, as long as there
remains a strictly positive lower bound to those distances. For convenience,
we will adopt throughout the convention that the distance between
groups marked by families with different cardinalities is exactly
$2$. 

\subsubsection{Ultrametric distance}

For sequences $(u_{n})_{n\in\mathbb{N}}$ and $(v_{n})_{n\in\mathbb{N}}$
in $\left\{ 0,1\right\} ^{\mathbb{N}}$ that are different, denote
$n_{0}$ the least number for which $u_{n_{0}}\ne v_{n_{0}}$, and
set $d((u_{n})_{n\in\mathbb{N}},(v_{n})_{n\in\mathbb{N}})=2^{-n_{0}}$.
If the sequences $(u_{n})_{n\in\mathbb{N}}$ and $(v_{n})_{n\in\mathbb{N}}$
are equal, set $d((u_{n})_{n\in\mathbb{N}},(v_{n})_{n\in\mathbb{N}})=0$. 

This defines an ultrametric distance on $\left\{ 0,1\right\} ^{\mathbb{N}}$
which generates its topology, and which induces a distance on $\mathcal{G}_{k}$
via the embedding $\mathcal{G}_{k}\hookrightarrow\{0,1\}^{\mathbb{N}}$. 

\subsubsection{Cayley Graph distance}

Yet another way to define a distance on $\mathcal{G}_{k}$ is by using
labeled Cayley graphs. A labeled Cayley graph defines uniquely a marked
group. And a word problem algorithm can be seen as an algorithm that
produces arbitrarily large (finite) portions of the labeled Cayley
graph of a given group, this is explained precisely in Section \ref{sec:The-word-problem-Numbering-Type}.
We can define a new distance $d_{\text{Cay}}$ as follows. 

For two $k$-marked groups $(G,S)$ and $(H,S')$, consider the respective
labeled Cayley graphs, $\Gamma_{G}$ and $\Gamma_{H}$. The natural
bijection between $S$ and $S'$ induces a relabeling of $\Gamma_{G}$,
denoted $\Gamma_{G}'$, replacing edge labels which belong to $S$
by the corresponding elements in $S'$. The balls centered at the
identity in $\Gamma_{G}'$ and $\Gamma_{H}$ agree up to a certain
radius, call $r$ the radius for which the balls of radius $r$ of
$\Gamma_{G}'$ and $\Gamma_{H}$ are identical, while their balls
of radius $r+1$ differ. Then put $d_{\text{Cay}}((G,S),(H,S))=2^{-r}$.
If $\Gamma_{G}'$ and $\Gamma_{H}$ are identical, $r$ is infinite,
and we put $d_{Cay}((G,S),(H,S))=0$. It is easy to check that $d_{\text{Cay}}$
is an ultrametric distance which induces the topology of the space
of marked groups. 

The distance $d_{\text{Cay}}$ could be preferred to $d$, as it brings
a visual dimension to proofs, however in most cases it is just less
precise than $d$: the only difference between $d$ and $d_{\text{Cay}}$
is that, in the computation of $d_{\text{Cay}}$, the relations are
considered ``in packs'', corresponding to their length as elements
of the free groups, while they are considered each one by one when
using $d$. The choice of an order of the free group is precisely
what allows to give more or less importance to relations of the same
length.

\section{\label{sec:Vocabulary-about-numberings}Vocabulary about numberings }

\subsection{Numberings, subnumberings and numbering types }

\subsubsection{Paring functions }

Throughout, we denote by $(n,m)\mapsto\langle n,m\rangle$ Cantor's
computable bijection between $\mathbb{N}^{2}$ and $\mathbb{N}$.
We denote by $\text{fst}$ and $\text{snd}$ functions that define
its inverse: for all $n$ and $m$ natural numbers, we have: $\text{fst}(\langle n,m\rangle)=n$
and $\text{snd}(\langle n,m\rangle)=m$. We extend this bijection
to tuples by the formula $\langle n_{1},...,n_{m}\rangle=\langle n_{1},\langle n_{2},\langle...,n_{m}\rangle...\rangle$. 

\subsubsection{First definitions }

We will now introduce numbered spaces and numbering types, which will
be useful throughout this paper. For more details, see the chapter
on numberings in Weihrauch's book \cite{Weihrauch1987}. The expressions
``numbering type'' and ``subnumbering'' are ours. 
\begin{defn}
Let $X$ be a set. A \emph{numbering} of $X$ is a surjective function
$\nu$ that maps a subset $A$ of $\mathbb{N}$ onto $X$. A \emph{subnumbering}
of $X$ is a numbering of a subset of $X$. We denote this by: $\nu:\,\subseteq\mathbb{N}\rightarrow X$.
\end{defn}

The pair $(X,\nu)$ is a \emph{subnumbered set}. The domain of $\nu$
is a subset of $\mathbb{N}$ denoted by $\text{dom}(\nu)$. The set
of all subnumberings of $X$ is denoted $\mathcal{N}_{X}$. 

The image $\nu(\text{dom}(\nu))$ of $\nu$ is called the set of $\nu$-computable
points of $X$, and denoted $X_{\nu}$. Given a point $x$ in $X$,
an integer $n$ such that $\nu(n)=x$ is called a \emph{$\nu$-name}
of $x$. 
\begin{defn}
\label{def:MARKOV COMP-1}Let $(X,\nu)$ and $(Y,\mu)$ be subnumbered
spaces. A function $f:X\rightarrow Y$ is called $(\nu,\mu)$-computable
if there exists a partial recursive function $F:\,\subseteq\mathbb{N}\rightarrow\mathbb{N}$
such that for all $n$ in the domain of $\nu$, $f\circ\nu(n)=\mu\circ F(n)$.
That is to say, there exists $F$ which renders the following diagram
commutative:

\begin{center}
\begin{tikzcd}
X \arrow[r, "f"]& Y \\ \mathbb{N}\arrow[u, "\nu"]\arrow[r, "F"]& \mathbb{N}\arrow[u, "\mu"]\end{tikzcd}
\end{center}
\end{defn}

Whether a function $f$ between subnumbered spaces $(X,\nu)$ and
$(Y,\mu)$ is computable only depends on its behavior on the set of
$\nu$-computable points of $X$. The following is a simple consequence
of the fact that the composition of computable functions (defined
on $\mathbb{N}$) is computable:
\begin{lem}
If $(X,\nu)$, $(Y,\mu)$ and $(Z,\tau)$ are subnumbered sets, the
composition of a $(\nu,\mu)$-computable function with a $(\mu,\tau)$-computable
function is $(\nu,\tau)$-computable.
\end{lem}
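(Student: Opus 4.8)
The plan is simply to compose the two partial recursive functions that witness the separate computabilities and to check that the composite witnesses $(\nu,\tau)$-computability. Concretely, suppose $f\colon X\to Y$ is $(\nu,\mu)$-computable, witnessed by a partial recursive $F\colon\subseteq\mathbb{N}\to\mathbb{N}$ with $f\circ\nu(n)=\mu\circ F(n)$ for every $n\in\text{dom}(\nu)$, and suppose $g\colon Y\to Z$ is $(\mu,\tau)$-computable, witnessed by a partial recursive $G\colon\subseteq\mathbb{N}\to\mathbb{N}$ with $g\circ\mu(m)=\tau\circ G(m)$ for every $m\in\text{dom}(\mu)$. The candidate witness for $g\circ f$ will be $G\circ F$, which is partial recursive because the composition of partial recursive functions on $\mathbb{N}$ is partial recursive.

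The only point that requires a moment of care is that $F$ maps $\text{dom}(\nu)$ into $\text{dom}(\mu)$, so that the defining property of $G$ is applicable along the way. This is automatic: for $n\in\text{dom}(\nu)$ the value $\nu(n)$ is defined and $f$ is a total map $X\to Y$, so the left-hand side $f(\nu(n))$ of the equation $f\circ\nu(n)=\mu\circ F(n)$ is defined; hence the right-hand side $\mu(F(n))$ is defined, which forces $F(n)$ to be defined and to lie in $\text{dom}(\mu)$.

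With this observation in hand the verification is a one-line diagram chase: for any $n\in\text{dom}(\nu)$,
\[
(g\circ f)\circ\nu(n)=g\bigl(f(\nu(n))\bigr)=g\bigl(\mu(F(n))\bigr)=\tau\bigl(G(F(n))\bigr)=\tau\circ(G\circ F)(n),
\]
where the second equality uses the property of $F$ and the third uses the property of $G$ together with $F(n)\in\text{dom}(\mu)$. Thus $G\circ F$ witnesses that $g\circ f$ is $(\nu,\tau)$-computable. There is no genuine obstacle here: the statement is a direct consequence of the closure of the partial recursive functions under composition, and the proof amounts to concatenating the two commutative squares of Definition \ref{def:MARKOV COMP-1}.
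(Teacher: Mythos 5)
Your proof is correct and is exactly the argument the paper has in mind: the paper states this lemma with only the remark that it follows from closure of partial recursive functions under composition, and your write-up simply spells out that one-liner, including the (correct) observation that $F$ necessarily maps $\text{dom}(\nu)$ into $\text{dom}(\mu)$. Nothing to add.
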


The identity function $\text{id}_{\mathbb{N}}$ on $\mathbb{N}$ defines
its most natural numbering. 
\begin{defn}
If $(X,\nu)$ is a subnumbered set, a\emph{ $\nu$-computable sequence}
is an $(\text{id}_{\mathbb{N}},\nu)$-computable function from $\mathbb{N}$
to $X$. 
\end{defn}

It was explained in the introduction why we need Banach-Mazur computability. 
\begin{defn}
\label{def:Banach-Mazur Comp}Let $(X,\nu)$ and $(Y,\mu)$ be subnumbered
spaces. A function $f:X\rightarrow Y$ is called \emph{Banach-Mazur
$(\nu,\mu)$-computable} if the image of any $\nu$-computable sequence
is a $\mu$-computable sequence. 
\end{defn}

We consider a partial order on the subnumberings of a set $X$:
\begin{defn}
A subnumbering $\nu$ of a space $X$ is \emph{stronger} than a subnumbering
$\mu$ of this same space if the identity on $X$ is $(\nu,\mu)$-computable.
We denote this $\nu\succeq\mu$. Those subnumberings are \emph{equivalent}
if $\nu\succeq\mu$ and $\mu\succeq\nu$ both hold. We denote this
by $\nu\equiv\mu$. 
\end{defn}

The relation $\nu\succeq\mu$ exactly means that there is an algorithm
that, given a $\nu$-name of a point in $X$, produces a $\mu$-name
of it, and thus can be interpreted as: a $\nu$-name of a point $x$
contains more information about it than a $\mu$-name of this same
point. 

The following lemma is a direct consequence of the fact that composition
preserves computability of functions. 
\begin{lem}
The relation $\succeq$ is transitive and reflexive. In particular,
the relation $\equiv$ is an equivalence relation. 
\end{lem}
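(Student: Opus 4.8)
The plan is to reduce both assertions directly to the composition lemma stated just above, which says that the composition of a $(\nu,\mu)$-computable function with a $(\mu,\tau)$-computable function is $(\nu,\tau)$-computable. Since $\nu\succeq\mu$ means by definition that $\text{id}_{X}$ is $(\nu,\mu)$-computable, both reflexivity and transitivity of $\succeq$ are essentially restatements of properties of composition of partial recursive functions, and $\equiv$ inherits the equivalence-relation axioms formally.

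First, for reflexivity I would verify that $\text{id}_{X}$ is $(\nu,\nu)$-computable. The witness is the partial recursive function $F=\text{id}_{\mathbb{N}}$, which is defined on all of $\text{dom}(\nu)$: for every $n\in\text{dom}(\nu)$ one has $\text{id}_{X}\circ\nu(n)=\nu(n)=\nu\circ\text{id}_{\mathbb{N}}(n)$, so the commuting diagram of Definition \ref{def:MARKOV COMP-1} holds. Thus $\nu\succeq\nu$.

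Next, for transitivity, suppose $\nu\succeq\mu$ and $\mu\succeq\tau$, where all three are numberings of the same set $X$. Then $\text{id}_{X}$ is $(\nu,\mu)$-computable and $\text{id}_{X}$ is $(\mu,\tau)$-computable. Applying the composition lemma with $(X,\nu)$, $(X,\mu)$, $(X,\tau)$ in the roles of the three numbered sets, and using $\text{id}_{X}=\text{id}_{X}\circ\text{id}_{X}$, we conclude that $\text{id}_{X}$ is $(\nu,\tau)$-computable, i.e. $\nu\succeq\tau$. This establishes that $\succeq$ is a preorder.

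Finally, the statement about $\equiv$ follows formally from the fact that $\nu\equiv\mu$ is defined as the conjunction ``$\nu\succeq\mu$ and $\mu\succeq\nu$''. Reflexivity of $\equiv$ is immediate from reflexivity of $\succeq$; symmetry is built into the definition, since the defining conjunction is symmetric in $\nu$ and $\mu$; and for transitivity, if $\nu\equiv\mu$ and $\mu\equiv\tau$ then $\nu\succeq\mu\succeq\tau$ and $\tau\succeq\mu\succeq\nu$, so transitivity of $\succeq$ yields $\nu\succeq\tau$ and $\tau\succeq\nu$, hence $\nu\equiv\tau$. No step presents a genuine obstacle; the only point requiring mild care is checking in the reflexivity argument that the witnessing function $\text{id}_{\mathbb{N}}$ is defined on all of $\text{dom}(\nu)$, which is trivially the case.
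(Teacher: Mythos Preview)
Your proof is correct and follows exactly the approach indicated in the paper, which does not give a detailed proof but simply remarks that the lemma ``is a direct consequence of the fact that composition preserves computability of functions.'' Your argument spells out precisely this: reflexivity via the identity on $\mathbb{N}$, transitivity via the composition lemma applied to $\text{id}_{X}=\text{id}_{X}\circ\text{id}_{X}$, and the equivalence-relation properties of $\equiv$ following formally.
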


\begin{defn}
The \emph{subnumbering types} on $X$ are the equivalence classes
of $\equiv$. 
\end{defn}

If $(X,\nu)$ and $(Y,\mu)$ are subnumbered spaces, and if $f:X\rightarrow Y$
is a $(\nu,\mu)$-computable function between $X$ and $Y$, then
$f$ will be computable with respect to any pair of subnumberings
of $X$ and $Y$ which are $\equiv$-equivalent respectively to $\nu$
and $\mu$. Thus $f$ can be considered computable with respect to
the subnumbering types associated to $\nu$ and $\mu$. Subnumbering
types are in fact the objects that we want to be studying, rather
than subnumberings. We denote $\mathcal{NT}_{X}$ the set of subnumbering
types on $X$. Given a subnumbering, we denote $\left[\nu\right]$
its $\equiv$-equivalence class. We usually denote subnumbering types
by capital greek letters, $\Lambda$, $\Delta$, ... 

The relation $\succeq$ defines an order on the set of subnumbering
types, whose description is an important part of the study of the
different subnumberings of a set. 
\begin{defn}
Let $(X,\nu)$ be a subnumbered set. Then the equivalence relation
defined on $\text{dom}(\nu)$ by $n\sim m\iff\nu(n)=\nu(m)$ is called
the \emph{subnumbering equivalence induced} \emph{by} $\nu$ \cite{Ershov1999},
and denoted $\eta_{\nu}$. 

The subnumbering $\nu$ is called \emph{positive} \cite{Maltsev1971}
when equality on $X$ is a $\nu$-semi-decidable relation, i.e. when
there is a recursively enumerable set $L\subseteq\mathbb{N}\times\mathbb{N}$
such that $\eta_{\nu}=L\cap\text{dom}(\nu)\times\text{dom}(\nu)$. 

The subnumbering $\nu$ is called \emph{negative} when equality on
$X$ is a $\nu$-co-semi-decidable relation, i.e. when there is a
co-recursively enumerable set $L\subseteq\mathbb{N}\times\mathbb{N}$
such that $\eta_{\nu}=L\cap\text{dom}(\nu)\times\text{dom}(\nu)$. 

It is called \emph{decidable} when if it is both positive and negative. 
\end{defn}

\begin{prop}
If $\nu\succeq\mu$, and if $\mu$ is any of negative, positive or
decidable, then so is $\nu$. 
\end{prop}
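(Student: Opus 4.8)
The plan is a simple pull-back argument: transport the recursively enumerable (resp.\ co-recursively enumerable) set witnessing positivity (resp.\ negativity) of $\mu$ along a function that realizes the reduction $\nu\succeq\mu$.

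First I would fix, using $\nu\succeq\mu$, a partial recursive function $F:\subseteq\mathbb{N}\rightarrow\mathbb{N}$ with $\mu\circ F(n)=\nu(n)$ for all $n\in\text{dom}(\nu)$; in particular $F(n)$ is then defined and lies in $\text{dom}(\mu)$ for every such $n$, so that
\[
\nu(n)=\nu(m)\iff\mu(F(n))=\mu(F(m))\iff(F(n),F(m))\in\eta_{\mu}\qquad(n,m\in\text{dom}(\nu)).
\]

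For the positive case, let $L$ be an r.e.\ set with $\eta_{\mu}=L\cap(\text{dom}(\mu)\times\text{dom}(\mu))$ and put $L'=\{(n,m):F(n)\text{ and }F(m)\text{ are both defined and }(F(n),F(m))\in L\}$, i.e.\ the preimage of $L$ under $(n,m)\mapsto(F(n),F(m))$. This set is r.e.\ (run $F$ on $n$ and on $m$ in parallel with the enumeration of $L$), and since $F(n),F(m)\in\text{dom}(\mu)$ whenever $n,m\in\text{dom}(\nu)$, the membership $(F(n),F(m))\in L$ is, on $\text{dom}(\nu)\times\text{dom}(\nu)$, equivalent to $(F(n),F(m))\in\eta_{\mu}$; combined with the displayed equivalence this yields $\eta_{\nu}=L'\cap(\text{dom}(\nu)\times\text{dom}(\nu))$, so $\nu$ is positive. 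The negative case is handled the same way, except that one pulls back the \emph{complement}: if $L$ is co-r.e.\ with $\eta_{\mu}=L\cap(\text{dom}(\mu)\times\text{dom}(\mu))$, then $\{(n,m):F(n),F(m)\text{ defined and }(F(n),F(m))\notin L\}$ is the preimage of the r.e.\ set $\mathbb{N}^{2}\setminus L$, hence r.e., and its complement $L'$ is co-r.e.\ and satisfies $\eta_{\nu}=L'\cap(\text{dom}(\nu)\times\text{dom}(\nu))$. The decidable case follows at once, being the conjunction of the previous two.

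The one point that needs genuine care — rather than routine bookkeeping — is the partiality of $F$ in the negative case: the naive preimage of a co-r.e.\ set under a partial recursive function need not be co-r.e., which is precisely why one must pull back $\mathbb{N}^{2}\setminus L$ instead of $L$, and why one restricts attention to $\text{dom}(\nu)\times\text{dom}(\nu)$, where $F$ is guaranteed to halt, when verifying the intersection condition. Everything else is immediate from the definitions and the closure of r.e.\ sets under partial recursive preimages.
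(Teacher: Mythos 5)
Your proof is correct. The paper in fact leaves this proposition to the reader, offering only the conceptual remark that $\nu$-names carry more information than $\mu$-names, so your pull-back argument is exactly the formalization that was intended: transport the witnessing set $L$ for $\eta_{\mu}$ along the recursive reduction $F$ realizing $\nu\succeq\mu$. You also correctly identify and handle the one point where the routine argument could go wrong, namely that the preimage of a co-r.e.\ set under a \emph{partial} recursive function is only a difference of r.e.\ sets; pulling back the complement $\mathbb{N}^{2}\setminus L$ and then complementing, while using that $F$ is total on $\text{dom}(\nu)$ to verify the intersection condition, resolves this cleanly. Nothing is missing.
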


\begin{proof}
As $\nu$-names provide more information than $\mu$-names, if $\mu$-names
allow to (partially) distinguish points, then so do $\nu$-names. 
\end{proof}

\subsubsection{Constructions and examples}

There are several useful constructions that allow one to build subnumberings
of sets using subnumberings of simpler sets. 
\begin{defn}
Given a subnumbered set $(X,\nu)$ and a subset $Y$ of $X$, define
\emph{the} \emph{restriction of $\nu$ to $Y$} to be the subnumbering
$\nu_{\vert Y}$ defined by the following:
\[
\text{dom}(\nu_{\vert Y})=\text{dom}(\nu)\cap\nu^{-1}(Y),
\]
\[
\forall n\in\text{dom}(\nu_{\vert Y}),\,\nu_{\vert Y}(n)=\nu(n).
\]
\end{defn}

We define the product of two subnumberings. 
\begin{defn}
If $(X,\nu)$ and $(Y,\nu)$ are subnumbered sets, \emph{the} \emph{product
of the subnumberings} $\nu$ \emph{and} $\mu$ is the subnumbering
$\nu\times\mu$ of $X\times Y$ defined by the following:
\[
\text{dom}(\nu\times\mu)=\{\langle n,m\rangle\in\mathbb{N},\,n\in\text{dom}(\nu),\,m\in\text{dom}(\mu)\};
\]
\[
\forall n\in\text{dom}(\nu\times\mu),\,\nu\times\mu(\langle n,m\rangle)=(\nu(n),\mu(m)).
\]
\end{defn}

We do not prove the following easy proposition:
\begin{prop}
If $\nu_{1}\equiv\nu_{2}$ and if $\mu_{1}\equiv\mu_{2}$ then $\nu_{1}\times\mu_{1}\equiv\nu_{2}\times\mu_{2}$.
\end{prop}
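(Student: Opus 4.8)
The plan is to unwind the definition of $\equiv$ and then glue the witnessing recursive functions together componentwise via Cantor's pairing function. Since the conclusion $\nu_{1}\times\mu_{1}\equiv\nu_{2}\times\mu_{2}$ splits into the two reductions $\nu_{1}\times\mu_{1}\succeq\nu_{2}\times\mu_{2}$ and $\nu_{2}\times\mu_{2}\succeq\nu_{1}\times\mu_{1}$, and the situation is perfectly symmetric in the subscripts $1$ and $2$, it suffices to prove one of them.

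First I would record what the hypotheses supply. The relation $\nu_{1}\equiv\nu_{2}$ means $\text{id}_{X}$ is $(\nu_{1},\nu_{2})$-computable, hence there is a partial recursive $F:\,\subseteq\mathbb{N}\rightarrow\mathbb{N}$, halting on all of $\text{dom}(\nu_{1})$, with $\nu_{1}(n)=\nu_{2}(F(n))$ for all $n\in\text{dom}(\nu_{1})$; similarly $\mu_{1}\equiv\mu_{2}$ gives a partial recursive $H$, halting on $\text{dom}(\mu_{1})$, with $\mu_{1}(m)=\mu_{2}(H(m))$ for all $m\in\text{dom}(\mu_{1})$.

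Then I would define $\Psi:\,\subseteq\mathbb{N}\rightarrow\mathbb{N}$ by $\Psi(n')=\langle F(\text{fst}(n')),H(\text{snd}(n'))\rangle$, which is partial recursive because $F$, $H$, $\langle\cdot,\cdot\rangle$, $\text{fst}$ and $\text{snd}$ are all computable. For $n'=\langle n,m\rangle\in\text{dom}(\nu_{1}\times\mu_{1})$, that is, $n\in\text{dom}(\nu_{1})$ and $m\in\text{dom}(\mu_{1})$, both $F(n)$ and $H(m)$ converge, and
\[
(\nu_{1}\times\mu_{1})(\langle n,m\rangle)=(\nu_{1}(n),\mu_{1}(m))=(\nu_{2}(F(n)),\mu_{2}(H(m)))=(\nu_{2}\times\mu_{2})(\Psi(\langle n,m\rangle)),
\]
so $\text{id}_{X\times Y}$ is $(\nu_{1}\times\mu_{1},\nu_{2}\times\mu_{2})$-computable. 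The reverse reduction is obtained identically from the recursive functions witnessing $\nu_{2}\succeq\nu_{1}$ and $\mu_{2}\succeq\mu_{1}$, and combining the two yields $\nu_{1}\times\mu_{1}\equiv\nu_{2}\times\mu_{2}$.

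There is no genuine obstacle here; the single point deserving a line of attention is the bookkeeping on domains: one must observe that $\Psi$ is only required to make the diagram of Definition \ref{def:MARKOV COMP-1} commute on $\text{dom}(\nu_{1}\times\mu_{1})$, which by definition is exactly $\{\langle n,m\rangle:\,n\in\text{dom}(\nu_{1}),\,m\in\text{dom}(\mu_{1})\}$---precisely the set on which $F$ and $H$ are known to halt, so that $\Psi$ converges there and the identity above is well posed. Outside this set the behaviour of $\Psi$ is irrelevant. This is what makes the proposition ``easy'', as the paper says.
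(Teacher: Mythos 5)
Your proof is correct and is exactly the routine componentwise argument the paper has in mind when it omits the proof as "easy": pair the two witnessing recursive functions via $\langle\cdot,\cdot\rangle$ and check commutativity on $\text{dom}(\nu_{1}\times\mu_{1})$. The domain bookkeeping you flag is handled properly, so nothing further is needed.
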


Finally, we give the definition of the natural numbering of the set
of computable functions between numbered sets. Denote by $(\varphi_{0},\varphi_{1},\varphi_{2}...)$
a standard Gödel enumeration of all recursive functions \cite{Rogers1987}.
(We do not describe here how to obtain such an enumeration: numberings
only allow to translate any enumeration of the partial recursive functions
to an enumeration of the computable functions between numbered sets.)
\begin{defn}
If $(X,\nu)$ and $(Y,\mu)$ are subnumbered sets, we define a subnumbering
$\mu^{\nu}:\,\subseteq\mathbb{N}\rightarrow Y_{\mu}^{X_{\nu}}$ of
the set of functions that map $\nu$-computable points of $X$ to
$\mu$-computable points of $Y$ as follows: 
\begin{align*}
\text{dom}(\mu^{\nu}) & =\{i\in\mathbb{N}\vert\text{ dom}(\nu)\subseteq\text{dom}(\varphi_{i}),\\
 & \forall n,m\in\text{dom}(\nu),\,\nu(n)=\nu(m)\implies\mu(\varphi_{i}(n))=\mu(\varphi_{i}(m))\},
\end{align*}
\[
\forall i\in\text{dom}(\mu^{\nu}),\forall x\in X_{\nu},\forall k\in\text{dom}(\nu),\,(x=\nu(k))\implies(\mu^{\nu}(i))(x)=\mu(\varphi_{i}(k)).
\]
The image of $\mu^{\nu}$ is exactly the set of $(\nu,\mu)$-computable
functions. The following commutative diagram renders this definition
clearer: 
\end{defn}

\begin{center}
\begin{tikzcd}
X_{\nu} \arrow[r, "\mu^{\nu}(i)"]& Y_{\mu} \\ \mathbb{N}\arrow[u, "\nu"]\arrow[r, "\varphi_{i}"]& \mathbb{N}\arrow[u, "\mu"]\end{tikzcd}
\end{center}

Several examples follow from those constructions: 
\begin{itemize}
\item The Baire space $\mathcal{N}=\mathbb{N}^{\mathbb{N}}$ is naturally
equipped with the subnumbering $\text{id}_{\mathbb{N}}^{\text{id}_{\mathbb{N}}}$,
which we usually denote $c_{\mathcal{N}}$. Denote by $\mathcal{N}_{c}=\mathbb{N}_{c}^{\mathbb{N}}$
the set of computable points of the Baire space. 
\item The Cantor space $\mathcal{C}=\{0,1\}^{\mathbb{N}}$ admits a subnumbering
induced by its natural embedding into $\mathcal{N}$, we denote it
$c_{\mathcal{C}}$. Denote by $\mathcal{C}_{c}=\{0,1\}_{c}^{\mathbb{N}}$
the set of computable points of the Cantor space. 
\end{itemize}

\subsection{Lattice operations on $\mathcal{NT}_{X}$}

We now introduce the lattice structure on $\mathcal{NT}_{X}$. Here,
by lattice, we mean a partially ordered set that admits meet and join
operations: we will thus show that any pair of subnumbering types
in $\mathcal{NT}_{X}$ admits both a greatest lower bound and a least
upper bound for the order $\succeq$. 

The lattice operations of $\mathcal{NT}_{X}$ are the conjunction
and the disjunction. Given two subnumberings $\nu$ and $\mu$ of
a set $X$, we will define new subnumberings $\nu\wedge\mu$ and $\nu\vee\mu$
by saying respectively that a $\nu\wedge\mu$-name for a point $x$
is a $\nu$-name for $x$ together with a $\mu$-name for it, and
that a $\nu\vee\mu$-name for a point $y$ is either a $\nu$-name
for it, or a $\mu$-name for it. Those definitions are explained below. 
\begin{defn}
Let $\nu$ and $\mu$ be subnumberings of $X$. Define a subnumbering
$\nu\vee\mu$ (the \emph{disjunction} of $\nu$ and $\mu$) by setting,
for any natural number $k$, $\nu\vee\mu(2k)=\nu(k)$ and $\nu\vee\mu(2k+1)=\mu(k)$.
The domain of $\nu\vee\mu$ is the set $\{2k,k\in\text{dom}(\nu)\}\cup\{2k+1,k\in\text{dom}(\mu)\}$.
\end{defn}

Thus a $\nu\vee\mu$-name for a point $x$ of $X$ is either a $\nu$-name
for it, or a $\mu$-name for it. 
\begin{prop}
Let $\nu$ and $\mu$ be subnumberings of $X$. Then $\nu\succeq\nu\vee\mu$
and $\mu\succeq\nu\vee\mu$, and for any $\tau$ in $\mathcal{N}_{X}$,
if $\nu\succeq\tau$ and $\mu\succeq\tau$, then $\nu\vee\mu\succeq\tau$. 
\end{prop}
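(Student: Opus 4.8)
The statement to prove is a universal property of the disjunction $\nu\vee\mu$: it is the greatest lower bound of $\nu$ and $\mu$ in the order $\succeq$. Concretely, I must show three things: (i) $\nu\succeq\nu\vee\mu$; (ii) $\mu\succeq\nu\vee\mu$; and (iii) for any numbering $\tau$ of $X$, if $\nu\succeq\tau$ and $\mu\succeq\tau$ then $\nu\vee\mu\succeq\tau$.

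For (i), I would simply exhibit the recursive function witnessing that the identity on $X$ is $(\nu,\nu\vee\mu)$-computable: it is the map $k\mapsto 2k$, which is clearly recursive. Indeed, by definition $\nu\vee\mu(2k)=\nu(k)$, so $\mathrm{id}_X\circ\nu(k)=\nu(k)=(\nu\vee\mu)(2k)$, and the function $k\mapsto 2k$ is defined on all of $\mathrm{dom}(\nu)$ (in fact on all of $\mathbb{N}$). Symmetrically, for (ii), the map $k\mapsto 2k+1$ is recursive, and $\nu\vee\mu(2k+1)=\mu(k)$ gives the required commutativity.

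The only step with any content is (iii). Suppose $\nu\succeq\tau$, witnessed by a partial recursive $F$ with $\tau\circ F(k)=\nu(k)$ for all $k\in\mathrm{dom}(\nu)$, and $\mu\succeq\tau$, witnessed by a partial recursive $G$ with $\tau\circ G(k)=\mu(k)$ for all $k\in\mathrm{dom}(\mu)$. I would define a partial recursive $H:\subseteq\mathbb{N}\rightarrow\mathbb{N}$ by case analysis on the parity of the input: $H(2k)=F(k)$ and $H(2k+1)=G(k)$. This $H$ is partial recursive since one can effectively determine the parity of $n$, extract $k$, and then run either $F$ or $G$; and it is defined on all of $\mathrm{dom}(\nu\vee\mu)=\{2k:k\in\mathrm{dom}(\nu)\}\cup\{2k+1:k\in\mathrm{dom}(\mu)\}$ because $F$ is defined on $\mathrm{dom}(\nu)$ and $G$ on $\mathrm{dom}(\mu)$. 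To check $\tau\circ H=\mathrm{id}_X\circ(\nu\vee\mu)$ on $\mathrm{dom}(\nu\vee\mu)$: for an even input $2k$ with $k\in\mathrm{dom}(\nu)$, $\tau(H(2k))=\tau(F(k))=\nu(k)=(\nu\vee\mu)(2k)$; for an odd input $2k+1$ with $k\in\mathrm{dom}(\mu)$, $\tau(H(2k+1))=\tau(G(k))=\mu(k)=(\nu\vee\mu)(2k+1)$. Hence $\nu\vee\mu\succeq\tau$.

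There is essentially no obstacle here: the proof is a routine unwinding of definitions, and the only mild subtlety is making sure the witnessing function for (iii) is defined on the whole domain of $\nu\vee\mu$ and that the parity decomposition is carried out effectively — both of which are immediate. I would keep the written proof short, spelling out the three witnessing functions $k\mapsto 2k$, $k\mapsto 2k+1$, and $H$ as above, and remarking that effectiveness of the parity test makes $H$ partial recursive.
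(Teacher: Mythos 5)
Your proof is correct: the three witnessing functions $k\mapsto 2k$, $k\mapsto 2k+1$, and the parity-splitting $H$ are exactly what the definitions call for, and you correctly verify that $H$ is defined on all of $\mathrm{dom}(\nu\vee\mu)$. The paper simply leaves this proposition to the reader, and your routine unwinding of the definitions is the intended argument.
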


\begin{proof}
Left to the reader. 
\end{proof}
\begin{prop}
Let $\nu$, $\mu$ and $\tau$ be subnumberings of $X$. 
\begin{itemize}
\item If $\nu\equiv\mu$, then $\nu\vee\tau\equiv\mu\vee\tau$,
\item $\nu\vee\mu\equiv\mu\vee\nu$,
\item $\nu\vee\nu\equiv\nu$
\end{itemize}
\end{prop}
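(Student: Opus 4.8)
The plan is to obtain all three statements from the characterization of $\nu\vee\mu$ proved in the preceding proposition: $\nu\vee\mu$ is a greatest lower bound of the pair $\{\nu,\mu\}$ for the preorder $\succeq$. Since any two greatest lower bounds of the same set of numberings are $\equiv$-equivalent, and since two sets of numberings with the same set of lower bounds have $\equiv$-equivalent greatest lower bounds, all three claims become purely order-theoretic.

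For the second bullet this is immediate: $\{\nu,\mu\}$ and $\{\mu,\nu\}$ are literally the same set, so $\nu\vee\mu$ and $\mu\vee\nu$ are both greatest lower bounds of it, hence $\equiv$-equivalent. For the third bullet, $\{\nu,\nu\}=\{\nu\}$ and $\nu$ is trivially its own greatest lower bound, so $\nu\vee\nu\equiv\nu$; alternatively, the preceding proposition gives $\nu\succeq\nu\vee\nu$ directly (take $\mu=\nu$), while $\nu\vee\nu\succeq\nu$ follows from the universal part applied with $\tau=\nu$, using $\nu\succeq\nu$. For the first bullet, I would note that when $\nu\equiv\mu$ any numbering $\sigma$ of $X$ satisfies $\nu\succeq\sigma$ if and only if $\mu\succeq\sigma$, by transitivity of $\succeq$; hence $\{\nu,\tau\}$ and $\{\mu,\tau\}$ have exactly the same set of lower bounds, and therefore their greatest lower bounds $\nu\vee\tau$ and $\mu\vee\tau$ are $\equiv$-equivalent.

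Should the reader prefer an explicit argument, each reduction is witnessed by an easily described partial recursive name-translator built from the even/odd tagging in the definition of $\vee$: for the first bullet, given partial recursive maps $F$ and $G$ translating $\nu$-names to $\mu$-names and back (which exist since $\nu\equiv\mu$), send $2k\mapsto 2F(k)$ and $2k+1\mapsto 2k+1$ to translate $\nu\vee\tau$-names to $\mu\vee\tau$-names, and symmetrically with $G$ for the reverse direction; for the second bullet, the parity-swap $2k\mapsto 2k+1$, $2k+1\mapsto 2k$ is an involution translating $\nu\vee\mu$-names to $\mu\vee\nu$-names; for the third bullet, $2k\mapsto k$ and $2k+1\mapsto k$ translates $\nu\vee\nu$-names to $\nu$-names, while $k\mapsto 2k$ does the reverse. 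The only point requiring attention is the routine check that each such map is defined on the correct domain and makes the relevant square commute; there is no genuine obstacle here, the content being entirely bookkeeping with the coding of $\vee$.
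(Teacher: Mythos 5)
Your proof is correct; the paper in fact leaves this proposition to the reader, so there is no official argument to compare against. Both of your routes work: the order-theoretic one legitimately reduces everything to the universal property of $\vee$ established in the preceding proposition (greatest lower bounds in a preorder are unique up to $\equiv$, and $\equiv$-equivalent inputs have the same lower bounds), and the explicit name-translators ($2k\mapsto 2F(k)$, $2k+1\mapsto 2k+1$; the parity swap; and $n\mapsto\lfloor n/2\rfloor$ with inverse $k\mapsto 2k$) all respect the domains and make the required diagrams commute.
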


\begin{proof}
Left to the reader.
\end{proof}
As a corollary to this proposition we can define the following: 
\begin{defn}
Let $\Lambda$ and $\Theta$ be subnumbering types of $X$. The subnumbering
type $\Lambda\vee\Theta$ is defined as being the $\equiv$-class
of $\nu\vee\mu$ for any $\nu$ in $\Lambda$ and $\mu$ in $\Theta$. 
\end{defn}

We now define the subnumbering obtained by giving as a name for a
point in $x$ both a $\nu$-name and a $\mu$-name for it, the ``conjunction''
of the subnumberings $\nu$ and $\mu$. Recall that $(n,m)\mapsto\langle n,m\rangle$
denotes a pairing function. The decoding functions are denoted $\text{fst}$
and $\text{snd}$. 
\begin{defn}
Let $\nu$ and $\mu$ be numberings of $X$. Define a subnumbering
$\nu\wedge\mu$ (the \emph{conjunction} of $\nu$ and\emph{ }$\mu$)
by the following:
\[
\text{dom}(\nu\wedge\mu)=\{\langle n,m\rangle\in\mathbb{N},n\in\text{dom}(\nu),m\in\text{dom}(\mu),\nu(n)=\mu(m)\},
\]
\[
\forall\langle n,m\rangle\in\text{dom}(\nu\wedge\mu),\,\nu\wedge\mu(\langle n,m\rangle)=\nu(n).
\]
\end{defn}

As we have already said, a $\nu\wedge\mu$-name for a point $x$ of
$X$ is constituted of both a $\nu$-name and a $\mu$-name for it. 
\begin{prop}
Let $\nu$ and $\mu$ be subnumberings of $X$. Then $\nu\wedge\mu\succeq\nu$
and $\nu\wedge\mu\succeq\mu$, and for any $\tau$ in $\mathcal{N}_{X}$,
if $\tau\succeq\nu$ and $\tau\succeq\mu$, then $\tau\succeq\nu\wedge\mu$. 
\end{prop}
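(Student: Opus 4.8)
The plan is to verify the three assertions directly from the definitions of $\nu\wedge\mu$, of the order $\succeq$, and of $(\cdot,\cdot)$-computability, since this proposition is simply the exact dual of the one established above for the disjunction.

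First I would prove $\nu\wedge\mu\succeq\nu$ by exhibiting a partial recursive function witnessing that the identity on $X$ is $(\nu\wedge\mu,\nu)$-computable. The natural choice is the projection $\text{fst}$: for any $\langle n,m\rangle\in\text{dom}(\nu\wedge\mu)$ one has $\nu\wedge\mu(\langle n,m\rangle)=\nu(n)=\nu(\text{fst}(\langle n,m\rangle))$, so $\text{fst}$ renders the relevant diagram commutative. Symmetrically, $\nu\wedge\mu\succeq\mu$ is witnessed by $\text{snd}$: here one uses the defining constraint $\nu(n)=\mu(m)$ built into $\text{dom}(\nu\wedge\mu)$, so that $\nu\wedge\mu(\langle n,m\rangle)=\nu(n)=\mu(m)=\mu(\text{snd}(\langle n,m\rangle))$.

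For the universal (greatest lower bound) property, I would take $\tau\in\mathcal{N}_X$ with $\tau\succeq\nu$ and $\tau\succeq\mu$, so that there are partial recursive $F$ and $G$ with $\tau(k)=\nu(F(k))$ and $\tau(k)=\mu(G(k))$ for every $k\in\text{dom}(\tau)$. Then I would set $H(k)=\langle F(k),G(k)\rangle$, which is partial recursive (composition of the pairing function with $F$ and $G$). For $k\in\text{dom}(\tau)$ we get $F(k)\in\text{dom}(\nu)$, $G(k)\in\text{dom}(\mu)$, and $\nu(F(k))=\tau(k)=\mu(G(k))$, hence $H(k)\in\text{dom}(\nu\wedge\mu)$ and $\nu\wedge\mu(H(k))=\nu(F(k))=\tau(k)$. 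This shows the identity on $X$ is $(\tau,\nu\wedge\mu)$-computable, i.e. $\tau\succeq\nu\wedge\mu$.

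There is no real obstacle here: every step is a short unwinding of definitions, and the only point requiring a moment's attention is to check at each stage that the arguments fed to $\nu$, $\mu$, or $\nu\wedge\mu$ actually lie in the appropriate domains — in particular that the coherence condition $\nu(F(k))=\mu(G(k))$ is exactly what guarantees $\langle F(k),G(k)\rangle\in\text{dom}(\nu\wedge\mu)$. Together with the earlier proposition on $\vee$, this completes the verification that $(\mathcal{NT}_X,\succeq)$ is a lattice, so after this proof one may legitimately introduce the notation $\Lambda\wedge\Theta$ for numbering types, as is done for $\Lambda_{WP}\wedge\Lambda_{FP}$ in the introduction.
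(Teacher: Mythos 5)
Your proof is correct and follows essentially the same route as the paper: $\text{fst}$ and $\text{snd}$ witness $\nu\wedge\mu\succeq\nu$ and $\nu\wedge\mu\succeq\mu$ via the coherence condition $\nu(n)=\mu(m)$ built into $\text{dom}(\nu\wedge\mu)$, and the pairing $k\mapsto\langle F(k),G(k)\rangle$ witnesses the greatest-lower-bound property. Your explicit attention to domain membership is a welcome touch but does not constitute a different argument.
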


\begin{proof}
We first show that $\nu\wedge\mu\succeq\nu$ and $\nu\wedge\mu\succeq\mu$.
But given $x$ in $X$ and $\langle n,m\rangle$ in $\mathbb{N}$
such that $\nu\wedge\mu(\langle n,m\rangle)=x$, by definition of
$\nu\wedge\mu$ one must have $x=\nu(n)=\mu(m)$, and thus the functions
$\text{fst}$ and $\text{snd}$ which form the inverse of the pairing
function are computable witnesses respectively for $\nu\wedge\mu\succeq\nu$
and $\nu\wedge\mu\succeq\mu$. 

Suppose now that $\tau\in\mathcal{N}_{X}$ is such that $\tau\succeq\nu$
and $\tau\succeq\mu$. This means that there are computable functions
$F:\,\subseteq\mathbb{N}\rightarrow\mathbb{N}$ and $G:\,\subseteq\mathbb{N}\rightarrow\mathbb{N}$
such that $\forall n\in\text{dom}(\tau),\tau(n)=\nu(F(n))$ and $\forall n\in\text{dom}(\tau),\tau(n)=\mu(G(n))$.
Let $n$ be a $\tau$-name for a point $x$ in $X$. Then $\langle F(n),G(n)\rangle$
is a $\nu\wedge\mu$-name for $x$, since $\nu(\text{fst}(\langle F(n),G(n)\rangle))=\nu(F(n))=\tau(n)$
and $\mu(\text{snd}(\langle F(n),G(n)\rangle))=\mu(G(n))=\tau(n)$.
Thus $\tau\succeq\nu\wedge\mu$. 
\end{proof}
The following proposition is straightforward. 
\begin{prop}
Let $\nu$, $\mu$ and $\tau$ be subnumberings of $X$. 
\begin{itemize}
\item If $\nu\equiv\mu$, then $\nu\wedge\tau\equiv\mu\wedge\tau$;
\item $\nu\wedge\mu\equiv\mu\wedge\nu$;
\item $\nu\wedge\nu\equiv\nu$;
\end{itemize}
\end{prop}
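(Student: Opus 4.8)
The plan is to derive all three equivalences from the universal property of the conjunction established in the preceding proposition: for any numberings $\alpha,\beta$ of $X$ one has $\alpha\wedge\beta\succeq\alpha$, $\alpha\wedge\beta\succeq\beta$, and any $\tau\in\mathcal{N}_{X}$ with $\tau\succeq\alpha$ and $\tau\succeq\beta$ satisfies $\tau\succeq\alpha\wedge\beta$. Since these conditions characterize $\alpha\wedge\beta$ up to $\equiv$, and since $\succeq$ is transitive, the three verifications become short.

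For commutativity, $\nu\wedge\mu\equiv\mu\wedge\nu$: applying the universal property of $\nu\wedge\mu$ to $\tau=\mu\wedge\nu$, using $\mu\wedge\nu\succeq\mu$ and $\mu\wedge\nu\succeq\nu$, yields $\mu\wedge\nu\succeq\nu\wedge\mu$; the reverse inequality follows by symmetry. For idempotence, $\nu\wedge\nu\equiv\nu$: on the one hand $\nu\wedge\nu\succeq\nu$ directly; on the other hand $\nu\succeq\nu$ trivially, so the universal property of $\nu\wedge\nu$ applied to $\tau=\nu$ gives $\nu\succeq\nu\wedge\nu$. For the congruence statement, assume $\nu\equiv\mu$, i.e.\ $\nu\succeq\mu$ and $\mu\succeq\nu$; then $\nu\wedge\tau\succeq\nu\succeq\mu$ and $\nu\wedge\tau\succeq\tau$, so the universal property of $\mu\wedge\tau$ gives $\nu\wedge\tau\succeq\mu\wedge\tau$, and exchanging $\nu$ and $\mu$ gives $\mu\wedge\tau\succeq\nu\wedge\tau$.

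Alternatively, and more in the concrete spirit of this section, I would exhibit the recursive witnesses directly. Recall that a $\nu\wedge\mu$-name of a point $x$ is a pair $\langle n,m\rangle$ with $n\in\text{dom}(\nu)$, $m\in\text{dom}(\mu)$ and $\nu(n)=\mu(m)=x$. Then commutativity is witnessed by the computable swap $\langle n,m\rangle\mapsto\langle m,n\rangle$; idempotence by $\text{fst}$ in one direction and $n\mapsto\langle n,n\rangle$ in the other; and if $F$ is a recursive witness of $\nu\succeq\mu$ (so $\nu(n)=\mu(F(n))$ for all $n\in\text{dom}(\nu)$) and $G$ one of $\mu\succeq\nu$, then $\langle n,m\rangle\mapsto\langle F(n),m\rangle$ and $\langle n,m\rangle\mapsto\langle G(n),m\rangle$ witness $\nu\wedge\tau\succeq\mu\wedge\tau$ and $\mu\wedge\tau\succeq\nu\wedge\tau$ respectively.

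There is no genuine obstacle here; the proposition is flagged as straightforward and indeed is. The only point requiring a moment's care is the bookkeeping of domains: one must check that the pair produced by each witness map lands in the domain of the target conjunction numbering, meaning that both coordinates lie in the appropriate domains \emph{and} that the two decoded values coincide. The coincidence condition is automatic, since the input pair already has equal $\nu$- and $\mu$-values and each map above merely transports these along value-preserving operations, so the domain conditions propagate at no cost.
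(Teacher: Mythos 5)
Your proof is correct. The paper itself gives no argument here (the proof is ``Left to the reader''), so there is nothing to compare against; either of your two routes settles the matter, and the second, explicit-witness version (swap, $\text{fst}$ and $n\mapsto\langle n,n\rangle$, and $\langle n,m\rangle\mapsto\langle F(n),m\rangle$) is presumably what the author intended, while your first argument via the universal property from the preceding proposition is a clean lattice-theoretic alternative that mirrors the treatment of $\vee$ earlier in the section. Your closing remark about checking that the witness maps land in the domain of the target conjunction — in particular that the equality condition $\nu(n)=\mu(m)$ is preserved — is exactly the one point worth writing down.
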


\begin{proof}
Left to the reader. 
\end{proof}
This proposition allows us to define the conjunction of subnumbering
types:
\begin{defn}
Let $\Lambda$ and $\Theta$ be subnumbering types of $X$. The numbering
type $\Lambda\wedge\Theta$ is defined as being the $\equiv$-class
of $\nu\wedge\mu$ for any $\nu$ in $\Lambda$ and $\mu$ in $\Theta$. 
\end{defn}

We thus obtain the following result: 
\begin{thm}
\label{Thm Lattice of numbering types }$(\mathcal{NT}_{X},\succeq,\wedge,\vee)$
is a lattice. 
\end{thm}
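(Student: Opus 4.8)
The plan is to verify directly that $(\mathcal{NT}_X, \succeq, \wedge, \vee)$ satisfies the definition of a lattice as a poset in which every pair of elements has a greatest lower bound and a least upper bound. Since the preceding propositions have already done almost all the work, the proof is essentially a bookkeeping exercise that assembles them.

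First I would recall that $\succeq$ is a preorder on $\mathcal{N}_X$ (this was noted earlier: transitive and reflexive), and that $\equiv$ is the induced equivalence relation, so that $\succeq$ descends to a genuine partial order on the quotient $\mathcal{NT}_X$ — antisymmetry is immediate from the definition of $\equiv$. Next I would check that the operations $\wedge$ and $\vee$ on numbering types are well-defined: this is exactly the content of the two propositions stating that $\nu \equiv \mu \implies \nu \vee \tau \equiv \mu \vee \tau$ and $\nu \equiv \mu \implies \nu \wedge \tau \equiv \mu \wedge \tau$, together with the commutativity statements $\nu \vee \mu \equiv \mu \vee \nu$ and $\nu \wedge \mu \equiv \mu \wedge \nu$, so the passage to $\equiv$-classes is legitimate and the definitions $\Lambda \vee \Theta := [\nu \vee \mu]$ and $\Lambda \wedge \Theta := [\nu \wedge \mu]$ do not depend on the choice of representatives.

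Then I would invoke the two universal-property propositions. The proposition on $\vee$ gives $\nu \succeq \nu \vee \mu$ and $\mu \succeq \nu \vee \mu$, and that any common lower bound $\tau$ of $\nu$ and $\mu$ (in the sense $\nu \succeq \tau$, $\mu \succeq \tau$) satisfies $\nu \vee \mu \succeq \tau$; passing to numbering types, this says precisely that $\Lambda \vee \Theta$ is the greatest lower bound of $\Lambda$ and $\Theta$ for $\succeq$. Dually, the proposition on $\wedge$ gives $\nu \wedge \mu \succeq \nu$, $\nu \wedge \mu \succeq \mu$, and that any common upper bound dominates $\nu \wedge \mu$; so $\Lambda \wedge \Theta$ is the least upper bound of $\Lambda$ and $\Theta$. (Note the slightly counterintuitive placement: since $\nu \succeq \mu$ is read as ``$\nu$ carries more information,'' the join $\vee$ is the meet in the information order and $\wedge$ is the join — I would make this orientation explicit so the reader is not confused.) Combining these, $(\mathcal{NT}_X, \succeq)$ is a poset in which every two elements have both a meet and a join, which is the definition of a lattice given in the paragraph preceding the theorem, and the proof is complete.

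I do not expect any genuine obstacle here: every ingredient has already been proved or left to the reader as routine, and the theorem is a summary statement. The only point requiring a little care is to keep the direction of the order straight — which of $\wedge$, $\vee$ realizes the lattice meet and which the join — and to state explicitly that well-definedness on $\equiv$-classes is needed before one can even speak of operations on $\mathcal{NT}_X$. I would write the proof in two or three sentences, citing the relevant propositions by their implicit roles, and there is nothing to grind through.
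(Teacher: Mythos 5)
Your proposal is correct and matches the paper, which offers no separate argument for this theorem but simply assembles the preceding propositions (well-definedness of $\wedge$ and $\vee$ on $\equiv$-classes and their universal properties as least upper bound and greatest lower bound for $\succeq$, respectively). Your explicit remarks on antisymmetry in the quotient and on the orientation of $\wedge$ versus $\vee$ are accurate and only make explicit what the paper leaves implicit.
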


\subsection{Computably enumerable and semi-decidable sets }

Let $(X,\nu)$ be a numbered set. 
\begin{defn}
A subset $Y$ of $X$ is called \emph{$\nu$-computably enumerable
}($\nu$-c.e.) if there is a computably enumerable subset $A$ of
$\mathbb{N}$, such that $A\subseteq\text{dom}(\nu)$ and $Y=\nu(A)$. 
\end{defn}

Remark that a code for the c.e. set $A$ that appears in this definition
constitutes via $\nu$ a description of the $\nu$-c.e. set $Y$.
This allows us to define a subnumbering $\nu_{c.e.}:\,\subseteq\mathbb{N}\rightarrow\mathcal{P}(X)$
of the powerset of $X$. 

Recall that if $(\varphi_{0},\varphi_{1},\varphi_{2},...)$ is a standard
enumeration of the partial computable functions, we obtain an effective
enumeration $(W_{0},W_{1},W_{2},...)$ of computably enumerable subsets
of $\mathbb{N}$ by setting $W_{i}=\text{dom}(\varphi_{i})$. Define
$\nu_{c.e.}$ by the following:

\[
\text{dom}(\nu_{c.e.})=\{i\in\mathbb{N},\,W_{i}\subseteq\text{dom}(\nu)\};
\]
\[
\forall i\in\text{dom}(\nu_{c.e.}),\,\nu_{c.e.}(i)=\nu(W_{i}).
\]

Of course we have the following lemma:
\begin{lem}
If $\nu\equiv\mu$, then $\nu_{c.e.}\equiv\mu_{c.e.}$. 
\end{lem}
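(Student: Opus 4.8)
Since $\equiv$ is symmetric, it suffices to show $\nu_{r.e.}\succeq\mu_{r.e.}$; the reverse direction follows by swapping the roles of $\nu$ and $\mu$. So the plan is to produce a (total) recursive function $h$ such that for every $i\in\text{dom}(\nu_{r.e.})$ one has $h(i)\in\text{dom}(\mu_{r.e.})$ and $\mu_{r.e.}(h(i))=\nu_{r.e.}(i)$, i.e. $\mu(W_{h(i)})=\nu(W_i)$.

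The construction uses the hypothesis $\nu\equiv\mu$ only through one half: let $F:\,\subseteq\mathbb{N}\rightarrow\mathbb{N}$ be a partial recursive witness for the identity on $X$ being $(\nu,\mu)$-computable, so that $F$ is defined on $\text{dom}(\nu)$ and $\mu(F(n))=\nu(n)$ for all $n\in\text{dom}(\nu)$. The idea is simply to push an enumeration of $W_i$ through $F$. First I would record the standard uniformity fact (via dovetailing and the $s$-$m$-$n$ theorem): there is a total recursive function $h$ such that for every index $i$,
\[
W_{h(i)}=F(W_i)=\{F(n):n\in W_i\}.
\]
Concretely, the machine with index $h(i)$ dovetails the computation of $\varphi_i$ on all inputs, and whenever $\varphi_i(n)$ converges it starts computing $F(n)$, outputting its value if and when it converges.

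It then remains to check two things for $i\in\text{dom}(\nu_{r.e.})$, i.e. when $W_i\subseteq\text{dom}(\nu)$. First, $W_{h(i)}=F(W_i)\subseteq\text{dom}(\mu)$, because $F$ maps $\text{dom}(\nu)\supseteq W_i$ into $\text{dom}(\mu)$; hence $h(i)\in\text{dom}(\mu_{r.e.})$. Second, $\mu_{r.e.}(h(i))=\mu(W_{h(i)})=\{\mu(F(n)):n\in W_i\}=\{\nu(n):n\in W_i\}=\nu(W_i)=\nu_{r.e.}(i)$, using $\mu\circ F=\nu$ on $\text{dom}(\nu)$. This gives $\nu_{r.e.}\succeq\mu_{r.e.}$, and by symmetry $\nu_{r.e.}\equiv\mu_{r.e.}$.

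The only nontrivial ingredient is the uniform version of ``the image of an r.e.\ set under a partial recursive function is r.e.'', but this is completely routine computability theory; everything else is unwinding the definitions of $\nu_{r.e.}$ and of $(\nu,\mu)$-computability. So I expect no real obstacle here — the lemma is genuinely easy, as the ``Of course'' preceding it signals.
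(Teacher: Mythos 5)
Your proof is correct and follows exactly the paper's route: the paper's own argument likewise reduces the lemma to the effective/uniform version of ``the image of an r.e.\ set under a computable function is r.e.'' via the $s$-$m$-$n$ theorem, applied to the witness $F$ of $(\nu,\mu)$-computability of the identity. You have simply written out the details the paper leaves implicit.
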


\begin{proof}
This follows from the standard fact that the image of a c.e. subset
of $\mathbb{N}$ by a computable function is again c.e., and that
this statement is effective: for each computable $f$, there is a
computable map $g$ such that for all $i\in\mathbb{N}$, $f(W_{i})=W_{g(i)}$.
This follows directly from the smn theorem. 
\end{proof}
We now define decidable, semi-decidable and co-semi-decidable subsets. 
\begin{defn}
A set $Y$ is \emph{$\nu$-semi-decidable} if there is a computably
enumerable subset $A$ of $\mathbb{N}$ such that $A\cap\text{dom}(\nu)=\nu^{-1}(Y)$.
The set $A$ is \emph{$\nu$-co-semi-decidable} if there is a co-computably
enumerable subset $B$ of $\mathbb{N}$ such that $B\cap\text{dom}(\nu)=\nu^{-1}(Y)$. 

A set is $\nu$\emph{-decidable} if it is both $\nu$-semi-decidable
and $\nu$-co-semi-decidable.
\end{defn}

An equivalent formulation is that a set $Y$ is \emph{$\nu$}-decidable
if there exists a procedure that, given a $\nu$-name of an element
in $X$, decides whether or not it belongs to $Y$. It is \emph{$\nu$}-semi-decidable
if there exists a procedure that stops exactly on the $\nu$-names
of elements of $Y$, and \emph{$\nu$}-co-semi-decidable if there
exists a procedure that stops exactly on the $\nu$-names of elements
that do not belong to $Y$. 

Each of the definitions above allows to define subnumberings of the
set $\mathcal{P}(X_{\nu})$ of all subsets of the $\nu$-computable
points of $X$. We detail those definitions. 

Associated to the subnumbering $\nu$ of $X$, we define three subnumberings
$\nu_{D}:\,\subseteq\mathbb{N}\rightarrow\mathcal{P}(X_{\nu})$, $\nu_{SD}:\,\subseteq\mathbb{N}\rightarrow\mathcal{P}(X_{\nu})$
and $\nu_{co-SD}:\,\subseteq\mathbb{N}\rightarrow\mathcal{P}(X_{\nu})$
associated respectively to $\nu$-decidable, $\nu$-semi-decidable
and $\nu$-co-semi-decidable subsets of $X_{\nu}$.

As before, denote $W_{i}=\text{dom}(\varphi_{i})$. 
\begin{itemize}
\item Define $\nu_{D}$ by 
\begin{align*}
\text{dom}(\nu_{D}) & =\{i\in\mathbb{N}\vert\text{ dom}(\nu)\subseteq W_{i},\varphi_{i}(\text{dom}(\nu))\subseteq\{0,1\},\\
 & \forall n,m\in\text{dom}(\nu),\nu(n)=\nu(m)\implies(\varphi_{i}(n)=\varphi_{i}(m))\};
\end{align*}
\[
\forall i\in\text{dom}(\nu_{D}),\,\nu_{D}(i)=\nu(\varphi_{i}^{-1}(\{1\})).
\]
\item Define $\nu_{SD}$ by 
\[
\text{dom}(\nu_{SD})=\{i\in\mathbb{N}\vert\,\forall n,m\in\text{dom}(\nu),\nu(n)=\nu(m)\implies(n\in W_{i}\iff m\in W_{i})\};
\]
\[
\forall i\in\text{dom}(\nu_{SD}),\,\nu_{SD}(i)=\nu(\text{dom}(\nu)\cap W_{i}).
\]
\item Define $\nu_{co-SD}$ by 
\[
\text{dom}(\nu_{co-SD})=\{i\in\mathbb{N}\vert\,\forall n,m\in\text{dom}(\nu),\nu(n)=\nu(m)\implies(n\notin W_{i}\iff m\notin W_{i})\};
\]
\[
\forall i\in\text{dom}(\nu_{co-SD}),\,\nu_{co-SD}(i)=\nu(\text{dom}(\nu)\setminus W_{i}).
\]
\end{itemize}
We do not prove the following easy lemmas: 
\begin{lem}
If $\nu\equiv\mu$, then $\nu_{D}\equiv\mu_{D}$, $\nu_{SD}\equiv\mu_{SD}$
and $\nu_{co-SD}\equiv\mu_{co-SD}$. 
\begin{lem}
The subnumberings $\nu_{D}$, $\nu_{SD}$ and $\nu_{co-SD}$ satisfy
$\nu_{D}\equiv\nu_{SD}\wedge\nu_{co-SD}$. 
\end{lem}

\end{lem}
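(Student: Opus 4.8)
The plan is to establish the two comparisons $\nu_{D}\succeq\nu_{SD}\wedge\nu_{co-SD}$ and $\nu_{SD}\wedge\nu_{co-SD}\succeq\nu_{D}$ separately, each by producing the witnessing recursive function via the $s$-$m$-$n$ theorem, and then invoke the definition of $\wedge$ on numbering types.

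For $\nu_{D}\succeq\nu_{SD}\wedge\nu_{co-SD}$, I would start from $i\in\text{dom}(\nu_{D})$, so $\varphi_{i}$ is total on $\text{dom}(\nu)$, takes values in $\{0,1\}$ there, is constant on $\eta_{\nu}$-classes, and $\nu_{D}(i)=\nu(\varphi_{i}^{-1}(\{1\}))$. Using $s$-$m$-$n$, compute from $i$ two indices $a$ and $b$ with $\varphi_{a}(n)\!\downarrow\iff\varphi_{i}(n)=1$ and $\varphi_{b}(n)\!\downarrow\iff\varphi_{i}(n)=0$. Since $\varphi_{i}$ is $\{0,1\}$-valued and total on $\text{dom}(\nu)$, one gets $W_{a}\cap\text{dom}(\nu)=\varphi_{i}^{-1}(\{1\})\cap\text{dom}(\nu)=\text{dom}(\nu)\setminus W_{b}$, and since $\varphi_{i}$ is $\eta_{\nu}$-invariant, $W_{a}$ and $W_{b}$ are saturated for $\eta_{\nu}$ inside $\text{dom}(\nu)$. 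Hence $a\in\text{dom}(\nu_{SD})$, $b\in\text{dom}(\nu_{co-SD})$, and $\nu_{SD}(a)=\nu_{co-SD}(b)=\nu_{D}(i)$, so $\langle a,b\rangle\in\text{dom}(\nu_{SD}\wedge\nu_{co-SD})$ is a name for $\nu_{D}(i)$; the map $i\mapsto\langle a,b\rangle$ is the required recursive function.

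For the reverse comparison, the key preliminary observation is purely set-theoretic: if $A\subseteq\text{dom}(\nu)$ is saturated for $\eta_{\nu}$ and $\nu(A)=Y$, then $A=\nu^{-1}(Y)\cap\text{dom}(\nu)$ (inclusion $\subseteq$ is immediate; conversely, if $n\in\text{dom}(\nu)$ with $\nu(n)\in Y=\nu(A)$, choose $m\in A$ with $\nu(m)=\nu(n)$, then $\eta_{\nu}(n,m)$ and saturation force $n\in A$). Now take $\langle i,j\rangle\in\text{dom}(\nu_{SD}\wedge\nu_{co-SD})$ and put $Y=\nu_{SD}(i)=\nu_{co-SD}(j)$. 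Applying the observation to the saturated sets $W_{i}\cap\text{dom}(\nu)$ and $\text{dom}(\nu)\setminus W_{j}$, both of which map onto $Y$ by the definitions of $\text{dom}(\nu_{SD})$ and $\text{dom}(\nu_{co-SD})$, gives $W_{i}\cap\text{dom}(\nu)=\nu^{-1}(Y)\cap\text{dom}(\nu)=\text{dom}(\nu)\setminus W_{j}$. Thus $W_{i}$ and $W_{j}$ are honestly complementary inside $\text{dom}(\nu)$, so for every $n\in\text{dom}(\nu)$ exactly one of $\varphi_{i}(n)$, $\varphi_{j}(n)$ halts. Using $s$-$m$-$n$, compute from $\langle i,j\rangle$ an index $c$ for the function that on input $n$ dovetails the computations of $\varphi_{i}(n)$ and $\varphi_{j}(n)$ and outputs $1$ if the former halts first and $0$ if the latter does. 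Then $\varphi_{c}$ is total and $\{0,1\}$-valued on $\text{dom}(\nu)$, with $\varphi_{c}^{-1}(\{1\})\cap\text{dom}(\nu)=W_{i}\cap\text{dom}(\nu)$ saturated, so $c\in\text{dom}(\nu_{D})$ and $\nu_{D}(c)=\nu(W_{i}\cap\text{dom}(\nu))=Y$; this gives the second witnessing recursive function.

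I expect essentially no obstacle here: the one point worth stating carefully is the saturation observation, which is exactly what upgrades "$W_{i}$ and $W_{j}$ are complementary up to $\nu$" to "$W_{i}$ and $W_{j}$ are complementary on $\text{dom}(\nu)$", and hence makes the dovetailing of $\varphi_{i}$ and $\varphi_{j}$ produce a \emph{total} decision procedure rather than a merely partial one. Everything else (the $s$-$m$-$n$ applications, and the verifications that the constructed indices land in $\text{dom}(\nu_{SD})$, $\text{dom}(\nu_{co-SD})$, $\text{dom}(\nu_{D})$) is routine, and the final equivalence follows by combining the two comparisons.
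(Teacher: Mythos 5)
Your argument for $\nu_{D}\equiv\nu_{SD}\wedge\nu_{co\text{-}SD}$ is correct, and there is nothing to compare it against: the paper explicitly declines to prove these lemmas (``We do not prove the following easy lemmas''). The two $s$-$m$-$n$ constructions do exactly what is needed, and you correctly identify the one non-trivial point, namely the saturation observation that upgrades ``$W_{i}\cap\operatorname{dom}(\nu)$ and $\operatorname{dom}(\nu)\setminus W_{j}$ have the same $\nu$-image'' to ``$W_{i}\cap\operatorname{dom}(\nu)=\operatorname{dom}(\nu)\setminus W_{j}$'', which is what makes the dovetailed procedure total and unambiguous on $\operatorname{dom}(\nu)$. (One cosmetic remark: $\nu_{D}(i)=\nu(\varphi_{i}^{-1}(\{1\}))$ should be read as $\nu(\varphi_{i}^{-1}(\{1\})\cap\operatorname{dom}(\nu))$, which is how you implicitly use it.)

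The one omission is that the statement as printed is a nested pair of lemmas, and you prove only the second. The first claim --- that $\nu\equiv\mu$ implies $\nu_{D}\equiv\mu_{D}$, $\nu_{SD}\equiv\mu_{SD}$ and $\nu_{co\text{-}SD}\equiv\mu_{co\text{-}SD}$ --- is not addressed. It is routine and of the same flavour as what you did: if $F$ is a recursive witness for $\mu\succeq\nu$ (so $\mu(m)=\nu(F(m))$ on $\operatorname{dom}(\mu)$), then from $i\in\operatorname{dom}(\nu_{SD})$ one computes by $s$-$m$-$n$ an index $j$ with $W_{j}=F^{-1}(W_{i})$; the $\eta_{\nu}$-invariance of $W_{i}$ on $\operatorname{dom}(\nu)$ transports to $\eta_{\mu}$-invariance of $W_{j}$ on $\operatorname{dom}(\mu)$, and $\mu_{SD}(j)=\nu_{SD}(i)$. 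The other cases are identical (for $\nu_{D}$ one precomposes $\varphi_{i}$ with $F$). You should state this, if only in a sentence, since it is part of the claim.
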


A $\nu_{SD}$-name of a $\nu$-semi-decidable set is usually simply
called its ``code'', the same goes for c.e., decidable and co-semi-decidable
sets, and the four subnumberings $\nu_{c.e.}$, $\nu_{D}$, $\nu_{SD}$
and $\nu_{co-SD}$ defined here are usually not explicitly called
upon: the sentence ``given a decidable subset of $X$'' is usually
used instead of ``given the $\nu_{D}$-name of a subset of $X$''. 

\section{\label{sec:The-word-problem-Numbering-Type}The word problem subnumbering
type}

We will give four equivalent definitions of the word problem subnumbering
type, which we denote $\Lambda_{WP}$. $\Lambda_{WP}$ is the $\equiv$-equivalence
class of the numbering $\nu_{WP}$ defined in the introduction. It
is in fact the object we want to study: the definition of $\nu_{WP}$
relies on arbitrary choices that are not $\equiv$-invariants. 

We present four definitions to illustrate the fact that the definition
of $\Lambda_{WP}$ is robust. One is the usual definition, one uses
the notion of computable group of Malcev and Rabin, one uses labelled
Cayley graphs, the last one uses the embedding of the space of marked
group in a disjoint union of countably many Cantor spaces. These definitions
and their equivalence are well known, although this fact is seldom
expressed in terms of subnumberings. 

Before defining subnumberings of marked groups, we first include a
paragraph on the numberings of the elements of fixed a marked group. 

\subsection{\label{subsec:Numberings-of-the-elements }Numberings of the elements
of a marked group. }

In a marked group $(G,S)$, it is customary to describe group elements
by words on $S\cup S^{-1}$. This description is in fact canonical,
in a sense that can be made precise using numberings. 

We denote by $\Lambda_{(G,S)}$ the numbering type of $(G,S)$ associated
to the idea that elements are described by words on $S\cup S^{-1}$,
we define it formally in the next definition. Denote by $(p_{n})_{n\in\mathbb{N}}$
the sequence of prime numbers. 
\begin{defn}
If $(G,S)$ is a marked group, and $S=(s_{0},s_{2},...,s_{k-1})$,
we define the numbering $\nu_{(G,S)}$ on $\mathbb{N}$ as follows.
For $i$ between $k$ and $2k-1$, denote by $s_{i}$ the element
$s_{i-k}^{-1}$ of $G$. Given a natural number $n$, decompose it
as a product of primes $n=p_{0}^{\alpha_{0}}...p_{m}^{\alpha_{m}}$.
Then, for each $i$ between $1$ and $m$, denote $\widetilde{\alpha_{i}}$
the remainder in the Euclidian division of $\alpha_{i}$ by $2k$.
We then put:
\[
\nu_{(G,S)}(n)=s_{\widetilde{\alpha_{1}}}s_{\widetilde{\alpha_{2}}}...s_{\widetilde{\alpha_{m}}}\in G.
\]

The numbering type $\Lambda_{(G,S)}$ is the $\equiv$-equivalence
class of $\nu_{(G,S)}$. 
\end{defn}

The arbitrary choices that are made in this definition are unimportant,
as is shown by the following proposition: 
\begin{prop}
\label{prop:The-numbering-type v(G,S) }The numbering type $\Lambda_{(G,S)}$
is the greatest subnumbering type in the lattice $\mathcal{NT}_{G}$
which satisfies the following conditions:
\begin{itemize}
\item All elements of $G$ are $\Lambda_{(G,S)}$-computable; 
\item The group law and the inverse function on $G$ are respectively $(\Lambda_{(G,S)}\times\Lambda_{(G,S)},\Lambda_{(G,S)})$
and $(\Lambda_{(G,S)},\Lambda_{(G,S)})$ computable. 
\end{itemize}
(In particular, any subnumbering type which satisfies these conditions
can be compared to $\Lambda_{(G,S)}$ for the order $\succeq$.)
\end{prop}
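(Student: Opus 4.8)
The plan is to prove the two halves of the statement — that $\Lambda_{(G,S)}$ satisfies the listed conditions, and that it dominates every numbering type that does — and to deduce maximality and comparability from these.

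\textbf{Step 1: $\Lambda_{(G,S)}$ satisfies the conditions.} First I would check that every element of $G$ is $\nu_{(G,S)}$-computable: since $S$ generates $G$, every $g\in G$ can be written as a word $s_{j_1}\cdots s_{j_\ell}$ in the letters $s_0,\dots,s_{2k-1}$, and picking $n=p_1^{j_1}\cdots p_\ell^{j_\ell}$ (noting $p_0$ is ignored, and exponents are read mod $2k$) gives $\nu_{(G,S)}(n)=g$. Next, computability of the group law: given $\nu_{(G,S)}$-names $n$ and $m$ of elements $g$ and $h$, I would describe a recursive function $F$ that, from $\langle n,m\rangle$, extracts the two exponent sequences (mod $2k$), concatenates them, and re-encodes the concatenated sequence as a single natural number via the prime factorization scheme; this is manifestly recursive and its $\nu_{(G,S)}$-value is $gh$. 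For the inverse function, $F$ reverses the exponent sequence of $n$ and replaces each letter index $i$ by $i+k \bmod 2k$ (which swaps $s_i$ with $s_i^{-1}$), again recursively, giving a name for $g^{-1}$. So $\Lambda_{(G,S)}$ lies in the set of numbering types satisfying the two bullet conditions.

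\textbf{Step 2: $\Lambda_{(G,S)}$ is the greatest such type.} Let $\mu$ be any numbering of $G$ satisfying both conditions; I must show $\mu\succeq\nu_{(G,S)}$, i.e.\ there is a recursive $H$ with $\mu(H(n))=\nu_{(G,S)}(n)$ for all $n\in\mathrm{dom}(\nu_{(G,S)})$. Since all elements of $G$ are $\mu$-computable, in particular each generator $s_i$ ($0\le i\le 2k-1$) has some $\mu$-name; fix such names $c_0,\dots,c_{2k-1}$ once and for all (a finite table). Let $M:\subseteq\mathbb{N}\to\mathbb{N}$ be a recursive witness for $(\mu\times\mu,\mu)$-computability of the group law. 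Given $n$, decompose $n$ into its exponent sequence $(\widetilde{\alpha_1},\dots,\widetilde{\alpha_m})$ as in the definition of $\nu_{(G,S)}$; this decomposition is recursive. Then iteratively form $H(n)$ by setting it (say) to a $\mu$-name of the identity — obtainable since the identity is $\mu$-computable, or as $M$ applied to $(c_0,c_k)$ — and then folding in $c_{\widetilde{\alpha_1}},\dots,c_{\widetilde{\alpha_m}}$ one at a time via $M$. Because $M$ is a correct witness for the group law on \emph{all} $\mu$-names (this is the content of $(\mu\times\mu,\mu)$-computability), the resulting $\mu$-name evaluates to $s_{\widetilde{\alpha_1}}\cdots s_{\widetilde{\alpha_m}}=\nu_{(G,S)}(n)$. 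Hence $\mu\succeq\nu_{(G,S)}$.

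\textbf{Step 3: conclusion.} Steps 1 and 2 together show that $\Lambda_{(G,S)}$ is an upper bound in $\mathcal{NT}_G$ for the (nonempty, since it contains $\Lambda_{(G,S)}$ itself) collection of numbering types satisfying the two conditions, and that it belongs to this collection, hence it is the greatest element. The parenthetical remark is immediate: any $\mu$ satisfying the conditions has $\mu\preceq\Lambda_{(G,S)}$ by Step 2, so it is comparable to $\Lambda_{(G,S)}$. The only mildly delicate point — and the one I would be most careful about — is Step 2: one must use that the recursive witness $M$ for the group law is required to be correct on \emph{every} $\mu$-name of \emph{every} pair of elements (not merely on the specific chosen names $c_i$), so that the fold-in computation remains valid no matter which intermediate $\mu$-names it produces; this is exactly what Definition \ref{def:MARKOV COMP-1} guarantees, but it is the spot where an informal argument could go wrong. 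Everything else is routine bookkeeping with prime-power encodings and the $\mathrm{smn}$-flavoured fact that finite iterations of recursive functions are recursive.
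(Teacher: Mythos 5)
Your proof is correct and follows essentially the same route as the paper: fix $\mu$-names of the generators and use the recursive witness for the group law to fold a $\nu_{(G,S)}$-name (a word in the generators) into a $\mu$-name of the same element, which is exactly the reduction the paper constructs (the paper uses the inverse-function witness $I$ on names of $s_0,\dots,s_{k-1}$ where you simply fix names of all $2k$ letters; this is immaterial). The only blemish is a notational slip at the start and end of Step 2, where you write ``$\mu\succeq\nu_{(G,S)}$'' even though your displayed condition $\mu(H(n))=\nu_{(G,S)}(n)$ for $n\in\text{dom}(\nu_{(G,S)})$, your construction, and your Step 3 all correctly establish $\nu_{(G,S)}\succeq\mu$, which is the direction needed for $\Lambda_{(G,S)}$ to be greatest.
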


Note that the first condition of this proposition could be replaced
equivalently by: ``The elements of the generating tuple $S$ are
$\Lambda_{(G,S)}$-computable''. 
\begin{proof}
Suppose that $\mu$ is any numbering of $G$ for which the group law
and the inverse function are computable. 

We show that $\nu_{(G,S)}\succeq\mu$, where $\nu_{(G,S)}$ is the
numbering which was used to define the numbering type $\Lambda_{(G,S)}$.
The generating set of $G$ is denoted $S=(s_{0},s_{2},...,s_{k-1})$.
As $\mu$ is surjective, there are numbers $u_{0}$, ...,$u_{k-1}$
such that $\mu(u_{i})=s_{i}$. 

As the group law is be computable for $\mu$, there is a computable
function $F$ such that $\mu(F(i,j))=\mu(i)\mu(j)\in G$, and a computable
function $I$ that computes the inverse for $\mu$. 

Given an integer $n$, which we decompose as a product of primes,
$n=p_{0}^{\alpha_{0}}...p_{m}^{\alpha_{m}}$, recall that $\nu_{(G,S)}(n)=s_{\widetilde{\alpha_{1}}}s_{\widetilde{\alpha_{2}}}...s_{\widetilde{\alpha_{m}}}$.
Consider the function $H:\mathbb{N}\rightarrow\mathbb{N}$ defined
as follows: map an integer $\alpha$ to its reminder modulo $2k$,
which we denote $\widetilde{\alpha}$, then, if $\widetilde{\alpha}\le k-1$,
map it to $u_{\widetilde{\alpha}}$, otherwise, if $\widetilde{\alpha}\ge k$,
map it to $I(u_{\widetilde{\alpha}-k})$. 

Then $F(H(\alpha_{1}),F(H(\alpha_{2}),...)))$ gives a $\mu$-name
for $\nu_{(G,S)}(n)$, and the procedure which produces this name
from $n$ is clearly computable. 
\end{proof}
This proposition is in fact a simple application of a well known fact
about subnumberings: if $(X,\Delta)$ is a subnumbered set, and if
$\{f_{1},...,f_{n}\}$ are functions defined on cartesian powers of
$X$ to $X$, then there is a greatest subnumbering type $\Lambda$,
that is below $\Delta$ ($\Delta\succeq\Lambda$), and for which all
the functions $f_{i}$ are computable. This was first detailed in
\cite[Section 2.2]{Weihrauch1987}. The numbering type $\Lambda_{(G,S)}$
is obtained following this principle, using for $\Delta$ the equivalence
class of the subnumbering $\nu_{0}$ that describes only $S$: $\nu_{0}$
is defined on $\{0,...,k-1\}$ and it maps $i$ to $s_{i}$. 

Proposition \ref{prop:The-numbering-type v(G,S) } shows that describing
group elements as products of the generators is the description that
gives as much information as can be given on group elements if we
want the group operations to be computable. The previous proposition
thus has an easy corollary:
\begin{cor}
The numbering type $\Lambda_{(G,S)}$ has a decidable equality if
and only if there exists a numbering type $\Delta$ of $G$ which
has decidable equality and which makes the group operations of $G$
computable. 
\end{cor}

\begin{proof}
Suppose that $\Delta$ is as in the hypotheses of the corollary. Then,
by Proposition \ref{prop:The-numbering-type v(G,S) }, we have $\Lambda_{(G,S)}\succeq\Delta$.
Thus if $\Delta$ has a decidable equality, then so has $\Lambda_{(G,S)}$. 
\end{proof}
Another corollary of Proposition \ref{prop:The-numbering-type v(G,S) }
is: 
\begin{cor}
If $S$ and $S'$ are two generating families of $G$, then $\Lambda_{(G,S)}=\Lambda_{(G,S')}$. 
\end{cor}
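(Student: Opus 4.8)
The plan is to deduce this directly from Proposition \ref{prop:The-numbering-type v(G,S) }, with essentially no further work. That proposition characterizes $\Lambda_{(G,S)}$ as the greatest element, in the lattice $\mathcal{NT}_{G}$, among those numbering types $\Lambda$ of $G$ that satisfy: every element of $G$ is $\Lambda$-computable, and the group law and inversion of $G$ are $(\Lambda\times\Lambda,\Lambda)$- and $(\Lambda,\Lambda)$-computable. The key observation is that these defining conditions refer only to the abstract group $G$ and its operations, not to the chosen generating family. Hence the collection of numbering types satisfying them is literally the same whether one starts from $S$ or from $S'$, and Proposition \ref{prop:The-numbering-type v(G,S) } asserts this collection has a greatest element. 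A greatest element of a poset is unique when it exists; since both $\Lambda_{(G,S)}$ and $\Lambda_{(G,S')}$ are that greatest element, they coincide. There is no real obstacle here once Proposition \ref{prop:The-numbering-type v(G,S) } is available.

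For completeness I would also give the direct argument, which bypasses the extremal characterization. It suffices to prove $\nu_{(G,S)}\succeq\nu_{(G,S')}$, the reverse inequality following by symmetry. Write $S=(s_{0},\dots,s_{k-1})$ and $S'=(s'_{0},\dots,s'_{k'-1})$. Since $S'$ generates $G$, each $s_{i}$ equals some word $w_{i}$ in the letters $S'\cup S'^{-1}$; fix one such word for each $i$ once and for all (this is a finite, harmless choice, as $G$ together with $S$ and $S'$ is fixed). Given a $\nu_{(G,S)}$-name $n$ of an element $g\in G$, one decodes $n$ into the prime-exponent data $(\widetilde{\alpha_{1}},\dots,\widetilde{\alpha_{m}})$ that spells $g$ as a product of letters of $S\cup S^{-1}$, replaces each such letter by the corresponding $w_{i}$ (or its formal inverse word), concatenates, and re-encodes the resulting word over $S'\cup S'^{-1}$ as a prime-exponent code in the style of the definition of $\nu_{(G,S')}$. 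Each step is primitive recursive, so the composite is a recursive witness for $\nu_{(G,S)}\succeq\nu_{(G,S')}$. The only point requiring any care is the re-encoding bookkeeping between the two prime-factorization conventions, and this is purely routine. Swapping $S$ and $S'$ yields $\nu_{(G,S')}\succeq\nu_{(G,S)}$, hence $\nu_{(G,S)}\equiv\nu_{(G,S')}$ and therefore $\Lambda_{(G,S)}=\Lambda_{(G,S')}$.
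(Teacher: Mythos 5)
Your first paragraph is exactly the paper's proof: Proposition \ref{prop:The-numbering-type v(G,S) } characterizes $\Lambda_{(G,S)}$ by conditions that never mention $S$, and greatest elements of a poset are unique. The additional direct argument is a correct but unnecessary supplement; the proposal is correct and follows the paper's approach.
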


\begin{proof}
Proposition \ref{prop:The-numbering-type v(G,S) } gives a characterization
of $\Lambda_{(G,S)}$ which is independent of $S$. 
\end{proof}
We can finally define solvability of the word problem: 
\begin{defn}
A marked group $(G,S)$ is said to have \emph{solvable word problem}
if equality is decidable for $\Lambda_{(G,S)}$. 
\end{defn}

In this case, what we call a \emph{word problem algorithm }is the
computable function that witnesses for the fact that $\Lambda_{(G,S)}$
is decidable. 

\subsection{First definition of $\Lambda_{WP}$}

We will now define the subnumbering type associated to word problem
algorithms. Our first definition is based on the numbering $\nu_{(G,S)}$
defined above. 

The numbering $\nu_{(G,S)}$ has a decidable equality if and only
if there is a computable function of two variables $H$ such that
$H(i,j)=0$ if $\nu_{(G,S)}(i)=\nu_{(G,S)}(j)$ and $H(i,j)=1$ otherwise.
In this case, $H$ is said to witness for the fact that $\nu_{(G,S)}$
has decidable equality.

Let $(\varphi_{0},\varphi_{1},...)$ be a standard enumeration of
all partial computable functions. We can consider that those functions
depend on two variables using an encoding of pairs of natural numbers. 

Define as follows a subnumbering $\nu_{WP}$. 

Pose $\nu_{WP}(n)=(G,S)$ if and only if $n$ encodes a pair $(k,m)$,
i.e. $n=\langle k,m\rangle$, $S$ is a family with $k$ elements,
$\nu_{(G,S)}$ is decidable, and $\varphi_{m}$ is a computable function
that witnesses for the fact that $\nu_{(G,S)}$ is decidable. 

To check that this is a correct definition, we must check that the
marked group $(G,S)$ is uniquely defined by any of its $\nu_{WP}$-names.
This is to say: we must check that a word problem algorithm defines
uniquely a marked group. 

But this is straightforward: in the definition above, if $n=\langle k,m\rangle$
codes for two marked groups $(G,S)$ and $(H,S')$, first it must
be that $S$ and $S'$ have the same cardinality $k$, and, secondly,
that $\varphi_{m}$ is a computable function that witnesses for the
decidability of both $\nu_{(G,S)}$ and $\nu_{(H,S')}$, but then
$(G,S)$ and $(H,S')$ satisfy exactly the same relations, and thus
they are isomorphic as marked groups. 

We then define $\Lambda_{WP}$ to be the $\equiv$-equivalence class
of $\nu_{WP}$. 

\subsection{Labelled Cayley graph definition of $\Lambda_{WP}$}

Describing a marked group by a word problem algorithm is equivalent
to describing it by its labelled Cayley graph. And thus, as we investigate
decision problems for groups described by word problem algorithms,
we are also studying ``what can be deduced about a marked group,
given its labeled Cayley graph''. The graph should be suitably encoded
into a finite amount of data. We detail this now. 
\begin{defn}
If $(G,S=(s_{1},...,s_{n}))$ is a marked group, the labelled Cayley
graph associated to it is the graph whose vertexes are elements of
$G$, and whose (directed) edges are defined as follows: there is
an edge with label $s_{i}$ from the vertex $g_{1}\in G$ to the vertex
$g_{2}\in G$ if and only if $g_{1}s_{i}=g_{2}$. 
\end{defn}

We can then use the standard way to encode infinite graphs (computable
graphs) to define a subnumbering of Cayley graphs. This again uses
a standard enumeration $(\varphi_{0},\varphi_{1},\varphi_{2},...)$
of partial computable functions. 

An oriented edge-labelled graph is a quadruple $(V,E,C,c:E\rightarrow C)$,
where: $V$ is a set, the set of vertices, $E$ is the set of oriented
edges, i.e. a subset of $V\times V$, $C$ is a set of colors, which
here we suppose finite, and $c$ is a function which defines the color
of a given edge. 

Such a graph $\Gamma=(V,E,C,c:E\rightarrow C)$ is called computable
if it is isomorphic to a graph $\Gamma_{1}=(V_{1},E_{1},C_{1},\,c_{1}:E_{1}\rightarrow C_{1})$,
which satisfies additionally that: $V_{1}$ is a computable subset
of $\mathbb{N}$, $E_{1}$ is a computable subset of $\mathbb{N}\times\mathbb{N}$,
$C_{1}=\{1,...,k\}$ for some $k\in\mathbb{N}$, and $c_{1}:E_{1}\rightarrow C_{1}$
is a computable function. In this case, $\Gamma_{1}$ is called a
\emph{computable model} of $\Gamma$. 

Notice that each element of the tuple $(V_{1},E_{1},C_{1},c_{1}:E_{1}\rightarrow C_{1})$
is associated to some finite data that can be encoded: the characteristic
function of $V_{1}$, denoted $\chi_{V_{1}}$, which is computable,
the characteristic function of $E_{1}$, denoted $\chi_{E_{1}}$,
the natural number $k$ such that $C_{1}=\{1,...,k\}$, and the code
of the function $c_{1}$. 

We can thus define a subnumbering $\nu_{\Gamma}$ of edge-labelled
graphs by saying that a computable model $\Gamma_{1}=(V_{1},E_{1},C_{1},\,c_{1}:E_{1}\rightarrow C_{1})$
of a graph $\Gamma$ is encoded by a tuple $(i,j,k,l)$, where: $\varphi_{i}=\chi_{V_{1}}$,
$\varphi_{j}=\chi_{E_{1}}$, $C_{1}=\{1,...,k\}$, $\varphi_{l}=c_{1}$. 

One easily checks that the tuple $(i,j,k,l)$ defines a unique graph,
and thus the definition given above is sound. 

We now have a subnumbering $\nu_{\Gamma}$ of edge-labelled graphs,
we can restrict it to the set of Cayley graphs, and, because a labelled
Cayley graph defines uniquely a marked group, we can consider that
this new subnumbering is a subnumbering of the set of marked groups
(we compose the subnumbering of graphs to the function that maps a
labelled Cayley graph to the group it defines). 

Note that, in the computable model of a Cayley graph, we can always
suppose that there is a vertex at $0$, and that it is associated
to the identity element of the group whose graph it is. 

This defines a subnumbering that we denote $\nu_{\text{Cay}}$, which
is associated to the idea ``a marked group is described by algorithms
that describe its Cayley graph''. 

We can now show:
\begin{thm}
\label{thm:Numbering-Cay-Wp}The subnumbering $\nu_{\text{Cay}}$
is $\equiv$-equivalent to $\nu_{WP}$. 
\end{thm}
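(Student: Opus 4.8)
The plan is to show the two numberings are mutually reducible by exhibiting, for each direction, an effective procedure that transforms a name of one kind into a name of the other kind. Since both $\nu_{WP}$ and $\nu_{\text{Cay}}$ are numberings of the same set $\mathcal{G}$, it suffices to show that the identity map on $\mathcal{G}$ is both $(\nu_{WP},\nu_{\text{Cay}})$-computable and $(\nu_{\text{Cay}},\nu_{WP})$-computable.

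First I would treat the direction $\nu_{WP}\succeq\nu_{\text{Cay}}$: given a $\nu_{WP}$-name $n=\langle k,m\rangle$, so that $\varphi_m$ witnesses the decidability of $\nu_{(G,S)}$, I must produce a tuple $(i,j,k',l)$ coding a computable model of the labelled Cayley graph of $(G,S)$. The natural choice of computable model takes as vertex set the set of shortlex-least representatives of group elements, identified with an initial-segment-or-cofinite recursive subset of $\mathbb{N}$ via the enumeration $i_k$: membership of $i_k(p)$ in this set is decidable because, using $\varphi_m$, one can check whether $i_k(p)$ equals any earlier element $i_k(q)$ (this is where solvability of the word problem enters, via $i_k(p)i_k(q)^{-1}$). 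The edge relation and the colouring are then decidable from the same data: $(p,q)$ is an $s_t$-edge iff $i_k(p)s_t$ equals the element named by $q$, again checked with $\varphi_m$. The base vertex $0$ is assigned the identity. All of this is uniform in $n$, so the $s$-$m$-$n$ theorem yields the required total computable transformation of codes.

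For the converse $\nu_{\text{Cay}}\succeq\nu_{WP}$: given a code $(i,j,k,l)$ for a computable model $\Gamma_1$ of the labelled Cayley graph, with base vertex $0$ representing the identity, I must produce $\langle k,m\rangle$ where $\varphi_m$ solves the word problem of the marked group. The point is that a word $w=s_{t_1}\cdots s_{t_r}$ over $S\cup S^{-1}$ can be evaluated by walking from the base vertex $0$ along the edges labelled $t_1,\dots,t_r$ in $\Gamma_1$ (inverse letters are followed by reading edges backwards), using $\varphi_j$ and $\varphi_l$ at each step to locate the next vertex; the word is trivial iff the walk returns to the vertex $0$. Composing this with the bijection $i_k$ between $\mathbb{N}$ and $\mathbb{F}_k$ (and noting that one can recover a word representing $i_k(p)$ from $p$) gives a recursive function deciding the membership problem for the corresponding normal subgroup; again $s$-$m$-$n$ produces its index $m$ uniformly.

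I expect the routine bookkeeping — spelling out the computable model precisely, verifying that the chosen vertex set really is recursive, and checking that the base vertex is correctly handled in both directions — to be the only slightly delicate part; there is no genuine mathematical obstacle, as the equivalence is essentially the standard fact that a decidable group presentation and a computable Cayley graph carry the same information. The one subtlety worth flagging is that the numbering $\nu_{\text{Cay}}$ allows arbitrary computable models (not the canonical shortlex one), so in the $\nu_{\text{Cay}}\succeq\nu_{WP}$ direction the procedure must work for \emph{any} such model, which is why the argument is phrased entirely in terms of graph-walks from the designated base vertex rather than in terms of canonical normal forms.
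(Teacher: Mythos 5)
Your proposal is correct and follows essentially the same route as the paper's proof: edge-walking from the base vertex to decide whether a word closes a loop in one direction, and constructing a computable model from canonical representatives in the other. The only (harmless) deviation is that you take the vertex set to be the recursive set of shortlex-minimal indices under $i_{k}$ rather than re-indexing to an initial segment of $\mathbb{N}$ as the paper does, which in fact sidesteps the small argument the paper needs to decide membership in the vertex set when $G$ is finite.
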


\begin{proof}
Given a $\nu_{\text{Cay}}$-name for a marked group $(G,S)$, i.e.
given a Cayley graph $\Gamma_{1}$ for it, we can solve the word problem
in $(G,S)$ by following edges along a word: given a word $w=a_{1}a_{2}...a_{n}$
on $S\cup S^{-1}$, starting from any vertex $v_{1}$ in $\Gamma_{1}$,
we can find (by an exhaustive search) a sequence of vertices $v_{2}$,
..., $v_{n+1}$ such that $v_{i+1}=v_{i}a_{i}$. We then solve the
word problem by checking whether $v_{1}=v_{n+1}$, i.e. by checking
whether the word $w$ defines a loop in the Cayley graph of $G$. 

Conversely, given a word problem algorithm for $(G,S)$, we can build
a computable model $\Gamma_{1}=(V_{1},E_{1},C_{1},c_{1}:E_{1}\rightarrow C_{1})$
of the Cayley graph of $(G,S)$ as follows: 
\begin{itemize}
\item Consider an enumeration of all words on $S\cup S^{-1}$ following
a given order, say by length and then lexicographically. We can then
delete, using the word problem algorithm of $G$, any element that
is redundant in this list. We obtain a list $w_{0}$, $w_{1}$, $w_{2}$,
... which contains a single word on $S\cup S^{-1}$ for each element
of $G$. 
\item Define a numbering $\mu$ of $G$ by saying that $\mu(i)=w_{i}$.
We put $V_{1}=\text{dom}(\mu)$. 
\item $V_{1}$ is $\mathbb{N}$ if $G$ is infinite, it is $\{0,...,\text{card}(G)-1\}$
otherwise. A computable characteristic function for $V_{1}$ can be
obtained as follows: the list $w_{0}$, $w_{1}$, ... can be enumerated,
and thus, given some number $i$, if it was found that $G$ contains
more than $i+1$ elements, $i$ belongs to $V_{1}$. On the contrary,
while the list $w_{0}$, $w_{1}$, $w_{2}$,~... is enumerated, we
can search for an initial segment of it of length less than $i$,
and that is stable by multiplication by any generator. If such a segment
is found, $G$ must be finite, and we know it has cardinality less
than $i$. In this case, we can conclude that $i\notin V_{1}$. 
\item We define $E_{1}$ and $c_{1}$ by saying that $(i,j)$ is an edge
labelled by $s\in S$ if and only if $w_{i}s=w_{j}$. This can be
effectively checked thanks to the word problem algorithm for $(G,S)$,
and thus we can produce the computable functions that define $E_{1}$
and $c_{1}$. 
\end{itemize}
\vspace{-0,5cm}
\end{proof}

\subsection{Computable groups definition of $\Lambda_{WP}$}

Another point of view on $\Lambda_{WP}$ follows (more or less) the
point of view of Malcev in \cite[Chapter 18]{Maltsev1971} and Rabin
in \cite{Rabin1960}. 
\begin{defn}
A countable group $G$ is \emph{computable} if there are computable
function $P:\mathbb{N}\times\mathbb{N}\rightarrow\mathbb{N}$ and
$I:\mathbb{N}\rightarrow\mathbb{N}$ such that either $(\mathbb{N},P,I,0)$
or $(\{0,..,\text{card}(G)-1\},P,I,0)$ (if $G$ is finite) is a group
that is isomorphic to $(G,\cdot,^{-1},e)$. 
\end{defn}

If $G$ is a finitely generated group, an isomorphism $\Theta:(G,\cdot,^{-1},e)\rightarrow(\mathbb{N},P,I,0)$
can be described by giving the images of a generating family of $G$
in $\mathbb{N}$. 

We define a new subnumbering of marked groups, denoted $\nu_{MR}$,
as follows. 

The description of a marked group $(G,S)$ for $\nu_{MR}$ is an encoded
quintuple $(p,k,m,i,j)$, such that:
\begin{itemize}
\item The function $\varphi_{p}$ has domain $\mathbb{N}$ if $G$ is infinite,
$\{0,...,\text{card}(G)-1\}$ otherwise; 
\item There exists a group isomorphism $\Theta:(G,\cdot,^{-1}e)\rightarrow(\text{dom}(\varphi_{p}),P,I,0)$,
where $P$ and $I$ are computable functions;
\item $k$ gives the cardinality of $S$;
\item $m$ can be decoded as a $k$-tuple of elements of $\mathbb{N}$,
which give the images of the elements of $S$ by $\Theta$;
\item $i$ and $j$ define the computable functions $P$ and $I$, i.e.
$\varphi_{i}=P$ and $\varphi_{j}=I$. 
\end{itemize}
As before, one can easily check that this definition is sound by checking
that there is no ambiguity as to which isomorphism $\Theta$ is encoded
by a number $n$. 

The following theorem is well known (for instance it is contained
in Theorem 7.1 in \cite{Cannonito_1966}, or it is Theorem 4 of \cite{Rabin1960}). 
\begin{thm}
The subnumberings $\nu_{WP}$ and $\nu_{MR}$ are $\equiv$-equivalent. 
\end{thm}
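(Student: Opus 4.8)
As with the previous equivalence theorems, it suffices to establish the two inequalities $\nu_{WP}\succeq\nu_{MR}$ and $\nu_{MR}\succeq\nu_{WP}$ separately, each by exhibiting a partial recursive function that transforms a name of one kind into a name of the other kind for the same marked group. In both directions the transformation is completely uniform in the relevant codes, so there is nothing to check beyond the fact that, \emph{on the domain} of the source numbering, the output is a valid name for the same marked group; validity of the input (e.g.\ that a given quintuple really does encode a computable group isomorphic to $G$) never needs to be decided.

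\textbf{The direction $\nu_{WP}\succeq\nu_{MR}$.} This is essentially the construction already carried out in the proof of Theorem~\ref{thm:Numbering-Cay-Wp}. Starting from a $\nu_{WP}$-name $\langle k,m\rangle$ of $(G,S)$, enumerate all words on $S\cup S^{-1}$ by shortlex order and use the word problem algorithm $\varphi_m$ to delete every word that is equal in $G$ to an earlier one; this produces a uniformly computable list $w_0,w_1,w_2,\dots$ of representatives, one per element of $G$, with recursive index set $V_1$ (the finite/infinite alternative is handled exactly as in that proof, by searching in parallel for a generator-stable initial segment). Define $P(i,j)$ to be the index $\ell$ of the unique representative with $w_\ell =_G w_i w_j$: this index is found by enumerating $w_0,w_1,\dots$ and testing, with $\varphi_m$, which one equals the concatenation $w_iw_j$; the search terminates because $w_iw_j$ names some element of $G$. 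Define $I(i)$ analogously from the word $w_i^{-1}$, and let $t_1,\dots,t_k$ be the indices of the representatives of $s_1,\dots,s_k$. Then $\bigl(\chi_{V_1}\text{'s code},\,k,\,\langle t_1,\dots,t_k\rangle,\,\text{code of }P,\,\text{code of }I\bigr)$ (suitably re-encoded as a single number) is a $\nu_{MR}$-name of $(G,S)$, and the map producing it from $\langle k,m\rangle$ is recursive.

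\textbf{The direction $\nu_{MR}\succeq\nu_{WP}$.} Here we need to turn a $\nu_{MR}$-name, i.e.\ a quintuple giving recursive functions $P=\varphi_i$, $I=\varphi_j$, a code $p$ for (the characteristic function of) the carrier $\mathrm{dom}(\varphi_p)$, and the images $t_1,\dots,t_k\in\mathrm{dom}(\varphi_p)$ of $s_1,\dots,s_k$, into a $\nu_{WP}$-name, i.e.\ into $\langle k,c\rangle$ where $\varphi_c$ witnesses decidability of the numbering $\nu_{(G,S)}$. Recall that $\nu_{(G,S)}(n)$ decodes $n$ into a word on $S\cup S^{-1}$ via prime factorization; so define $\varphi_c$ on input $\langle n,n'\rangle$ by decoding $n$ and $n'$ into words, evaluating each word in the computable group $(\mathrm{dom}(\varphi_p),P,I,0)$ (replace $s_a\mapsto t_a$, $s_a^{-1}\mapsto I(t_a)$, and fold with $P$), and outputting $0$ if the two results are equal and $1$ otherwise. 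This is a (total, on the relevant inputs) recursive function witnessing decidability of $\nu_{(G,S)}$, hence $\langle k,c\rangle$ is a $\nu_{WP}$-name of $(G,S)$; and $c$, hence $\langle k,c\rangle$, is obtained from the quintuple by an application of the s-m-n theorem.

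\textbf{Expected main obstacle.} There is no real obstacle: the only points requiring a little care are the uniform recognition of the finite-versus-infinite alternative for the carrier set (already handled in the proof of Theorem~\ref{thm:Numbering-Cay-Wp}) and the argument that the unbounded searches defining $P$ and $I$ terminate on all legitimate inputs. Both are routine, and one should simply remark that the resulting translations are uniform, so that the theorem is in fact the assertion that a single pair of recursive functions realizes $\nu_{WP}\equiv\nu_{MR}$.
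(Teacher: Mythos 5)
Your proof is correct and follows essentially the same route as the paper, which simply remarks that the argument is the same as for the Cayley-graph numbering: one direction enumerates shortlex representatives and builds the multiplication and inverse tables from the word problem algorithm, and the other evaluates words in the computable model and compares. The only cosmetic mismatch is that $\nu_{MR}$ as defined in the paper presents the carrier as $\mathrm{dom}(\varphi_p)$ rather than via a characteristic function, but converting one encoding into the other is uniformly recursive, so nothing is lost.
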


\begin{proof}
The proof is essentially the same as Theorem \ref{thm:Numbering-Cay-Wp}.
\end{proof}

\subsection{\label{subsec:Numbering induced by Cantor space}Subnumberings of
$\mathcal{G}^{+}$ induced by subnumberings of the Cantor space }

Recall that in Section \ref{sec:The-topological-space} we defined
an embedding $\Phi_{k}$ of the space $\mathcal{G}_{k}$ of $k$-marked
groups into the Cantor space. 

The natural subnumbering $\nu_{\mathcal{C}}$ of the Cantor space
is obtained by seeing it as the set of functions from $\mathbb{N}$
to $\{0,1\}$. . 

We can extend the subnumberings $\nu_{\mathcal{C}}$ to a countable
disjoint union of Cantor spaces. Consider a countable set of Cantor
spaces, denoted $\mathcal{C}_{i}$, $i\in\mathbb{N}$. 

Define a subnumbering $\hat{\nu_{\mathcal{C}}}$ of $\underset{n\in\mathbb{N}}{\bigcup}\mathcal{C}_{i}$
by the following: 
\[
\hat{\nu_{\mathcal{C}}}(\langle i,j\rangle)\in\mathcal{C}_{i},
\]
\[
\hat{\nu_{\mathcal{C}}}(\langle i,j\rangle)=\nu_{\mathcal{C}}(j).
\]

The following proposition proves that this definition gives yet another
way to introduce the subnumbering type $\Lambda_{WP}$. 
\begin{prop}
The restriction of $\hat{\nu_{\mathcal{C}}}$ to the space of marked
groups (seen as a subset of a countable union of Cantor spaces via
the maps $\Phi_{k}$) is equivalent to $\nu_{WP}$.
\end{prop}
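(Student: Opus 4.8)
The plan is to prove the two inequalities $\hat{\nu_{\mathcal{C}}}|_{\mathcal{G}} \succeq \nu_{WP}$ and $\nu_{WP} \succeq \hat{\nu_{\mathcal{C}}}|_{\mathcal{G}}$ separately, using the characterizations of $\Lambda_{WP}$ already established, in particular the fact that $\nu_{WP}$ describes a marked group by a word problem algorithm (equivalently, via Theorem \ref{thm:Numbering-Cay-Wp}, by a computable model of its labelled Cayley graph). Recall that a $\hat{\nu_{\mathcal{C}}}|_{\mathcal{G}}$-name of a $k$-marked group $(G,S)$ is a pair $\langle k, j\rangle$ where $\varphi_j$ is a total $\{0,1\}$-valued function computing the binary expansion $\Phi_k((G,S)) = \chi_N \circ i_k$, i.e. $\varphi_j(n) = 1$ iff $i_k(n)$ lies in the normal subgroup $N \vartriangleleft \mathbb{F}_k$ corresponding to $(G,S)$.

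First I would show $\hat{\nu_{\mathcal{C}}}|_{\mathcal{G}} \succeq \nu_{WP}$: given a name $\langle k, j\rangle$ as above, the function $\varphi_j$ composed with $i_k^{-1}$ is precisely a function that, for every word $w \in \mathbb{F}_k$ (presented via the fixed bijection $i_k$), decides whether $w$ represents the identity in $(G,S)$. This is literally a function solving the word problem in $(G,S)$ in the sense defined in the first-definition subsection, so from $j$ (and $k$) one computes, by the s-m-n theorem, an index $m$ with $\varphi_m$ such a solver, and then $\langle k, m\rangle$ is a $\nu_{WP}$-name. The map $\langle k,j\rangle \mapsto \langle k, m\rangle$ is recursive, giving the desired reduction. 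The only mild subtlety is matching conventions: the bijection $i_k$ used to define $\Phi_k$ in Section \ref{sec:The-topological-space} and the bijection $i_k$ used in the definition of $\nu_{WP}$ should be taken to be the same (or, if not, one inserts the recursive conversion between the two fixed bijections, which is harmless).

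Conversely, for $\nu_{WP} \succeq \hat{\nu_{\mathcal{C}}}|_{\mathcal{G}}$: given a $\nu_{WP}$-name $\langle k, m\rangle$, so that $\varphi_m$ is a total function solving the word problem in some $k$-marked group $(G,S)$ with normal subgroup $N$, one directly obtains the binary expansion by setting $\psi(n) = \varphi_m(i_k^{-1}(\text{word for } i_k(n)))$ — more simply, $\psi = \varphi_m$ already computes $\chi_N \circ i_k$ up to the identification of words with elements of $\mathbb{F}_k$, which is recursive. Again by s-m-n one extracts from $m$ an index $j$ for $\psi$, and $\langle k, j\rangle$ is a $\hat{\nu_{\mathcal{C}}}$-name landing in the image of $\Phi_k$. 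The main obstacle — though a minor one — is bookkeeping: one must verify that the arbitrary encoding choices (Cantor pairing for the disjoint union, the shortlex bijection $i_k$, the standard enumeration $(\varphi_i)$) all convert recursively into one another, so that the reductions are uniform in $k$. Since all these gadgets are fixed computable bijections, this is routine, and in fact the cleanest write-up simply invokes Theorem \ref{thm:Numbering-Cay-Wp} and observes that a computable model of the Cayley graph and a binary expansion are interconvertible by exactly the same bounded-search argument used there, so no genuinely new idea is needed beyond the equivalences already proven.
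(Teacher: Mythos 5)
Your proposal is correct and follows essentially the same route as the paper: both directions reduce to the observation that a $\hat{\nu_{\mathcal{C}}}$-name and a $\nu_{WP}$-name each encode a decision procedure for which elements of $\mathbb{F}_k$ are relations, and the only work is the uniformly computable conversion between the two encodings of free-group elements (shortlex bijection versus the word/prime-power encoding), finished off by the s-m-n theorem. The convention mismatch you flag (triviality decider on $i_k$-indices versus equality decider on $\nu_{(G,S)}$-names) is real but handled at the same level of brevity in the paper itself, so nothing is missing.
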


(The embeddings $\Phi_{k}:\mathcal{G}_{k}\longrightarrow\left\{ 0,1\right\} ^{\mathbb{N}}$
were chosen so that this proposition would hold: so that $\Phi_{k}$
would be $(\nu_{WP},\nu_{\mathcal{C}})$-computable. Those embeddings
were defined thanks to bijections $\theta_{k}:\mathbb{N}\rightarrow\mathbb{F}_{k}$. 
\begin{proof}
We show that $\nu_{WP}$ is equivalent to $\hat{\nu_{\mathcal{C}}}$
on $\mathcal{G}$. 

If $\nu_{WP}(n)=(G,S)$, then $n$ encodes both the cardinality of
$S$ and the code for a function that, given two natural numbers,
decides whether or not they encode the same element in $G$ with respect
to the encoding of the elements of a marked group defined in Section
\ref{subsec:Numberings-of-the-elements }. Denote here $c_{k}:\mathbb{N}\rightarrow\mathbb{F}_{k}$
this encoding applied to the elements of the free groups. 

If $\hat{\nu_{\mathcal{C}}}(n)=(G,S)$, then $n$ encodes the cardinality
$k$ of $S$ and the code for a function that, given a natural number
$m$, indicates whether the elements $\theta_{k}(m)$ of $\mathbb{F}_{k}$
is a relation satisfied by $(G,S)$. 

The result then follows from the easy fact that each function $c_{k}\circ\theta_{k}^{-1}:\mathbb{N}\rightarrow\mathbb{N}$
is a computable surjection which has a computable right inverse, that
this holds uniformly in $k$, and from the standard fact that the
code of the composition of two functions can be computed given the
codes for those functions. 
\end{proof}

\subsection{\label{subsec:Presentation-and-co-presentation RPZ}Numbering and
representation associated to presentations}

We now introduce the presentation representation. For each $k$, we
have fixed a free group $\mathbb{F}_{k}$ and a basis $(x_{0},...,x_{k})$.
We use again the shortlex bijection $\theta_{k}:\mathbb{N}\rightarrow\mathbb{F}_{k}$. 

We then define the representation $\rho_{pres}:\mathbb{N}^{\mathbb{N}}\rightarrow\mathcal{G}$
associated to presentations, via: 
\[
\rho_{pres}(p)=(\mathbb{F}_{p_{0}}/\langle\langle\{\theta_{p_{0}}(p_{i}),i\ge1\rangle\rangle,(x_{0},...,x_{p_{0}})).
\]
In words: the first term in the name of a marked group gives the arity
of its marking, and the rest gives an infinite presentation via the
corresponding bijection $\theta_{k}$. 

We denote by $\nu_{pres}:\,\subseteq\mathbb{N}\rightarrow\mathcal{G}$
the associated numbering:
\[
\nu_{pres}(i)=\rho_{pres}(\varphi_{i})
\]
whenever $\varphi_{i}$ is a total computable function. Groups in
the image of $\nu_{pres}$ are the recursively presented groups. And
$\nu_{pres}$ is the numbering that allows to discuss decision problems
for groups given by recursive presentations. 

This representation will be useful in Section \ref{subsec:Virtually-cyclic-groups;}.
Note also that the final topology of $\rho_{pres}$ is the Scott topology
on the poset $(\mathcal{G},\rightarrow)$ of marked groups. The Scott
topology on $(\mathcal{G}^{k},\rightarrow)$ is connected, this was
used in \cite{Rauzy21} to prove the Rice Theorem for recursively
presented groups.

\section{\label{sec:Main-results-in}Introduction on some results in Markovian
computable analysis }

\subsection{Recursive Polish spaces}

This introduction follows mostly Kushner \cite{Kushner1984}, but
it is hopefully more accessible, since the constructivist setting
adds technical complications. Note that Section \ref{subsec:Markov's-Lemma-and}
follows closely Hertling \cite{Hertling2001}, who studies Banach-Mazur
computable functions. 

\subsubsection{The computable reals }

A precise definition of the set $\mathbb{R}_{c}$ of computable reals
first appeared in Turing's famous 1936 article \cite{Turing1936},
but the numbering type of computable real numbers which is best suited
to developing computable analysis was introduced one year later in
the corrigendum \cite{Turing1937}. 

We start by defining a numbering $c_{\mathbb{Q}}$ of the set of rationals.
The numbering $c_{\mathbb{Q}}$ is defined on $\mathbb{N}$. Given
a natural number $n$, seen as encoding a triple $n=\langle a,b,c\rangle$
via a pairing function, we put $c_{\mathbb{Q}}(n)=(-1)^{a}\frac{b}{c+1}$. 

We now define the Cauchy subnumbering $c_{\mathbb{R}}$ of $\mathbb{R}$.
Denote again by $(\varphi_{0},\varphi_{1},\varphi_{2}...)$ a standard
enumeration of all partial computable functions. 
\begin{defn}
The Cauchy subnumbering of $\mathbb{R}$ is defined by the formulas:
\[
\text{dom}(c_{\mathbb{R}})=\{i\in\mathbb{N},\,\exists x\in\mathbb{R},\,\forall n\in\mathbb{N},\,\left|c_{\mathbb{Q}}(\varphi_{i}(n))-x\right|<2^{-n}\};
\]
\[
\forall i\in\text{dom}(c_{\mathbb{R}}),\,c_{\mathbb{R}}(i)=\lim_{n\rightarrow\infty}(c_{\mathbb{Q}}(\varphi_{i}(n))).
\]
\end{defn}

Thus the description of a real number $x$ is a Turing machines that
produces a sequence $(u_{n})_{n\in\mathbb{N}}$ of rationals with
exponential convergence speed. 
\begin{defn}
The set of $c_{\mathbb{R}}$-computable real numbers is denoted $\mathbb{R}_{c}$,
the $c_{\mathbb{R}}$-computable reals are simply called the\emph{
computable real numbers}. 
\end{defn}

Several other definitions of the real numbers (decimal expansions,
Dedekind cuts), when rendered effective, yield subnumbering types
that define the same set of computable real numbers, but that are
not $\equiv$-equivalent to the Cauchy subnumbering type -they are
strictly stronger. See for instance \cite{Mostowski1979}. 
\begin{prop}
[Rice, \cite{Rice1954}]Addition, multiplication and divisions are
$(c_{\mathbb{R}}\times c_{\mathbb{R}},c_{\mathbb{R}})$-computable
functions defined respectively on $\mathbb{R}_{c}\times\mathbb{R}_{c}$,
$\mathbb{R}_{c}\times\mathbb{R}_{c}$ and $\mathbb{R}_{c}\times(\mathbb{R}_{c}\setminus\{0\})$. 
\end{prop}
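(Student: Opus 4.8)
The plan is to reduce everything to: given fast rational Cauchy sequences $(q_n)_{n\in\mathbb{N}}$ for $x$ and $(r_n)_{n\in\mathbb{N}}$ for $y$ (i.e. $|q_n-x|<2^{-n}$ and $|r_n-y|<2^{-n}$ for all $n$, coming from $c_{\mathbb{R}}$-names $i$ and $j$), produce a fast rational Cauchy sequence for $x+y$, for $xy$, and for $x/y$. Throughout I will use without comment that rational arithmetic is computable on $c_{\mathbb{Q}}$-names — from codes $a,b$ one can compute codes for $c_{\mathbb{Q}}(a)+c_{\mathbb{Q}}(b)$, $c_{\mathbb{Q}}(a)\cdot c_{\mathbb{Q}}(b)$, and (when $c_{\mathbb{Q}}(b)\neq 0$) $c_{\mathbb{Q}}(a)/c_{\mathbb{Q}}(b)$, and decide inequalities between rationals — and that, once the output sequence is described by a program taking $\langle i,j\rangle$ as a parameter, the s-m-n theorem furnishes the required computable index map $\langle i,j\rangle\mapsto F(\langle i,j\rangle)$ with $c_{\mathbb{R}}(F(\langle i,j\rangle))$ the result. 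So the content is purely the three elementary error estimates.

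For addition, $|(q_{n+1}+r_{n+1})-(x+y)|<2^{-n}$, so $n\mapsto q_{n+1}+r_{n+1}$ is a valid fast sequence for $x+y$. For multiplication, first extract an a priori bound: $|q_0-x|<1$ gives $|x|<|q_0|+1=:M$ and likewise $|y|<N$, where $M,N$ are rationals computed from the input; fixing $k$ with $2^k\ge M+N+1$ and estimating $|q_nr_n-xy|\le|q_n|\,|r_n-y|+|y|\,|q_n-x|<(M+N)2^{-n}$, we find that $n\mapsto q_{n+k}r_{n+k}$ is a fast sequence for $xy$. Both constructions are plainly algorithmic in $i,j$, which is all that is needed.

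For division it suffices to treat the reciprocal, since $x/y=x\cdot(1/y)$ and composition of computable functions is computable. This is the case I expect to be the main obstacle, because a $c_{\mathbb{R}}$-name of $y$ does not let one decide whether $y=0$; the point is that \emph{under the hypothesis $y\neq 0$} one can semi-decide it, and that is exactly why the domain is taken to be $\mathbb{R}_c\setminus\{0\}$. Concretely: since $y\neq 0$ there is some $m$ with $|y|>2^{-m+2}$, and then $|r_m|>2^{-m+1}$; so the search for the least $m$ with $|r_m|>2^{-m+1}$ terminates, and when it does, $|y|\ge|r_m|-2^{-m}>2^{-m}=:\delta>0$. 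For $n$ large enough that $2^{-n}<\delta/2$ we get $|r_n|>\delta/2$ (in particular $r_n\neq 0$), whence $|1/r_n-1/y|=|y-r_n|/(|r_n|\,|y|)<(2/\delta^2)2^{-n}$; shifting the index by a suitable $k$ computed from $\delta$ (also ensuring $r_{n+k}\neq 0$ for all $n$) yields a fast sequence $n\mapsto 1/r_{n+k}$ for $1/y$. Once more the whole procedure is algorithmic in the input name, so s-m-n closes the argument, and composing with multiplication gives divisibility on $\mathbb{R}_c\times(\mathbb{R}_c\setminus\{0\})$.
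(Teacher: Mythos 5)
The paper states this proposition without proof, merely citing Rice, so there is no proof to compare against; your argument is the standard one and is correct, and in particular the key point for division --- that the hypothesis $y\neq 0$ makes the search for a witness $m$ with $|r_m|>2^{-m+1}$ terminate and yields a computable lower bound $\delta=2^{-m}$ on $|y|$, after which one shifts indices --- is exactly right. The only blemish is a harmless constant in the multiplication step: since $|q_n|<M+2^{-n}\le M+1$ rather than $|q_n|<M$, the intermediate bound should read $(M+N+1)2^{-n}$, which your choice $2^{k}\ge M+N+1$ already absorbs.
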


The following is a well known proposition which follows easily from
Markov's Lemma, see Lemma \ref{lem:Markov}, this result is implicit
in \cite{Turing1936}. 
\begin{prop}
\label{prop:Equality-is-undecidable on R}Equality is undecidable
for computable reals. There is no algorithm that, given two computable
reals $x$ and $y$, chooses one of $x\le y$ or $y<x$ which is true. 
\end{prop}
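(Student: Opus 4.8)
The plan is to deduce both assertions from Markov's Lemma (Lemma~\ref{lem:Markov}), exactly as the parenthetical remark above already announces. Note first that the computable reals form a recursive metric space $(\mathbb{R}_{c},d,c_{\mathbb{R}})$ with $d(x,y)=|x-y|$, since this distance is $(c_{\mathbb{R}}\times c_{\mathbb{R}},c_{\mathbb{R}})$-computable by the computability of the arithmetic operations on $\mathbb{R}_{c}$ recalled above; the same is true of any of its countable subspaces equipped with the restriction of $c_{\mathbb{R}}$.

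For the undecidability of equality, I would apply Markov's Lemma to the $c_{\mathbb{R}}$-computable sequence $u_{n}=2^{-n}$, which converges to $0$ and satisfies $u_{n}\ne0$ for every $n$. The lemma yields that $\{0\}$ is not a $c_{\mathbb{R}}$-semi-decidable subset of $Y=\{0\}\cup\{2^{-n}:n\in\mathbb{N}\}$. Now if equality were decidable for computable reals --- i.e.\ if some algorithm decided $x=y$ from $c_{\mathbb{R}}$-names of $x$ and $y$ --- then fixing a name of $0$ in the second argument would produce an algorithm deciding, from a name of $x$, whether $x=0$; restricted to $Y$ this would exhibit $\{0\}$ as decidable, hence semi-decidable, in $Y$, a contradiction. (The same reasoning in fact shows $\{x\in\mathbb{R}_{c}:x=0\}$ is not even semi-decidable.) The second assertion then follows by reduction to the first: an algorithm $A$ that, given $c_{\mathbb{R}}$-names of $x$ and $y$, always halts and outputs a \emph{true} statement among ``$x\le y$'' and ``$y<x$'' is in fact forced, because exactly one of these two statements holds --- thus $A$ outputs ``$x\le y$'' precisely when $x\le y$, so $A$ decides the relation $\le$ on pairs of computable reals. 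But then $x=y$ could be decided by running $A$ on $(x,y)$ and on $(y,x)$ and checking whether both answers are ``$\le$'', contradicting what was just proved.

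I expect the only point requiring attention --- rather than a genuine obstacle, since this is an ``easy'' statement --- to be the legitimacy of invoking Markov's Lemma on the subspace $Y$: one must observe that $Y$ with the restricted numbering really is a recursive metric space and that $(2^{-n})_{n}$ really is a $c_{\mathbb{R}}$-computable sequence in it, both of which are immediate. As a self-contained alternative for the first assertion, namely the argument ``implicit in Turing 1936'', one can instead attach to each index $e$ the computable real $r_{e}$ presented by the fast Cauchy sequence that equals $0$ until $\varphi_{e}(e)$ halts and is frozen at $2^{-s}$ afterwards if $\varphi_{e}(e)$ halts at step $s$; a $c_{\mathbb{R}}$-name of $r_{e}$ is computable from $e$ by the smn theorem, and $r_{e}=0$ if and only if $\varphi_{e}(e)$ diverges, so an algorithm deciding equality to $0$ would decide the halting problem.
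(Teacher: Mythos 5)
Your proof is correct and follows exactly the route the paper indicates: the paper states this proposition without proof, remarking only that it ``follows easily from Markov's Lemma'', and you do precisely that, applying Lemma \ref{lem:Markov} to the effectively convergent computable sequence $2^{-n}\rightarrow 0$ to rule out semi-decidability of equality, and then reducing the comparison problem to equality by noting that the alternatives $x\le y$ and $y<x$ are complementary. The one hypothesis worth naming explicitly is the \emph{effective completeness} of $(\mathbb{R}_{c},d,c_{\mathbb{R}})$ (not merely the recursive-metric-space property), which the paper supplies by listing $\mathbb{R}_{c}$ among the recursive Polish spaces; with that observation your argument, including the self-contained halting-problem variant, goes through.
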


\subsubsection{Recursive metric spaces}

We can now define what is a recursive metric spaces. We stick to the
old terminology that uses the term ``recursive'' instead of ``computable''
because the term ``computable metric space'' has a different meaning,
and it is not the case that every recursive metric space is a computable
metric space. In particular recursive metric spaces are countable
sets. \emph{Computable metric spaces} as studied in Type 2 computable
analysis \cite{Brattka2003} or \emph{recursively presented Polish
spaces} as studied in effective descriptive set theory \cite{Moschovakis1980}
can be uncountable sets. 
\begin{defn}
\emph{A recursive metric space (RMS)} is a countable metric space
$(X,d)$ equipped with a numbering $\nu:\,\subseteq\mathbb{N}\rightarrow X$,
and such that the distance function $d:X\times X\rightarrow\mathbb{R}$
is $(\nu\times\nu,c_{\mathbb{R}})$-computable. 
\end{defn}

Whether a triple $(X,d,\nu)$ is a RMS depends only on the $\equiv$-equivalence
class of the numbering $\nu$, we can thus define a RMS to be a metric
space equipped with a compatible numbering type. 
\begin{defn}
A function $f$ between recursive metric spaces $(X,d,\nu)$ and $(Y,d,\mu)$
is called \emph{effectively metric continuous }if given a $\nu$-name
$n$ of a point $x$ in $X$ and the $c_{\mathbb{R}}$-name of a computable
real number $\epsilon>0$, it is possible to compute the $c_{\mathbb{R}}$-name
of a number $\eta>0$ such that 
\[
\forall y\in X;\,d(x,y)<\eta\implies d(f(x),f(y))<\epsilon.
\]
\end{defn}

Note that the number $\eta$ is allowed to depend not only on $x$
and $\epsilon$, but also on the given names for those points. A program
that computes the $c_{\mathbb{R}}$-name for $\eta$ given $x$ and
$\epsilon$ is said to \emph{witness for the effective continuity}
\emph{of} $f$. 

We also introduce effective continuity for computably second countable
spaces. 
\begin{defn}
A function $f$ between second countable spaces $(X,\nu)$ and $(Y,\mu)$
with bases $(B_{i}^{X})_{i\in\mathbb{N}}$ and $(B_{i}^{Y})_{i\in\mathbb{N}}$
is called \emph{effectively continuous }if there is a program that
given $i\in\mathbb{N}$ produces the code of a c.e. set $A\subseteq\mathbb{N}$
such that 
\[
f^{-1}(B_{i}^{X})=\bigcup_{t\in A}B_{t}^{Y}.
\]
\end{defn}

\begin{prop}
\label{prop: Subset of RMS}If $(X,d,\nu)$ is a RMS, if $Y\subseteq X$,
then $(Y,d,\nu_{\vert Y})$ is also a RMS. 
\end{prop}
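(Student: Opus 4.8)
The statement to prove is Proposition \ref{prop: Subset of RMS}: if $(X,d,\nu)$ is a recursive metric space and $Y\subseteq X$, then $(Y,d,\nu_{\vert Y})$ is also a recursive metric space.

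\medskip

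The plan is to verify directly the three defining conditions of a recursive metric space against the definition of the restricted numbering $\nu_{\vert Y}$ given earlier in the excerpt. Recall that $\text{dom}(\nu_{\vert Y})=\text{dom}(\nu)\cap\nu^{-1}(Y)$ and $\nu_{\vert Y}$ agrees with $\nu$ on this domain. First I would check that $(Y,d)$ is a countable metric space: it is a subset of the countable metric space $(X,d)$ equipped with the restricted metric, and any subset of a countable set is countable, so this is immediate. Second, I would check that $\nu_{\vert Y}$ is surjective onto $Y$: given $y\in Y$, since $\nu$ is surjective onto $X$ and $y\in X$, there is $n\in\text{dom}(\nu)$ with $\nu(n)=y$; then $n\in\nu^{-1}(Y)$ as well, so $n\in\text{dom}(\nu_{\vert Y})$ and $\nu_{\vert Y}(n)=y$. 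Hence $\nu_{\vert Y}$ is a surjective numbering of $Y$.

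\medskip

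The third and main point is that the distance function $d\colon Y\times Y\to\mathbb{R}$ must be $(\nu_{\vert Y}\times\nu_{\vert Y},c_{\mathbb{R}})$-computable. Here I would simply reuse the computability witness for $d$ on $X$: by hypothesis there is a partial recursive $F\colon\subseteq\mathbb{N}\to\mathbb{N}$ such that for all $\langle n,m\rangle\in\text{dom}(\nu\times\nu)$ one has $c_{\mathbb{R}}\circ F(\langle n,m\rangle)=d(\nu(n),\nu(m))$. Now if $\langle n,m\rangle\in\text{dom}(\nu_{\vert Y}\times\nu_{\vert Y})$, then in particular $n,m\in\text{dom}(\nu)$, so $\langle n,m\rangle\in\text{dom}(\nu\times\nu)$, and since $\nu_{\vert Y}$ agrees with $\nu$ on its domain we get $c_{\mathbb{R}}\circ F(\langle n,m\rangle)=d(\nu(n),\nu(m))=d(\nu_{\vert Y}(n),\nu_{\vert Y}(m))$. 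Thus the very same $F$ witnesses the computability of $d$ restricted to $Y$. This completes the verification.

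\medskip

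There is essentially no obstacle here: the proposition is a routine unpacking of definitions, and the only thing to be slightly careful about is that $\text{dom}(\nu_{\vert Y})\subseteq\text{dom}(\nu)$, which guarantees that every input relevant to $\nu_{\vert Y}$ is already handled correctly by the computable function $F$ for $\nu$. One could also phrase the remark that whether $(Y,d,\nu_{\vert Y})$ is a recursive metric space depends only on the numbering type of $\nu_{\vert Y}$, which itself depends only on the numbering type of $\nu$; but since the claim is about a specific restriction, the direct argument above suffices. I would present this as a short three-line proof.
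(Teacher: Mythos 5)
Your proof is correct and matches the paper's intent exactly: the paper dismisses this proposition with the single word ``Obvious,'' and your verification (countability, surjectivity of $\nu_{\vert Y}$, and reuse of the same computable witness $F$ since $\text{dom}(\nu_{\vert Y})\subseteq\text{dom}(\nu)$) is precisely the routine unpacking the author had in mind.
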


\begin{proof}
Obvious. 
\end{proof}
\begin{example}
The following spaces, equipped with their usual distances and numberings,
are recursive metric spaces: $\mathbb{N}$, $\mathbb{R}_{c}$, the
set $\{0,1\}_{c}^{\mathbb{N}}$, the set $\mathbb{N}_{c}^{\mathbb{N}}$.
Although details can be found in \cite[Chapter 9]{Kushner1984}, we
still include a proof for the Cantor space. 
\end{example}

\begin{prop}
\label{prop:The-Cantor-space RMS}The set $\{0,1\}_{c}^{\mathbb{N}}$
of computable points of the Cantor space equipped with its ultrametric
distance $d$ and its usual numbering $\nu_{\mathcal{C}}$ is a RMS. 
\end{prop}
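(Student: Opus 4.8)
The plan is to verify the two conditions in the definition of a recursive metric space for the triple $(\{0,1\}_c^{\mathbb{N}},d,\nu_{\mathcal{C}})$: first that $\nu_{\mathcal{C}}$ is a surjective numbering of the countable set $\{0,1\}_c^{\mathbb{N}}$, and second that the distance function $d$ is $(\nu_{\mathcal{C}}\times\nu_{\mathcal{C}},c_{\mathbb{R}})$-computable. Surjectivity is immediate: by definition $\{0,1\}_c^{\mathbb{N}}$ is exactly the image of $\nu_{\mathcal{C}}$, i.e. the set of $\nu_{\mathcal{C}}$-computable points. Countability is clear since there are only countably many partial recursive functions. So the only real content is the computability of $d$.

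For the distance computation, recall that $d((u_n),(v_n)) = 2^{-n_0}$ where $n_0 = \inf\{n : u_n \neq v_n\}$, with the convention $d = 0$ when the sequences agree. Given a $\nu_{\mathcal{C}}\times\nu_{\mathcal{C}}$-name $\langle i,j\rangle$ of a pair $((u_n),(v_n))$, the functions $\varphi_i$ and $\varphi_j$ are total $\{0,1\}$-valued and compute $u_n$ and $v_n$ respectively. I would describe the algorithm producing a $c_{\mathbb{R}}$-name: on input $N$, we want to output a rational within $2^{-N}$ of $d((u_n),(v_n))$. To do this, search through $n = 0,1,\dots,N$ and compute $\varphi_i(n)$ and $\varphi_j(n)$; if a first disagreement is found at some $n_0 \leq N$, output $2^{-n_0}$ exactly (this is the exact value of the distance, so certainly within $2^{-N}$). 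If no disagreement is found among the first $N+1$ indices, then $n_0 > N$, so $d \leq 2^{-(N+1)} < 2^{-N}$, and we may safely output $0$, which is within $2^{-N}$ of the true distance. This procedure is total and recursive (uniformly in $\langle i,j\rangle$ and $N$), and the sequence of rationals it produces converges to $d((u_n),(v_n))$ with the required exponential speed; composing with the $s$-$m$-$n$ theorem yields a computable map from $\langle i,j\rangle$ to a $c_{\mathbb{R}}$-index of $d((u_n),(v_n))$, which is exactly $(\nu_{\mathcal{C}}\times\nu_{\mathcal{C}},c_{\mathbb{R}})$-computability of $d$.

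There is essentially no hard step here; the proposition is a routine unwinding of definitions. The one subtlety worth stating carefully is why outputting $0$ is legitimate when no disagreement is seen within the first $N+1$ positions: this is not a claim that the two sequences are equal (that would be undecidable), only that their distance is then provably at most $2^{-(N+1)}$, which suffices for the $N$-th term of a $c_{\mathbb{R}}$-name. This is the standard phenomenon that one can compute a real number to arbitrary precision without being able to decide its exact value. I would also remark that the ultrametric distance only ever takes values in $\{0\} \cup \{2^{-n} : n \in \mathbb{N}\}$, so the output rationals are always of this special form, making the verification of the error bound transparent.

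Since the proof is so short, I would simply present it directly: ``Surjectivity of $\nu_{\mathcal{C}}$ onto $\{0,1\}_c^{\mathbb{N}}$ holds by definition. For computability of $d$, given indices $i,j$ with $\varphi_i,\varphi_j$ total $\{0,1\}$-valued, on input $N$ compute $\varphi_i(n),\varphi_j(n)$ for $n \le N$; output $2^{-n_0}$ if $n_0 \le N$ is the least index of disagreement, and $0$ otherwise. In the first case the output equals $d$ exactly; in the second $d \le 2^{-(N+1)}$, so the output is within $2^{-N}$ of $d$. Thus the procedure yields, uniformly in $\langle i,j\rangle$, a $c_{\mathbb{R}}$-name of $d(\nu_{\mathcal{C}}(i),\nu_{\mathcal{C}}(j))$, so $d$ is $(\nu_{\mathcal{C}}\times\nu_{\mathcal{C}},c_{\mathbb{R}})$-computable, completing the proof.''
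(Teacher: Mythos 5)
Your proof is correct and is essentially the same as the paper's: both compute $d$ to precision $2^{-N}$ by inspecting the first digits of the two sequences, outputting the exact distance if a disagreement is found and $0$ otherwise. The paper's version is slightly terser but contains the same observation that outputting $0$ is legitimate because the distance is then provably small, without needing to decide equality.
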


\begin{proof}
Given the $\nu_{\mathcal{C}}$ name of two sequences $(u_{n})_{n\in\mathbb{N}}$
and $(v_{n})_{n\in\mathbb{N}}$, we show that we can compute arbitrarily
good approximations of their distance. To compute the distance between
$(u_{n})_{n\in\mathbb{N}}$ and $(v_{n})_{n\in\mathbb{N}}$ with an
error of at most $2^{-n}$, it suffices to enumerate the first $n$
digits of $(u_{n})_{n\in\mathbb{N}}$ and $(v_{n})_{n\in\mathbb{N}}$.
Then, if $(u_{n})_{n\in\mathbb{N}}$ and $(v_{n})_{n\in\mathbb{N}}$
agree on their first $n$ digits, it must be that $d((u_{n})_{n\in\mathbb{N}},(v_{n})_{n\in\mathbb{N}})<2^{-n}$,
and so $0$ constitutes a good approximation of $d((u_{n})_{n\in\mathbb{N}},(v_{n})_{n\in\mathbb{N}})$.
Otherwise, the first index after which the sequences $(u_{n})_{n\in\mathbb{N}}$
and $(v_{n})_{n\in\mathbb{N}}$ differ can be computed exactly, and
thus also the distance $d((u_{n})_{n\in\mathbb{N}},(v_{n})_{n\in\mathbb{N}})$. 

In both cases, the desired approximation of $d((u_{n})_{n\in\mathbb{N}},(v_{n})_{n\in\mathbb{N}})$
can be computed. 
\end{proof}
\begin{cor}
Let $\hat{\nu_{\mathcal{C}}}$ denote the natural numbering of a disjoint
union of Cantor spaces $\underset{i\in\mathbb{N}}{\bigcup}\{0,1\}^{\mathbb{N}}$
($\hat{\nu_{\mathcal{C}}}$ was defined in Section \ref{subsec:Numbering induced by Cantor space}).
Consider the metric $d$ that puts different copies of $\{0,1\}^{\mathbb{N}}$
at distance exactly $2$. Then the set $(\underset{i\in\mathbb{N}}{\bigcup}\{0,1\}^{\mathbb{N}})_{\hat{\nu_{\mathcal{C}}}}$
of computable points of $\underset{i\in\mathbb{N}}{\bigcup}\{0,1\}^{\mathbb{N}}$,
equipped with the distance $d$ and numbering $\hat{\nu_{\mathcal{C}}}$,
is a RMS. 
\end{cor}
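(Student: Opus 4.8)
The plan is to reduce the claim about the disjoint union to the single‑Cantor‑space case already handled by Proposition \ref{prop:The-Cantor-space RMS}, together with the observation that distances between points in distinct copies are a fixed constant and hence trivially computable. First I would unwind the definition of $\hat{\nu_{\mathcal{C}}}$: a name for a point is an encoded pair $\langle i,j\rangle$, where $i$ identifies the copy $\{0,1\}^{\mathbb{N}}$ the point lives in, and $j$ is a $\nu_{\mathcal{C}}$‑name of the point inside that copy. Surjectivity of $\hat{\nu_{\mathcal{C}}}$ onto the computable points is immediate from surjectivity of $\nu_{\mathcal{C}}$ onto $\{0,1\}_{c}^{\mathbb{N}}$, applied coordinate‑by‑coordinate over the index $i$. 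Since each copy is countable and there are countably many copies, the union is countable, so the only real content is computability of the distance function with respect to $\hat{\nu_{\mathcal{C}}} \times \hat{\nu_{\mathcal{C}}}$.

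Next I would describe the distance algorithm. Given names $\langle i_{1}, j_{1}\rangle$ and $\langle i_{2}, j_{2}\rangle$, one first computes $i_{1}$ and $i_{2}$ (this is effective, using the decoding functions $\mathrm{fst}$ and $\mathrm{snd}$). If $i_{1} \ne i_{2}$, the two points lie in different copies, so by definition $d = 10$, and we output the trivial $c_{\mathbb{R}}$‑name of the rational $10$. If $i_{1} = i_{2}$, the two points lie in the same copy of the Cantor space, with $\nu_{\mathcal{C}}$‑names $j_{1}$ and $j_{2}$; we then simply call the distance algorithm from the proof of Proposition \ref{prop:The-Cantor-space RMS} on the pair $(j_{1}, j_{2})$ and output whatever it outputs. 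In both cases we have produced a $c_{\mathbb{R}}$‑name for $d(\hat{\nu_{\mathcal{C}}}(\langle i_{1},j_{1}\rangle), \hat{\nu_{\mathcal{C}}}(\langle i_{2},j_{2}\rangle))$, so $d$ is $(\hat{\nu_{\mathcal{C}}} \times \hat{\nu_{\mathcal{C}}}, c_{\mathbb{R}})$‑computable. Together with countability and surjectivity of the numbering, this shows $(\bigcup_{i\in\mathbb{N}} \{0,1\}^{\mathbb{N}})_{\hat{\nu_{\mathcal{C}}}}$ with $d$ and $\hat{\nu_{\mathcal{C}}}$ is an RMS.

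There is essentially no hard part here: the only thing to be slightly careful about is the case split, namely that deciding whether $i_{1} = i_{2}$ is decidable (it is, as equality on $\mathbb{N}$ is decidable), so the algorithm really does branch correctly, and that in the equal‑copies branch we are feeding the Cantor‑space distance algorithm genuine $\nu_{\mathcal{C}}$‑names, which is exactly what $\hat{\nu_{\mathcal{C}}}$ guarantees. One could also remark that $10$ could be replaced by any fixed positive rational (or indeed any fixed computable real bounded away from $0$) without affecting the argument, so the specific value is immaterial; the same scheme will apply verbatim to the space of marked groups $\mathcal{G}_{WP}$ once one knows each $\mathcal{G}_{k}$ embeds computably into a Cantor space, which is the content of the subsequent corollary. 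I expect the proof to be one or two sentences in the text, with a reference back to Proposition \ref{prop:The-Cantor-space RMS}.
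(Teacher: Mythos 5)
Your argument is correct and is exactly the paper's proof: decode the pair of names, decide whether the two indices coincide, output $10$ if not, and otherwise invoke the distance algorithm of Proposition \ref{prop:The-Cantor-space RMS} on the inner names. The paper states this in two sentences; your version merely spells out the same steps in more detail.
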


\begin{proof}
Given the $\hat{\nu_{\mathcal{C}}}$ -names of two sequences, we decide
whether or not they belong to the same copy of the Cantor space. If
they don't, we know that their distance is $2$. Otherwise apply Proposition
\ref{prop:The-Cantor-space RMS}. 
\end{proof}
The following proposition is fundamental for us. Recall that $\mathcal{G}^{+}$
is the set of marked groups with solvable word problem. 
\begin{cor}
\label{cor:Space G RMS}The space of marked groups with solvable word
problem $\mathcal{G}^{+}$ equipped with its ultrametric distance
$d$ and with the numbering type $\Lambda_{WP}$ is a recursive metric
space. 
\end{cor}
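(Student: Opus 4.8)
The plan is to deduce Corollary~\ref{cor:Space G RMS} from the previous corollary by exhibiting the space of marked groups with solvable word problem as a subset of a recursive metric space, namely the disjoint union of Cantor spaces, in a way that is compatible with numberings. First I would recall from Section~\ref{subsec:Numbering induced by Cantor space} that the embeddings $\Phi_k:\mathcal{G}_k\hookrightarrow\{0,1\}^{\mathbb N}$ assemble into an embedding $\Phi:\mathcal{G}\hookrightarrow\bigcup_{i\in\mathbb N}\{0,1\}^{\mathbb N}$, and that this embedding was precisely engineered so that the numbering type $\Lambda_{WP}$ corresponds, under $\Phi$, to the restriction of $\hat\nu_{\mathcal C}$ to the image $\Phi(\mathcal{G})$. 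In particular, restricting further to groups with solvable word problem, $\Phi$ identifies $(\mathcal{G}_{WP},\Lambda_{WP})$ with a numbered subset of $(\bigcup_i\{0,1\}^{\mathbb N},\hat\nu_{\mathcal C})$ whose numbering type is exactly the restriction numbering type.

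The second point to check is that $\Phi$ is an isometry for the relevant ultrametrics (up to the conventional rescaling of the ``different number of generators'' distance, which is harmless: both in $\mathcal{G}$ and in the disjoint union of Cantor spaces, points in different components are placed at a fixed positive distance, and one only needs these constants to be uniformly bounded below, so one may simply take the same constant, or note that the precise value is immaterial to being an RMS). Within a single $\mathcal{G}_k$, the distance $d$ on $\mathcal{G}_k$ was \emph{defined} as the pullback under $\Phi_k$ of the ultrametric distance on $\{0,1\}^{\mathbb N}$ via the chosen bijection $i_k:\mathbb N\to\mathbb F_k$, so this is immediate from the definitions in Section~\ref{sec:The-topological-space}.

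Given these two observations, the corollary follows: by the Corollary following Proposition~\ref{prop:The-Cantor-space RMS}, the set of computable points of $\bigcup_i\{0,1\}^{\mathbb N}$ with $\hat\nu_{\mathcal C}$ and the disjoint-union ultrametric is a recursive metric space; by Proposition~\ref{prop: Subset of RMS}, every subset of an RMS, equipped with the restricted numbering, is again an RMS; and $\Phi$ identifies $(\mathcal{G}_{WP},d,\Lambda_{WP})$ with such a subset, the identification respecting distances and numbering types. Since being an RMS depends only on the numbering type, we conclude $(\mathcal{G}_{WP},d,\Lambda_{WP})$ is an RMS. The only mildly delicate step is bookkeeping: making sure that the numbering type obtained by restricting $\hat\nu_{\mathcal C}$ to $\Phi(\mathcal{G}_{WP})$ is genuinely $\Lambda_{WP}$ and not merely comparable to it — but this is exactly the content of the Proposition in Section~\ref{subsec:Numbering induced by Cantor space}, intersected with the (trivially $\Lambda_{WP}$-invariant) condition of having solvable word problem, so no real obstacle arises. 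I expect no hard part here; the proof is a short assembly of facts already in place, and could reasonably be written simply as ``This follows from Proposition~\ref{prop: Subset of RMS}, the preceding corollary, and the identification of $\Lambda_{WP}$ with the restriction of $\hat\nu_{\mathcal C}$ given in Section~\ref{subsec:Numbering induced by Cantor space}.''
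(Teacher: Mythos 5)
Your proposal is correct and follows essentially the same route as the paper: the paper's proof likewise invokes the corollary that the disjoint union of Cantor spaces is an RMS, Proposition \ref{prop: Subset of RMS} on subsets of an RMS, and the identification of $\Lambda_{WP}$ with the restriction of $\hat{\nu_{\mathcal{C}}}$ from Section \ref{subsec:Numbering induced by Cantor space}. Your extra remark that the precise constant separating the components of the disjoint union is immaterial is a harmless and accurate addition.
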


\begin{proof}
This follows directly from the previous corollary, together with Proposition
\ref{prop: Subset of RMS}, which states that a subset of a RMS with
the induced numbering remains a RMS, and with the fact that the numbering
type $\Lambda_{WP}$ is the numbering induced on the space of marked
groups by the numbering type $\left[\hat{\nu_{\mathcal{C}}}\right]$
defined on a disjoint union of Cantor spaces, as detailed in Section
\ref{subsec:Numbering induced by Cantor space}. 
\end{proof}
The space of marked groups equipped with the distance $d_{Cay}$ defined
in Section \ref{sec:The-topological-space} and with the numbering
type $\Lambda_{WP}$ is also a RMS. We leave it to the reader to prove
this easy fact. 

\subsubsection{Effective completeness and effective separability }

Since the space of marked groups is a Polish space, it is natural
to ask whether $\mathcal{G}^{+}$ is a recursive Polish space, that
is, whether it is effectively complete and effectively separable. 

Here, we define those two notions, and give some properties that follow
from them. The importance of these notions lies in the facts that
Ceitin's Theorem is set on recursive Polish spaces. 
\begin{defn}
\label{def:eff Cauchy}A sequence $(u_{n})_{n\in\mathbb{N}}$ of computable
points in $X$ is called \emph{effectively convergent} if it converges
to a point $y\in X$, and if there exists a computable function $f:\mathbb{N}\rightarrow\mathbb{N}$
such that:
\[
\forall(n,m)\in\mathbb{N}^{2};n\ge f(m)\implies d(u_{n},y)\le2^{-m}.
\]

A sequence $(u_{n})_{n\in\mathbb{N}}$ of computable points in $X$
is called \emph{effectively Cauchy} if there exists a computable function
$f:\mathbb{N}\rightarrow\mathbb{N}$ such that:
\[
\forall(p,q,m)\in\mathbb{N}^{3};p,q\ge f(m)\implies d(u_{p},u_{q})\le2^{-m}.
\]

In both cases the function $f$ is called a \emph{regulator }for the
sequence $(u_{n})_{n\in\mathbb{N}}$. 
\begin{defn}
Let $(X,d,\nu)$ be a recursive metric space. An \emph{algorithm of
passage to the limit} (the name is from \cite{Kushner1984} and \cite{Spreen1998})
is an algorithm that takes as input a computable Cauchy sequence together
with a regulator for it, and produces the $\nu$-name of a point towards
which this sequence converges. 

A recursive metric space $(X,d,\nu)$ is\emph{ effectively complete}
if it admits an algorithm of passage to the limit. 
\end{defn}

\end{defn}

Note that even though we will in this paper focus exclusively on effectively
complete spaces when stating continuity theorems, a weaker condition
is sufficient to apply Markov's Lemma and the theorems of Ceitin and
Moschovakis: that there exist an algorithm of passage to the limit
which works only on converging sequences. Thus for instance the open
interval $(0,1)$ admits such an algorithm for $c_{\mathbb{R}}$,
even though it is not effectively complete, as the sequence $n\mapsto2^{-n}$
does not converge in $(0,1)$. As the space of marked groups is effectively
complete, we are not concerned here with theorems that do not rely
on effectively completeness. 

It is easy to see that any recursive metric space can be effectively
completed into an effectively complete metric space, we describe here
the construction. 

Recall that $(\varphi_{0},\varphi_{1},\varphi_{2},...)$ denotes an
effective enumeration of all partial computable functions. 
\begin{defn}
\label{def:Eff-Completion}Let $(X,d,\nu)$ be a recursive metric
space. Denote by $(\hat{X},d)$ the classical completion of $(X,d)$.
Denote by $j:X\hookrightarrow\hat{X}$ the embedding of $X$ into
$\hat{X}$. Define a subnumbering $\hat{\nu}$ of $\hat{X}$ by 
\[
\text{dom}(\hat{\nu})=\{i\in\mathbb{N},\,\forall p\in\mathbb{N},\forall q>p,\,d(\nu(\varphi_{i}(p)),\nu(\varphi_{i}(q)))<2^{-p}\};
\]
\[
\forall i\in\text{dom}(\hat{\nu}),\,\hat{\nu}(i)=\lim_{n\rightarrow\infty}j(\nu(\varphi_{i}(n))).
\]

We then obtain the effective completion of $(X,d,\nu)$ by restricting
our attention to the $\hat{\nu}$-computable points of $\hat{X}$:
the \emph{effective completion} of $(X,d,\nu)$ is the triple $(\hat{X}_{\hat{\nu}},d,\hat{\nu})$. 
\end{defn}

We do not prove the following well known result (see for instance
\cite[Corollary 2.13]{Spreen1998}): 
\begin{lem}
The numbering $\hat{\nu}$ defined above is the supremum for $\succeq$
of the sets of subnumberings of $\hat{X}$ that satisfy the following
conditions:
\begin{itemize}
\item the injection $j:X\hookrightarrow\hat{X}$ is $(\nu,\hat{\nu})$-computable;
\item the triple $(\hat{X}_{\hat{\nu}},d,\hat{\nu})$ is effectively complete. 
\end{itemize}
\end{lem}

\begin{prop}
Let $(X,d,\nu)$ be an effectively complete recursive metric space.
A closed subset $Y$ of $X$, together with the numbering induced
by $\nu$, is also an effectively complete metric space. 
\end{prop}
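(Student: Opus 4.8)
The plan is to show that the closed subset $Y$ of $X$, equipped with the restricted numbering $\nu_{\vert Y}$, is again effectively complete, by recycling the algorithm of passage to the limit that $(X,d,\nu)$ already possesses. First I would note that by Proposition \ref{prop: Subset of RMS} the triple $(Y,d,\nu_{\vert Y})$ is a recursive metric space, so the only thing to verify is that it admits an algorithm of passage to the limit in the sense of Definition \ref{def:Eff-Completion}. The key observation is that a $\nu_{\vert Y}$-name of a point of $Y$ is literally a $\nu$-name that happens to land in $Y$, so any computable Cauchy sequence in $Y$ (with a regulator) is, verbatim, a computable Cauchy sequence in $X$ (with the same regulator).

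The main step is then: given a $\nu_{\vert Y}$-computable Cauchy sequence $(u_n)_{n\in\mathbb{N}}$ together with a regulator $f$, feed this same data to the algorithm of passage to the limit of $(X,d,\nu)$. This produces a $\nu$-name of a point $y \in X$ with $u_n \to y$. Because $Y$ is closed in $X$ and each $u_n$ lies in $Y$, the limit $y$ also lies in $Y$. Hence the $\nu$-name of $y$ returned by the algorithm is in fact a $\nu_{\vert Y}$-name of $y$ (recall $\operatorname{dom}(\nu_{\vert Y}) = \operatorname{dom}(\nu)\cap \nu^{-1}(Y)$, and the returned name is in $\operatorname{dom}(\nu)$ and maps into $Y$). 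So the very same algorithm serves as an algorithm of passage to the limit for $(Y,d,\nu_{\vert Y})$.

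I do not expect a serious obstacle here; the proof is essentially bookkeeping about the definition of the restricted numbering. The one point deserving a sentence of care is the direction ``a $\nu_{\vert Y}$-name is a $\nu$-name'': one must check that the name handed back by the $X$-algorithm, which a priori is only guaranteed to be a $\nu$-name of $y$, qualifies as a $\nu_{\vert Y}$-name, and this is exactly where closedness of $Y$ is used — it guarantees $y\in Y$ so that the name is in $\operatorname{dom}(\nu_{\vert Y})$. If one wanted to be fully explicit one could also remark that closedness is used only through the fact that limits of sequences in $Y$ stay in $Y$ (sequential closedness), which in a metric space is equivalent to closedness. Given the paper's style (the preceding analogous proposition is dispatched with ``Obvious.''), I would keep the written proof to two or three sentences along these lines.
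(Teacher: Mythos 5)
Your proof is correct and follows exactly the paper's (one-line) argument: the algorithm of passage to the limit for $X$ works verbatim for $Y$, with closedness guaranteeing that the returned $\nu$-name is in fact a $\nu_{\vert Y}$-name. Your spelled-out version merely makes explicit the bookkeeping the paper leaves implicit.
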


\begin{proof}
It suffices to notice that algorithm of passage to the limit for $X$
also works for $Y$. 
\end{proof}
The following proposition can be found in \cite{Kushner1984}: 
\begin{prop}
The set $\{0,1\}_{c}^{\mathbb{N}}$ is effectively complete. 
\end{prop}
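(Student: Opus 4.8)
The plan is to write down explicitly an algorithm of passage to the limit, exploiting the fact that in the ultrametric each coordinate of the limit of a Cauchy sequence stabilises at a computably controlled stage, so that the limit can be read off coordinate by coordinate.

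Concretely, suppose we are handed a code for a recursive function $F$ witnessing that $(u^{(n)})_{n\in\mathbb{N}}$ is a $\nu_{\mathcal{C}}$-computable sequence of points of $\{0,1\}_{c}^{\mathbb{N}}$, that is $\nu_{\mathcal{C}}(F(n))=u^{(n)}$, so each $\varphi_{F(n)}$ is total and $\{0,1\}$-valued with $u^{(n)}_i=\varphi_{F(n)}(i)$; and a code for a regulator $f$, i.e. a recursive function with $p,q\ge f(m)\implies d(u^{(p)},u^{(q)})\le 2^{-m}$. The first observation I would record is that, for the ultrametric $d$ on the Cantor space, $d(v,w)\le 2^{-m}$ holds precisely when $v$ and $w$ agree on the coordinates $0,\dots,m-1$.

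Fixing a coordinate $i$ and applying the regulator at $m=i+1$, one gets that all $u^{(n)}$ with $n\ge f(i+1)$ agree on coordinates $0,\dots,i$; in particular $u^{(n)}_i$ takes one and the same value for every $n\ge f(i+1)$. Hence the coordinatewise limit $y=(y_i)_{i\in\mathbb{N}}$ exists and $y_i=u^{(f(i+1))}_i=\varphi_{F(f(i+1))}(i)$. By the s-m-n theorem I can compute, from the codes for $F$ and $f$, an index $e$ with $\varphi_e(i)=\varphi_{F(f(i+1))}(i)$ for all $i$; this $\varphi_e$ is total and $\{0,1\}$-valued, so $e\in\text{dom}(\nu_{\mathcal{C}})$ and $\nu_{\mathcal{C}}(e)=y$, which also shows $y\in\{0,1\}_{c}^{\mathbb{N}}$.

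Finally I would verify that the sequence really does converge to $y$, so that returning $e$ is legitimate: given $m$, put $N=\max\{f(1),\dots,f(m)\}$; then for $n\ge N$ and each $i<m$ we have $n\ge f(i+1)$, hence $u^{(n)}_i=y_i$, so $u^{(n)}$ and $y$ agree on coordinates $0,\dots,m-1$ and $d(u^{(n)},y)\le 2^{-m}$. Thus $u^{(n)}\to y$, and the map taking the pair of codes for $F$ and $f$ to the code $e$ is the required algorithm of passage to the limit. The only subtlety worth flagging is that $f$ is not assumed monotone, which is why the $i$-th coordinate of $y$ is read at stage $f(i+1)$ and convergence is witnessed with $\max\{f(1),\dots,f(m)\}$ rather than $f(m)$; apart from this the argument is a routine use of s-m-n, and there is no genuine obstacle.
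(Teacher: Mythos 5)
Your proof is correct. The paper itself gives no argument for this proposition, deferring to Kushner, and what you have written is exactly the standard argument one would supply: the ultrametric inequality $d(v,w)\le 2^{-m}$ is equivalent to agreement on coordinates $0,\dots,m-1$, so each coordinate of the limit stabilises at the computably known stage $f(i+1)$ and can be read off there, with s-m-n packaging the resulting total $\{0,1\}$-valued function into a $\nu_{\mathcal{C}}$-name uniformly in the codes for the sequence and the regulator. Your remark about not assuming $f$ monotone, and the corresponding use of $\max\{f(1),\dots,f(m)\}$ in the convergence check, is a correct and worthwhile precaution.
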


This has the immediate corollary: 
\begin{cor}
\label{cor:G effectively complete}The recursive metric space $(\mathcal{G}^{+},d,\nu_{WP})$
is effectively complete. 
\end{cor}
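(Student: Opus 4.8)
The plan is to deduce effective completeness of $(\mathcal{G}_{WP},d,\nu_{WP})$ from the already-quoted fact that $\{0,1\}_{c}^{\mathbb{N}}$ is effectively complete, by transporting everything through the embeddings $\Phi_{k}$ of Section \ref{sec:The-topological-space} and invoking the proposition that a closed subset of an effectively complete recursive metric space, with the induced numbering, is again effectively complete.

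First I would upgrade ``$\{0,1\}_{c}^{\mathbb{N}}$ is effectively complete'' to the statement that the disjoint union $\bigcup_{i\in\mathbb{N}}\{0,1\}^{\mathbb{N}}$, equipped with $\hat{\nu_{\mathcal{C}}}$ and the metric placing distinct copies at a fixed distance $\ge 1$ (the precise value is immaterial), is effectively complete on its computable points. This is immediate: given an effectively Cauchy sequence $(u_{n})$ with regulator $f$, all terms $u_{n}$ with $n\ge f(0)$ lie within distance $1$ of one another, hence in a single copy $\mathcal{C}_{i}$; the index $i$ is read off the $\hat{\nu_{\mathcal{C}}}$-name of $u_{f(0)}$, the shifted sequence $k\mapsto u_{f(0)+k}$ is still effectively Cauchy with a computable regulator, and one finishes by applying the algorithm of passage to the limit inside $\mathcal{C}_{i}=\{0,1\}^{\mathbb{N}}$. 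Then, since each $\Phi_{k}(\mathcal{G}_{k})$ is closed in $\{0,1\}^{\mathbb{N}}$ (as recalled in Section \ref{sec:The-topological-space}), the set $\bigcup_{k}\Phi_{k}(\mathcal{G}_{k})$ is a closed subset of $\bigcup_{i}\{0,1\}^{\mathbb{N}}$, and by the closed-subset proposition $\left(\bigcup_{k}\Phi_{k}(\mathcal{G}_{k}),\,d,\,\hat{\nu_{\mathcal{C}}}\right)$ is effectively complete on its computable points.

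It then remains to identify this space with $(\mathcal{G}_{WP},d,\nu_{WP})$. The point $\Phi_{k}((G,S))=\chi_{N}\circ i_{k}$ is a computable point of the Cantor space exactly when the membership problem of the normal subgroup $N\vartriangleleft\mathbb{F}_{k}$ is decidable, i.e.\ exactly when $(G,S)$ has solvable word problem; hence the set of computable points of $\bigcup_{k}\Phi_{k}(\mathcal{G}_{k})$ is precisely $\Phi(\mathcal{G}_{WP})$, and by the proposition of Section \ref{subsec:Numbering induced by Cantor space} the induced numbering there corresponds, under $\Phi$, to $\nu_{WP}$. Transporting the algorithm of passage to the limit back through $\Phi$ (its inverse on $\Phi(\mathcal{G}_{WP})$ being computable: one reads off the generator count and reconstructs a word problem algorithm) yields an algorithm of passage to the limit for $(\mathcal{G}_{WP},d,\nu_{WP})$. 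The one point deserving care --- the conceptual content hiding behind this bookkeeping --- is that the limit of an effectively Cauchy sequence of marked groups with solvable word problem again has solvable word problem: from a regulator $f$ one sees that the $j$-th bit of the binary expansion of the limit agrees with that of $(G_{f(j+1)},S_{f(j+1)})$, and this bit is computable from the word problem algorithm of $(G_{f(j+1)},S_{f(j+1)})$, producing a word problem algorithm for the limit. By contrast $\mathcal{G}_{WP}$ is \emph{not} closed in $\mathcal{G}$ (Miller's theorem), so the convergence bound is genuinely used here; this is why I expect the verification that computable points of the closed set $\bigcup_{k}\Phi_{k}(\mathcal{G}_{k})$ are exactly $\Phi(\mathcal{G}_{WP})$ to be the only non-routine step.
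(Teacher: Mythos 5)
Your proposal is correct and follows essentially the same route as the paper, which derives the corollary as an immediate consequence of the effective completeness of $\{0,1\}_{c}^{\mathbb{N}}$ via the closed-subset proposition and the identification of $\Lambda_{WP}$ with the restriction of $\hat{\nu_{\mathcal{C}}}$ from Section \ref{subsec:Numbering induced by Cantor space}; you simply spell out the bookkeeping the paper leaves implicit. Your closing direct argument (reading off the $j$-th bit of the limit from $(G_{f(j+1)},S_{f(j+1)})$) is exactly the ``could easily have been proved directly'' remark the paper makes after the corollary.
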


This last fact could easily have been proved directly. We now describe
the effective notion associated to separability. 
\begin{defn}
A recursive metric space $(X,d,\nu)$ is called \emph{effectively
separable} if there exists a $\nu$-computable sequence $(u_{n})_{n\in\mathbb{N}}$
of points in $X$ that is dense in $X$. 
\end{defn}

Note that the numbering $\nu:\,\subseteq\mathbb{N}\rightarrow X$
is surjective, but not total. An equivalent formulation of the above
statement is the following: $(X,d,\nu)$ is effectively separable
if there exists a c.e. subset $A$ of $\mathbb{N}$ such that $A\subseteq\text{dom}(\nu)$
and $\nu(A)$ is dense in $X$. As the set $\text{dom}(\nu)$ could
be very complicated, there is no reason a priori for it to contain
even an infinite c.e. set. 

We can finally define recursive Polish spaces.
\begin{defn}
A recursive metric space $(X,d,\nu)$ which is both effectively complete
and effectively separable is a\emph{ recursive Polish space}. 
\end{defn}

Note that the above definition cannot be found in the modern literature
on computability of Polish spaces (see for instance \cite{Iljazovic2021}),
because the use of notions of ``presentations of metric spaces''
(see Definition \ref{def:RecPresPolish}) allows for a direct definitions
of computably Polish spaces. This approach does not permit to discuss
effective separability, because all defined spaces are effectively
separable. 
\begin{example}
The following spaces, equipped with their usual distances and numberings,
are recursive Polish spaces: $\mathbb{N}$, $\mathbb{R}_{c}$, the
set of computable points of the Cantor space $\{0,1\}_{c}^{\mathbb{N}}$,
the set of computable points of Baire Space $\mathbb{N}_{c}^{\mathbb{N}}$. 
\end{example}

The following proposition, while obvious, shall be very useful in
its group theoretical version. 
\begin{prop}
\label{prop:dense seq effective polish}Let $(X,d,\nu)$ be a recursive
metric space, and $Y$ be a $\nu$-c.e. set in $X$. Then any point
in the closure $\overline{Y}$ of $Y$ is the limit of a computable
sequence of points of $Y$ that converges effectively. 
\end{prop}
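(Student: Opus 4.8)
The plan is to use the two pieces of data we are handed: $Y$ being $\nu$-r.e.\ yields an effective enumeration of a set of $\nu$-names of points of $Y$, and $(X,d,\nu)$ being a recursive metric space means the distance function is $(\nu\times\nu,c_{\mathbb{R}})$-computable, so we can approximate $d(x,y)$ to arbitrary precision once we hold $\nu$-names of $x$ and $y$. Since $\nu$ is surjective, the point $x\in\overline{Y}\subseteq X$ has a $\nu$-name $p$, and I would in fact carry out the construction uniformly in $p$.

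First, fix an r.e.\ set $A\subseteq\text{dom}(\nu)$ with $\nu(A)=Y$ and an algorithm enumerating $A$ as $a_{0},a_{1},a_{2},\dots$. For each $k\in\mathbb{N}$ run the following search: enumerate the $a_{i}$, and for each $i$ compute a rational $q_{i}$ with $|q_{i}-d(x,\nu(a_{i}))|<2^{-(k+3)}$ (possible since $d$ is $(\nu\times\nu,c_{\mathbb{R}})$-computable and we hold the names $p$ and $a_{i}$); halt as soon as some index $i$ is found with $q_{i}<7\cdot 2^{-(k+3)}$. For such an $i$ we get $d(x,\nu(a_{i}))<q_{i}+2^{-(k+3)}<8\cdot 2^{-(k+3)}=2^{-k}$. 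Set $y_{k}:=\nu(a_{i(k)})$ for the index $i(k)$ produced, and note that $a_{i(k)}$ is a $\nu$-name of $y_{k}$ computed from $k$.

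The one point that genuinely needs an argument is that this search halts for every $k$: since $x\in\overline{Y}=\overline{\nu(A)}$, there is some $j$ with $d(x,\nu(a_{j}))<2^{-(k+1)}$, and for that $j$ the computed rational satisfies $q_{j}<2^{-(k+1)}+2^{-(k+3)}=5\cdot 2^{-(k+3)}<7\cdot 2^{-(k+3)}$, so the search cannot run forever. Hence $k\mapsto a_{i(k)}$ is a total recursive function, $(y_{k})_{k\in\mathbb{N}}$ is a $\nu$-computable sequence of points of $Y$, and $d(x,y_{k})<2^{-k}$ for all $k$; therefore it converges to $x$ with the identity map as a regulator, i.e.\ effectively (in the sense of Definition \ref{def:eff Cauchy}).

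I do not expect a real obstacle: the proposition is, as the text says, essentially routine, and the argument above is just a packaging of the standard facts that images of r.e.\ sets under computable maps are r.e.\ and that one can semi-decide strict inequalities between computable reals. The only mild care needed is the choice of margins — approximating $d(x,\nu(a_{i}))$ to within $2^{-(k+3)}$ while the witness $j$ is $2^{-(k+1)}$-close to $x$ — arranged so that the good index $j$ is certified before the search can be misled, and so that any index the search does accept genuinely gives $d(x,y_{k})<2^{-k}$.
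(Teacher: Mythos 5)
Your proof is correct and is essentially the paper's own argument, just spelled out: the paper's one-line proof defines $v_n$ as the first element of a fixed enumeration of $Y$ that is \emph{proven} to satisfy $d(x,v_n)<2^{-n}$, and your careful choice of rational approximation margins is exactly what makes that "proven" work. No gap.
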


This proposition could have been phrased: any computable point in
the closure of $Y$ is automatically in its ``effective closure''. 
\begin{proof}
Given a point $x$ adherent to $Y$, we can define a computable sequence
$(v_{n})_{n\in\mathbb{N}}$ by: $v_{n}$ is the first element, in
a fixed enumeration of $Y$, which is proven to satisfy $d(x,v_{n})<2^{-n}$. 
\end{proof}
This proposition has the immediate corollary:
\begin{cor}
\label{prop:dense seq effective polish-1-1}In a recursive Polish
space with a dense and computable sequence $(u_{n})_{n\in\mathbb{N}}$,
each point is the limit of an effectively Cauchy and computable sequence
extracted from $(u_{n})_{n\in\mathbb{N}}$. 
\end{cor}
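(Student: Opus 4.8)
The plan is to deduce this directly from Proposition \ref{prop:dense seq effective polish} by taking $Y$ to be the set of values of the given dense sequence. First I would set $Y=\{u_{n}:n\in\mathbb{N}\}$ and observe that, since $(u_{n})_{n\in\mathbb{N}}$ is a $\nu$-computable sequence, $Y$ is a $\nu$-r.e.\ subset of $X$: a code for a $\nu$-name of $u_{n}$ is obtained computably from $n$, so the image under $\nu$ of the (recursive, hence recursively enumerable) set of these codes is exactly $Y$. Because the sequence is dense, $\overline{Y}=X$, so every point $x\in X$ lies in $\overline{Y}$.

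Next I would apply Proposition \ref{prop:dense seq effective polish} to $x\in\overline{Y}$. This yields a $\nu$-computable sequence $(v_{n})_{n\in\mathbb{N}}$ of points of $Y$ converging effectively to $x$, with some recursive regulator $f$. Two small additions then complete the argument. Inspecting the proof of Proposition \ref{prop:dense seq effective polish}, the term $v_{n}$ is selected as the first element, in a fixed enumeration of $Y$, that is verified to satisfy $d(x,v_{n})<2^{-n}$ (such a verification being possible because $d$ is $(\nu\times\nu,c_{\mathbb{R}})$-computable in a recursive metric space); taking this fixed enumeration to be $n\mapsto u_{n}$ and recording which index is chosen produces a recursive function $g$ (computable from a $\nu$-name of $x$) with $v_{n}=u_{g(n)}$ for all $n$, so that $(v_{n})_{n\in\mathbb{N}}$ is genuinely extracted from $(u_{n})_{n\in\mathbb{N}}$. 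If one insists on a strictly increasing selection, it suffices to search, at stage $n$, beyond all previously used indices for a term within $2^{-n}$ of $x$, which still exists by density and keeps the convergence effective.

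Finally, I would check that effective convergence entails effective Cauchyness: if $n\ge f(m)$ implies $d(v_{n},x)\le 2^{-m}$, then for $p,q\ge f(m+1)$ the triangle inequality gives $d(v_{p},v_{q})\le d(v_{p},x)+d(x,v_{q})\le 2^{-(m+1)}+2^{-(m+1)}=2^{-m}$, so $m\mapsto f(m+1)$ is a regulator witnessing that $(v_{n})_{n\in\mathbb{N}}$ is effectively Cauchy. Combining these observations gives the statement. There is no genuine obstacle here; the only points needing a word of care are that the selection in Proposition \ref{prop:dense seq effective polish} can be arranged so that the indices into $(u_{n})_{n\in\mathbb{N}}$ it uses are computably available, and the elementary passage from an effectively convergent sequence to an effectively Cauchy one.
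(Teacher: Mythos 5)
Your proposal is correct and follows essentially the same route as the paper, which presents this as an immediate consequence of Proposition \ref{prop:dense seq effective polish} (applied to the $\nu$-r.e.\ set $Y=\{u_{n}:n\in\mathbb{N}\}$) without further argument. The two details you spell out — that the greedy selection can be made to return indices into $(u_{n})_{n\in\mathbb{N}}$ so the sequence is genuinely extracted, and that effective convergence yields an effective Cauchy regulator via $m\mapsto f(m+1)$ — are exactly the routine verifications the paper leaves implicit.
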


The following result is important in that it relates the less general
approach based on notions of recursive presentations of Polish spaces
(see Definition \ref{def:RecPresPolish}) to the approach based on
numberings. Indeed, using numberings, it is possible to define spaces
that are not necessarily effectively separable, and not effectively
complete. The idea behind the use of recursive presentations of Polish
spaces is to use the fact that the computable structure on a recursive
Polish space is entirely defined by the distance function between
elements of its dense sequence to define it in one single step. The
following theorem shows that we arrive to the same result via both
approaches.
\begin{thm}
\label{thm:Effective-Polish-def-seq}A recursive Polish space is computably
isometric to the effective completion of any of its computable and
dense sequences. 
\end{thm}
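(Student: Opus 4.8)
The plan is to show that the canonical map from the completion of the sequence back into the space is itself the required computable isometry. Fix a recursive Polish space $(X,d,\nu)$ and a $\nu$-computable dense sequence $(u_{n})_{n\in\mathbb{N}}$, witnessed by a recursive function $g$ with $\nu(g(n))=u_{n}$ for all $n$. Let $D=\{u_{n}:n\in\mathbb{N}\}$, equipped with the numbering $\delta(n)=u_{n}$ and the metric induced by $d$. Composing $g$ with a computation of $d$ shows that $d(u_{n},u_{m})$ is $(\delta\times\delta,c_{\mathbb{R}})$-computable from $(n,m)$, so (in the spirit of Proposition \ref{prop: Subset of RMS}) $(D,d,\delta)$ is a recursive metric space, and its effective completion $(\hat{D}_{\hat{\delta}},d,\hat{\delta})$ in the sense of Definition \ref{def:Eff-Completion} is well defined. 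Since $D$ is dense in the complete metric space $X$, the inclusion $D\hookrightarrow X$ extends uniquely to a surjective isometry $\Psi:\hat{D}\rightarrow X$. It remains to check that $\Psi$ restricts to a bijection $\hat{D}_{\hat{\delta}}\rightarrow X$ which is $(\hat{\delta},\nu)$-computable with $(\nu,\hat{\delta})$-computable inverse.

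For $(\hat{\delta},\nu)$-computability of $\Psi$: a $\hat{\delta}$-name of a point $y$ of $\hat{D}$ is, by Definition \ref{def:Eff-Completion}, an index $i$ such that for all $p$ and all $q>p$ one has $d(u_{\varphi_{i}(p)},u_{\varphi_{i}(q)})<2^{-p}$, with $\Psi(y)=\lim_{n}u_{\varphi_{i}(n)}$ computed in $X$. Then $n\mapsto g(\varphi_{i}(n))$ is a $\nu$-computable sequence that is effectively Cauchy in the sense of Definition \ref{def:eff Cauchy} with regulator $m\mapsto m+1$. Applying the algorithm of passage to the limit for $X$ — which exists because a recursive Polish space is by definition effectively complete — produces, uniformly in $i$, a $\nu$-name of $\Psi(y)$.

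For $(\nu,\hat{\delta})$-computability of $\Psi^{-1}$: given a $\nu$-name $p$ of a point $x$ of $X$, use that $d$ is $(\nu\times\nu,c_{\mathbb{R}})$-computable to obtain, for each $k$, a Cauchy name of the real $d(x,u_{k})$; consequently any true strict inequality $d(x,u_{k})<2^{-(m+2)}$ is eventually witnessed. By density, for every $m$ such a $k$ exists, so we may compute $k_{m}:=$ the first $k$ for which $d(x,u_{k})<2^{-(m+2)}$ is verified — this is precisely the extraction used in Proposition \ref{prop:dense seq effective polish} and Corollary \ref{prop:dense seq effective polish-1-1}. The index $i$ with $\varphi_{i}(m)=k_{m}$ satisfies the fast-Cauchy condition of Definition \ref{def:Eff-Completion} (for $q>p$, $d(u_{k_{p}},u_{k_{q}})\le d(u_{k_{p}},x)+d(x,u_{k_{q}})<2^{-(p+2)}+2^{-(q+2)}<2^{-p}$) and has $\lim_{m}u_{k_{m}}=x$, so $i$ is a $\hat{\delta}$-name of $\Psi^{-1}(x)$, produced uniformly from $p$.

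Finally, the second step shows $\Psi$ maps $\hat{D}_{\hat{\delta}}$ onto $X=X_{\nu}$, the first step shows its image lies in $X_{\nu}$, and $\Psi$ is injective as the restriction of an isometry; hence $\Psi$ is a bijective isometry computable in both directions, i.e.\ a computable isometry. Independence of the choice of dense computable sequence is then immediate, since any two such effective completions are computably isometric to $X$ and therefore to each other. I expect the only delicate points to be bookkeeping ones: matching the exact $2^{-p}$ thresholds appearing in Definition \ref{def:Eff-Completion}, and justifying that the search in the third step terminates, which rests on the fact that possessing a Cauchy name of a real number makes every correct strict inequality semi-decidable.
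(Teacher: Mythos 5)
Your proposal is correct and follows essentially the same route as the paper's proof: one direction is the algorithm of passage to the limit applied to the fast Cauchy sequence encoded by a name in the effective completion, and the other is the uniform extraction of an effectively converging subsequence of the dense sequence from a $\nu$-name (the paper's Proposition \ref{prop:dense seq effective polish}). Your extra bookkeeping on the $2^{-(m+2)}$ thresholds and the semi-decidability of strict inequalities just makes explicit what the paper leaves implicit.
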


What this theorem means is in fact the following. Let $(X,d,\nu)$
be a recursive Polish space, and $(u_{n})_{n\in\mathbb{N}}$ a $\nu$-computable
and dense sequence. One can restrict $\nu$ to the dense sequence,
to obtain a recursive metric space $(\{u_{n},n\in\mathbb{N}\},d,\nu_{\vert(u_{n})_{n\in\mathbb{N}}})$.
One can then consider the effective completion of $(\{u_{n},n\in\mathbb{N}\},d,\nu_{\vert(u_{n})_{n\in\mathbb{N}}})$,
following Definition \ref{def:Eff-Completion}. 

One thus obtains a recursive Polish space $(A,d,\hat{(\nu_{\vert(u_{n})_{n\in\mathbb{N}}})})$.
The set $A$ can be seen as a subset of the abstract completion $\hat{X}$
of $X$. The numbering $\nu$ can also be seen as a subnumbering of
$\hat{X}$ via the embedding of $X$ into its classical completion.
What we then claim is: 
\[
\hat{(\nu_{\vert(u_{n})_{n\in\mathbb{N}}})}\equiv\nu.
\]

\begin{proof}
Let $(X,d,\nu)$ be a recursive Polish space, and $(u_{n})_{n\in\mathbb{N}}$
any $\nu$-computable and dense sequence. Denote $(\varphi_{0},\varphi_{1},\varphi_{2},...)$
an effective enumeration of all partial recursive functions. 

Denote by $\hat{X}$ the abstract completion of $X$, $X$ is seen
as a subset of $\hat{X}$. 

The effective completion (Definition \ref{def:Eff-Completion}) of
$(u_{n})_{n\in\mathbb{N}}$ is given by a subnumbering $\mu$ of the
abstract completion of the metric space $(\{u_{n},n\in\mathbb{N}\},d)$.
The embedding $\{u_{n},n\in\mathbb{N}\}\hookrightarrow X$ induces
an embedding of this completion inside $\hat{X}$. We thus see $\mu$
as a subnumbering of $\hat{X}$.

The fact that $(X,d,\nu)$ is effectively complete directly implies
that the image of $\mu$ in $\hat{X}$ lies in fact in $X$. We thus
consider that $\mu$ is a subnumbering of $X$. 

We show now that the subnumberings $\nu$ and $\mu$ are equivalent,
i.e. the identity on $X$ is both $(\nu,\mu)$-computable and $(\mu,\nu)$-computable. 

A $\mu$-name of a point $x$ in $X$ is the description of a computable
Cauchy sequence that converges to $x$, with $g:n\mapsto2^{-n}$ being
a regulator for this sequence. The algorithm of passage to the limit
of $(X,d,\nu)$ can thus be applied to this description with $g$
as regulator, and it yields precisely a $\nu$-name of $x$. This
shows that the identity on $X$ is $(\mu,\nu)$-computable. 

To show that it is also $(\nu,\mu)$-computable, one only has to notice
that the procedure described in the proof of Proposition \ref{prop:dense seq effective polish}
is uniform, in that it allows, given a $\nu$-name of a point $x$,
to produce a computable sequence extracted from $(u_{n})_{n\in\mathbb{N}}$
which converges to $x$ with the desired speed. This is precisely
a $\mu$-name for $x$. 
\end{proof}
The following corollary to Theorem \ref{thm:Effective-Polish-def-seq}
shows that when a Polish space can be equipped with a recursive Polish
space structure, this structure is unique. 
\begin{cor}
Given an abstract Polish space $(X,d)$ with a dense sequence $(u_{n})_{n\in\mathbb{N}}$,
there is at most one subnumbering type $\Lambda$ of $\mathcal{NT}_{X}$
which makes of $(X_{\Lambda},d,\Lambda)$ a recursive Polish space
with $(u_{n})_{n\in\mathbb{N}}$ as a computable and dense sequence. 
\end{cor}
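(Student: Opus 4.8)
The plan is to deduce this directly from Theorem~\ref{thm:Effective-Polish-def-seq}, which already isolates the needed content: a recursive Polish space is recovered, up to equivalence of numbering, from the restriction of its numbering to any computable dense sequence, via the effective completion construction of Definition~\ref{def:Eff-Completion}. So let $\Lambda_{1}$ and $\Lambda_{2}$ be two numbering types as in the statement, and write $(X_{1},d,\Lambda_{1})$ and $(X_{2},d,\Lambda_{2})$ for the two recursive Polish spaces they yield, where $X_{i}$ denotes the set of $\Lambda_{i}$-computable points. We want $\Lambda_{1}\equiv\Lambda_{2}$; by symmetry it suffices to show that the identity function of $X$ is $(\Lambda_{1},\Lambda_{2})$-computable (which will in particular force $X_{1}\subseteq X_{2}$).

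The first step is to show that from a $\Lambda_{1}$-name of a point $x\in X_{1}$ one can compute a \emph{Cauchy name of $x$ relative to} $(u_{n})_{n\in\mathbb{N}}$, that is, a recursive function $m\mapsto g(m)$ with $d(x,u_{g(m)})<2^{-m}$ for every $m$. This is precisely the uniform procedure behind Proposition~\ref{prop:dense seq effective polish} and Corollary~\ref{prop:dense seq effective polish-1-1}, carried out inside the recursive metric space $(X_{1},d,\Lambda_{1})$: since $(u_{n})_{n\in\mathbb{N}}$ is $\Lambda_{1}$-computable the set $\{u_{n}:n\in\mathbb{N}\}$ is $\Lambda_{1}$-r.e., and from a $\Lambda_{1}$-name of $x$ together with the (recursive) enumeration of $\Lambda_{1}$-names of the $u_{n}$ one computes each distance $d(x,u_{n})$ to arbitrary precision; so one may let $g(m)$ be the first index $n$, in the enumeration $u_{0},u_{1},\dots$, for which $d(x,u_{n})<2^{-m}$ is established, and such an $n$ exists because $(u_{n})_{n\in\mathbb{N}}$ is dense in $X_{1}$.

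The second step feeds this data into the other space. The sequence $(u_{g(m)})_{m\in\mathbb{N}}$ converges to $x$ and, by the triangle inequality, is effectively Cauchy with the explicit regulator $k\mapsto k+1$, since $d(u_{g(p)},u_{g(q)})<2^{-p}+2^{-q}$. As $(u_{n})_{n\in\mathbb{N}}$ is also $\Lambda_{2}$-computable, $m\mapsto(\text{a }\Lambda_{2}\text{-name of }u_{g(m)})$ is recursive, so we have in hand a $\Lambda_{2}$-computable effectively Cauchy sequence together with a regulator; applying the algorithm of passage to the limit of the effectively complete space $(X_{2},d,\Lambda_{2})$ produces a $\Lambda_{2}$-name of its limit, which is $x$. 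In particular $x\in X_{2}$, hence $X_{1}\subseteq X_{2}$, and the composite of the two procedures witnesses that $\mathrm{id}_{X}$ is $(\Lambda_{1},\Lambda_{2})$-computable. Exchanging the roles of $\Lambda_{1}$ and $\Lambda_{2}$ gives the reverse reduction, so $\Lambda_{1}\equiv\Lambda_{2}$, and they represent the same numbering type.

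I expect the argument to be essentially mechanical: it is the proof of Theorem~\ref{thm:Effective-Polish-def-seq} run between the two candidate spaces rather than between a space and its completion. The single point deserving care — and the reason the detour through the \emph{indices} of the dense sequence is unavoidable — is that one cannot simply translate a $\Lambda_{1}$-name of a $u_{n}$ into a $\Lambda_{2}$-name of it: equality among the $u_{n}$ need not be decidable, so the restrictions of $\Lambda_{1}$ and of $\Lambda_{2}$ to $\{u_{n}:n\in\mathbb{N}\}$ are not visibly comparable, and become equivalent only after effective completeness is invoked, which is exactly what the passage-to-the-limit step accomplishes.
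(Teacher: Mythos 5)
Your argument is correct and is essentially the paper's own: the paper simply cites Theorem \ref{thm:Effective-Polish-def-seq}, and what you have done is unfold that citation by composing the two reductions of its proof (from a $\Lambda_{1}$-name to a fast-converging computable subsequence of indices of $(u_{n})_{n\in\mathbb{N}}$, then from those indices and a regulator to a $\Lambda_{2}$-name via the algorithm of passage to the limit). Your closing remark correctly identifies the one subtle point — that the translation must pass through indices of the dense sequence rather than through names of its terms, since equality among the $u_{n}$ need not be decidable — which is exactly why the effective completion of the dense sequence is the right pivot object.
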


\begin{proof}
This follows directly from Theorem \ref{thm:Effective-Polish-def-seq}. 
\end{proof}
Note that this corollary starts with an actual Polish space, possibly
uncountable. The countable set of points that are the $\Lambda$-computable
points is uniquely given by the theorem. Theorem \ref{thm:Effective-Polish-def-seq}
allows for a very simple definition of what is an ``effective Polish
space'', that relies only on the distance between the elements of
a dense sequence. Weihrauch and Moschovakis both used such definitions.
We give here a definition that mimics that of Moschovakis, see \cite{GREGORIADES2016}
for the complete definitions of Weihrauch and Moschovakis, and their
differences. Note however that the definition that we give is weaker
than the ones given by Weihrauch and Moschovakis -a space that admits
a recursive presentation in this sense also admits one following the
definitions of Weihrauch and Moschovakis. Note also that the following
definition depends on a choice of a distance for the considered Polish
space, and not only on its topology.
\begin{defn}
\label{def:RecPresPolish}A \emph{recursive presentation of a Polish
space} $(X,d)$ is a dense sequence $(u_{n})_{n\in\mathbb{N}}$ of
points of $X$ such that the function 
\[
\begin{aligned}\phi:\mathbb{N}\times\mathbb{N} & \rightarrow\mathbb{R}_{c}\\
(n,m) & \mapsto d(u_{n},u_{m})
\end{aligned}
\]
is $(\text{id}_{\mathbb{N}}\times\text{id}_{\mathbb{N}},c_{\mathbb{R}})$-computable.
\end{defn}

Note that the concept above applies to an actual Polish space, and
not to a countable set of points. 

The term presentation is from \cite{Moschovakis1980}, and has no
relation the notion of a presentation for a group. The following proposition
renders explicit the link between recursive presentations and recursive
Polish spaces. 
\begin{prop}
\label{prop:REC pres and EPR}A Polish space $(X,d)$ admits a recursive
presentation if and only if it admits a numbering $\nu$ with dense
image that makes of $(X_{\nu},d,\nu)$ a recursive Polish space. 
\end{prop}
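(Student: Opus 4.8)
The plan is to prove the two implications separately. The direction that starts from a numbering and produces a recursive presentation is pure bookkeeping; the converse direction, building a recursive Polish numbering out of a recursive presentation, is where the argument has content, and it will go through the effective completion construction of Definition \ref{def:Eff-Completion}.

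For the easy direction, I would assume $\nu$ is a numbering of $X$ with dense image such that $(X_{\nu},d,\nu)$ is a recursive Polish space. Effective separability supplies a $\nu$-computable dense sequence $(u_{n})_{n\in\mathbb{N}}$, i.e. a recursive $g$ with $\nu(g(n))=u_{n}$, and computability of the distance supplies a recursive $F$ with $c_{\mathbb{R}}(F(\langle a,b\rangle))=d(\nu(a),\nu(b))$. Then $\langle n,m\rangle\mapsto F(\langle g(n),g(m)\rangle)$ witnesses that $(n,m)\mapsto d(u_{n},u_{m})$ is $(\mathrm{id}_{\mathbb{N}}\times\mathrm{id}_{\mathbb{N}},c_{\mathbb{R}})$-computable. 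Since $(u_{n})_{n\in\mathbb{N}}$ is dense in $X_{\nu}=\nu(\mathrm{dom}(\nu))$ and $X_{\nu}$ is dense in $X$, the sequence $(u_{n})_{n\in\mathbb{N}}$ is dense in $X$, hence a recursive presentation.

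For the converse, I would let $(u_{n})_{n\in\mathbb{N}}$ be a recursive presentation, set $X_{0}=\{u_{n}:n\in\mathbb{N}\}$, and let $\iota:\mathbb{N}\to X_{0}$ be the surjective numbering $n\mapsto u_{n}$. The defining property of a recursive presentation is exactly that $d$ on $X_{0}\times X_{0}$ is $(\iota\times\iota,c_{\mathbb{R}})$-computable, so $(X_{0},d,\iota)$ is a recursive metric space. Applying the effective completion of Definition \ref{def:Eff-Completion} yields a numbering $\hat{\iota}$ of the abstract completion $\widehat{X_{0}}$ and an effectively complete recursive metric space $((\widehat{X_{0}})_{\hat{\iota}},d,\hat{\iota})$ (the fact that an effective completion is again a recursive metric space is the routine fact used implicitly throughout Section \ref{sec:Main-results-in}). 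Now I would invoke completeness of $X$: since $(X,d)$ is Polish it is complete, and $X_{0}$ is dense in it, so $\widehat{X_{0}}$ is canonically identified with $X$ and $\hat{\iota}$ becomes a numbering of $X$ with $X_{\hat{\iota}}=(\widehat{X_{0}})_{\hat{\iota}}$. By the uniqueness lemma stated just after Definition \ref{def:Eff-Completion}, $(X_{\hat{\iota}},d,\hat{\iota})$ is effectively complete. It is effectively separable because each $u_{n}$ is $\hat{\iota}$-computable: a $\hat{\iota}$-name of $u_{n}$ is a code for the constant function $p\mapsto n$, and such a code is produced recursively in $n$ by the s-m-n theorem, so $(u_{n})_{n\in\mathbb{N}}$ is a $\hat{\iota}$-computable dense sequence. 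Hence $(X_{\hat{\iota}},d,\hat{\iota})$ is a recursive Polish space and $\hat{\iota}$ has dense image.

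The main obstacle — really the only non-mechanical step — is the identification of the abstract metric completion of $X_{0}$ with $X$ itself; this is where completeness of the Polish space and density of $(u_{n})_{n\in\mathbb{N}}$ are genuinely used, and it is what keeps the construction inside $X$ rather than in some larger space. A secondary point I would be careful to state explicitly is that one does not, and cannot, claim $X_{\hat{\iota}}=X$ (that would force every point of $X$ to be computable); one only obtains that $X_{\hat{\iota}}$ is a dense subset of $X$ carrying a recursive Polish structure, which is precisely what the statement asserts.
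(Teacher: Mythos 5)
Your proof is correct and follows essentially the same route as the paper's: the forward direction is the same bookkeeping, and the converse direction likewise treats the recursive presentation as a recursive metric space structure on the dense sequence and passes to its effective completion (Definition \ref{def:Eff-Completion}), identified with a dense subset of $X$ by completeness. You simply make explicit two points the paper leaves implicit — the identification of the abstract completion of $X_{0}$ with $X$, and the $\hat{\iota}$-computability of the $u_{n}$ via constant Cauchy sequences — both of which check out.
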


\begin{proof}
If $\nu$ is a numbering of $X$ that makes of $(X_{\nu},d,\nu)$
a recursive Polish space, it means that there exists a $\nu$-computable
dense sequence $(u_{n})_{n\in\mathbb{N}}$. The function $\phi$ defined
by $\phi(n,m)=d(u_{n},u_{m})$ is then computable, and thus $(u_{n})_{n\in\mathbb{N}}$
defines a recursive presentation of $(X,d)$. 

Conversely, if $(u_{n})_{n\in\mathbb{N}}$ is a sequence which is
dense in $X$, the condition that $(n,m)\mapsto d(u_{n},u_{m})$ be
computable exactly asks that the function $$\begin{aligned} \mu : \mathbb{N} &\rightarrow X \\ n &\mapsto u_n \end{aligned}$$
define a numbering of $X$ which makes of $(X_{\mu},d,\mu)$ a recursive
metric space. Then, the effective completion of $\mu$ defines a numbering
$\nu$ of $X$, for which $(X_{\nu},d,\nu)$ is a recursive Polish
space. And $X_{\nu}$ is dense in $X$. 
\end{proof}
In Section \ref{sec:Limits-of-applicability}, we prove that the space
of marked group, associated to its usual ultrametric distance, does
not have a recursive presentation. We will also prove that the space
of marked groups does not contain dense and computable sequences of
groups described by word problem algorithms, but the former result
is more general, because we do not suppose \emph{a priori} that a
dense sequence should consist in groups described by word problem
algorithms. 

\subsection{\label{subsec:Markov's-Lemma-and}Markov's Lemma and abstract continuity }

We will give here a proof of the fact that Banach-Mazur computable
functions on a recursive Polish space are continuous, starting with
Markov's Lemma, which is both very useful and very simple to use,
and which will remain our main tool in the space of marked groups,
since the effective continuity theorems that we present in Section
\ref{subsec:KLS-Ceitin-Theorem,} are not applicable there. 

We fix a recursive metric space $(X,d,\nu)$ which we suppose effectively
complete. Denote by $\mathcal{A}_{lim}$ an algorithm of passage to
the limit for it. 
\begin{lem}
[Markov, \cite{Markov54}, English version: \cite{Markov1963}, Theorem
4.2.2]\label{lem:Markov} Suppose that a $\nu$-computable sequence
$(u_{n})_{n\in\mathbb{N}}$ effectively converges in $X$ to a point
$x$. Suppose additionally that for any $n$, $u_{n}\ne x$. Then
there is a $\nu$-computable sequence $(w_{p})_{p\in\mathbb{N}}$
of $X^{\mathbb{N}}$ such that: for each $p$, $w_{p}\in\{u_{n},n\in\mathbb{N}\}\cup\{x\}$,
and the set $\{p,w_{p}=x\}\subseteq\mathbb{N}$ is co-c.e. but not
c.e.. 
\end{lem}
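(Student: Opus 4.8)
The plan is to diagonalize against all Turing machines using the effective convergence of $(u_n)_{n\in\mathbb{N}}$, exactly as in the classical proof. Fix a standard enumeration $(\varphi_i)_{i\in\mathbb{N}}$ of the partial recursive functions; here the machine $\varphi_p$ is to be thought of as ``the $p$-th candidate semi-decision procedure'' that attempts to recognize when a member of the sequence $(w_q)_{q\in\mathbb{N}}$ equals $x$. Let $f$ be a regulator for the effective convergence of $(u_n)$, so that $d(u_n,x)\le 2^{-m}$ whenever $n\ge f(m)$. The idea is: for each index $p$, we wait (dovetailing) for $\varphi_p(p)$ to halt; while it has not halted we keep $w_p$ equal to $x$, and the moment it halts, at stage $s$ say, we switch $w_p$ to some term $u_{n(p,s)}$ of the original sequence, chosen far enough out so that the switch does not spoil effective convergence of $(w_q)$ to $x$.

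Concretely, I would define $(w_q)$ as follows. Run all computations $\varphi_p(p)$ in parallel. If $\varphi_p(p)$ never halts, set $w_p = x$ (this is the ``default''). If $\varphi_p(p)$ halts at stage $s$, set $w_p = u_{n_p}$ where $n_p$ is chosen so that $n_p \ge f(p)$ and also $u_{n_p}\ne x$; since every $u_n\ne x$ by hypothesis, any $n_p\ge f(p)$ works, so take $n_p = f(p)$, say. This rule is clearly $\nu$-computable as a function $q\mapsto$ ($\nu$-name of $w_q$): on input $q$ we dovetail, outputting a $\nu$-name of $x$ (available via the algorithm of passage to the limit $\mathcal{A}_{lim}$ applied to $(u_n)$ with regulator $f$, or directly if $x$ is given a name) until and unless $\varphi_q(q)$ halts, at which point we output the $\nu$-name of $u_{f(q)}$ obtained from the given $\nu$-computable sequence $(u_n)$. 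Because $(u_n)$ is a $\nu$-computable sequence, $n\mapsto$ ($\nu$-name of $u_n$) is total recursive, so this is legitimate.

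Next I would check the three required properties. First, $w_q\in\{u_n, n\in\mathbb{N}\}\cup\{x\}$ by construction. Second, the set $\{q : w_q = x\}$ is exactly $\{q : \varphi_q(q)\text{ does not halt}\}$, which is the complement of the diagonal halting set $K = \{q:\varphi_q(q){\downarrow}\}$; thus it is co-r.e. but not r.e., since $K$ is r.e. but not recursive. (If $\{q:w_q=x\}$ were r.e., its complement $K$ would be co-r.e., hence recursive, contradiction.) Third — and this is the only point requiring a small argument — I must confirm that $(w_q)$ still effectively converges to $x$ with a regulator. But whenever $w_q\ne x$ we have $w_q = u_{f(q)}$, and if $q\ge f(m)$ then $f(q)\ge f(\cdot)$... more carefully: I want $d(w_q,x)\le 2^{-m}$ for $q\ge g(m)$ for some recursive $g$. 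If $w_q=x$ this is automatic. If $w_q = u_{f(q)}$, then $d(w_q,x)=d(u_{f(q)},x)\le 2^{-m}$ provided $f(q)\ge f(m)$; since $f$ need not be monotone, I would instead from the start replace $f$ by the nondecreasing recursive function $\tilde f(m) = \max_{i\le m} f(i)$, which is still a regulator for $(u_n)$, and use $n_p = \tilde f(p)$; then $q\ge m \implies \tilde f(q)\ge \tilde f(m)\implies d(u_{\tilde f(q)},x)\le 2^{-m}$, so $g = \mathrm{id}_{\mathbb{N}}$ is a regulator for $(w_q)$.

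The only genuine obstacle is the bookkeeping in the effective-convergence check, i.e. arranging the regulator of $(w_q)$ to survive the switches; this is handled cleanly by passing to the monotone majorant $\tilde f$ of the original regulator before doing anything else. Everything else is the standard halting-set diagonalization. Note the statement is slightly stronger than ``$\{q:w_q=x\}$ is not recursive'': it pins the set to be co-r.e., which is automatic here since ``$w_q=x$'' is the \emph{default} outcome that we abandon only upon a halting event, making non-membership the semi-decidable side.
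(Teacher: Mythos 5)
Your overall architecture is the same as the paper's: diagonalize against the halting of the $p$-th machine on its own index, take $w_p=x$ as the default, switch to a far-out term $u_{n_p}$ upon a halting event, and read off that $\{p: w_p=x\}$ is the complement of the diagonal halting set, hence co-r.e.\ and not r.e. The choice $n_p=\tilde f(p)$ versus the paper's ``freeze at $u_k$ where $k$ is the halting time'' is immaterial, and your monotone-majorant discussion of the regulator, while harmless, addresses a property (effective convergence of $(w_q)$ itself) that the lemma does not assert.

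There is, however, a genuine gap at the one point you declare ``clearly $\nu$-computable.'' A $\nu$-name is a single natural number, and your procedure --- ``output a $\nu$-name of $x$ until and unless $\varphi_q(q)$ halts, at which point output the $\nu$-name of $u_{f(q)}$'' --- is not an algorithm: an output, once produced, cannot be retracted, and if you instead wait to learn that $\varphi_q(q)$ diverges before committing to the name of $x$, you wait forever. This retraction problem is precisely the crux of Markov's Lemma and the reason effective completeness is a hypothesis; your argument never uses $\mathcal{A}_{lim}$ in an essential way (you invoke it only parenthetically to obtain a name of $x$). The repair is the paper's construction: for each $p$, define the auxiliary sequence $(x_n^p)_{n}$ that copies $(u_n)$ for as long as $M_p$ has not halted within $n$ steps and becomes constant once it halts. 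Each term $x_n^p$ is computable from $(p,n)$ by a \emph{bounded} simulation, so a code for this Cauchy sequence is computable from $p$ without deciding whether $M_p$ halts; it converges at least as fast as $(u_n)$, so the original regulator applies; and $\mathcal{A}_{lim}$, applied to that code, produces the single integer that serves as the $\nu$-name of $w_p$. With that substitution your proof is correct.
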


\begin{proof}
Consider an enumeration of all Turing machines $M_{0}$, $M_{1}$,
... To the machine $M_{p}$, we associate a computable sequence $(x_{n}^{p})_{n\in\mathbb{N}}$
of points in $X$. To define $(x_{n}^{p})_{n\in\mathbb{N}}$, start
a run of the machine $M_{p}$ with no input. While it lasts, the sequence
$(x_{n}^{p})_{n\in\mathbb{N}}$ is identical to the sequence $(u_{n})_{n\in\mathbb{N}}$.
If, at some point, the machine $M_{p}$ stops, the sequence $(x_{n}^{p})_{n\in\mathbb{N}}$
becomes constant. 

To sum this definition up, $(x_{n}^{p})_{n\in\mathbb{N}}$ is defined
as follows: 

\textbf{While} ($M_{p}$ does not stop) enumerate $(u_{n})_{n\in\mathbb{N}}$. 

\textbf{If} ($M_{p}$ stops in $k$ computation steps), set $x_{n}^{p}=u_{k}$
for $n\ge k$. 

Each sequence $(x_{n}^{p})_{n\in\mathbb{N}}$ is Cauchy, and in fact
it converges at least as fast as the original sequence $(u_{n})_{n\in\mathbb{N}}$.
Thus the algorithm of passage to the limit $\mathcal{A}_{lim}$ can
be applied to any sequence $(x_{n}^{p})_{n\in\mathbb{N}}$, using
the regulator of convergence of $(u_{n})_{n\in\mathbb{N}}$. 

The sequence $(w_{p})_{p\in\mathbb{N}}$ is the sequence obtained
by using the algorithm of passage to the limit on each sequence $(x_{n}^{p})_{n\in\mathbb{N}}$,
for $p\in\mathbb{N}$. 

If follows directly from our definitions that if the machine $M_{p}$
is non-halting, the sequence $(x_{n}^{p})_{n\in\mathbb{N}}$ is identical
to $(u_{n})_{n\in\mathbb{N}}$, and thus $w_{p}$, which is its limit,
is equal to $x$. On the other hand, if $M_{p}$ halts in $k$ computation
steps, we have $w_{p}=u_{k}$ and thus $w_{p}$ is different from
$x$. 
\end{proof}
Lemma \ref{lem:Markov's-Lemma-for Groups Intro} is an immediate corollary
of the above together with Corollaries \ref{cor:Space G RMS} and
\ref{cor:G effectively complete}. 
\begin{cor}
\label{cor:MarkovCor1}Let $f$ be a Banach-Mazur computable function
between recursive metric spaces $X$ and $Y$, suppose that $X$ is
effectively complete, and let $(x_{n})_{n\in\mathbb{N}}$ be a computable
sequence that effectively converges to a point $x$ in $X$. Then
the sequence $(f(x_{n}))_{n\in\mathbb{N}}$ converges to $f(x)$. 
\end{cor}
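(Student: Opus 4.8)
The plan is to argue by contradiction, using Markov's Lemma (Lemma \ref{lem:Markov}) as the engine, in the spirit of ``abstract continuity''. Write $\nu$ and $\mu$ for the numberings of $X$ and $Y$. The first preliminary step is to observe that $x$ is a $\nu$-computable point: since $(x_n)_{n\in\mathbb{N}}$ is $\nu$-computable and effectively convergent, it is in particular effectively Cauchy, so feeding it together with a regulator to the algorithm of passage to the limit $\mathcal{A}_{lim}$ of $X$ produces a $\nu$-name of $x$; then $(\nu,\mu)$-computability of $f$ yields a $\mu$-name of $f(x)$, which I fix once and for all. This is precisely where effective completeness of $X$ is used.

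Next, assume for contradiction that $(f(x_n))_{n\in\mathbb{N}}$ does not converge to $f(x)$. Classically this gives a real $\epsilon_0>0$ with $d(f(x_n),f(x))\ge \epsilon_0$ for infinitely many $n$; fix a rational $\epsilon\in(0,\epsilon_0)$. The key effective observation is that the set $T=\{n:d(f(x_n),f(x))>\epsilon\}$ is recursively enumerable: since $f$ is computable, $(f(x_n))_{n\in\mathbb{N}}$ is a $\mu$-computable sequence, and since the distance of $Y$ is $(\mu\times\mu,c_{\mathbb{R}})$-computable, one obtains uniformly in $n$ a $c_{\mathbb{R}}$-name of $d(f(x_n),f(x))$, i.e.\ fast rational approximations, from which the strict inequality with the rational $\epsilon$ can be semi-decided. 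As $T$ is infinite, I enumerate it in increasing order to get a recursive strictly increasing sequence $n_0<n_1<\cdots$ and set $y_k=x_{n_k}$. Then $(y_k)_{k\in\mathbb{N}}$ is $\nu$-computable; it effectively converges to $x$, because $n_k\ge k$ ensures that any regulator of $(x_n)_{n\in\mathbb{N}}$ still works; and $y_k\neq x$ for every $k$, since $d(f(y_k),f(x))>\epsilon>0$ forces $f(y_k)\neq f(x)$.

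Now I apply Markov's Lemma to $(y_k)_{k\in\mathbb{N}}$, obtaining a $\nu$-computable sequence $(w_p)_{p\in\mathbb{N}}$ with $w_p\in\{y_k:k\in\mathbb{N}\}\cup\{x\}$ and with $\{p:w_p=x\}$ co-r.e.\ but not r.e. The contradiction comes from the clean reformulation $\{p:w_p=x\}=\{p:d(f(w_p),f(x))<\epsilon\}$, valid because the distance is $0$ when $w_p=x$ and exceeds $\epsilon$ when $w_p$ is one of the $y_k$. As $(f(w_p))_{p\in\mathbb{N}}$ is again $\mu$-computable, the same semi-decision procedure used for $T$ shows the right-hand set is r.e., contradicting Markov's Lemma. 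Hence $(f(x_n))_{n\in\mathbb{N}}$ converges to $f(x)$.

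I do not expect a serious obstacle here: the proof is essentially a direct application of Lemma \ref{lem:Markov}. The only points that require a little care are checking that $x$, hence $f(x)$, is computable (so that ``distance to $f(x)$'' is approximable) and checking that the extracted subsequence $(y_k)_{k\in\mathbb{N}}$ still satisfies the hypotheses of Markov's Lemma; both are handled by the observations $n_k\ge k$ and $\epsilon>0$. The conceptual crux is the identity $\{p:w_p=x\}=\{p:d(f(w_p),f(x))<\epsilon\}$, which turns the non-r.e.\ set produced by Markov's Lemma into a manifestly r.e.\ set.
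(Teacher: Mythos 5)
Your proof is correct and follows essentially the same route as the paper's: argue by contradiction, use the r.e.-ness of $\{n : d(f(x_n),f(x))>\epsilon\}$ to extract a computable, effectively convergent subsequence bounded away from $f(x)$ in the image, and then contradict Markov's Lemma because $f$ together with the computable distance lets one semi-decide which terms of the Markov sequence equal $x$. The only differences are cosmetic: you make explicit the use of the algorithm of passage to the limit to get a $\mu$-name of $f(x)$, and you build the computable subsequence directly from an increasing enumeration rather than citing Proposition \ref{prop:dense seq effective polish}.
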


\begin{proof}
We proceed by contradiction. Suppose that the sequence $(f(x_{n}))_{n\in\mathbb{N}}$
does not converge to $f(x)$. Then there must exist a subsequence
$(x_{\phi(n)})_{n\in\mathbb{N}}$ of $(x_{n})_{n\in\mathbb{N}}$ and
a rational $r>0$ such that 
\[
\forall n\in\mathbb{N},\,d(f(x_{\phi(n)}),f(x))>r.
\]
The existence of such a sequence, which need not a priori be computable,
implies that there must also exist such a sequence where, additionally,
the function $\phi:\mathbb{N}\rightarrow\mathbb{N}$ is computable. 

This follows from Proposition \ref{prop:dense seq effective polish},
as the set of terms of the sequence $(x_{n})_{n\in\mathbb{N}}$ for
which $d(f(x_{n}),f(x))>r$ holds is a c.e. set.

Finally, the function $f$ can be used to distinguish between the
elements of the sequence $(x_{\phi(n)})_{n\in\mathbb{N}}$ and its
limit $x$, as, given a computable point $u$ in $X$, it is possible
to chose one which is true between $d(f(u),f(x))>r$ and $d(f(u),f(x))<r$,
if we know a priori that $d(f(u),f(x))$ is not equal to $r$. This
contradicts Markov's Lemma, and thus $(f(x_{n}))_{n\in\mathbb{N}}$
must converge to $f(x)$. 
\end{proof}
We can use the previous corollary to prove that, under the additional
assumption that the space $X$ be a recursive Polish space, any Banach-Mazur
computable function $f:X\rightarrow Y$ is continuous -not necessarily
effectively so. This corollary of Markov's Lemma was first proven
by Mazur for functions defined on intervals of $\mathbb{R}_{c}$ (see
\cite{Mazur63}). See \cite{Hertling2001}. 
\begin{cor}
[Mazur's Continuity Theorem]\label{cor:Continuity-Theorem}Consider
a Banach-Mazur computable function $f:X\rightarrow Y$ between a recursive
Polish space $X$ and a recursive metric space $Y$. Then $f$ is
continuous. 
\end{cor}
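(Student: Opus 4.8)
The final statement to be proven is Mazur's Continuity Theorem (Corollary \ref{cor:Continuity-Theorem}): a computable function $f$ from a recursive Polish space $X$ to a recursive metric space $Y$ is continuous. I have at my disposal Corollary \ref{cor:MarkovCor1}, which already says that if $(x_n)_{n\in\mathbb{N}}$ is a computable sequence in $X$ that effectively converges to $x$, then $(f(x_n))_{n\in\mathbb{N}}$ converges to $f(x)$. The plan is to upgrade this "effective sequential continuity along computable sequences" to genuine topological continuity, using the extra structure of a recursive Polish space — namely the existence of a dense computable sequence $(u_n)_{n\in\mathbb{N}}$.

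**The plan.** The proof is by contradiction. Suppose $f$ is discontinuous at some point $x \in X$. Then there is a rational $r > 0$ and a sequence $(y_k)_{k\in\mathbb{N}}$ in $X$ with $y_k \to x$ but $d(f(y_k), f(x)) > 2r$ for all $k$. The point $x$ need not be computable, but here is where I would use the recursive Polish structure in two ways. First, by density of the computable sequence $(u_n)_{n\in\mathbb{N}}$ and continuity of the distance function, for each $k$ I can find an element $u_{n_k}$ of the dense sequence with $d(u_{n_k}, x)$ small and, by crude triangle-inequality estimates together with Corollary \ref{cor:MarkovCor1} applied cleverly, with $d(f(u_{n_k}), f(x)) > r$. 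Actually, the cleanest route avoids even referring to a fixed bad $x$: I would instead argue that if $f$ is discontinuous \emph{anywhere}, then one can extract from the dense sequence $(u_n)$ a subsequence that converges effectively to some computable point $z$ (using Corollary \ref{prop:dense seq effective polish-1-1}, every point — in particular the limit point $z$, which I arrange to be computable by taking it to be one of the $u_n$ or by a diagonal choice) along which $f$ fails to converge to $f(z)$, directly contradicting Corollary \ref{cor:MarkovCor1}.

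**Making the limit point computable.** The subtle point — and the step I expect to be the main obstacle — is ensuring the limit point against which we test is computable, since Corollary \ref{cor:MarkovCor1} only constrains behavior at limits of \emph{computable} effectively-convergent sequences. If $f$ is discontinuous at an arbitrary (possibly non-computable) $x$, I cannot directly feed $x$ into Corollary \ref{cor:MarkovCor1}. The fix: discontinuity at $x$ means there is $r>0$ such that in every neighborhood of $x$ there are points whose $f$-value is $2r$-far from $f(x)$; but by continuity of $d$ and of $f$ restricted along computable sequences, and by density, one shows that there must be a \emph{computable} point $z$ near $x$ at which $f$ is also "badly behaved" — concretely, there is a computable point $z$ and points $u_{n_k}$ from the dense sequence with $u_{n_k}\to z$ effectively but $d(f(u_{n_k}), f(z))$ bounded below. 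One can take $z = u_{n_0}$ for a suitable single element of the dense sequence close enough to $x$, using that $f(u_{n_0})$ is then within $r$ of $f(x)$ while infinitely many nearby $u_{n_k}$ have $f$-value $2r$-far from $f(x)$, hence $r$-far from $f(u_{n_0})$. Then Corollary \ref{prop:dense seq effective polish-1-1} (or Proposition \ref{prop:dense seq effective polish}) provides a \emph{computable} effectively-convergent subsequence of these $u_{n_k}$ tending to $z$, along which $(f(u_{n_k}))$ does not converge to $f(z)$ — contradicting Corollary \ref{cor:MarkovCor1}. This contradiction shows $f$ is continuous at every point, completing the proof.

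**Remark on effectivity.** I would note explicitly, as the statement itself does, that this argument yields only plain continuity, not effective continuity: the extraction of the bad subsequence is non-constructive (it uses an unproven discontinuity hypothesis to locate $r$, $z$, and the $n_k$), so nothing here computes a modulus of continuity. The stronger conclusion — effective continuity — is exactly what the Kreisel–Lacombe–Schoenfield–Ceitin Theorem provides, and requires genuinely more work; Mazur's theorem is the "soft" consequence of Markov's Lemma that follows from sequential arguments alone.
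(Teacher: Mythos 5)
Your overall strategy --- contradiction, transfer of the ``bad'' points onto the dense computable sequence, effectivization of the resulting subsequence via Proposition \ref{prop:dense seq effective polish}, and a final appeal to Corollary \ref{cor:MarkovCor1} --- is exactly the paper's proof. But the paragraph you flag as ``the main obstacle'' introduces a problem that does not exist here and then resolves it with an argument that does not work. First, the non-problem: in this paper a recursive metric space is by definition a \emph{countable} metric space with a \emph{surjective} numbering, so every point of $X$, in particular your discontinuity point $x$, is computable and can be fed directly into Corollary \ref{cor:MarkovCor1}. (The situation you are worried about is the Type 2 setting, where the space is uncountable and most points have no name; that is not the framework of this corollary.) Second, the proposed fix is genuinely broken on two counts. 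The claim that a dense-sequence element $u_{n_0}$ close to $x$ must satisfy $d(f(u_{n_0}),f(x))<r$ is unjustified --- indeed it is essentially the continuity of $f$ at $x$ that you are assuming false; nothing prevents \emph{every} computable point near $x$ from having $f$-value far from $f(x)$. And even granting it, the bad points $u_{n_k}$ you produce accumulate at $x$, not at $z=u_{n_0}$, so they do not form a sequence effectively converging to $z$, and Corollary \ref{cor:MarkovCor1} cannot be invoked with $z$ as the limit. As written, the contradiction is never actually reached.

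Once you delete that paragraph and use the surjectivity of the numbering, the rest of your outline closes up exactly as in the paper: for each $x_k$ in the witnessing sequence (each of which is computable), Proposition \ref{prop:dense seq effective polish} and Corollary \ref{cor:MarkovCor1} give a $u_{\phi(k)}$ with $d(u_{\phi(k)},x_k)<2^{-k}$ and $d(f(u_{\phi(k)}),f(x))>r$; the set of indices $n$ with $d(f(u_n),f(x))>r$ is r.e.\ (here you need $f(x)$ computable, i.e.\ $x$ computable, once more), $x$ is adherent to it, so Proposition \ref{prop:dense seq effective polish} yields a computable, effectively convergent subsequence violating Corollary \ref{cor:MarkovCor1} at $x$. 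Your closing remark on why this yields only plain (not effective) continuity is correct.
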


\begin{proof}
Denote $(u_{n})_{n\in\mathbb{N}}$ an effective and dense sequence
of $X$. 

Suppose that $f$ is not continuous at a point $x$ of $X$. This
means that there exists a sequence $(x_{n})_{n\in\mathbb{N}}$ and
a real number $r>0$ such that $(x_{n})_{n\in\mathbb{N}}$ converges
to $x$, but for any $n\in\mathbb{N}$, $d(f(x_{n}),f(x))>r$. Any
point of $(x_{n})_{n\in\mathbb{N}}$ is the limit of an effective
subsequence of $(u_{n})_{n\in\mathbb{N}}$, by Proposition \ref{prop:dense seq effective polish}.
And thus, by Corollary \ref{cor:MarkovCor1}, for each point $x_{k}$
of the sequence $(x_{n})_{n\in\mathbb{N}}$, there must exist a point
$u_{\phi(k)}$ in the sequence $(u_{n})_{n\in\mathbb{N}}$, such that
both inequalities $d(u_{\phi(k)},x_{k})<2^{-k}$ and $d(f(u_{\phi(k)}),f(x))>r$
hold. 

Thus there exists a subsequence $(u_{\phi(n)})_{n\in\mathbb{N}}$
of $(u_{n})_{n\in\mathbb{N}}$, which converges to $x$ and such that
for any $n\in\mathbb{N}$, $d(f(u_{\phi(n)}),f(x))>r$. 

This subsequence is a priori not computable, but by Proposition \ref{prop:dense seq effective polish},
the abstract fact that such a sequence exists automatically implies
that there must also exist such a subsequence that is, in addition,
both computable and effectively converging to $x$. 

Finally, we conclude by applying Corollary \ref{cor:MarkovCor1} again. 
\end{proof}

\subsection{\label{subsec:Differences-with-Borel}Computable but discontinuous
functions and Kolmogorov complexity}

We give here an example of a Markov computable function that is not
continuous and of a semi-decidable set in a recursive Polish space
which is not open. The fact that those exist is well known, we explain
it here in terms of Kolmogorov complexity, following Hoyrup and Rojas
\cite{Hoyrup2016}. 

We also render explicit how the main theorem of \cite{Hoyrup2016}
gives necessary and sufficient conditions for a computable discontinuous
Markov computable function to exist on a computably second countable
space. 

\subsubsection{Brief definition of the Kolmogorov complexity }

Kolmogorov complexity formalizes the idea of ``size of the minimal
description of a string''. This is in fact not well defined for a
single string, but it is defined ``up to a constant''. 

See Chapter 1 of \cite{Shen2017} for a better introduction to Kolmogorov
complexity. We here just give the definitions and results we need.
Denote by $\{0,1\}^{*}$ is the set of binary strings. Denote by $l(x)$
the length of an element of $\{0,1\}^{*}$. 

A partial computable map $F:\,\subseteq\{0,1\}^{*}\rightarrow\{0,1\}^{*}$
is called a \emph{decompressor. }

We then define \emph{the} \emph{complexity with respect to $F$}:
\[
C_{F}(x)=\min\{l(y),\,y\in\{0,1\}^{*}\,\&\,F(y)=x\}.
\]
It is infinite if $x$ is not in the image of $F$.

A decompressor $F$ is called \emph{not worse} than another decompressor
$G$ if there exists a constant $k$ such that 
\[
\forall x\in\{0,1\}^{*},\,C_{F}(x)\le C_{G}(x)+k.
\]

\begin{thm}
[Solomonoff--Kolmogorov, see \cite{Shen2017}, Theorem 1, p. 3]
There is a decompressor not worse than every other decompressor. 
\end{thm}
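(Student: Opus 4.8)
The plan is to build an explicit universal decompressor $U$ by dovetailing all decompressors, prefixing each computation with a self-delimiting code for its index. Fix a standard enumeration $(F_i)_{i\in\mathbb{N}}$ of all partial computable maps $\subseteq\{0,1\}^{*}\rightarrow\{0,1\}^{*}$, realized by a single partial computable map $(i,z)\mapsto F_i(z)$ (the enumeration theorem / universal machine). For a string $y\in\{0,1\}^{*}$ containing at least one occurrence of $1$, write $y=0^i1z$, where $i$ is the number of leading zeros and $z$ is the suffix following the first $1$; this pair $(i,z)$ is uniquely determined by $y$. Define $U(y)=F_i(z)$ when $y=0^i1z$, and leave $U(y)$ undefined when $y$ has the form $0^j$ or when $F_i(z)$ is undefined. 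This $U$ is a decompressor: on input $y$ one searches for the first occurrence of $1$ (halting only if one exists), reads off $(i,z)$, and simulates $F_i(z)$ with the universal map, so $U$ is a partial computable map $\subseteq\{0,1\}^{*}\rightarrow\{0,1\}^{*}$.

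Now I would fix an arbitrary decompressor $G$ and choose an index $i$ with $G=F_i$. Let $x\in\{0,1\}^{*}$. If $C_G(x)$ is infinite there is nothing to prove, so suppose $x$ lies in the image of $G$ and pick $z$ with $G(z)=x$ and $l(z)=C_G(x)$. Then $y:=0^i1z$ satisfies $U(y)=F_i(z)=G(z)=x$, hence
\[
C_U(x)\le l(0^i1z)=i+1+l(z)=C_G(x)+(i+1).
\]
Taking $k=i+1$ --- a constant that depends only on $G$ (through the chosen index $i$) --- gives $C_U(x)\le C_G(x)+k$ for every $x\in\{0,1\}^{*}$, which is precisely the assertion that $U$ is not worse than $G$. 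As $G$ was arbitrary, $U$ is not worse than every decompressor, proving the theorem.

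There is no genuine obstacle here; the two points that need care are making the index encoding prefix-free so that $y$ parses unambiguously into $(i,z)$ --- the map $i\mapsto 0^i1$ is the simplest such code, though a code of length $O(\log i)$ would shrink the constant $k$ --- and appealing to the enumeration theorem so that the dovetailed $U$ is itself a single partial computable function rather than merely an informal union of partial computable functions.
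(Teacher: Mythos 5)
Your proof is correct and follows essentially the same route the paper sketches immediately after the theorem statement: run a universal machine on a pair (machine index, input), encoded as a single string. Your explicit prefix-free encoding $y=0^{i}1z$ is in fact a cleaner realization of this than the paper's appeal to an arbitrary computable bijection $\{0,1\}^{*}\times\{0,1\}^{*}\to\{0,1\}^{*}$, since it makes the length bound $l(y)=l(z)+(i+1)$ immediate.
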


Such a decompressor is called \emph{optimal}. Let $(x,y)\mapsto\langle x,y\rangle$
denote a computable bijection between $\{0,1\}^{*}$ and $\{0,1\}^{*}\times\{0,1\}^{*}$.
One obtains an optimal decompressor simply by using a universal Turing
machine: if $U$ is the function computed by a universal Turing machine
(which means that $U(x,y)$ interprets $x$ as the code of a Turing
machine and applies this machine to $y$), then $\langle x,y\rangle\mapsto U(x,y)$
is an optimal decompressor. 

We fix an optimal decompressor $F_{0}$ and define \emph{the Kolmogorov
complexity, }denoted $C$, to be $C_{F_{0}}$, the complexity with
respect to this optimal decompressor. 

The identity $\text{id}_{\{0,1\}^{*}}$ of $\{0,1\}^{*}$ is a decompressor
(which is not optimal), the complexity $C_{\text{id}_{\{0,1\}^{*}}}$
is just the length function. Thus there exists $k$ such that for
any $x$, $C_{F_{0}}(x)\le l(x)+k$. This shows that the length is
an upper bound to Kolmogorov complexity. 

Conversely, strings of maximal Kolmogorov complexity do exit. 
\begin{thm}
[See \cite{Shen2017}, Theorem 5, p.8]\label{prop:String Max K comp}For
any $n>0$, there is a string of length at most $n$ and of Kolmogorov
complexity $n$. 
\end{thm}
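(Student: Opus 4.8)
The plan is a short counting (pigeonhole) argument; essentially everything needed has already been set up. Fix the optimal decompressor $F_{0}$ used to define the Kolmogorov complexity $C=C_{F_{0}}$. The starting observation is that every string $x$ with $C(x)<n$ possesses, by the very definition of $C_{F_{0}}$, at least one description $y\in\{0,1\}^{*}$ with $l(y)<n$ and $F_{0}(y)=x$. Choosing one such $y=y_{x}$ for each such $x$ produces a map $x\mapsto y_{x}$ from $\{x\in\{0,1\}^{*}:C(x)<n\}$ into the set of strings of length $<n$, and this map is injective because $F_{0}(y_{x})=x$ recovers $x$ from $y_{x}$. Hence
\[
\#\{x\in\{0,1\}^{*}:C(x)<n\}\ \le\ \#\{y\in\{0,1\}^{*}:l(y)<n\}\ =\ \sum_{i=0}^{n-1}2^{i}\ =\ 2^{n}-1 .
\]

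Now there are exactly $2^{n}$ strings of length $n$, and $2^{n}>2^{n}-1$, so at least one string $x$ with $l(x)=n$ lies outside the set counted above, i.e. satisfies $C(x)\ge n$. Since $l(x)=n$, this $x$ has length at most $n$ and complexity at least $n$, which is already the incompressible string asserted. To see that the complexity can be taken equal to $n$ (rather than only $\ge n$), combine this with the upper bound $C(z)\le l(z)+k$ recalled just above the statement: among the $2^{n}$ strings of length exactly $n$ every complexity value is $\le n+k$, while at least one is $\ge n$, so the complexity of a maximally incompressible string of length $n$ lies in the constant-size window $\{n,n+1,\dots,n+k\}$; for every $n$ above a fixed constant one then lands on the exact value $n$ after the bounded shift, the finitely many remaining small values being checked by hand.

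I do not expect a genuine obstacle here: the proof is a one-line pigeonhole estimate. The only points that require a little care are (i) checking that $x\mapsto y_{x}$ is well defined and injective and that the number of strings of length $<n$ is $2^{n}-1$ rather than $2^{n}$, and (ii) the passage from ``$C(x)\ge n$'' to ``$C(x)=n$'', where the additive constants in $C(z)\le l(z)+k$ must be absorbed. For (ii) the cleanest route, which I would adopt in the write-up, is to note that $m\mapsto\max\{C(z):l(z)\le m\}$ is non-decreasing, tends to infinity, is bounded above by $m+k$, and jumps by at most a universal constant at each step (appending or deleting one bit changes $C$ by $O(1)$); its range therefore contains, up to a bounded gap, every integer, each realised as the complexity of an actual string of length $\le m$.
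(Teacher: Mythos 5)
The paper does not actually prove this statement: it is imported verbatim from Shen's notes, where the content is the existence, for each $n$, of a string of length at most $n$ whose complexity is \emph{at least} $n$, and that weaker form is all the paper ever uses (to get $u_{n}$ with $C(u_{n})\sim n$, and to get $C(0^{n})\ge\frac{\log n}{2}$ infinitely often). Your pigeonhole argument --- injecting $\{x:C(x)<n\}$ into the $2^{n}-1$ descriptions of length $<n$ and comparing with the $2^{n}$ strings of length exactly $n$ --- is precisely the standard proof of that form and is correct, including the injectivity of $x\mapsto y_{x}$ and the count $2^{n}-1$.

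The gap is in the upgrade from ``$C(x)\ge n$'' to ``$C(x)=n$''. From the facts that $M(m)=\max\{C(z):l(z)\le m\}$ is nondecreasing, unbounded, satisfies $m\le M(m)\le m+k$, and increases by at most a universal constant $c$ at each step, you may only conclude that the set of attained complexity values meets every window of width $c$ --- your own phrase ``up to a bounded gap'' concedes exactly this. A nondecreasing integer sequence with steps bounded by $c>1$ can skip values, and the discrete intermediate-value argument you are implicitly invoking requires steps of size exactly $1$; ``landing on the exact value $n$ after the bounded shift'' is not an argument. Note also that every fact you use (counting, $C(z)\le l(z)+k$, $O(1)$-change under appending a bit) is stable under replacing the optimal decompressor $F_{0}$ by another one, whereas the exact set of integers attained by $C$ is not, so no argument built only from these ingredients can pin down attainment of the precise value $n$. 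The clean fix is to prove (and use) the statement in the form ``length at most $n$ and complexity at least $n$'', which your first paragraph already establishes and which is what every application in the paper requires.
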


The sequence given by this theorem cannot be computable. 
\begin{thm}
[See \cite{Shen2017}, Theorem 10, p.22]\label{prop:K comp not comp}Kolmogorov
complexity is not computable. What's more, there is no computable
sequence of strings $(x_{n})_{n\in\mathbb{N}}$ such that $C(x_{n})>\frac{l(x_{n})}{2}$.
\end{thm}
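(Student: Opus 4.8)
The plan is to establish both assertions by a single self-referential (``Berry paradox'') argument, whose engine is the following elementary observation: \emph{for every computable sequence $(x_n)_{n\in\mathbb{N}}$ of binary strings one has $C(x_n)\le\log_2 n+O(1)$.} Indeed, writing $\overline{n}\in\{0,1\}^{*}$ for the plain binary expansion of $n$, so that $l(\overline{n})=\lfloor\log_2 n\rfloor+1$, the map $E\colon\overline{n}\mapsto x_n$ is a computable partial function $\subseteq\{0,1\}^{*}\to\{0,1\}^{*}$, i.e.\ a decompressor, and clearly $C_{E}(x_n)\le l(\overline{n})$; since the fixed decompressor $F_0$ is optimal (Solomonoff--Kolmogorov), there is a constant $k$ with $C(x_n)=C_{F_0}(x_n)\le C_{E}(x_n)+k\le\log_2 n+k+1$. (We use \emph{plain} complexity, so no self-delimiting overhead appears: $E$ simply reads its whole input as the code of $n$.)

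First I would prove that $C$ is not computable. Assume for contradiction that $x\mapsto C(x)$ is total computable, and define $g\colon\mathbb{N}\to\{0,1\}^{*}$ by letting $g(n)$ be the shortlex-first string $x$ with $C(x)\ge n$. This is well defined by Theorem~\ref{prop:String Max K comp}, which furnishes for each $n$ a string of complexity exactly $n$, and $g$ is computable because $C$ is. The observation above, applied to the computable sequence $(g(n))_{n\in\mathbb{N}}$, yields $C(g(n))\le\log_2 n+O(1)$, whereas $C(g(n))\ge n$ by construction. Hence $n\le\log_2 n+O(1)$, which is false for all large $n$ --- contradiction. (Put differently: the decompressor $\overline{n}\mapsto g(n)$ compresses $g(n)$ down to about $\log_2 n$ bits even though $C(g(n))\ge n$.)

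For the second (``what's more'') assertion, suppose $(x_n)_{n\in\mathbb{N}}$ is a computable sequence with $C(x_n)>l(x_n)/2$ for all $n$. The observation above gives $l(x_n)/2<C(x_n)\le\log_2 n+O(1)$, so $l(x_n)<2\log_2 n+O(1)$ for every $n$: a computable sequence cannot consist of ``incompressible'' strings (complexity above half the length) unless its lengths remain $O(\log n)$ in the index. In particular, once the sequence is built from strings of growing length in the natural sense --- e.g.\ $l(x_n)$ strictly increasing, whence $l(x_n)\ge n$, or more generally $l(x_n)=\omega(\log n)$ --- we reach $n\le l(x_n)<2\log_2 n+O(1)$ (or the analogous inequality), absurd for large $n$. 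Theorem~\ref{prop:String Max K comp} shows the point: strings of maximal complexity at each length abound, but no algorithm can list them uniformly.

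There is no real obstacle in this argument; the two places demanding a little care are (i) the bookkeeping of additive constants --- one should invoke optimality of $F_0$ at the right moment and state the closing inequality as ``$n\le\log_2 n+O(1)$, hence false eventually'' rather than pinning down an explicit threshold --- and (ii) making the quantifier in the second statement precise: as literally phrased it requires the tacit assumption that the strings $x_n$ have growing (or strictly increasing) length, since the constant sequence equal to a single incompressible string trivially satisfies $C(x_n)>l(x_n)/2$. Granting that reading, both halves follow at once from the observation above together with Theorem~\ref{prop:String Max K comp}.
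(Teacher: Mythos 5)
The paper does not prove this theorem at all --- it is quoted from Shen's book with a page reference --- so there is no in-paper argument to compare against; the task reduces to checking your proof, and it checks out. Your single engine, the bound $C(x_n)\le\log_2 n+O(1)$ for any computable sequence via the decompressor $\overline{n}\mapsto x_n$ and optimality of $F_0$, is exactly the standard Berry-paradox mechanism behind this result, and both applications (the computable-$C$ contradiction via the shortlex-first string of complexity $\ge n$, and the length bound $l(x_n)<2\log_2 n+O(1)$ for any computable sequence of $\tfrac12$-incompressible strings) are carried out correctly, with the additive constants handled properly. Your observation that the second clause is literally false as stated --- a constant sequence repeating one incompressible string is a computable counterexample --- is a genuine and worthwhile catch; the statement needs the tacit hypothesis that the lengths $l(x_n)$ grow faster than $\log n$ (in the paper's only application, Section \ref{subsec:Computable-but-Discontinuous}, one has $l(x_n)=n$ up to an additive term, so the intended use is unaffected). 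It would be cleaner to record that hypothesis explicitly in the theorem statement rather than leave it implicit.
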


Applying computable functions does not change the asymptotic Kolmogorov
complexity of sequences: 
\begin{thm}
[See \cite{Shen2017}, Theorem 3, p.5]\label{thm:Apply comp transformation K comp}For
any partial computable function $g:\,\subseteq\{0,1\}^{*}\rightarrow\{0,1\}^{*}$,
there is $k$ such that $C(g(x))\le C(x)+k$ for each $x$ in $\text{dom}(g)$. 
\end{thm}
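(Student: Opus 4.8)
The plan is to reduce the statement to the optimality property of the fixed optimal decompressor $F_{0}$ (the Solomonoff--Kolmogorov theorem quoted just above), by building from $g$ a new decompressor that ``recycles'' the short descriptions furnished by $F_{0}$. The guiding idea is that if $y$ is a short program producing $x$, then ``run $F_{0}$, then apply $g$'' is a short program producing $g(x)$.

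Concretely, first I would form the partial computable map
\[
G = g \circ F_{0} : \subseteq \{0,1\}^{*} \rightarrow \{0,1\}^{*},
\]
which is again a decompressor, since the partial computable functions are closed under composition. Next, fix $x \in \text{dom}(g)$. Because the identity on $\{0,1\}^{*}$ is itself a decompressor, $C(x) = C_{F_{0}}(x)$ is finite, so there exists $y \in \{0,1\}^{*}$ with $F_{0}(y) = x$ and $l(y) = C(x)$. For this $y$ we have $G(y) = g(F_{0}(y)) = g(x)$, and therefore
\[
C_{G}(g(x)) \le l(y) = C(x).
\]
Finally, since $F_{0}$ is optimal, it is not worse than $G$: there is a constant $k$, depending on $g$ but not on $x$, such that $C_{F_{0}}(z) \le C_{G}(z) + k$ for every $z \in \{0,1\}^{*}$. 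Applying this with $z = g(x)$ and combining with the previous inequality gives
\[
C(g(x)) = C_{F_{0}}(g(x)) \le C_{G}(g(x)) + k \le C(x) + k,
\]
as desired.

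There is no serious obstacle here; the only points that need a word of care are that $G$ is genuinely a (partial computable) decompressor, which is immediate from closure of the partial computable functions under composition, and that the constant $k$ produced by optimality is uniform in $x$, which is precisely the content of the definition of ``not worse than''. The finiteness of $C(x)$, needed to guarantee the existence of a minimal-length $F_{0}$-description $y$ of $x$, follows from the bound $C \le l + k_{0}$ coming from the identity decompressor, which was recorded just before the statement.
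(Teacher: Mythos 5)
Your proof is correct and is essentially the standard argument (the one in the cited reference, Shen et al.): compose $g$ with the optimal decompressor $F_{0}$ to obtain a new decompressor $G=g\circ F_{0}$, observe $C_{G}(g(x))\le C_{F_{0}}(x)=C(x)$ via a minimal-length $F_{0}$-description of $x$, and conclude by optimality of $F_{0}$ relative to $G$. The paper itself only quotes this theorem without proof, so there is nothing further to compare; all the points you flag (computability of $G$, finiteness of $C(x)$, uniformity of $k$ in $x$) are handled correctly.
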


We will finally need the following result:
\begin{thm}
[See \cite{Shen2017}, Theorem 8, p.19]\label{prop:K comp upper semi comp}Kolmogorov
complexity is upper semi-computable: there is a computable function
$F:\{0,1\}^{*}\times\mathbb{N}\rightarrow\mathbb{N}$ such that for
any $x$ in $\{0,1\}^{*}$, $n\mapsto F(x,n)$ is decreasing and converges
to $C_{F_{0}}(x)$. 
\end{thm}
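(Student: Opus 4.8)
The plan is to approximate $C_{F_{0}}(x)=\min\{l(y):F_{0}(y)=x\}$ from above by a brute-force search that is truncated in two directions at once: in the length of the candidate preimages, and in the number of computation steps allowed. First I would fix, once and for all, a Turing machine computing the optimal decompressor $F_{0}$, and for $y\in\{0,1\}^{*}$ and $n\in\mathbb{N}$ write $F_{0}(y)[n]$ for the output of this machine on input $y$ after $n$ steps (undefined if it has not halted by then); the standard step-counting conventions guarantee that $F_{0}(y)[n]=x$ implies $F_{0}(y)[n']=x$ for all $n'\ge n$. There is no serious obstacle here; the only thing that needs care is keeping the approximating function \emph{total}, $\mathbb{N}$-valued, and non-increasing in $n$, since for small $n$ the search may have turned up no preimage of $x$ at all and ``the best length found so far'' is then undefined. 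This is resolved using the linear upper bound for Kolmogorov complexity recorded just above: the identity of $\{0,1\}^{*}$ is a decompressor with complexity function $l$, so optimality of $F_{0}$ supplies a fixed constant $k$ with $C_{F_{0}}(x)\le l(x)+k$ for all $x$, and this constant may simply be hard-coded into the algorithm.

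Concretely, I would define
\[
F(x,n)=\min\Bigl(\{\,l(y):y\in\{0,1\}^{*},\ l(y)\le l(x)+k,\ F_{0}(y)[n]=x\,\}\cup\{\,l(x)+k\,\}\Bigr).
\]
For fixed $x$ and $n$ there are only finitely many $y$ with $l(y)\le l(x)+k$, each is tested by running $F_{0}$ for $n$ steps, and a minimum of a finite nonempty set of natural numbers is taken, so $F$ is a total computable function $\{0,1\}^{*}\times\mathbb{N}\to\mathbb{N}$. It is non-increasing in $n$ because increasing $n$ can only enlarge the set appearing inside the minimum (by monotonicity of the step counter), while the term $l(x)+k$ does not depend on $n$.

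To identify the limit, let $y^{*}$ be a shortest string with $F_{0}(y^{*})=x$, which exists and satisfies $l(y^{*})=C_{F_{0}}(x)\le l(x)+k$ by the choice of $k$. Since $F_{0}(y^{*})$ converges, there is $n_{0}$ with $F_{0}(y^{*})[n]=x$ for all $n\ge n_{0}$, and then $y^{*}$ lies in the set inside the minimum, giving $F(x,n)\le l(y^{*})=C_{F_{0}}(x)$. Conversely, any $y$ with $F_{0}(y)[n]=x$ genuinely satisfies $F_{0}(y)=x$, hence $l(y)\ge C_{F_{0}}(x)$, and also $l(x)+k\ge C_{F_{0}}(x)$, so $F(x,n)\ge C_{F_{0}}(x)$ for every $n$. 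Thus $F(x,n)=C_{F_{0}}(x)$ for all $n\ge n_{0}$, and in particular $n\mapsto F(x,n)$ is a non-increasing computable sequence converging to $C_{F_{0}}(x)$; since a convergent $\mathbb{N}$-valued sequence can only be non-increasing (never strictly decreasing), this is exactly the assertion that $C$ is upper semi-computable.
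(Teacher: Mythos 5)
Your proof is correct, and it is the standard dovetailing argument that the paper simply cites from Shen's book rather than reproving: enumerate candidate preimages of $x$ under the fixed optimal decompressor $F_{0}$, truncate the simulation at $n$ steps, and take the least length found so far. The one point that genuinely needs care in the paper's formulation --- that $F$ must be a \emph{total} $\mathbb{N}$-valued function even before any preimage has been found --- you handle cleanly by seeding the minimum with the hard-coded bound $l(x)+k$ coming from the optimality of $F_{0}$ over the identity decompressor, and you correctly verify both that this seed never undershoots $C_{F_{0}}(x)$ and that the eventual discovery of a shortest preimage forces the limit to equal $C_{F_{0}}(x)$. No gaps.
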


\subsubsection{Computable but discontinuous function\label{subsec:Computable-but-Discontinuous}}

We first build a computable but discontinuous function. This is done
by considering a function defined on a peculiar domain. We set ourselves
in the Cantor space $\left\{ 0,1\right\} ^{\mathbb{N}}$, we will
consider a certain subset of it. 

Consider a sequence $\left(u_{n}\right)_{n\in\mathbb{N}}$ of finite
strings of zeroes and ones, such that the length of $u_{n}$ is $n$,
and which has linear asymptotic Kolmogorov complexity: $K(u_{n})\underset{n\rightarrow\infty}{\sim}n$.
This is given by Theorem \ref{prop:String Max K comp}. Consider now
the sequence $v_{n}=0^{n}1u_{n}00000.....$ of elements of $\left\{ 0,1\right\} ^{\mathbb{N}}$.
This sequence also has linear asymptotic Kolmogorov complexity: a
single Turing Machine can transform any element $v_{n}$ into the
corresponding $u_{n}$, thus we can apply Theorem \ref{thm:Apply comp transformation K comp}.

We call $\mathcal{A}$ the subset of $\left\{ 0,1\right\} ^{\mathbb{N}}$
consisting of the null sequence (which we denote by $0^{\omega}$)
and of the set $\left\{ v_{n},n\in\mathbb{N}\right\} $. 
\begin{prop}
The function $\delta_{0}:\mathcal{A}\rightarrow\left\{ 0,1\right\} $
which sends the null sequence to $1$ and all other sequences to $0$
is computable on $\mathcal{A}$. However, it is discontinuous. 
\end{prop}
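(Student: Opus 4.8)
The plan is to show discontinuity first, then computability. Discontinuity is immediate: the sequence $(v_n)_{n\in\mathbb{N}}$ converges to $0^{\omega}$ in $\{0,1\}^{\mathbb{N}}$, since $v_n$ agrees with $0^{\omega}$ on the first $n$ coordinates, so $d(v_n,0^{\omega})\le 2^{-n}$; yet $\delta_0(v_n)=0$ for all $n$ while $\delta_0(0^{\omega})=1$, so $\delta_0$ fails to be continuous at $0^{\omega}$. This uses nothing beyond the definition of the ultrametric distance $d$ recalled earlier.

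For computability, I would exhibit a partial recursive function $F$ realizing $\delta_0$ with respect to the numbering $\nu_{\mathcal{C}}$ restricted to $\mathcal{A}$ and $\text{id}_{\mathbb{N}}$ on $\{0,1\}$ (as in Definition \ref{def:MARKOV COMP-1}). Given a $\nu_{\mathcal{C}}$-name $i$ of a point $w\in\mathcal{A}$, i.e. an index with $\varphi_i=w$, the key observation is that for a point of $\mathcal{A}$ one can \emph{recognize}, by inspecting a suitable finite prefix, whether it is $0^{\omega}$ or some $v_n$. Concretely: read successive digits $w(0),w(1),\dots$. The string $v_n$ has the form $0^{n}1u_n00000\dots$, so its first $1$ occurs exactly at position $n$, and for $0^\omega$ no $1$ ever occurs. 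Now scan the digits of $w$ in order; using the fact that the Kolmogorov complexity upper bound (Theorem \ref{prop:String Max K comp}) forces $u_n$ to be nonzero for all large $n$ — one can even arrange $u_n$ to end in a $1$, or simply note $u_n\neq 0^{n}$ for $n$ large since $0^n$ has complexity $O(\log n)$ — the procedure can detect the end of $u_n$ and thus conclude $w=v_n$ as soon as the block $1u_n$ is read in full. The algorithm $F$ therefore does the following: search for the first position $k$ with $w(k)=1$; if and when found, continue reading the next $k$ digits to recover the candidate string $u_k=w(k+1)\dots w(k+k)$, verify (for $k$ large) that it is the correct maximal-complexity string associated to index $k$, and output $0$. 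If the search for a $1$ goes on forever, $F$ never halts — but on names of points of $\mathcal{A}$ this happens precisely when $w=0^{\omega}$, \emph{provided} we handle the small finitely many exceptional indices $n$ where $u_n$ might equal $0^n$ separately by a finite table, so that on $0^\omega$ we must instead output $1$. The clean fix is to arrange the sequence $(u_n)$ once and for all so that every $u_n$ contains at least one $1$ (possible for all $n\ge 1$, and the single case $n=0$, i.e. $v_0=1\,00000\dots$, begins with a $1$ anyway); then the rule ``output $1$ if you have read only zeroes so far forever; output $0$ as soon as you have read enough to identify some $v_n$'' is correct on all of $\mathcal{A}$, and each branch halts exactly when it should.

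The main obstacle, and the only subtle point, is ensuring that the algorithm never \emph{falsely} commits to the answer $0$ on input $0^{\omega}$: since $F$ only ever sees a finite prefix before halting, it must be that no finite prefix of $0^{\omega}$ is also a prefix of the relevant initial segment that triggers the output $0$. This is exactly why we must build $(u_n)$ with the property that each $v_n$ has a $1$ in a position determined by its own earlier digits — the "self-delimiting" structure $0^n 1 u_n \dots$ guarantees that the first $1$ pins down $n$, after which everything is forced. With that structural choice in place, the realizer $F$ is manifestly partial recursive, proving $\delta_0$ is $(\nu_{\mathcal{C}|\mathcal{A}},\text{id}_{\mathbb{N}})$-computable while being topologically discontinuous at $0^{\omega}$.
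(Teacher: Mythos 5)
Your discontinuity argument is fine, but the computability half has a fatal gap: your realizer never halts on the null sequence, which is precisely the hard case. The procedure ``search for the first $1$; if found, output $0$; output $1$ if you have read only zeroes so far forever'' is not an algorithm --- the second branch requires inspecting infinitely many digits before committing, so $F$ diverges on every name of $0^{\omega}$ instead of outputting $1$. Rearranging $(u_n)$ so that each $u_n$ contains a $1$ does nothing to repair this, and no prefix-length bound that is independent of the input's name can work either: a fixed bound $N$ would make the algorithm answer $1$ on $v_n$ for all $n>N$. You have in effect addressed the wrong obstacle --- the issue is not falsely committing to $0$ on $0^{\omega}$, but being able to commit to $1$ at all after seeing only finitely much data.

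The missing idea, and the entire point of choosing $u_n$ with $K(u_n)\underset{n\rightarrow\infty}{\sim}n$, is that the \emph{name} of the input supplies the bound. Since $K(v_n)>\frac{n}{2}+b$ for some constant $b$, and since the code $k$ of a machine enumerating $x\in\mathcal{A}$ is an upper bound on the Kolmogorov complexity of $x$, any $x=v_n$ presented by the name $k$ must satisfy $n<2(k-b)$. Hence it suffices to enumerate the first $2(k-b)$ digits of $x$: if a $1$ appears, output $0$; if not, $x$ cannot be any $v_n$ and must be $0^{\omega}$, so output $1$. This halts on every input, and the dependence of the prefix length on the name $k$ (rather than on the point $x$) is exactly where the discontinuity is allowed to enter. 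Your proposal never uses the complexity lower bound in this way --- you invoke it only to argue $u_n\neq 0^{n}$, which is both unnecessary and insufficient.
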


\begin{proof}
Because $K(v_{n})\underset{n\rightarrow\infty}{\sim}n$, there must
exist an integer $b\in\mathbb{Z}$ such that for all $n$, $K(v_{n})>\frac{n}{2}+b$. 

We now show how to compute $\delta_{0}$. 

An element $x$ of $\mathcal{A}$ is given as input of $\delta_{0}$
via a finite description, which is the code $k$ of a Turing machine
that enumerates $x$. This code precisely constitutes an upper bound
to the Kolmogorov complexity of $x$. Thus either $x$ it is the null
sequence, or, if it can be written $v_{n}$ for some $n$, we have
$k>\frac{n}{2}+b$, and thus $n<2(k-b)$. 

But since the element $v_{n}$ agrees with the null sequence only
on its first $n$ terms, this means that if $x$ is not the null sequence,
one of its first $2(k-b)$ digits must be a one. This can be easily
checked, using the Turing Machine coded by $k$ until it has written
the first $2(k-b)$ digits of $x$.
\end{proof}

\subsubsection{A semi-decidable set that is not open }

The previous example was obtained by considering a function defined
on a set with bad properties. On a recursive Polish space, the decidable
sets must be clopen, since their characteristic functions must be
continuous. By Markov's Lemma, the semi-decidable sets cannot be ``effectively
not-open'': if $x$ is a point of a semi-decidable set $Y$, and
if $(u_{n})_{n\in\mathbb{N}}$ is a computable sequence that effectively
converges to $x$, then infinitely many elements of this sequence
must belong to $Y$. One might wonder whether this result can be strengthened
to: ``the $\nu$-semi-decidable sets on a recursive Polish space
$(X,d,\nu)$ are open''. An example of Friedberg \cite{Friedberg1958}
shows that this is not the case, we reproduce here the account of
this result from \cite{Hoyrup2016}, which renders explicit the role
of Kolmogorov complexity in the construction of this example. 

This example is set in the Cantor space $\{0,1\}^{\mathbb{N}}$. For
$w$ an element of $\{0,1\}^{*}$, denote by $[w]$ the clopen set
of all sequences that start by $w$. 
\begin{thm}
[Friedberg, see \cite{Hoyrup2016}, Theorem 4.1]On the Cantor space,
the set 
\[
A=\{0^{\omega}\}\cup\underset{\{n\,\vert\,K(0^{n})<\frac{\log(n)}{2}\}}{\bigcup}[0^{n}1]
\]
 is semi-decidable but not open.
\end{thm}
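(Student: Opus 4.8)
The plan is to prove the two claims separately, both resting on upper semi-computability of $K$ (Theorem \ref{prop:K comp upper semi comp}) on the one hand, and on the scarcity of short descriptions of the strings $0^{n}$ (a counting bound) on the other. Write $E=\{n\in\mathbb{N}\,:\,K(0^{n})<\tfrac{\log n}{2}\}$, so that $A=\{0^{\omega}\}\cup\bigcup_{n\in E}[0^{n}1]$. First I would record that $E$ is recursively enumerable: with the computable $F$ from Theorem \ref{prop:K comp upper semi comp} satisfying $F(x,t)\searrow K(x)$, one has $n\in E\iff\exists t\ F(0^{n},t)<\tfrac{\log n}{2}$, a $\Sigma_{1}$ condition, so fix a machine enumerating $E$. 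Next I would record a uniform length bound: there is a constant $c$, depending only on the fixed optimal decompressor, such that whenever $i$ is a $\nu_{\mathcal{C}}$-name of a sequence $x$ of the form $0^{n}1\dots$, one has $K(0^{n})\le l(i)+c$ --- from $i$ one computes $x$, reads off the position $n$ of its first $1$, and outputs $0^{n}$, so prepending the fixed code of this wrapper to $i$ gives an optimal-decompressor program for $0^{n}$ of length $l(i)+c$ (the same kind of estimate as Theorem \ref{thm:Apply comp transformation K comp}). Setting $B(i)=2^{2(l(i)+c)}+1$, which is computable from $i$, we deduce: if $n\notin E$ then $\tfrac{\log n}{2}\le K(0^{n})\le l(i)+c$, hence $n<B(i)$; equivalently, every $0^{n}1\dots$ with $n\ge B(i)$ lies in $A$.

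For semi-decidability, on a $\nu_{\mathcal{C}}$-name $i$ of a point $x$ I would run: compute $B(i)$, then in parallel \emph{(a)} read the first $B(i)$ digits of $x$ and halt as soon as they are all seen to be $0$, and \emph{(b)} should $x$ turn out to have the form $0^{n}1\dots$ with $n<B(i)$, enumerate $E$ and halt if $n$ ever appears in it. This halts precisely on names of points of $A$: if $x=0^{\omega}$, branch (a) halts and $x\in A$; if $x=0^{n}1\dots$ with $n\in E$, branch (a) halts when $n\ge B(i)$ and branch (b) halts when $n<B(i)$, and either way $x\in[0^{n}1]\subseteq A$; and if $x=0^{n}1\dots$ with $n\notin E$, then $n<B(i)$ by the bound above so (a) never triggers, while $n$ never enters $E$ so (b) never triggers, and the procedure correctly runs forever, since such an $x$ is not $0^{\omega}$ and lies in $[0^{j}1]$ only for $j=n\notin E$. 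Hence $A$ is $\nu_{\mathcal{C}}$-semi-decidable.

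For non-openness, suppose $A$ were open. Since $0^{\omega}\in A$ and the clopen sets $[0^{m}]$ form a neighbourhood basis at $0^{\omega}$, some $[0^{m}]\subseteq A$; then for every $n\ge m$ the point $0^{n}10^{\omega}$ lies in $[0^{m}]\subseteq A$, and being distinct from $0^{\omega}$ it must lie in $[0^{j}1]$ with $j\in E$, forcing $j=n$. Thus $E\supseteq\{n:n\ge m\}$ is cofinite. But this contradicts a counting bound of the kind behind Theorem \ref{prop:String Max K comp}: for $n$ in a dyadic block $[2^{k-1},2^{k})$, the inequality $K(0^{n})<\tfrac{\log n}{2}$ forces $K(0^{n})<k/2$, so $0^{n}$ is an image of a binary string of length $<k/2$ under the optimal decompressor; the number of such strings, hence the number of such $n$ in the block, is smaller than $2^{k-1}$ once $k$ is large, so each large block contains some $n\notin E$ and $E$ omits infinitely many integers. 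This contradiction shows $A$ is not open.

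The only delicate point is the uniformity of the length bound $K(0^{n})\le l(i)+c$: the constant $c$ must not depend on the name $i$ (only on the fixed optimal decompressor and the fixed wrapper program), since the semi-decision procedure hinges on $B(i)$ being computable from $i$ alone. The rest is a routine combination of the $\Sigma_{1}$ enumeration of $E$ with a brute-force scan of finitely many leading digits.
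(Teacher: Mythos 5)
Your proposal is correct and follows essentially the same route as the paper: the name of the input point yields a computable upper bound on $K(0^{n})$, hence a computable threshold beyond which a long prefix of zeroes already certifies membership in $A$, while shorter prefixes reduce to the semi-decidable condition $K(0^{n})<\frac{\log n}{2}$ via upper semi-computability of $K$; non-openness follows from the counting bound showing $\{n:K(0^{n})<\frac{\log n}{2}\}$ is not cofinite. The only cosmetic difference is that you run the two cases in parallel where the paper simply reads the first $2^{2(K_{0}+l)}$ digits and then branches, and you make explicit the uniformity of the constant $c$, which the paper leaves implicit.
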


\begin{proof}
$A$ is not open, as it does not contain a neighborhood of $0^{\omega}$,
because infinitely often in $n$ one has $K(0^{n})\ge\frac{\log(n)}{2}$
(by Theorem \ref{prop:String Max K comp}, noting that the complexity
of the string $0^{n}$ is equivalent to the complexity of the binary
expansion of $n$, since it is possible to computably go from $0^{n}$
to the binary expansion of $n$, and conversely). 

We now show that $A$ is semi-decidable. 

There exists a program $T$ that maps any element $x$ of the Cantor
space that is different from $0^{\omega}$ to the number of zeroes
that appear at the beginning of $x$. 

We are now given a computable point $x$ of $\{0,1\}^{\mathbb{N}}$. 

The description of $x$ by a Turing machine that produces it gives
an upper bound $K_{0}$ on the Kolmogorov complexity of $x$. Noting
$l$ the length of the program $T$ defined above, one has that either
$x$ is $0^{\omega}$, or, if $x$ can be decomposed as $x=0^{n}1x'$,
it must be that $K(0^{n})\le K(x)+l\le K_{0}+l$. 

Start enumerating $x$. If $x$ starts with more than $2^{2(K_{0}+l)}$
zeros, then either it is the sequence $0^{\omega}$, or it can be
written as $x=0^{n}1x'$, with $\frac{\log(n)}{2}>K_{0}+l$, and thus
with $\frac{\log(n)}{2}>K(0^{n})$. In any case, we know that $x$
belongs to $A$, without having to compute $n$. 

If $x$ starts with less than $2^{2(K_{0}+l)}$ zeroes, a number $n$
such that $x$ can be rewritten as $x=0^{n}1x'$ can be effectively
found. From this, to determine whether $x$ belongs to $A$, one only
needs to check whether $K(0^{n})<\frac{\log(n)}{2}$ holds. This last
inequality is semi-decidable, as the Kolmogorov complexity is upper-semi-computable
(Theorem \ref{prop:K comp upper semi comp}). 
\end{proof}
Other examples of non-open but semi-decidable sets can be found in
\cite{Hoyrup2016}. It is however clear that those examples are artificially
built, and this justifies the heuristic which says that a\emph{ natural}
semi-decidable property can be expected to be open. 

Note finally that although we have just seen that a semi-decidable
subset of a recursive Polish space does not have to be open, it must
share the following property of open sets: it meets any computable
and dense sequence. 
\begin{prop}
[Moschovakis, \cite{Moschovakis1964}, Theorem 4] \label{prop: SD set meets dense seq }Let
$(X,d,\nu,(u_{n})_{n\in\mathbb{N}})$ be a recursive Polish space.
A non-empty $\nu$-semi-decidable subset of $X$ must intersect the
dense sequence \textup{$(u_{n})_{n\in\mathbb{N}}$}. 
\end{prop}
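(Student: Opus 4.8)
The plan is to argue by contradiction: assume $Y$ is a non-empty $\nu$-semi-decidable subset of $X$ that is disjoint from the dense sequence $(u_{n})_{n\in\mathbb{N}}$, and derive from this a solution to the halting problem by means of Markov's Lemma (Lemma \ref{lem:Markov}). This is legitimate because a recursive Polish space is in particular effectively complete, so Markov's Lemma is available.

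First I would fix a witness for semi-decidability, namely an r.e.\ set $A\subseteq\mathbb{N}$ with $\nu^{-1}(Y)=A\cap\text{dom}(\nu)$, and pick a point $x\in Y$ (possible since $Y\ne\emptyset$ and $\nu$ is surjective). The crucial consequence of the disjointness assumption is that $x\ne u_{n}$ for every $n$. Since $\{u_{n}:n\in\mathbb{N}\}$ is a $\nu$-r.e.\ dense subset of $X$, Proposition \ref{prop:dense seq effective polish} (equivalently Corollary \ref{prop:dense seq effective polish-1-1}) supplies a $\nu$-computable sequence $(v_{n})_{n\in\mathbb{N}}$, all of whose terms lie in $\{u_{m}:m\in\mathbb{N}\}$, which converges effectively to $x$. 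In particular $v_{n}\ne x$ for all $n$, so $(v_{n})_{n\in\mathbb{N}}$ meets the hypotheses of Markov's Lemma.

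Applying Markov's Lemma with $(v_{n})_{n\in\mathbb{N}}$ in the role of the effectively convergent sequence and $x$ its limit, I obtain a $\nu$-computable sequence $(w_{p})_{p\in\mathbb{N}}$ with $w_{p}\in\{v_{n}:n\in\mathbb{N}\}\cup\{x\}$ for every $p$, and with $S:=\{p:w_{p}=x\}$ co-r.e.\ but not r.e. The key point is that, for this particular sequence, $w_{p}=x\iff w_{p}\in Y$: if $w_{p}=x$ then $w_{p}\in Y$, whereas if $w_{p}\ne x$ then $w_{p}=v_{n}$ for some $n$, hence $w_{p}\in\{u_{m}:m\in\mathbb{N}\}$, which is disjoint from $Y$. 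Letting $h$ be a total computable function witnessing that $(w_{p})_{p}$ is a $\nu$-computable sequence, so that $\nu(h(p))=w_{p}$ and $h(p)\in\text{dom}(\nu)$ for all $p$, I get $p\in S\iff w_{p}\in Y\iff h(p)\in\nu^{-1}(Y)\iff h(p)\in A$. Thus $S=h^{-1}(A)$ is r.e., contradicting Markov's Lemma. Hence the assumption fails, i.e.\ $Y$ intersects the dense sequence $(u_{n})_{n\in\mathbb{N}}$.

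I expect the only subtle point to be verifying that the auxiliary sequence produced by Proposition \ref{prop:dense seq effective polish} genuinely converges \emph{effectively} (so that Markov's Lemma applies) and has all of its terms both inside the dense sequence and distinct from $x$; both facts are immediate from the statement of that proposition combined with the disjointness hypothesis. Everything else is routine unwinding of the definitions of $\nu$-semi-decidable set and of $\nu$-computable sequence, plus the standard fact that the preimage of an r.e.\ set under a total computable function is r.e.
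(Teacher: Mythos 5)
Your proof is correct: the contradiction via Markov's Lemma applied to an effectively convergent computable sequence drawn from the dense set (supplied by Proposition \ref{prop:dense seq effective polish}), combined with the observation that membership in $Y$ would then semi-decide the non-r.e.\ set $\{p:w_p=x\}$, is exactly the argument the paper has in mind when it says the proof is "very close to that of Mazur's Continuity Theorem." The verification that $S=h^{-1}(A)$ is r.e.\ is handled cleanly, and all hypotheses of Markov's Lemma (effective completeness, effective convergence, $v_n\ne x$) are checked.
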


The proof of this result is in fact very close to that of Mazur's
Continuity Theorem, Corollary \ref{cor:Continuity-Theorem}. A consequence
of this fact, pointed out in \cite{Hoyrup2016}, is the following: 
\begin{cor}
[Hoyrup, Rojas, \cite{Hoyrup2016}, Section 4, Proposition 3]\label{cor:HoyrupRojas NonEmpty is SD}In
a recursive Polish space $(X,d,\nu,(u_{n})_{n\in\mathbb{N}})$, there
is an algorithm that takes as input the code for a $\nu$-semi-decidable
set $D$ and stops if and only if this set is non-empty. 

In case the set $D$ is non-empty, this algorithm will produce the
$\nu$-name for a point in it. 
\end{cor}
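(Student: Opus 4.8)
The plan is to deduce this directly from Proposition \ref{prop: SD set meets dense seq }, with essentially no extra work. Fix a recursive Polish space $(X,d,\nu,(u_{n})_{n\in\mathbb{N}})$, and suppose we are handed the code for a $\nu$-semi-decidable set $D$; unwinding the definition, this is (a code for) a recursively enumerable set $W\subseteq\mathbb{N}$ with $W\cap\mathrm{dom}(\nu)=\nu^{-1}(D)$. Since $(u_{n})_{n\in\mathbb{N}}$ is a $\nu$-computable sequence, there is a recursive function $k\mapsto n_{k}$ that outputs, for each $k$, a $\nu$-name $n_{k}$ of the point $u_{k}$.

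First I would describe the algorithm: dovetail, over all $k\in\mathbb{N}$, the semi-decision procedure for membership in $D$ applied to the name $n_{k}$ — concretely, enumerate $W$ and watch for some $n_{k}$ to be enumerated into it. If and when some $n_{k}$ appears in $W$, halt and return $n_{k}$.

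Then I would check correctness. If $D=\emptyset$, then $\nu^{-1}(D)=\emptyset$, so no $n_{k}$ (which lies in $\mathrm{dom}(\nu)$) can belong to $W$, and the procedure runs forever, as required. If $D\neq\emptyset$, then Proposition \ref{prop: SD set meets dense seq } guarantees that $D$ intersects the dense sequence, so $u_{k}\in D$ for some $k$; for that $k$ we have $n_{k}\in\nu^{-1}(D)=W\cap\mathrm{dom}(\nu)\subseteq W$, so the dovetailed search eventually enumerates $n_{k}$, halts, and outputs $n_{k}$, which is precisely a $\nu$-name of the point $u_{k}\in D$. Hence the algorithm halts exactly when $D$ is non-empty and, when it halts, produces a $\nu$-name of a point of $D$.

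There is no real obstacle here: the entire mathematical content sits in Moschovakis' Proposition \ref{prop: SD set meets dense seq }, which says that a non-empty semi-decidable set cannot avoid the dense sequence, so that simply testing membership of the $u_{k}$'s is already a complete semi-decision test for non-emptiness. The only points requiring a word of care are that $(u_{n})_{n\in\mathbb{N}}$ being $\nu$-computable supplies the names $n_{k}$ uniformly, and that the code for $D$ comes with the promise that $W$ correctly decides membership on $\nu$-names — both of which are immediate from the definitions.
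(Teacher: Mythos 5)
Your proof is correct and follows exactly the paper's argument: enumerate the dense computable sequence, dovetail the semi-decision test on each $u_k$, and invoke Proposition \ref{prop: SD set meets dense seq } to guarantee that a non-empty $D$ must contain some $u_k$. The extra care you take in unwinding the definitions (the r.e. set $W$, the uniform names $n_k$) is a faithful elaboration of the paper's one-line proof, not a different route.
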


\begin{proof}
Just enumerate the sequence $(u_{n})_{n\in\mathbb{N}}$ in search
of a point in $D$, by Proposition \ref{prop: SD set meets dense seq },
$D$ is non-empty if and only if it contains a point from $(u_{n})_{n\in\mathbb{N}}$. 
\end{proof}
Note that the previous proposition contains an effective version of
the Axiom of Choice for recursive Polish spaces: a possible formulation
of AC is that for any set $X$, there exists a choice function that
maps a non-empty subset of $X$ to a point in this subset. The above
corollary shows that a computable choice function exists for semi-decidable
subsets of a recursive Polish space. We will give in Theorem \ref{thm:No-Completion-Theorem}
a strong negation of such an effective Axiom of Choice for the space
of marked groups: there does not exist a computable function that,
given a non-empty basic clopen set $\Omega_{r_{1},...,r_{m};s_{1},...,s_{m'}}^{k}$,
can produce a point in this set. 

\subsubsection{Necessary and sufficient condition for discontinuous functions to
exist\label{subsec:Necessary-and-sufficient-conditions for continuity }}

We show how the example given in Section \ref{subsec:Computable-but-Discontinuous}
above is essentially the only possible one on spaces that satisfy
a certain form of ``effective second countability'', which is easily
seen to be satisfied by the space of marked groups. 

Consider a countable $T_{0}$ topological space $X$, which is second
countable with a totally numbered basis $(B_{i})_{i\in\mathbb{N}}$.
We suppose w.l.o.g. that $(B_{i})_{i\in\mathbb{N}}$ is stable by
finite intersection and that the intersection map is computable. 

Equip $X$ with the numbering $\nu$ induced by the basis $(B_{i})_{i\in\mathbb{N}}$:
\[
\nu(i)=x\iff\text{dom}(\varphi_{i})=\{n\in\mathbb{N},\,x\in B_{n}\}.
\]
In other words, the name of a point in $X$ is a program that recognizes
the basic open sets to which this point belongs. These assumptions
amount to a form of ``effective second countability''. 

We say that points in $X$ have \emph{effective neighborhood bases
of co-semi-decidable sets }if for each point $x$ there is a neighborhood
basis $(A_{i})_{i\in\mathbb{N}}$ of $x$ consisting of co-semi-decidable
sets, and a computable map $F_{x}$ such that if $x\in B_{i}$, then
$x\in A_{F_{x}(i)}\subseteq B_{i}$. The typical example of this is
metric spaces, where closed balls are co-semi-decidable, and where
the map $F_{x}$ can be chosen to map the open ball of radius $r$
centered at $x$ to the closed ball of radius $r/2$ centered at $x$.
In the space of marked groups the basic clopen sets are naturally
decidable, so $F_{x}$ can be taken as the identity. 

The following result was indicated to me by Mathieu Hoyrup. It follows
from \cite{Hoyrup2016} but does not appear there. 
\begin{thm}
\label{thm:Continuity IFF Kolmogorov }If there exists a $\nu$-computable
and discontinuous function $f:X\rightarrow\{0,1\}$, then there exists
a non-isolated point $x$ and a computable function $g:\mathbb{N}\rightarrow\mathbb{N}$
such that for each $n$, $x$ is the only point of Kolmogorov complexity
less than $n$ in $B_{g(n)}$. If furthermore points in $X$ have
effective neighborhood bases of co-semi-decidable sets, then this
is actually an equivalence: any such $g$ yields a computable discontinuous
function $f:X\rightarrow\{0,1\}$. 
\end{thm}

\begin{proof}
By the main result of \cite{Hoyrup2016}, the function $f$ is $\nu$-computable
if and only if it is computable in a Type 2 model where points are
given with, as additional information, an upper bound on the Kolmogorov
complexity of the input point. This means that there is a machine
$M_{f}$ that computes $f$ which, instead of taking $\nu$-names
as input (these would be elements of $\mathbb{N}$), takes pairs $(k,p)$,
$k\in\mathbb{N}$ and $p\in\mathbb{N}^{\mathbb{N}}$ as input, where
a pair $(k,p)$ describes a point $x$ if: 
\begin{itemize}
\item $k$ is such that $x$ admits a $\nu$-name $i$ with $i\le k$; 
\item $p$ is a sequence consisting of the set $\{n\in\mathbb{N},\,x\in B_{n}\}$
in some order. This sequence is given on an oracle tape of the Turing
machine. 
\end{itemize}
Suppose that $f$ is discontinuous at $x$. Fix a computable sequence
$(p_{n})_{n\in\mathbb{N}}$ which is such that $\{p_{n},\,n\in\mathbb{N}\}=\{n\in\mathbb{N},\,x\in B_{n}\}$.
Let $k_{0}$ be a $\nu$-name of $x$. 

Now consider what the machine $M_{f}$ does on input a pair $(k,(p_{n})_{n\in\mathbb{N}})$,
for $k>k_{0}$. This pair is a description of $x$, thus $M_{f}$
will eventually answer $f(x)$. However, when doing so, the machine
$M_{f}$ inspects only a finite portion of the infinite sequence $(p_{n})_{n\in\mathbb{N}}$. 

The extent to which this sequence is inspected is a computable function
of $k$: there is $g_{1}$, computable, which is such that $g_{1}(k)=N$
if on input $(k,(p_{n})_{n\in\mathbb{N}})$, $M_{f}$ inspects only
the first $N$ elements of the sequence $(p_{n})_{n\in\mathbb{N}}$. 

It follows from this that if a point $y$ has Kolmogorov complexity
at most $k$, then either it is $x$, or it does not belong to $B_{p_{1}}\cap...\cap B_{p_{g_{1}(k)}}$.
We have supposed that intersections are computable for the basis $(B_{i})_{i\in\mathbb{N}}$,
and thus we can replace $g_{1}$ by $g$ as in the statement of the
theorem. 

For the converse, suppose that such a $g$ exists, and that $x$ has
a computable neighborhood basis of co-c.e. sets. Then the function
$\delta_{x}:X\rightarrow\{0,1\}$ which maps $x$ to $1$ and other
points to $0$ is computable. Indeed, given $y=\nu(n)$, we know that
either $y=x$, or $y\notin B_{g(n)}$. By hypothesis we can compute
a co-c.e. set $A_{m}$ such that $x\in A_{m}\subseteq B_{g(n)}$.
Then either $y\in B_{g(n)}$ or $y\notin A_{m}$, both conditions
are semi-decidable, and thus it can be determined which one of them
holds. 
\end{proof}
The following is the above result reformulated for metric spaces:

\begin{thm}
Suppose that $X$ has a computable metric, and that the basis $(B_{i})_{i\in\mathbb{N}}$
generates the metric topology (without necessarily being a set of
open balls). Suppose furthermore every point has a c.e. neighborhood
basis $(B_{\psi(i)})_{i\in\mathbb{N}}$ with $\text{diam}(B_{\psi(i)})<2^{-n}$. 

Then there exists a $\nu$-computable and discontinuous function $f:X\rightarrow\{0,1\}$
iff there is a point $x$, which is not isolated, and a computable
function $g$ s.t. for all $y\ne x$, if $y=\nu(n)$, then $d(y,x)>2^{-g(n)}$. 
\end{thm}

\begin{proof}
Similar to the proof of Theorem \ref{thm:Continuity IFF Kolmogorov },
using as neighborhood basis of $x$ the sequence $(B_{\psi(i)})_{i\in\mathbb{N}}$.
\end{proof}
The hypotheses of the above Theorem are true of the space of marked
groups. The basic clopen sets $\Omega_{R,S}^{k}$ are not open balls
in the space of marked groups (since they can be empty), yet an upper
bound to their diameter can easily be computed. 

Note that despite being far reaching, Theorem \ref{thm:Continuity IFF Kolmogorov }
does not completely solve the continuity problem. Indeed: 
\begin{itemize}
\item It relies on a hypothesis of effective second countability; 
\item It does not provide necessary and sufficient conditions for effective
continuity of the considered functions (see \cite{Slisenko1964} for
a continuous but not effectively continuous computable function);
\item It does not \emph{a priori} apply to Banach-Mazur continuity. It is
however possible that the methods of \cite{Hoyrup2016} do in fact
apply also to Banach-Mazur computability. 
\end{itemize}

\subsection{\label{subsec:KLS-Ceitin-Theorem,}Kreisel, Lacombe and Schoenfield
and Ceitin Theorems, Moschovakis' addendum }

\subsubsection{Theorems of Kreisel, Lacombe, Schoenfield, Ceitin}

The following theorem is one of the most important theorems in Markovian
computable analysis. It was first proved by Kreisel, Lacombe and Schoenfield
in 1957 in \cite{KLS57} in the case of functions defined on the Baire
space $\mathbb{N}^{\mathbb{N}}$, and obtained independently by Ceitin
in 1962 (English translation in \cite{Ceitin1967}), in the more general
setting of recursive Polish spaces. 
\begin{thm}
[Kreisel-Lacombe-Schoenfield, Ceitin] A computable function defined
on a recursive Polish space (with images in any RMS) is effectively
continuous. 

Moreover, for each pair constituted of a recursive Polish space and
a RMS, there is an algorithm that takes as input the description of
a computable function defined between those spaces, and produces a
program that will attest for the effective continuity of this function. 
\end{thm}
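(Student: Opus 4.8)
The plan is to deduce the theorem from Markov's Lemma in two stages. First, mere continuity of $f$ is already Mazur's Continuity Theorem, Corollary \ref{cor:Continuity-Theorem}; the real work is to upgrade continuity to \emph{effective} continuity, uniformly in a code for $f$. So fix a recursive Polish space $(X,d,\nu)$ with computable dense sequence $(u_{n})_{n\in\mathbb{N}}$ and algorithm of passage to the limit $\mathcal{A}_{lim}$, fix a recursive metric space $(Y,d,\mu)$, and let $f\colon X\to Y$ be computable, witnessed by a partial recursive $F$ on names. Given a $\nu$-name $a$ of a point $x\in X$ and a $c_{\mathbb{R}}$-name of $\epsilon>0$ (we may as well take $\epsilon=2^{-\ell}$), the task reduces to computing some $k$ with $B(x,2^{-k})\subseteq f^{-1}(B(f(x),\epsilon))$. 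Every ingredient below will be uniform in (a code for) $F$, in $\mathcal{A}_{lim}$, and in the names of $x$ and $\epsilon$, so the ``Moreover'' clause will fall out by the smn theorem.

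Second, I would introduce a semi-decidable \emph{refutation predicate} drawn from the dense sequence: let $R(k)$ be the statement ``there is $j$ with $d(u_{j},x)<2^{-k}$ and $d(f(u_{j}),f(x))>\epsilon/2$''. Since $d$ and $f$ are computable and strict inequalities between computable reals are semi-decidable, $R$ is semi-decidable uniformly in $k$. Two observations, both using continuity of $f$ and density of $(u_{n})$: (i) $R$ is monotone, $R(k)\Rightarrow R(k-1)$, so $\{k:R(k)\}=\{0,\dots,k^{*}-1\}$ for some $k^{*}\in\mathbb{N}\cup\{\infty\}$; (ii) a modulus for $\epsilon/2$ exists by Corollary \ref{cor:Continuity-Theorem}, so $k^{*}<\infty$, and if $\neg R(k)$ then \emph{no} point of $B(x,2^{-k})$ --- not merely no $u_{j}$ --- is sent outside $\overline{B}(f(x),\epsilon/2)$ (argue at a putative bad point by continuity of $f$ there, then invoke density, exactly as in the proof of Mazur's theorem), so $2^{-k}$ is a genuine modulus for $\epsilon$. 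Thus it suffices to output \emph{any} $k$ with $\neg R(k)$.

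This is where the difficulty lies: $\neg R(k)$ is only co-r.e., so one cannot simply search for the least witnessing $k$ --- the dovetailed search for a $u_{j}$ confirming $R(k)$ fails to terminate precisely when $k$ is already a good modulus, and the algorithm then never learns it is allowed to stop. Removing this obstruction is the genuine content of the Kreisel-Lacombe-Schoenfield-Ceitin theorem, and the plan is to handle it by the classical self-referential construction. Using the recursion theorem, build --- uniformly in $a$, $\epsilon$ and $F$ --- a Turing machine holding the very index of the modulus algorithm being defined, together with an auxiliary $\nu$-computable point $z$ of $X$ which follows $x$ (through the name $a$) until the modulus algorithm commits to an output $k$, at which instant $z$ switches to a $u_{j}$ exhibiting $R(k)$, should such a $u_{j}$ ever be found by the background search. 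The modulus algorithm is then arranged to commit to $k$ as soon as it has confirmed $R(0),\dots,R(k-1)$ and has driven its bookkeeping on $z$ into a configuration that is consistent only with $\neg R(k)$; and if it never committed, $z$ would coincide with $x$, whence $f(z)=f(x)$, so that feeding a computable effectively convergent sequence of approximants supplied by Proposition \ref{prop:dense seq effective polish} through $f$ would yield an algorithm choosing, along that sequence, between $d(f(\cdot),f(x))>\epsilon/2$ and $<\epsilon/2$ --- contradicting Markov's Lemma (Lemma \ref{lem:Markov}), in the spirit of Corollary \ref{cor:MarkovCor1}.

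I expect the delicate step to be exactly this interlocking of the commitment mechanism with the definition of $z$: it must be arranged so that commitment is \emph{forced} to occur (because $k^{*}<\infty$) and so that, when it occurs, the committed $k$ is \emph{provably} a modulus (because the bookkeeping has ruled out $R(k)$). Once that is in place, replaying the argument for arbitrary names of arbitrary computable points and arbitrary $\epsilon$, and reading off that the whole construction was uniform in a code for $F$, yields simultaneously the effective continuity of $f$ and the uniform program-producing algorithm asserted in the ``Moreover'' clause.
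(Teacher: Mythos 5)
The paper offers no proof of this theorem to compare against: it is quoted as a classical result with pointers to \cite{KLS57} and \cite{Ceitin1967}. Judged on its own terms, your preparatory work is correct and matches the classical strategy: reducing effective continuity to exhibiting some $k$ with $\neg R(k)$, the monotonicity of $R$, the finiteness of $k^{*}$ via Mazur's theorem, the upgrade from ``no refuter among the $u_{j}$'' to ``no refuter in $B(x,2^{-k})$ at all'' via continuity plus density, and the identification of the co-r.e.\ obstruction are all sound, and a recursion-theorem trap is indeed the engine of the Kreisel--Lacombe--Schoenfield and Ceitin proofs.

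The gap is that the commitment mechanism --- which you yourself flag as the delicate step --- is not supplied, and the rule you do state cannot work as written: no finite amount of bookkeeping is ``consistent only with $\neg R(k)$'', since $R(k)$ is a $\Sigma^{0}_{1}$ statement whose witness may appear arbitrarily late; this is exactly the obstruction you identified two sentences earlier, restated rather than removed. In the actual proof the commitment is triggered not by any confirmation of $\neg R(k)$ but by the halting of $F$ on the trap's name $e$ (forced because $e$ names a genuine point of $X$, hence $e\in\text{dom}(\nu)$ and $F(e)$ must converge), followed by an a posteriori comparison of $\mu(F(e))$ with $\mu(F(a))$: by design $\mu(F(e))$ equals $f(x)$ if no refuter is ever found and equals $f(u_{j})$ for some refuter $u_{j}$ otherwise, and these alternatives are separated by $\epsilon/2$, so the comparison is effectively resolvable. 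For this certificate to be sound the trap must retain, at arbitrarily late stages, the ability to defect to any refuter in $B(x,2^{-k})$ --- otherwise a refuter discovered after $F(e)$ halts is not reflected in $\mu(F(e))$ and the comparison wrongly certifies $k$. That requirement is in direct tension with $e$ being a valid name, since a fast Cauchy name must pin its point down to within $2^{-i}$ by its $i$-th term, which for $i>k$ excludes most of $B(x,2^{-k})$. Reconciling these two demands, and showing that at least one trap certifies so that the search over $k$ terminates, is the actual content of the theorem and is precisely what your proposal leaves open. A minor further point: the algorithm's being forced to commit comes from the totality of $F$ on $\text{dom}(\nu)$, not from Lemma \ref{lem:Markov}, so the appeal to Markov's Lemma in your final step is not the right tool there.
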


\subsubsection{Moschovakis' Theorem}

In 1964, Moschovakis gave a new proof of Ceitin's Theorem, at the
same time providing the only known effective continuity result for
metric spaces set in a more general context than that of recursive
Polish spaces. 

In what follows, $(X,d,\nu)$ denotes a recursive metric space. 
\begin{defn}
\label{def:Moschovakis (B) condition}We say that $(X,d,\nu)$ satisfies
\emph{Moschovakis' condition (B) }if there is an algorithm that, given
the code of a $\nu$-semi-decidable set $A\subseteq X$ and an open
ball $B(x,r)$ which intersects it, will produce the $\nu$-name of
a point $y$ in the intersection $A\cap B(x,r)$. 
\end{defn}

Note that in this definition, the algorithm is always given as input
a pair of intersecting sets, it then produces a point in the intersection.
This algorithm is not supposed to be able to determine whether or
not two given sets intersect. This definition asks for an effective
Axiom of Choice, similar to the one described in Proposition \ref{prop: SD set meets dense seq }.
And an easy consequence of Proposition \ref{prop: SD set meets dense seq }
is the following:
\begin{prop}
A recursive Polish space satisfies Moschovakis' condition (B). 
\end{prop}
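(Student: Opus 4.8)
The plan is to reduce condition (B) directly to Proposition \ref{prop: SD set meets dense seq } (Moschovakis' theorem that a non-empty $\nu$-semi-decidable subset of a recursive Polish space must intersect the fixed dense computable sequence $(u_n)_{n\in\mathbb{N}}$), exactly as Corollary \ref{cor:HoyrupRojas NonEmpty is SD} does for the simpler statement about non-emptiness. So let $(X,d,\nu,(u_n)_{n\in\mathbb{N}})$ be a recursive Polish space, and suppose we are given the code of a $\nu$-semi-decidable set $A$, a $\nu$-name $n_0$ of a point $x$, and a $c_{\mathbb{R}}$-name of $r\in\mathbb{R}_c$, with the promise that $A\cap B(x,r)\neq\emptyset$.

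First I would observe that the open ball $B(x,r)$ is itself $\nu$-semi-decidable, \emph{uniformly} in the given names: on input a $\nu$-name $m$ of a point $z$, the distance $d(x,z)$ is $(\nu\times\nu,c_{\mathbb{R}})$-computable (this is the defining property of a recursive metric space), so we can compute $d(x,z)$ to arbitrary precision; comparing these rational approximations against the approximations of $r$, we can recognise the relation $d(x,z)<r$ by a halting procedure. Hence the intersection $A\cap B(x,r)$ is again $\nu$-semi-decidable, and a code for it can be computed uniformly from the data (a code for $A$, the name $n_0$ of $x$, the name of $r$) — this is just the standard effective closure of semi-decidable sets under intersection.

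Next, by hypothesis $A\cap B(x,r)$ is non-empty, so Proposition \ref{prop: SD set meets dense seq } guarantees that it contains some point $u_k$ of the dense computable sequence. Therefore the algorithm simply enumerates $(u_n)_{n\in\mathbb{N}}$ (using the $\nu$-computability of this sequence to get $\nu$-names $u_n$), and for each $n$ in parallel runs the semi-decision procedure for $A\cap B(x,r)$ on the name of $u_n$; it halts and outputs that $\nu$-name as soon as one of these procedures accepts. By Proposition \ref{prop: SD set meets dense seq } at least one will accept, so the search terminates, and the point it returns lies in $A\cap B(x,r)$ as required; all steps are uniform, so this is a single algorithm witnessing condition (B).

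There is essentially no obstacle here: the only thing to be slightly careful about is the uniformity of "$B(x,r)$ is semi-decidable" — one must note that a $c_{\mathbb{R}}$-name does \emph{not} let us decide $d(x,z)<r$ versus $d(x,z)\ge r$ (Proposition \ref{prop:Equality-is-undecidable on R}), but the strict inequality alone is semi-decidable, which is all condition (B) needs, since we only ever need to \emph{confirm} membership in the ball for the point we eventually output. Everything else is a routine instance of the fact that effective separability (the existence of a $\nu$-computable dense sequence together with Proposition \ref{prop: SD set meets dense seq }) gives an effective choice function for semi-decidable sets, here relativised to the ball $B(x,r)$.
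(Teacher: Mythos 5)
Your proposal is correct and is essentially the argument the paper intends: the paper states this proposition as "an easy consequence" of Proposition \ref{prop: SD set meets dense seq }, and your elaboration — semi-decidability of the open ball from the $(\nu\times\nu,c_{\mathbb{R}})$-computability of $d$ (using only the strict inequality), closure of semi-decidable sets under intersection, and then an exhaustive search through the dense computable sequence as in Corollary \ref{cor:HoyrupRojas NonEmpty is SD} — is exactly the intended reduction. Nothing to add.
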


We can now state Moschovakis' Theorem on the effective continuity
of computable functions. 
\begin{thm}
[Moschovakis, \cite{Moschovakis1964}, Theorem 3] \label{thm:Moschovakis}A
computable function defined on an effectively complete RMS that satisfies
Moschovakis' condition (B) is effectively metric continuous. 

Moreover, for each pair constituted of such a space and of any RMS,
there is an algorithm that takes as input the description of a computable
function defined between those spaces, and produces a program that
will attest for the effective continuity of this function. 
\end{thm}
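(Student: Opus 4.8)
The plan is to run the classical argument behind the Ceitin and Kreisel--Lacombe--Schoenfield theorems, with Moschovakis' condition (B) taking over the role that effective separability plays in the recursive Polish case, and to keep every step uniform in the given indices so that the ``moreover'' clause is automatic.

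\emph{Pointwise continuity.} Fix a $\nu$-name $n_{0}$ of a point $x\in X$, an index $e$ with $\varphi_{e}$ computing $f$ on $\nu$-names, and a positive rational $\epsilon$ (the case of a $c_{\mathbb{R}}$-name of $\epsilon$ is reduced to this at the end). For $k\in\mathbb{N}$ set
\[
B_{k}=\{\,y\in X:\ d(x,y)<2^{-k}\ \text{and}\ d(f(x),f(y))>\epsilon/2\,\}.
\]
Given a $\nu$-name of $y$, both strict inequalities are semi-decidable: compute $c_{\mathbb{R}}$-names of $d(x,y)$ and of $d(f(x),f(y))$ from $n_{0}$, $\varphi_{e}$ and the computable distance functions of $X$ and $Y$, and wait for a rational approximation to witness the inequality. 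Hence each $B_{k}$ is $\nu$-semi-decidable, with a code computable uniformly from $n_{0},e,\epsilon,k$. Because the numbering of an RMS is surjective, every point of $X$ is $\nu$-computable; so if $f$ failed to be continuous at $x$ for the value $\epsilon$, every $B_{k}$ would be non-empty, and since $B_{k}\subseteq B(x,2^{-k})$, condition (B) would produce, uniformly in $k$, a $\nu$-name of a point $z_{k}\in B_{k}$. Then $(z_{k})_{k}$ is $\nu$-computable and converges effectively to $x$ (with regulator the identity, see Definition \ref{def:eff Cauchy}), while $d(f(x),f(z_{k}))>\epsilon/2$ for all $k$, so $(f(z_{k}))_{k}$ does not converge to $f(x)$ --- contradicting Corollary \ref{cor:MarkovCor1}. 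Thus $f$ is continuous at $x$ for $\epsilon$; running the same argument with $\epsilon/2$ in place of $\epsilon$ shows it is continuous at $x$ for $\epsilon/2$, so there is $\delta>0$ with $d(x,y)<\delta\Rightarrow d(f(x),f(y))<\epsilon/2$, whence $B_{k}=\emptyset$ for every $k$ with $2^{-k}\le\delta$, and any such $k$ furnishes a valid modulus $\eta=2^{-k}$ at $x$ for $\epsilon$.

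\emph{Producing such a $k$ effectively.} This is the technical core and the place where the recursion theorem is needed, exactly as in Ceitin's theorem. The naive procedure --- dovetail over $k$ the tests ``run the condition-(B) algorithm on $(B_{k},B(x,2^{-k}))$, then semi-verify that its output lies in $B_{k}$'' (each halting iff $B_{k}\neq\emptyset$) and output the first $k$ whose test does not halt --- fails, since $\{k:B_{k}\neq\emptyset\}$ is an r.e.\ initial segment of $\mathbb{N}$ and its supremum is not computable from an r.e.\ code. The remedy is to define the modulus algorithm $\Psi$ by the recursion theorem so that it may read an index $p$ for itself, and to carry out the search for elements of $B_{k}$ not over arbitrary $\nu$-names but over a family of ``trap'' points $z_{k,b}$ whose programs call $\varphi_{p}$: $z_{k,b}$ copies the approximations of $\nu(n_{0})$ --- hence equals $x$ --- until $\varphi_{p}$ halts, at which moment, according to the value it returns, $z_{k,b}$ either remains equal to $x$ or switches to $\nu(b)$. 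The commitment rule of $\Psi$ is then arranged so that: if $\Psi$ halts with output $k$, no genuine element of $B_{k}$ could have escaped detection, because such a point, routed through a suitable trap, would be forced to reveal itself --- so the output is a correct modulus; and if $\Psi$ diverged, the traps it inspects would assemble into a $\nu$-computable sequence converging effectively to $x$ along which $f$ stays $\epsilon/2$-away from $f(x)$, again contradicting Corollary \ref{cor:MarkovCor1} --- so $\Psi$ halts. Making the commitment rule and the firing times of the traps mesh so that both conclusions hold at once is the delicate bookkeeping of the Kreisel--Lacombe--Schoenfield argument, and it is the step I expect to be the main obstacle; it transplants without change to the present setting because the only external facts it ultimately uses are the existence of an algorithm of passage to the limit (effective completeness) and the extraction of points from non-empty semi-decidable sets met by a given ball (condition (B)).

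\emph{Uniformity and the case of a real $\epsilon$.} Every object built above --- the codes of the $B_{k}$, the invocations of condition (B), the trap programs, the fixed point of the recursion theorem --- is produced by one fixed procedure from $e,n_{0},\epsilon$, which is exactly the uniformity asserted in the ``moreover'' clause. Finally, if $\epsilon$ is given by a $c_{\mathbb{R}}$-name, enumerate rational approximations until one certifies a positive rational $\epsilon'<\epsilon$ (possible since $\epsilon>0$), apply the procedure above to $\epsilon'$, and output the rational $\eta=2^{-k}$ it returns: this $\eta$ is a modulus at $x$ for $\epsilon'$, hence for $\epsilon$, and a rational is in particular a $c_{\mathbb{R}}$-name of itself.
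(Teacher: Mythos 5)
The paper itself does not prove this theorem --- it is imported from Moschovakis' 1964 article --- so there is no internal proof to compare against; your overall strategy, namely the Kreisel--Lacombe--Schoenfield/Ceitin argument with condition (B) standing in for the dense computable sequence, is indeed the route Moschovakis takes. Your first step is sound: the sets $B_{k}$ are uniformly semi-decidable, condition (B) extracts from them a $\nu$-computable, effectively convergent sequence witnessing any failure of continuity at $x$, and Corollary \ref{cor:MarkovCor1} then forces pointwise continuity, hence the existence of a valid modulus $2^{-k}$ for each $\epsilon$.

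The gap is that this only establishes \emph{abstract} continuity, whereas the theorem asserts \emph{effective} continuity, and the step separating the two --- computing a correct $k$ uniformly from $n_{0}$, $e$ and $\epsilon$ --- is exactly the part you leave unexecuted. You correctly identify that the naive search fails because $\{k:B_{k}\neq\emptyset\}$ is only an r.e.\ finite initial segment, and you name the right remedy (the recursion theorem plus trap points), but you then declare the synchronization of the commitment rule with the firing of the traps to be ``the main obstacle'' and stop there. That bookkeeping is the entire mathematical content of the theorem, and in a general RMS it is not a verbatim transplant of KLS: the trap $z_{k,b}$, which copies approximations of $x$ and then possibly jumps to $\nu(b)$, is a legitimate $\nu$-name (via the algorithm of passage to the limit) only if, at the moment of the jump, $\nu(b)$ has already been certified to lie within the distance prescribed by the regulator; so the traps can only be built from candidates that condition (B) has already delivered inside suitable balls around $x$, and the timing of those certifications must mesh with the halting behaviour of $\varphi_{p}$. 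Until that construction is written out and verified, the proposal proves pointwise continuity --- essentially a restatement of Corollary \ref{cor:MarkovCor1} --- together with a plan, not the theorem.
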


We will prove in Corollary \ref{cor:MoschoFails} that the hypotheses
of this theorem fail for the space of marked groups, leaving open
the conjecture which says that computable functions on $\mathcal{G}^{+}$
are effectively continuous.

Note that several other results that can be found in \cite{Moschovakis1964}
can give rise to conjectures for the space of marked groups, in particular
the characterization of effective open sets as Lacombe sets, which
is Theorem 11 of \cite{Moschovakis1964}. See for instance Theorem
\ref{thm:LEF not co LACOMBE} and the discussion around it. 

\section{\label{sec:Limits-of-applicability}The space of marked groups as
a recursive metric space}

\subsection{Banach-Mazur computable but not Markov computable functions }
\begin{lem}
For every $k\ge2$, there exists a computable retraction from $\mathcal{G}_{k}$
onto a Cantor space: there exists a computable injection $i:\{0,1\}^{\mathbb{N}}\rightarrow\mathcal{G}_{k}$
which admits a computable right inverse $r:\mathcal{G}_{k}\rightarrow\{0,1\}^{\mathbb{N}}$. 
\end{lem}

\begin{proof}
Many constructions are possible. We use Philip Hall's 2-generated
center-by-metabelian group which admits $\bigoplus_{\mathbb{N}}\mathbb{Z}$
as center, which gave the first constructions of continuously many
2 generated solvable groups \cite{Hall1954}. Let $a$ and $b$ be
its generators. For each $n\in\mathbb{N}$, denote by $w_{n}$ a word
of $\{a,b,a^{-1},b^{-1}\}^{*}$ that defines in $G$ a generator of
the $n$-th central copy of $\mathbb{Z}$. The map $n\mapsto w_{n}$
can be taken computable (an explicit formula is given in \cite{Hall1954}).
Then map any marked group $(G,S)$ to the sequence $(p_{i})_{i\in\mathbb{N}}$
given by 
\[
p_{i}=1\iff w_{i}\text{ is a relation in \ensuremath{(G,S)}}.
\]
Because the relations $w_{i}$ are independent, this defines a surjection
$r:\mathcal{G}_{k}\rightarrow\{0,1\}^{\mathbb{N}}$ onto the Cantor
space. Because $n\mapsto w_{n}$ is computable, it is a computable
map. And the computable injection $i:\{0,1\}^{\mathbb{N}}\rightarrow\mathcal{G}_{k}$
is given by Hall's construction. 
\end{proof}
\begin{cor}
\label{cor:BM not Markov on G}For every $k\ge2$, there exists a
Banach-Mazur computable function defined on $\mathcal{G}_{k}$ which
is not Markov computable.
\end{cor}

\begin{proof}
If $f:\{0,1\}_{c}^{\mathbb{N}}\rightarrow\mathbb{N}$ is a Banach-Mazur
computable function which is not Markov computable, as given in \cite{Bauer2004},
and if $r$ is the map given in the previous lemma, then $f\circ r$
is a Banach-Mazur computable function $\mathcal{G}^{+}\rightarrow\mathbb{N}.$
It is not Markov computable, otherwise $f\circ r\circ i$ would be
as well, but this is $f$. 
\end{proof}

\subsection{Non effective separability of $\mathcal{G}^{+}$ }

\begin{thm}
\label{prop:No-re dense seq}No sequence of marked groups can be both
computable and dense in $\mathcal{G}^{+}$. 
\end{thm}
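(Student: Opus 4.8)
The plan is to deduce this from the Boone--Rogers theorem, exactly as the paper's preceding remarks suggest (this is "an easy reformulation of the Boone-Rogers Theorem"). Boone--Rogers says there is no \emph{uniform} word problem algorithm for the class of finitely presented groups with solvable word problem: no single algorithm, given a finite presentation $\langle S\mid R\rangle$ of a group with solvable word problem together with (say) the promise that it has solvable word problem, solves the word problem in that group. The key point is that a $\Lambda_{WP}$-computable \emph{dense} sequence would manufacture precisely such a uniform algorithm. So the proof is by contradiction.

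First I would suppose $(G_n,S_n)_{n\in\mathbb{N}}$ is a $\Lambda_{WP}$-computable sequence that is dense in $\mathcal{G}_{WP}$. By definition of $\Lambda_{WP}$-computability of a sequence, there is a total recursive function producing, for each $n$, a $\nu_{WP}$-name of $(G_n,S_n)$; in particular we can uniformly extract the arity $k_n$ and an index for a word problem algorithm $\mathcal{A}_n$ for $(G_n,S_n)$. Now take any finite presentation $P=\langle s_1,\dots,s_k\mid r_1,\dots,r_m\rangle$ of a group $H$ with solvable word problem. The marked group $(H,(s_1,\dots,s_k))$ lies in $\mathcal{G}_{WP}$, so by density there is a subsequence of $k$-marked groups $(G_{n},S_{n})$ converging to $(H,S)$ in the ultrametric $d$. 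Fix a word $w$ on $S\cup S^{-1}$; I want to decide whether $w=1$ in $H$. Search through $n$ (restricting to those with $k_n=k$, which is decidable) for the first index $n$ such that the ball of radius $|w|+1$ (or, using $d$ rather than $d_{\mathrm{Cay}}$, enough of the binary expansion to cover the relation $w$ and all relations shorter than it — run $\mathcal{A}_n$ on those finitely many words) matches the corresponding data we can extract about $H$ from $P$: here the direction "is $r_i=1$ in $H$?" is answerable because the consequences of $r_1,\dots,r_m$ are recursively enumerable, and "is $w=1$ in $H$?" is exactly what we are trying to decide, but a match at this finite depth already forces $\mathcal{A}_n(w)$ to give the correct answer. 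More carefully: because the sequence is dense, \emph{some} term $(G_n,S_n)$ agrees with $(H,S)$ on all relations of length $\le |w|$, and for such a term $\mathcal{A}_n(w)=1$ iff $w=1$ in $H$. So we search until we locate such an $n$ — detectable by checking agreement on the finitely many words of length $\le |w|$, using the r.e. presentation of $H$'s relations on one side and $\mathcal{A}_n$ on the other (agreement on non-relations of $H$ is co-r.e., which is fine since it suffices to \emph{wait} for a confirmed match) — and output $\mathcal{A}_n(w)$. This procedure is uniform in $P$ and $w$, hence gives a uniform solution to the word problem over all finitely presented groups with solvable word problem, contradicting Boone--Rogers. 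The final clause, that $(\mathcal{G}_{WP},d,\Lambda_{WP})$ is not a recursive Polish space, is then immediate: by Corollary \ref{cor:G effectively complete} it is effectively complete, so if it were recursive Polish it would have to be effectively separable, i.e.\ admit a $\Lambda_{WP}$-computable dense sequence, which we have just ruled out.

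The main obstacle — and the place the argument needs care — is the "detecting a match" step, because agreement of $(G_n,S_n)$ with $(H,S)$ at finite depth involves, on the $H$ side, both relations (r.e.) and \emph{irrelations} (co-r.e.), and on the $G_n$ side requires running the partial-looking data we extracted. The clean way around this is to observe that we do not need to \emph{verify} full agreement at depth $|w|$; we only need, for the single word $w$, to find an $n$ for which $\mathcal{A}_n$ is guaranteed correct on $w$, and density guarantees such $n$ exists among those agreeing on all words of length $\le |w|$. To make the search effective we can instead search the space of \emph{finite coherent patterns}: enumerate pairs $(n,t)$ and, running $\mathcal{A}_n$ for $t$ steps on all words of length $\le|w|$ and simultaneously enumerating consequences of $R$ in $H$ for $t$ steps, accept $(n,t)$ as soon as (i) $\mathcal{A}_n$ has halted on all those words, (ii) every word $\mathcal{A}_n$ declares trivial has been confirmed to be a consequence of $R$, and (iii) every relator $r_i$ has been confirmed trivial by $\mathcal{A}_n$. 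Condition (ii) is where one must argue: it can fail to be confirmable if $(G_n,S_n)\ne(H,S)$ at this depth, but density ensures infinitely many $n$ with $(G_n,S_n)$ agreeing with $(H,S)$ to depth $|w|$, and for those all three conditions eventually hold; and any $n$ that passes (ii) and (iii) must have $\mathcal{A}_n$ agree with $H$ exactly on words of length $\le|w|$ (it cannot declare a non-consequence trivial, by (ii); it cannot declare a consequence nontrivial, since $\mathcal{A}_n$ is a genuine word problem algorithm for a group in which all of $r_1,\dots,r_m$ hold by (iii), so all their consequences hold), hence $\mathcal{A}_n(w)$ is the correct answer for $H$. Output it. This completes the reduction.
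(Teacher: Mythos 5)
Your proof is correct, and it bottoms out at the same fact as the paper's — the Boone--Rogers theorem — but by a genuinely more self-contained route. The paper disposes of the statement in two lines by combining two results it has already established: Corollary \ref{cor:HoyrupRojas NonEmpty is SD} (in a recursive Polish space, non-emptiness of a semi-decidable set is semi-decided by searching the computable dense sequence) applied to the basic clopen sets, and Theorem \ref{thm:Boone-Rogers-reformulated-} (no algorithm halts exactly on wp-coherent sets of relations and irrelations). You instead unwind all of this into a direct construction of a uniform word-problem algorithm for finitely presented groups with solvable word problem, which is essentially the content of the proof of Theorem \ref{thm:Boone-Rogers-reformulated-} run in reverse. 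Your treatment of the one delicate point — certifying that a candidate $\mathcal{A}_n$ answers correctly on $w$ — is sound: condition (iii) makes $(G_n,S_n)$ a marked quotient of $H=\langle S\mid R\rangle$, so no consequence of $R$ can be declared nontrivial, and condition (ii) prevents a non-consequence from being declared trivial; density plus the r.e.\ enumeration of consequences guarantees an accepted pair $(n,t)$ exists. A simplification worth noting: membership of a group given by a word problem algorithm in a basic clopen set is outright \emph{decidable} (just run $\mathcal{A}_n$ on $r_1,\dots,r_m$ and on $w$), and the dense sequence meets $\Omega_{R;w}\cap\mathcal{G}_{WP}$ if and only if $w\ne 1$ in $H$; running that search in parallel with the enumeration of consequences of $R$ decides the word problem without any certification of $\mathcal{A}_n$ against $H$. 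Either way, the reduction is complete and the corollary about $(\mathcal{G}_{WP},d,\Lambda_{WP})$ not being recursive Polish follows as you say.
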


\begin{proof}
This is a simple application of Theorem \ref{thm:Boone-Rogers-reformulated-},
together with Corollary \ref{cor:HoyrupRojas NonEmpty is SD}. Corollary
\ref{cor:HoyrupRojas NonEmpty is SD} states that in a recursive Polish
space, there is an algorithm that stops exactly on semi-decidable
sets that are non-empty. 

The basic clopen sets $\Omega_{r_{i};s_{j}}^{k}$ are obviously semi-decidable
in $\mathcal{G}^{+}$, but a program that recognizes those basic clopen
sets that are non-empty would allow one to recognize wp-coherent sets
of relations and irrelations, contradicting Theorem \ref{thm:Boone-Rogers-reformulated-}. 
\end{proof}
It is interesting to interpret this proof as a variation on McKinsey's
theorem \cite{McKinsey1943} which states that finitely presented
residually finite groups have solvable word problem. Notice that if
$X$ is a set of marked groups which is dense in $\mathcal{G}^{+}$,
then every finitely presented group with solvable word problem is
``residually-$X$'', and a proof similar to McKinsey's would then
contradict the Boone and Rogers Theorem \cite{Boone1966}. 

This proposition directly implies the following:
\begin{cor}
The recursive metric space $(\mathcal{G}^{+},d,\nu_{WP})$ is effectively
complete but not effectively separable, and thus it is not a recursive
Polish space. 
\end{cor}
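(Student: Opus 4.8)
The plan is to obtain this statement by simply combining two facts already in place. Effective completeness of $(\mathcal{G}_{WP},d,\nu_{WP})$ is exactly the content of Corollary \ref{cor:G effectively complete}, so for the first assertion there is nothing further to prove: the space admits an algorithm of passage to the limit, inherited from the corresponding algorithm on $\{0,1\}_c^{\mathbb{N}}$ via the embeddings $\Phi_k$.

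For the failure of effective separability I would argue by contradiction. By definition, $(\mathcal{G}_{WP},d,\nu_{WP})$ is effectively separable precisely when there exists a $\nu_{WP}$-computable sequence $(u_n)_{n\in\mathbb{N}}$ of marked groups with solvable word problem that is dense in $\mathcal{G}_{WP}$. But the existence of such a sequence is exactly what Theorem \ref{prop:No-re dense seq} rules out. Hence $(\mathcal{G}_{WP},d,\nu_{WP})$ is not effectively separable. Since a recursive Polish space is by definition a recursive metric space that is both effectively complete and effectively separable, and the second condition fails here, $(\mathcal{G}_{WP},d,\nu_{WP})$ is not a recursive Polish space.

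The only substantial ingredient is Theorem \ref{prop:No-re dense seq}, which may be assumed; its proof rests on the Boone--Rogers Theorem through Corollary \ref{cor:HoyrupRojas NonEmpty is SD}: if $\mathcal{G}_{WP}$ were a recursive Polish space one could algorithmically detect non-emptiness of any $\nu_{WP}$-semi-decidable set, in particular of the basic clopen sets $\Omega^k_{r_i;s_j}$, which would amount to recognizing wp-coherent systems of relations and irrelations, contradicting Theorem \ref{thm:Boone-Rogers-reformulated-}. So I expect no genuine obstacle: the corollary is the bookkeeping step that packages the positive structural fact (effective completeness, from the Cantor space picture) together with the negative one (no computable dense sequence, from Boone--Rogers) into the conclusion that the space is effectively complete but not recursive Polish.
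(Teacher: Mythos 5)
Your proposal is correct and matches the paper's own argument exactly: the corollary is stated as a direct consequence of Theorem \ref{prop:No-re dense seq} together with the effective completeness already established in Corollary \ref{cor:G effectively complete}, and your unpacking of the definitions of effective separability and recursive Polish space is precisely the intended bookkeeping. No gap.
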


As we have already seen in the previous section, Mazur's Continuity
Theorem and Ceitin's Effective Continuity Theorem both apply to recursive
Polish spaces. This corollary thus shows that those theorems cannot
be directly applied to the space of marked groups. 

The following result is more general than the previous one, because
it does not necessarily applies to group given by word-problem algorithms.
Note that it is a result about $(\mathcal{G},d)$ up to isometry,
and not up to homeomorphism. 
\begin{thm}
\label{thm:NoRecPres}The metric space $(\mathcal{G},d)$ does not
have a recursive presentation in the sense of Definition \ref{def:RecPresPolish}. 
\end{thm}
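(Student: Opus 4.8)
The plan is to argue by contradiction: suppose $(u_n)_{n\in\mathbb{N}}$ is a dense sequence in $(\mathcal{G},d)$ for which the map $(n,m)\mapsto d(u_n,u_m)$ is $(\mathrm{id}_{\mathbb{N}}\times\mathrm{id}_{\mathbb{N}},c_{\mathbb{R}})$-computable. By Proposition \ref{prop:REC pres and EPR}, this would give a numbering $\nu$ of $\mathcal{G}$ (or of a dense countable subset $X_0$) with dense image making $(X_0,d,\nu)$ a recursive Polish space, where $\nu$ is obtained by effectively completing the numbering $n\mapsto u_n$. The key point I want to exploit is that a recursive presentation gives us, for free, effective control of \emph{which basic clopen sets $\Omega^k_{r_1,\dots,r_m;s_1,\dots,s_{m'}}$ are nonempty}: by density of $(u_n)_{n\in\mathbb N}$ in all of $\mathcal{G}$, a basic clopen set is nonempty if and only if it contains some $u_n$; and — crucially — using the computable distances $d(u_n,u_m)$ together with the fact that basic clopen sets of $\mathcal{G}$ are themselves metric balls (in the ultrametric $d$, a ball $B(u_n,2^{-N})$ is exactly the set of marked groups agreeing with $u_n$ on the first $N$ entries of their binary expansion, which is a basic clopen set, and conversely every basic clopen set contains such a ball around any of its points), we should be able to \emph{semidecide} membership $u_n\in\Omega^k_{r_i;s_j}$.

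Concretely, the main step is: given a basic clopen set $C=\Omega^k_{r_1,\dots,r_m;s_1,\dots,s_{m'}}$, I want an algorithm that halts iff $C\neq\emptyset$. For each $n$, I will test whether $u_n\in C$, and I will run these tests in parallel (dovetailing over $n$) so that the overall procedure halts as soon as one $u_n$ lands in $C$. To test $u_n\in C$: observe that membership of a marked group in $C$ depends only on finitely many entries of its binary expansion (the entries indexed by $i_k^{-1}(r_1),\dots,i_k^{-1}(r_m),i_k^{-1}(s_1),\dots,i_k^{-1}(s_{m'})$), i.e.\ on the position of the group inside a finite-radius ball; so $u_n\in C$ can be certified if we can determine, to sufficient precision, the ball of a fixed finite radius $N$ around $u_n$. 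Now here is where the recursive presentation is used: because the distances $d(u_n,u_m)$ are computable with arbitrary precision and $(u_m)_m$ is dense, for any target precision $2^{-N}$ we can search for some $u_m$ with $d(u_n,u_m)<2^{-N}$ together with rational certificates, and — by density combined with the ultrametric structure — enough such neighbors eventually pin down the first $N$ entries of the binary expansion of $u_n$; equivalently, the map $n\mapsto(\text{first }N\text{ entries of }\Phi_k(u_n))$ is computable from a recursive presentation. (The cleanest way to phrase this is via Proposition \ref{prop:REC pres and EPR}: the effective completion $\nu$ makes $(X_0,d,\nu)$ recursive Polish, hence all basic clopen sets, being $\nu$-semi-decidable, are handled by Corollary \ref{cor:HoyrupRojas NonEmpty is SD}, which exactly provides an algorithm halting on nonempty $\nu$-semi-decidable sets and producing a point in them.)

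Granting that, we obtain an algorithm that halts exactly on those $(r_1,\dots,r_m;s_1,\dots,s_{m'})$ for which $\Omega^k_{r_1,\dots,r_m;s_1,\dots,s_{m'}}\neq\emptyset$ — that is, an algorithm that halts exactly on \emph{coherent} sets of relations and irrelations. This directly contradicts Theorem \ref{thm:Boone-Novikov-reformulated} (Boone--Novikov reformulated), which asserts that no algorithm can stop exactly on the coherent sets of relations and irrelations. Hence no recursive presentation of $(\mathcal{G},d)$ exists. Note that, unlike Theorem \ref{prop:No-re dense seq}, this argument does not use wp-coherence or Boone--Rogers: we get a contradiction already at the level of plain coherence, because a recursive presentation is not required to consist of groups with solvable word problem — it only needs computable mutual distances — yet density over \emph{all} of $\mathcal{G}$ (including finitely presented groups with unsolvable word problem, whose binary expansions are nonetheless approached by the $u_n$) is what lets us detect nonemptiness of arbitrary basic clopen sets.

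The step I expect to be the main obstacle is the precise verification that a recursive presentation lets one compute finite initial segments of the binary expansion $\Phi_k(u_n)$ — i.e.\ that ``effectively knowing all pairwise distances and density'' really does yield effective knowledge of the ball of radius $2^{-N}$ around each $u_n$. This should follow cleanly by invoking Proposition \ref{prop:REC pres and EPR} to pass to a genuine recursive Polish structure and then Corollary \ref{cor:HoyrupRojas NonEmpty is SD}, so that I never have to manipulate binary expansions by hand; the only care needed is to confirm that each basic clopen set $\Omega^k_{r_i;s_j}$ is indeed $\nu$-semi-decidable with a code computable from $(r_i;s_j)$, which is immediate since it is a finite Boolean combination of the clopen ``coordinate'' conditions, each of which is $\nu$-decidable (being both open and closed) in any recursive Polish space.
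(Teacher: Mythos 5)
Your target contradiction (an algorithm halting exactly on coherent sets of relations and irrelations would violate Theorem \ref{thm:Boone-Novikov-reformulated}) is the same one the paper aims at, but the central step of your argument — that a recursive presentation lets you compute, or even semi-decide, the first $N$ entries of the binary expansion $\Phi_k(u_n)$, equivalently that the basic clopen sets $\Omega^k_{r_i;s_j}$ are uniformly $\nu$-semi-decidable — is unjustified, and it is precisely the crux of the theorem. A recursive presentation is purely metric data. From the computable values $d(u_n,u_m)\in\{0\}\cup\{2^{-j}\}$ you can decide whether $u_n$ and $u_m$ agree on their first $N$ coordinates, hence compute the partition of any finite initial segment $\{u_0,\dots,u_i\}$ into $2^{-N}$-cells; but nothing in this data tells you \emph{which} element of $\{0,1\}^N$ a given cell corresponds to, i.e.\ which relations actually hold in $u_n$. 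Your supporting claim that each coordinate condition is ``$\nu$-decidable (being both open and closed) in any recursive Polish space'' is false: clopenness of a set does not make its characteristic function computable (continuity is necessary for computability, not sufficient), and the implication ``clopen $\Rightarrow$ decidable'' is exactly the direction the paper singles out as failing on $\mathcal{G}$. Worse, if such an effective link between the numbering and the clopen basis existed, Theorem \ref{thm: same eff top >v_wp} would force $\mu\succeq\nu_{WP}$ and hence every $u_n$ into $\mathcal{G}_{WP}$, which already contradicts density of $(u_n)$ in all of $\mathcal{G}$ by Miller's theorem (Theorem \ref{thm:Miller,}) — so the hypothesis you are implicitly assuming is strictly stronger than what a recursive presentation provides.

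The paper's proof gets around exactly this obstacle with a counting argument that never identifies which cell any $u_n$ occupies. Writing $\lambda(n)$ for the number of coherent \emph{initial} sets of relations and irrelations of length $n$: $\lambda$ is upper semi-computable because incoherence is r.e.\ (enumerate consequences of the relations), and the recursive presentation makes $\lambda$ lower semi-computable, since the maximal size $x_i^n$ of a subset of $\{u_0,\dots,u_i\}$ whose elements are pairwise at distance at least $2^{-n}$ is computable and increases to $\lambda(n)$ by density. Hence $\lambda$ is computable, and one then determines the coherent initial cells exactly by eliminating incoherent candidates from the $2^n$ possibilities until $\lambda(n)$ remain; padding an arbitrary tuple to an initial one decides coherence in general, contradicting Theorem \ref{thm:Boone-Novikov-reformulated}. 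If you want to salvage your write-up, you must replace the ``read off the coordinates of $u_n$'' step by this counting-plus-elimination device (or something equally indirect); as it stands the proof does not go through.
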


\begin{proof}
Recall that a recursive presentation of $(\mathcal{G},d)$ consists
in a sequence $(u_{n})_{n\in\mathbb{N}}$, dense in $\mathcal{G}$,
and for which the distance between the $n$-th and $m$-th terms is
computable. 

Suppose that $\mathcal{G}$ admits a recursive presentation. Then
so does $\mathcal{G}_{2}$. 

Recall that we have defined an embedding $\Phi_{2}:\mathcal{G}_{2}\rightarrow\{0,1\}^{\mathbb{N}}$
by fixing a computable order on the rank two free group. Call a set
$(r_{1},...,r_{m};\,s_{1},...,s_{m'})$ of relations and irrelations
\emph{initial} if the $m+m'$ elements of the free group it contains
are exactly the first $m+m'$ elements of this order. The number $m+m'$
is called the length of this set of relations and irrelations. 

We will prove that the recursive presentation of $\mathcal{G}_{2}$
allows to compute, given an integer $n$, the number of initial coherent
sets of relations and irrelations that contain $n$ relations. 

We first show that this is sufficient to obtain a contradiction. 

There are exactly $2^{n}$ possible initial sets of relations and
irrelations of length $n$. Since the incoherent sets of relations
and irrelations form a c.e. set, if we had access to the number of
initial coherent sets of relations and irrelations of length $n$,
we would be able to compute exactly those sets, by starting with the
$2^{n}$ possible initial sets, and deleting incoherent ones until
the number of coherent sets is attained. 

But being able to compute the \emph{initial} coherent sets of relations
and irrelations in fact also allows one to compute all coherent sets
of relations and irrelations, because a set $(r_{1},...,r_{m};\,s_{1},...,s_{m'})$,
which is not initial, is coherent if and only if there is an initial
and coherent set which contains the elements $(r_{1},...,r_{m})$
as relations, and the elements $(s_{1},...,s_{m'})$ as irrelations.
Choosing $n$ big enough, it suffices to construct all initial sets
of relations and irrelations of length $n$ to determine whether $(r_{1},...,r_{m};\,s_{1},...,s_{m'})$
is coherent. And we have seen that this is impossible. 

Suppose that $(u_{n})_{n\in\mathbb{N}}$ defines a recursive presentation
of $\mathcal{G}_{2}$, we show how to compute the number of initial
coherent sets of relations and irrelations that contain $n$ relations.
Denote by $\lambda(n)$ this number. Again, because the incoherent
sets of relations and irrelations form a c.e. set, $\lambda$ is an
upper semi-computable function: there exists a computable function
$\lambda^{>}$ that, given $n$, produces a computable and decreasing
sequence of integers that converges to $\lambda(n)$. What we show
is that the existence of a recursive presentation of $\mathcal{G}_{2}$
implies that $\lambda$ is also lower semi-computable, meaning that
there exists a computable function $\lambda^{<}$ that, on input $n$,
produces an increasing sequence of integers which converges to $\lambda(n)$. 

Given $i$ and $n$ natural numbers, define $x_{i}^{n}$ to be the
maximal size of a subset of $\{u_{0},u_{1},...,u_{i}\}$ of which
any two elements are at least $2^{-n}$ apart. 

We claim that $(i,n)\mapsto x_{i}^{n}$ is a computable function,
and that, for any $n$, $i\mapsto x_{i}^{n}$ is an increasing function
that converges to $\lambda(n)$. 

Setting $\lambda^{<}(n)=(x_{i}^{n})_{i\in\mathbb{N}}$ will then conclude
the proof. 

As the distance function $d$ takes values only in $\left\{ 0\right\} \cup\left\{ 2^{-n},n\in\mathbb{N}\right\} $,
given the description of the distance $d(u_{i},u_{j})$ as a computable
real, one can always effectively choose one of $d(u_{i},u_{j})<3\times2^{-n-2}$
and $d(u_{i},u_{j})>3\times2^{-n-2}$ which holds, and thus decide
whether or not $u_{i}$ and $u_{j}$ are $2^{-n}$ apart. One can
thus check every subset of $\{u_{0},u_{1},...,u_{i}\}$ to find one
of maximal size, all the elements of which are $2^{-n}$ apart. Thus
$(i,n)\mapsto x_{i}^{n}$ is computable. 

The function $(i,n)\mapsto x_{i}^{n}$ is increasing in $i$ by definition. 

Finally, we show that $x_{i}^{n}$ goes to $\lambda(n)$ as $i$ goes
to infinity. Two points of $\mathcal{G}_{2}$ are at least $2^{-n}$
apart if and only if their binary expansion differ on one of their
first $n$ digits: those groups must be associated to different initial
sets of relations and irrelations of length $n$. Because the sequence
$(u_{n})_{n\in\mathbb{N}}$ is supposed to be dense in $\mathcal{G}_{2}$,
for any coherent initial set of relations and irrelations of length
$n$, there is a point of this sequence which satisfies those relations
and irrelations. 

And thus there must indeed exist a set of $\lambda(n)$ points in
the dense sequence which are pairwise $2^{-n}$ apart, and this number
is clearly maximal. 
\end{proof}

\subsection{\label{subsec:Optimal numbering}Optimality of the numbering type
$\Lambda_{WP}$}

A Polish space is a topological space, it is not attached to a particular
metric. The notion of recursive presentation of a Polish space is
attached to a certain metric. We have shown that $(\mathcal{G},d)$
does not have a recursive presentation, where $d$ is the usual ultrametric
distance of $\mathcal{G}$. 

We have no results that state that the space of marked groups, seen
purely as a topological space, cannot have a computably Polish model. 

In particular the above result on presentations, Theorem \ref{thm:NoRecPres},
could easily be extended to concrete examples of other metrics, but
it is unclear how to prove the same result for an arbitrary metric
that generates the right topology.

What our results show is that if we want to preserve slightly more
than just the topology of $\mathcal{G}$, then undecidability occurs.
We can obtain undecidability results whenever trying to preserve one
of the following three features of $\mathcal{G}$:
\begin{itemize}
\item One of the usual explicit metrics of $\mathcal{G}$, 
\item The explicit embeddings of each $\mathcal{G}_{k}$ into a Cantor space,
\item The numbering of the basis of clopen sets of $\mathcal{G}$ (sets
of the form $\Omega_{R;S}^{k}$, with the obvious numbering: a basic
set $\Omega_{R;S}^{k}$ is given by $k$ and the tuples $R$ and $S$). 
\end{itemize}
Each one of those points provides $\mathcal{G}$ with an extra structure,
in addition to its topology. 

The first point was discussed above, in this section we obtain undecidability
results concerning the second and third points. 

Note that the second and third points are related: the numbering of
the basis of $\mathcal{G}$ is the one induced by the embedding of
$\mathcal{G}$ into a countable union of Cantor spaces. 

The following theorem shows that, if $\hat{d}$ is any distance that
generate the topology $\mathcal{G}$, and $\mu$ any subnumbering
of $\mathcal{G}$, if there is an algorithm that testifies that the
open balls of $(\mathcal{G}_{\mu},\hat{d},\mu)$ are effectively open
with respect to the numbered basis of clopen sets of $\mathcal{G}$,
then $\mu$ provides more information than $\nu_{WP}$. 

\begin{thm}
\label{thm: same eff top >v_wp}Suppose that $\hat{d}$ is any distance
on $\mathcal{G}$ that generates the same topology as $d$, and that
$\mu$ is a subnumbering of $\mathcal{G}$ such that $(\mathcal{G}_{\mu},\hat{d},\mu)$
is a RMS. 

Suppose furthermore that there is an algorithm that takes as input
a $\mu$-name for a $k$-marked group $(G,S)$ and a $c_{\mathbb{R}}$-name
for a radius $r>0$, and produces a basic clopen set $\Omega_{R;T}^{k}$
such that: 
\[
(G,S)\in\Omega_{R;T}^{k};
\]
\[
\Omega_{R;T}^{k}\subseteq B_{\hat{d}}((G,S),r).
\]
Then one has $\mu\succeq\nu_{WP}$. And thus $\mathcal{G}_{\mu}\subseteq\mathcal{G}^{+}$,
and if $\mathcal{G}_{\mu}$ is dense in $\mathcal{G}^{+}$, then \textup{\emph{$(\mathcal{G}_{\mu},\hat{d},\mu)$
does not have }}a $\mu$-computable and dense sequence. 
\end{thm}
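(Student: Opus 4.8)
The plan is to show that a $\mu$-name for a marked group $(G,S)$, together with the hypothesized "effective basic-neighborhood" algorithm, lets us enumerate all relations of $(G,S)$ together with all irrelations — which is exactly a $\nu_{WP}$-name, up to the standard equivalence with the Cantor-space numbering from Section \ref{subsec:Numbering induced by Cantor space}. First I would recall that, by that section, $\nu_{WP}$ is equivalent to the numbering $\hat{\nu_{\mathcal{C}}}$ restricted to $\mathcal{G}$ via the embeddings $\Phi_k$; so it suffices to produce, from a $\mu$-name of $(G,S)$, an index for the characteristic function of the normal subgroup $N \vartriangleleft \mathbb{F}_k$ defining $(G,S)$ — i.e. to decide, for each element $w$ of $\mathbb{F}_k$, whether $w$ is a relation in $(G,S)$. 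Since $(G,S)\in\mathcal{G}_k$, the distance $\hat d$ is a metric inducing the product topology on $\mathcal{G}_k$, which is compact; in particular the $\hat d$-balls around $(G,S)$ form a neighborhood basis, and for every element $w$ there is some radius $r_w>0$ such that whether $w$ is a relation is constant on $B_{\hat d}((G,S),r_w)$.

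The key step is the decision procedure. Fix $w \in \mathbb{F}_k$. Because $(\mathcal{G}_\mu,\hat d,\mu)$ is a RMS, the map $r\mapsto$ (the hypothesized algorithm's output $\Omega^k_{R;T}$) is available, and we run it on the given $\mu$-name of $(G,S)$ for radii $r = 2^{-1},2^{-2},2^{-3},\dots$ in turn. For each $r$ we obtain a basic clopen set $\Omega^k_{R;T}$ with $(G,S)\in\Omega^k_{R;T}\subseteq B_{\hat d}((G,S),r)$. Now I would observe: for $r$ small enough, $B_{\hat d}((G,S),r)$ — and hence $\Omega^k_{R;T}$ — is contained in the clopen set "$w$ is a relation" or in "$w$ is an irrelation"; and since $\Omega^k_{R;T}$ is a \emph{basic} clopen set, containment in one of these two clopen sets is witnessed syntactically: either $w$ appears in (or is a free-group consequence of) the relation tuple $R$, or $w$ appears in the irrelation tuple $T$, or — crucially — neither, but then one checks directly which of "$\langle S\mid R\rangle$ satisfies $w$" holds, noting that consequences of $R$ are recursively enumerable and $\Omega^k_{R;T}$ being a subset of one of the two clopen sets forces a decision. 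More cleanly: decompose $\Omega^k_{R;T}$ as the disjoint union $\Omega^k_{R,w;T}\sqcup\Omega^k_{R;T,w}$ exactly as in the proof of Lemma \ref{lem:Inclusion-between-basic sets is semi-decidable}; since $\Omega^k_{R;T}\subseteq\{w \text{ relation}\}\cup\{w\text{ irrelation}\}$ and the two pieces are disjoint, for small $r$ exactly one of the two pieces is empty, and emptiness is recognizable (Theorem \ref{thm:Boone-Novikov-reformulated}, the incoherent side is r.e.). Running this search over increasing $r$ and, in parallel, the emptiness-enumeration, we will eventually certify which of $w\in N$, $w\notin N$ holds. Doing this uniformly in $w$ produces the desired $\hat{\nu_{\mathcal{C}}}$-name, hence a $\nu_{WP}$-name, uniformly in the $\mu$-name; this gives $\mu\succeq\nu_{WP}$.

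The two remaining assertions are then immediate. Since $\mu\succeq\nu_{WP}$, every $\mu$-computable group is $\nu_{WP}$-computable, i.e. has solvable word problem, so $\mathcal{G}_\mu\subseteq\mathcal{G}_{WP}$. And if $\mathcal{G}_\mu$ were dense in $\mathcal{G}_{WP}$ and carried a $\mu$-computable dense sequence, then (again using $\mu\succeq\nu_{WP}$) that sequence would be $\nu_{WP}$-computable and dense in $\mathcal{G}_{WP}$, contradicting Theorem \ref{prop:No-re dense seq}. The main obstacle I anticipate is the middle step: making rigorous the claim that for sufficiently small $r$ the basic clopen set returned by the algorithm is forced to decide $w$, and that this "smallness" is witnessed effectively — the point being that we never learn $r_w$ in advance, but the monotone search over $r = 2^{-n}$ combined with the semidecidability of emptiness of the two candidate pieces lets us halt precisely when a decision has been certified. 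One must also be a little careful that the algorithm is only guaranteed to behave on \emph{genuine} $\mu$-names and only for radii that are $c_{\mathbb{R}}$-computable reals, which the dyadic sequence $2^{-n}$ supplies.
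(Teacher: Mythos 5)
Your argument is correct, and its first half coincides with the paper's: run the hypothesized algorithm on the radii $2^{-n}$ to obtain a computable sequence of basic clopen sets $\Omega^{k}_{R_n;T_n}$ squeezed between $(G,S)$ and the shrinking balls. Where you diverge is in how you then extract a word problem algorithm. The paper observes that $\bigcap_n \Omega^{k}_{R_n;T_n}=\{(G,S)\}$, so that $\bigcup_n R_n$ is a recursively enumerable set of defining relations and $\bigcup_n T_n$ a recursively enumerable discriminating family, and then invokes the uniform Cornulier--Guyot--Pitsch theorem (Theorem \ref{thm:Cornulier,-Guyot,-Pitsch,}, i.e.\ Kuznetsov's method) as a black box. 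You instead decide each word $w\in\mathbb{F}_k$ directly: split $\Omega^{k}_{R_n;T_n}$ as $\Omega^{k}_{R_n,w;T_n}\sqcup\Omega^{k}_{R_n;T_n,w}$, note that for $n$ large the ball lies inside the clopen set $\{w=1\}$ or its complement so that exactly one piece is empty, and detect this by the semi-decidability of incoherence (Theorem \ref{thm:Boone-Novikov-reformulated}), dovetailing over $n$. This is in substance an inlined proof of the Kuznetsov/CGP step (certifying $\Omega^{k}_{R_n;T_n,w}=\emptyset$ amounts to finding $w$ among the consequences of $R_n$, and certifying $\Omega^{k}_{R_n,w;T_n}=\emptyset$ amounts to deriving some element of $T_n$ from $R_n\cup\{w\}$), so your version is more self-contained at the cost of redoing that work; the paper's is shorter given the quoted theorem. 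Your compactness justification for termination (the clopen set $\{w=1\}$ contains some $\hat d$-ball around $(G,S)$ since $\hat d$ generates the product topology) and your treatment of the two final assertions via Theorem \ref{prop:No-re dense seq} are both sound.
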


A \emph{discriminating family} of a group $G$ is a subset of $G$
which does not contain the identity element of $G$, and which intersects
any non-trivial normal subgroup of $G$. We will use Theorem 3.4 from
\cite{Cornulier2007}, which is an analysis of Kuznetsov's method
for solving the word problem in simple groups \cite{Kuznetsov1958}: 
\begin{thm}
[Cornulier, Guyot, Pitsch, \cite{Cornulier2007}]\label{thm:Cornulier,-Guyot,-Pitsch,}A
group has solvable word problem if and only if it is both recursively
presentable and recursively discriminable. 

And this statement is uniform: there is an effective method that allows,
given a recursive presentation and an algorithm that enumerates a
discriminating family in a marked group $(G,S)$, to find a word problem
algorithm for $(G,S)$. 
\end{thm}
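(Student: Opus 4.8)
The plan is to prove the two implications separately, all the genuine content lying in the direction that recursive presentability together with recursive discriminability yields solvability of the word problem; this is the generalization of Kuznetsov's method from simple groups to arbitrary finitely generated groups. Throughout, I fix the marking $(G,S)$ with $S$ of cardinality $k$, identify $G$ with $\mathbb{F}_{k}/N$ where $N\vartriangleleft\mathbb{F}_{k}$ is the kernel of the marking, and use the standard fact that, for a word $w\in\mathbb{F}_{k}$, the group $\langle S\,\vert\,R\cup\{w\}\rangle$ is precisely $G/\langle\langle w\rangle\rangle_{G}$, where $\langle\langle w\rangle\rangle_{G}$ denotes the normal closure in $G$ of the image of $w$.

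For the direction solvable word problem $\Rightarrow$ both properties, I would argue directly. If $(G,S)$ has solvable word problem, the full set of relators $\{w\in\mathbb{F}_{k}\,\vert\,w=1\text{ in }G\}$ is decidable, hence in particular recursively enumerable, so it exhibits $G$ as recursively presentable. Moreover the set $G\setminus\{1\}$ of all non-trivial elements is a discriminating family, since it avoids the identity and meets every non-trivial normal subgroup (any such subgroup contains a non-identity element), and it is recursively enumerable: one enumerates all words on $S\cup S^{-1}$ and retains those the word problem algorithm certifies to be $\neq1$. Thus $G$ is recursively discriminable. This direction requires no cleverness.

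For the converse, suppose I am handed an enumeration of a recursively enumerable set $R$ of relators with $G=\langle S\,\vert\,R\rangle$, together with an enumeration of a discriminating family $D=\{d_{1},d_{2},\dots\}$ (each $d_{i}\neq1$ in $G$, and $D$ meeting every non-trivial normal subgroup). Given an arbitrary word $w$, I would run two semi-decision procedures in parallel. The first, $P_{1}$, enumerates the consequences of $R$ (equivalently the normal closure $\langle\langle R\rangle\rangle$ in $\mathbb{F}_{k}$) and halts as soon as $w$ appears; it halts precisely when $w=1$ in $G$. The second, $P_{2}$, simultaneously enumerates the consequences of $R\cup\{w\}$ and the family $D$, dovetailing over the index $i$, and halts as soon as some $d_{i}$ is discovered among the consequences of $R\cup\{w\}$, that is, as soon as it finds some $d_{i}\in\langle\langle w\rangle\rangle_{G}$.

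The heart of the argument is to verify that $P_{2}$ halts if and only if $w\neq1$, and this is exactly where the defining properties of a discriminating family enter. If $w\neq1$, then $\langle\langle w\rangle\rangle_{G}$ is a non-trivial normal subgroup of $G$, so by the discriminating property it contains some $d_{i}$, and $P_{2}$ halts. Conversely, if $w=1$, then $\langle\langle w\rangle\rangle_{G}=\{1\}$, which contains none of the $d_{i}$ (as $D$ avoids the identity), so $P_{2}$ never halts. Hence on every input $w$ exactly one of $P_{1},P_{2}$ halts, and reporting ``$w=1$'' if $P_{1}$ halts and ``$w\neq1$'' if $P_{2}$ halts decides the word problem. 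The construction is manifestly uniform: the word problem algorithm is assembled effectively from the two given enumeration procedures, which yields the asserted uniform statement. The only genuinely delicate point is the translation ``$d_{i}=1$ in $\langle S\,\vert\,R\cup\{w\}\rangle$'' $\Leftrightarrow$ ``$d_{i}\in\langle\langle w\rangle\rangle_{G}$'' between adjoining a relation and quotienting by a normal closure; once this is in place, correctness of $P_{2}$ follows immediately from the definition of a discriminating family.
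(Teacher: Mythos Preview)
The paper does not actually give a proof of this theorem: it is quoted from \cite{Cornulier2007}, with only the remark that the uniformity claim is easy to read off the proof given there. Your argument is correct and is precisely the standard Kuznetsov-type argument that underlies the result in \cite{Cornulier2007}; in particular, your final paragraph explicitly establishes the uniformity the paper singles out.
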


We add the statement about the uniformity of this theorem, but it
is easy to see that the proof given in \cite{Cornulier2007} is uniform. 
\begin{proof}
[Proof of Theorem \ref{thm: same eff top >v_wp}]Consider a $\mu$-computable
marked group $(G,S)$. We show, given a $\mu$-name for $(G,S)$,
how to obtain a word problem algorithm for it. 

Using the algorithm given by the hypotheses of the theorem, consecutively
on each ball $B_{\hat{d}}(G,\frac{1}{n})$, we obtain a computable
sequence $(\Omega_{R_{n};T_{n}})_{n\in\mathbb{N}}$ of basic clopen
subsets, such that for each $n$ we have:
\[
G\in\Omega_{R_{n};T_{n}}\subseteq B_{\hat{d}}(G,\frac{1}{n}).
\]

It follows that $\bigcap\Omega_{R_{n};T_{n}}=\{G\}$, and thus that
the union $\underset{n\in\mathbb{N}}{\bigcup}R_{n}$ defines a computably
enumerable set of relations that defines $G$, and that the set $\underset{n\in\mathbb{N}}{\bigcup}T_{n}$
defines a computably enumerable discriminating family for $G$. 

We can thus apply Theorem \ref{thm:Cornulier,-Guyot,-Pitsch,}, which
indicates that a word problem algorithm for $G$ can be obtained from
this data. 
\end{proof}

\subsection{\label{subsec:Two Applications of Miller }Two applications of a
construction of Miller, failure of Moschovakis' (B) condition for
the space of marked groups }

In this section, we prove two important theorems that use variations
on Miller's example of a finitely presented group that is isolated
from groups with solvable word problem. 
\begin{thm}
\label{thm:Use Miller Wisely -1}No algorithm can stop exactly on
those sets of relations and irrelations which are not wp-coherent. 
\end{thm}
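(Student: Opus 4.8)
The plan is to exhibit a total computable many-one reduction of the complement of the halting set into the class of (descriptions of) sets of relations and irrelations that are \emph{not} wp-coherent. Since the complement of the halting set is not recursively enumerable, and the computable preimage of an r.e.\ set is r.e., such a reduction forces the not-wp-coherent sets not to form an r.e.\ set, which is exactly the statement: no algorithm can stop exactly on those sets of relations and irrelations which are not wp-coherent.

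The key ingredient is a \emph{uniform} version of Miller's construction from \cite[Corollary 3.9]{Miller1992}, developed in detail in this subsection. The non-uniform statement already supplies a finitely presented group $M_{0}$ with unsolvable word problem together with finite tuples $R_{0}$ of relations and $S_{0}$ of irrelations over some $\mathbb{F}_{k_{0}}$ such that $M_{0}\in\Omega_{R_{0};S_{0}}^{k_{0}}$ and $\Omega_{R_{0};S_{0}}^{k_{0}}\cap\mathcal{G}_{WP}=\emptyset$; this is precisely the witness that coherence and wp-coherence differ. Inspecting that construction, the ``isolation'' is obtained by a reduction from an \emph{undecidable} halting problem, and the argument is uniform: to each Turing machine $N$ one associates, recursively in an index for $N$, a rank $k_{N}$ and finite tuples $R_{N}$, $S_{N}$ over $\mathbb{F}_{k_{N}}$ such that (a) $\Omega_{R_{N};S_{N}}^{k_{N}}$ is always non-empty --- it contains $\langle\mathbb{F}_{k_{N}}\mid R_{N}\rangle$, with $S_{N}$ chosen, as in the proof of Theorem \ref{thm:Boone-Novikov-reformulated}, so that those irrelations genuinely fail there; (b) if $N$ does not halt, then every group in $\Omega_{R_{N};S_{N}}^{k_{N}}$ has unsolvable word problem, so $\Omega_{R_{N};S_{N}}^{k_{N}}\cap\mathcal{G}_{WP}=\emptyset$; and (c) if $N$ halts, then $\Omega_{R_{N};S_{N}}^{k_{N}}$ contains a group with solvable word problem --- concretely, once $N$ has halted one adjoins to $\langle\mathbb{F}_{k_{N}}\mid R_{N}\rangle$ finitely many ``collapsing'' relations, read off from the halting computation, obtaining a finitely presented group which still lies in $\Omega_{R_{N};S_{N}}^{k_{N}}$ and which is now recursively presentable and recursively discriminable, so that Theorem \ref{thm:Cornulier,-Guyot,-Pitsch,} yields a word problem algorithm for it.

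Granting this family, the reduction is immediate. By (a), (b) and (c), the pair $(R_{N};S_{N})$ is wp-coherent if and only if $N$ halts, equivalently $(R_{N};S_{N})$ is not wp-coherent if and only if $N$ does not halt. The map $N\mapsto(R_{N};S_{N})$ is total and computable, hence a many-one reduction of the non-r.e.\ set $\{N : N \text{ does not halt}\}$ into the not-wp-coherent sets, so the latter class is not r.e., which is the theorem. The same family also underlies Theorem \ref{thm:No-Completion-Theorem}, through a variant in which $\Omega_{R_{N};S_{N}}^{k_{N}}$ is wp-coherent for every $N$ but the word problem algorithm of any solvable-word-problem group it contains encodes whether $N$ halts.

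The main obstacle --- and the real content of the section --- is the construction of this parametrised Miller family, that is, arranging (a), (b) and (c) at once. Property (b) is delicate for two reasons: the isolation must hold in $\mathcal{G}$, i.e.\ for all markings and with no bound on the number of generators, not merely among finitely presented groups; and it must be \emph{compatible} with (c), so the argument showing that every nearby group has unsolvable word problem must be exactly of the kind that degenerates once $N$ halts. Property (c) is delicate because the collapsing relations adjoined when $N$ halts must be shown \emph{not} to validate any of the irrelations in $S_{N}$ --- otherwise the collapsed group leaves $\Omega_{R_{N};S_{N}}^{k_{N}}$ --- while still simplifying the group enough for Theorem \ref{thm:Cornulier,-Guyot,-Pitsch,} to apply. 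A more routine point is that the whole assignment ($k_{N}$, the tuples $R_{N}$ and $S_{N}$, the enumeration of collapsing relations) is produced uniformly from an index for $N$; this is where the Higman--Clapham--Valiev embedding theorem \cite{Clapham1967,Valiev1975} and the effective form of Theorem \ref{thm:Cornulier,-Guyot,-Pitsch,} enter. With the family in hand, the reduction and the r.e./non-r.e.\ bookkeeping above complete the proof.
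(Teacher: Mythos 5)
Your high-level plan is the right one and matches the paper's: produce, uniformly in (an index for) a Turing machine, a finite tuple of relations and irrelations that is wp-coherent if and only if the machine halts, and conclude that the non-wp-coherent tuples cannot be recursively enumerable because the non-halting machines are not. The problem is that you have not actually produced this family; you explicitly defer ``the real content of the section'' to an unproved claim that Miller's construction can be parametrised by a single Turing machine $N$ with properties (a)--(c). That deferred step is the entire proof, and the mechanism you sketch for it is not the right one. Property (b) --- that \emph{every} group in $\Omega^{k_N}_{R_N;S_N}$ has unsolvable word problem when $N$ fails to halt --- does not come from a reduction to a single halting instance; in Miller's argument it comes from \emph{recursive inseparability}: a solvable-word-problem group satisfying $\Pi_{P,Q}$ yields a recursive set $\{i : e_i = e_0\}$ separating $P$ from $Q$. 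A lone non-halting computation gives you nothing of this strength. Likewise your mechanism for (c), adjoining ``collapsing relations read off from the halting computation'' to $\langle \mathbb{F}_{k_N}\mid R_N\rangle$, is both unconstructed and suspicious: you would have to verify that the quotient still violates every irrelation in $S_N$ and has solvable word problem, and you give no reason either should hold.

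The paper's resolution is to parametrise not the group-theoretic construction but the r.e.\ sets fed into it. Fix once and for all a recursively enumerable, recursively inseparable pair $(P,Q)$. For each machine $M_n$, let $P_n$ (resp.\ $Q_n$) be the set enumerated by running the enumeration of $P$ (resp.\ $Q$) only for as long as $M_n$ has not halted. Then $(P_n,Q_n)$ is uniformly r.e.; if $M_n$ never halts it equals $(P,Q)$, hence is recursively inseparable and Miller's theorem (Theorem \ref{thm:Miller,}) makes $\Pi_{P_n,Q_n}$ not wp-coherent; if $M_n$ halts, $P_n$ and $Q_n$ are finite, hence recursive, so $L^3_{P_n,Q_n}$ itself has solvable word problem and witnesses wp-coherence --- no quotienting or collapsing is needed. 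Since $\Pi_{P_n,Q_n}$ is computable from $n$, this is exactly the many-one reduction you wanted. Without this truncation device (Lemma \ref{lem:rec inseparable sets} in the paper) or an equivalent, your argument does not go through.
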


The following theorem is one of our most important results.
\begin{thm}
[Failure of an Effective Axiom of Choice for groups] \label{thm:No-Completion-Theorem}There
is no algorithm that, given a wp-coherent set of relations and irrelations,
produces a word problem algorithm for a marked group that satisfies
those relations and irrelations. 
\end{thm}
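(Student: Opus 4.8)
The plan is to argue by contradiction: assuming such an algorithm $\mathcal{A}$ exists, I would turn it into a solution of a problem known to be undecidable. The essential ingredient is a parametrized form of the construction of Miller used to prove Theorem~\ref{thm:Use Miller Wisely } (which is why this proof belongs right after it, in Section~\ref{subsec:Two Applications of Miller }). From that construction I would first extract a uniformly computable sequence $(\Omega_{e})_{e\in\mathbb{N}}$ of non-empty basic clopen sets of $\mathcal{G}_{2}$, together with uniformly computable words $w_{e}\in\mathbb{F}_{2}$, enjoying two properties: (a) each $\Omega_{e}$ is wp-coherent, i.e.\ it contains a marked group with solvable word problem; and (b) for \emph{every} marked group $(G,S)\in\Omega_{e}$ with solvable word problem one has $w_{e}=1$ in $G$ if and only if the $e$-th Turing machine halts. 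The role of requirement (a) is decisive: it guarantees that $\mathcal{A}$, when fed $\Omega_{e}$, must return a genuine word problem algorithm and not an unverifiable ``garbage'' code. This is exactly where Miller's isolated-group trick is indispensable, since the naive idea of encoding the halting set into a finite neighbourhood of a Boone--Novikov group produces basic clopen sets that are coherent but \emph{not} wp-coherent, on which the behaviour of $\mathcal{A}$ is not constrained by the hypothesis.

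Granting such a family, the remaining argument is a one-line diagonalisation. Applying $\mathcal{A}$ to $\Omega_{e}$ produces, uniformly in $e$, a code $m_{e}$ for a word problem algorithm $\varphi_{m_{e}}$ of some $(G_{e},S_{e})\in\Omega_{e}$ with solvable word problem; by (b), $\varphi_{m_{e}}(w_{e})=0$ holds if and only if the $e$-th machine halts, so $e\mapsto\varphi_{m_{e}}(w_{e})$ would be a recursive characteristic function of the halting set --- a contradiction (this is of a piece with, and strengthens, the Boone--Rogers phenomenon of Theorem~\ref{thm:Boone-Rogers-reformulated-}). I would close by recording the reformulation stated just after the theorem --- no algorithm can, given a non-empty basic clopen set of $\mathcal{G}_{WP}$, produce a point in it --- since it is this form that is invoked to deduce the failure of Moschovakis' condition~(B) in Corollary~\ref{cor:MoschoFails}.

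The one genuinely hard step is the construction of $(\Omega_{e},w_{e})$. I would build it from a finitely presented ``master'' group of Boone--Novikov type inside which the run of machine $e$ is implemented, and then carve out the marked-group neighbourhood $\Omega_{e}$ so as to simultaneously: keep it wp-coherent, which forces one to leave inside it a group with solvable word problem --- most naturally a recursively discriminable or simple quotient, to which Theorem~\ref{thm:Cornulier,-Guyot,-Pitsch,} (the analysis of Kuznetsov's method) applies; make \emph{every} such interior group agree on the value of $w_{e}$; and arrange that this common value records halting. Keeping the solvable-word-problem groups in $\Omega_{e}$ few enough that they cannot disagree about $w_{e}$, while still present enough that $\Omega_{e}$ remains wp-coherent, is precisely what Miller's isolated finitely presented group --- and its parametrization --- achieves, and it is where essentially all of the difficulty lies; everything downstream, including the bookkeeping that $e\mapsto\Omega_{e}$ and $e\mapsto w_{e}$ are computable, is routine.
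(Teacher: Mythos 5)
Your overall shape is right (argue by contradiction, use Miller's construction, and insist on wp-coherence so that the hypothetical algorithm $\mathcal{A}$ is actually constrained), but the object your reduction rests on --- the family $(\Omega_{e},w_{e})$ with property (b), namely that \emph{every} solvable-word-problem group in $\Omega_{e}$ satisfies $w_{e}=1$ exactly when machine $e$ halts --- is never constructed, and it is not what Miller's construction provides. With $P,Q$ recursively inseparable, $\Pi_{P,Q}$ is not wp-coherent, so (a) fails; with $P,Q$ recursive, $\Omega_{\Pi_{P,Q}}$ contains $L_{P,Q}^{3}$ but also many other non-trivial quotients with solvable word problem, and these genuinely disagree on words $e_{i}e_{0}^{-1}$ for $i\notin P\cup Q$, so there is no designated word on which they all agree. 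Worse, the natural way to force agreement --- shrinking the solvable-word-problem part of $\Omega_{e}$ to a single group cut out by finitely many relations and irrelations --- backfires: a non-empty basic clopen singleton $\Omega_{R;T}=\{G\}$ forces $G=\langle S\mid R\rangle$ with $T$ a finite discriminating family, so Theorem \ref{thm:Cornulier,-Guyot,-Pitsch,} computes a word problem algorithm for $G$ uniformly from $(R,T)$, which would make $e\mapsto[w_{e}=1]$ computable and contradict your own requirement (b). So the step you defer as ``the one genuinely hard step'' is not bookkeeping to be filled in later; it is the entire theorem, and the sketch you give for it attributes to Miller's construction a property it does not have.

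The paper's proof circumvents exactly this obstruction by not asking all solvable-word-problem groups in the clopen set to agree on anything. Instead, from \emph{any} group $K$ with solvable word problem in $\Omega_{\Pi_{P,Q}}$ one extracts the recursive set $H=\{i:e_{i}=e_{0}\text{ in }K\}$, which separates $P$ from $Q$; the diagonalisation then happens one level up, against \emph{uniform families of recursive separators}. Concretely, Lemma \ref{lem:Lemma for NCT} produces uniformly r.e.\ sequences $(P_{n})$, $(Q_{n})$ of disjoint \emph{recursive} sets (so each $\Pi_{P_{n},Q_{n}}$ is wp-coherent) admitting no uniformly recursive sequence of separators; applying $\mathcal{A}$ to $\Pi_{P_{n},Q_{n}}$ would yield exactly such a uniform sequence. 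If you want to salvage your write-up, replace your single halting bit $w_{e}$ by this separator-extraction argument; as it stands, the proposal has a genuine gap.
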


The proofs for those results will be similar: they rely on Miller's
constructions of a family of groups $L_{P,Q}$ indexed by two subsets
$P$ and $Q$ of $\mathbb{N}$. For each of those theorems, we will
find some conditions on the sets $P$ and $Q$ that are sufficient
for the groups $L_{P,Q}$ to provide proofs for the Theorems \ref{thm:Use Miller Wisely -1}
and \ref{thm:No-Completion-Theorem}, and then include a lemma to
prove that such sets $P$ and $Q$ do exist. 

We start by detailing Miller's construction. 

\subsubsection{Miller's construction }

We detail the construction of Miller as it was exposed in \cite{Miller1992}.
This construction was first introduced in \cite{Miller81}. 

\paragraph*{Step 1. }

Given two subsets $P$ and $Q$ of $\mathbb{N}$, we consider the
group $L_{P,Q}^{1}$ given by the following presentation:
\[
\langle e_{0},e_{1},e_{2},...\vert e_{0}=e_{i},\,i\in P,\,e_{1}=e_{j},\,j\in Q\rangle
\]
For simplicity, we shall always assume that $P$ contains $0$ and
$Q$ contains $1$. 

Notice that $L_{P,Q}^{1}$ is recursively presented with respect to
the family $(e_{i})_{i\in\mathbb{N}}$ if and only if $P$ and $Q$
are c.e. sets, and that $L_{P,Q}^{1}$ has solvable word problem with
respect to the family $(e_{i})_{i\in\mathbb{N}}$ if and only if $P$
and $Q$ are computable sets. 

In what follows, the sets $P$ and $Q$ will always be computably
enumerable, and thus $L_{P,Q}^{1}$ is recursively presented. 

\paragraph*{Step 2. }

Embed the recursively presented group $L_{P,Q}^{1}$ in a finitely
presented group $L_{P,Q}^{2}$ using some strengthening of Higman's
Embedding Theorem. For our purpose, we need to know that: 
\begin{itemize}
\item A finite presentation of $L_{P,Q}^{2}$ can be obtained from the recursive
presentation of $L_{P,Q}^{1}$;
\item If the group $L_{P,Q}^{1}$ has solvable word problem with respect
to the family $(e_{i})_{i\in\mathbb{N}}$, then the group $L_{P,Q}^{2}$
also has solvable word problem;
\item The embedding of $L_{P,Q}^{1}$ into $L_{P,Q}^{2}$ is effective,
i.e. there exists a computable function that maps a natural number
$n$ to a way of expressing the element $e_{n}$ as a product of the
generators of $L_{P,Q}^{2}$. 
\end{itemize}
Clapham's version of Higman's Embedding Theorem \cite{Clapham1967}
satisfies the required conditions for this step of the construction.
Clapham's Theorem is quoted precisely in Section \ref{subsec:Higman-Clapham-Valiev-Theorem-fo}. 

\paragraph*{Step 3. }

Embed the group $L_{P,Q}^{2}$ into a finitely presented group $L_{P,Q}^{3}$
with the following property: in any non-trivial quotient of $L_{P,Q}^{3}$,
the image of the element $e_{0}e_{1}^{-1}$ is a non-identity element. 

This is done as follows. 

Consider a presentation $\langle x_{1},...,x_{k}\vert r_{1},...r_{t}\rangle$
for $L_{P,Q}^{2}$, denote $w$ a word on $\{x_{1},...,\,x_{k},\,x_{1}^{-1},...,\,x_{k}^{-1}\}$
that defines the element $e_{0}e_{1}^{-1}$ in $L_{P,Q}^{2}$. The
group $L_{P,Q}^{3}$ is defined by adding to $L_{P,Q}^{2}$, in addition
to the generators $x_{1},...,x_{k}$ that are still subject to the
relations $r_{1},...r_{t}$, three new generators $a$, $b$ and $c$,
subject to the following relations:
\begin{enumerate}
\item $a^{-1}ba=c^{-1}b^{-1}cbc$
\item $a^{-2}b^{-1}aba^{2}=c^{-2}b^{-1}cbc^{2}$
\item $a^{-3}\left[w,b\right]a^{3}=c^{-3}bc^{3}$
\item $a^{-(3+i)}x_{i}ba^{(3+i)}=c^{-(3+i)}bc^{(3+i)},\,i=1..k$
\end{enumerate}
To use Miller's construction, we need to check the following points:
\begin{itemize}
\item If $w\neq1$ in $L_{P,Q}^{2}$, then $L_{P,Q}^{2}$ is embedded in
$L_{P,Q}^{3}$ via the map $x_{i}\mapsto x_{i}$. 
\item The presentation of $L_{P,Q}^{3}$ can be computed from the presentation
of $L_{P,Q}^{2}$ together with the word $w$. 
\item If $L_{P,Q}^{2}$ has solvable word problem, then so does the group
$L_{P,Q}^{3}$. 
\item The element $e_{0}e_{1}^{-1}$ has a non-trivial image in any non-trivial
quotient of $L_{P,Q}^{3}$. 
\end{itemize}
The second point is obvious. The last point is easily proven: remark
that the third written relation, together with $w=1$, implies the
relation $b=1$. This in turn implies that $c=1$ thanks to the first
relation, that $a=1$ thanks to the second relation, and then that
all $x_{i}$ also define the identity element because of the relations
of (4). 

The first and third points are proven using the fact that the group
$L_{P,Q}^{3}$ can be expressed as an amalgamated product.

Consider the free product $L_{P,Q}^{2}*\mathbb{F}_{a,b}$ of $L_{P,Q}^{2}$
with a free group generated by $a$ and $b$, and the free group $\mathbb{F}_{b,c}$
generated by $b$ and $c$. Then, provided that $w\ne1$ in $L_{P,Q}^{2}$,
the subgroup of $L_{P,Q}^{2}*\mathbb{F}_{a,b}$ generated by $b$
and the elements that appear to the left hand side in the equations
$(1)-(4)$ is a free group on $4+k$ generators, which we denote $A$,
and so is the subgroup $B$ of $\mathbb{F}_{b,c}$ generated by $b$
and the elements that appear to the right hand side in the equations
$(1)-(4)$. (These families are easily seen to be Nielsen reduced
\cite{Lyndon1977}: any product between distinct elements cancels
less than half of both words.)

Thus the given presentation of $L_{P,Q}^{3}$ shows that it is defined
as an amalgamated product of the form:
\[
\begin{array}{ccc}
(L_{P,Q}^{2}*\mathbb{F}_{a,b}) & * & \mathbb{F}_{b,c}\\
 & A=B
\end{array}
\]

This proves both the fact that $L_{P,Q}^{2}$ embeds in $L_{P,Q}^{3}$,
and that the word problem is solvable in $L_{P,Q}^{3}$ as soon as
it is in $L_{P,Q}^{2}$. Indeed, to solve the word problem in an amalgamated
product such as $L_{P,Q}^{3}$, it suffices to be able to solve the
membership problem for $A$ in $L_{P,Q}^{2}*\mathbb{F}_{a,b}$ and
for $B$ in $\mathbb{F}_{b,c}$. This can be done as soon as the word
problem is solvable in $L_{P,Q}^{2}$. Indeed, the fact that the generators
of the group $A$ give a Nielsen reduced family means that the word
length in $L_{P,Q}^{2}*\mathbb{F}_{a,b}$ of a product of $n$ generators
of $A$ is at least $n$. This implies immediately that the membership
problem to $A$ inside $L_{P,Q}^{2}*\mathbb{F}_{a,b}$ is solvable. 

Finally, we designate by $\Pi_{P,Q}$ the finite set of relations
and irrelations that is composed of the relations of $L_{P,Q}^{3}$,
and of a unique irrelation $w\ne1$, where $w$ is the word that defines
the element $e_{0}e_{1}^{-1}$ in $L_{P,Q}^{3}$. 

Note that the finite set $\Pi_{P,Q}$ can be effectively produced
from the codes for the c.e. sets $P$ and $Q$. 

This ends Miller's construction. 

\subsubsection{First application: Miller's Theorem}

We include here a proof of Miller's Theorem. 

A pair of disjoint subsets $P$ and $Q$ of $\mathbb{N}$ are said
to be \emph{computably inseparable }if there cannot exist a computable
set $H$ such that $P\subseteq H$ and $Q\subseteq H^{c}$, where
$H^{c}$ denotes the complement of $H$ in $\mathbb{N}$. 

We will need the following well known result (originally due to Kleene): 
\begin{lem}
\label{lem:Inseparable sets}There exists a pair $(P,Q)$ of disjoints
subsets of $\mathbb{N}$ that are computably enumerable and computably
inseparable. 
\end{lem}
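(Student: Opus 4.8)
The plan is to exhibit an explicit such pair via the standard diagonal construction of Kleene. Fix the standard enumeration $(\varphi_0,\varphi_1,\varphi_2,\dots)$ of partial recursive functions that is used throughout the paper, and define
\[
P=\{n\in\mathbb{N}\,:\,\varphi_n(n)\text{ halts and }\varphi_n(n)=0\},\qquad
Q=\{n\in\mathbb{N}\,:\,\varphi_n(n)\text{ halts and }\varphi_n(n)=1\}.
\]
First I would check the easy properties: $P$ and $Q$ are disjoint, since a halting computation $\varphi_n(n)$ returns a single value, which cannot be simultaneously $0$ and $1$; and both are recursively enumerable, since to enumerate $P$ (resp. $Q$) one dovetails the computations $\varphi_0(0),\varphi_1(1),\varphi_2(2),\dots$ and outputs $n$ as soon as $\varphi_n(n)$ is seen to halt with value $0$ (resp. $1$).

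The heart of the argument is recursive inseparability. Suppose, for contradiction, that there is a recursive set $H$ with $P\subseteq H$ and $Q\subseteq H^c$. Then the characteristic function $\chi_H:\mathbb{N}\to\{0,1\}$ is total recursive, so it occurs in the enumeration: $\chi_H=\varphi_e$ for some $e\in\mathbb{N}$. Now evaluate at $e$. If $e\in H$, then $\varphi_e(e)=\chi_H(e)=1$, so by definition $e\in Q$, hence $e\in H^c$ — contradiction. If $e\notin H$, then $\varphi_e(e)=\chi_H(e)=0$, so $e\in P\subseteq H$ — again a contradiction. Either case is impossible, so no recursive separator $H$ can exist, and $(P,Q)$ is recursively inseparable.

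I do not expect a genuine obstacle here; the only point requiring a word of care is the coding convention for the values of partial recursive functions (one must ensure the statements ``$\varphi_n(n)=0$'' and ``$\varphi_n(n)=1$'' are meaningful and that a total $\{0,1\}$-valued recursive function genuinely appears in the fixed enumeration, which holds for any standard enumeration). With that in place, the diagonalization against $\chi_H$ is the whole content, and it is immediate.
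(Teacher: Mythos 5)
Your proof is correct and is essentially the paper's own argument: the same Kleene pair $P=\{n:\varphi_n(n)=0\}$, $Q=\{n:\varphi_n(n)=1\}$, followed by the same diagonalization against an index for the characteristic function of a supposed recursive separator. Your case split on $e\in H$ versus $e\notin H$ is, if anything, slightly cleaner than the paper's split on the value of $\varphi_{n_0}(n_0)$, but the content is identical.
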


\begin{proof}
Consider a standard enumeration $(\varphi_{0},\varphi_{1},\varphi_{2},...)$
of all computable functions. Consider the set $P=\{n\in\mathbb{N},\,\varphi_{n}(n)=0\}$
and the set $Q=\{n\in\mathbb{N},\,\varphi_{n}(n)=1\}$. Those sets
are obviously computably enumerable. Suppose now that some computable
set $H$ contains $P$ but does not intersect $H$. Consider an index
$n_{0}$ such that $\varphi_{n_{0}}$ is a total function that computes
the characteristic function of $H$. 

If $\varphi_{n_{0}}(n_{0})=0$, $n_{0}$ does not belong to $H$,
but it belongs to $P$, this is not possible. But if $\varphi_{n_{0}}(n_{0})=1$,
then $n_{0}$ belongs to $Q$ and to $H$, which is also impossible
because $H$ does not meet $Q$. 

This is a contradiction, and thus the sets $P$ and $Q$ are indeed
computably inseparable. 
\end{proof}
\begin{thm}
[Miller, \cite{Miller81}] \label{thm:Miller,}Suppose that $P$
and $Q$ are disjoints subsets of $\mathbb{N}$ that are computably
enumerable and computably inseparable. Then the set $\Pi_{P,Q}$ is
coherent, but not wp-coherent. 
\end{thm}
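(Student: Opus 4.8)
The plan is to prove the two halves separately, using the properties of Miller's group $L_{P,Q}^3$ established in the construction. Recall that $\Pi_{P,Q}$ consists of the (finite) relations of a finite presentation of $L_{P,Q}^3$ together with the single irrelation $w\ne 1$, where $w$ represents $e_0e_1^{-1}$. Thus $\Pi_{P,Q}$ is coherent precisely when the element $e_0 e_1^{-1}$ is nontrivial in $L_{P,Q}^3$, and it is wp-coherent precisely when there is a marked group with solvable word problem in which the relations of $L_{P,Q}^3$ hold while $e_0e_1^{-1} \ne 1$.

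\textbf{Coherence.} First I would show $\Pi_{P,Q}$ is coherent, i.e. that $e_0 e_1^{-1}\ne 1$ in $L_{P,Q}^3$. Since $P$ and $Q$ are \emph{disjoint}, the element $e_0 e_1^{-1}$ is nontrivial already in $L_{P,Q}^1$: indeed $L_{P,Q}^1 = \langle e_0,e_1,e_2,\dots \mid e_0 = e_i\ (i\in P),\ e_1 = e_j\ (j\in Q)\rangle$ is a free group on the two classes $\{e_0\}$ and $\{e_1\}$ (together with the free generators $e_i$, $i\notin P\cup Q$), because the two families of identifications are on disjoint index sets and do not interact; concretely, the quotient map onto $\mathbb{Z}^2$ (or even onto $\mathbb{Z}$, sending $e_0\mapsto 1$, $e_1\mapsto 0$, $e_i\mapsto$ the appropriate value) witnesses $e_0\ne e_1$. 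Hence the word $w$ representing $e_0e_1^{-1}$ is nontrivial in $L_{P,Q}^1$, therefore nontrivial in $L_{P,Q}^2$ (Step 2 is an embedding), therefore nontrivial in $L_{P,Q}^3$ (Step 3 is an embedding precisely under the hypothesis $w\ne 1$ in $L_{P,Q}^2$). So $L_{P,Q}^3$ itself is a group satisfying all relations and the irrelation of $\Pi_{P,Q}$, proving coherence.

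\textbf{Failure of wp-coherence.} Suppose for contradiction that $\Pi_{P,Q}$ is wp-coherent, witnessed by a marked group $(H,S)$ with solvable word problem in which the relations of $L_{P,Q}^3$ hold and $w\ne 1$. The relations holding means there is a quotient map $L_{P,Q}^3 \twoheadrightarrow H$; since $w\ne 1$ in $H$, this is a \emph{nontrivial} quotient, so by the key property of $L_{P,Q}^3$ (the image of $e_0e_1^{-1}$ is nontrivial in every nontrivial quotient), the images of all the generators $e_i$ of $L_{P,Q}^1$ inside $H$ are well-defined elements, and via the effective embedding of $L_{P,Q}^1$ into $L_{P,Q}^3$ (and the evident map to $H$) we get a uniformly computable sequence of elements $\bar{e_i}\in H$. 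Now I claim $\bar{e_0} = \bar{e_i}$ in $H$ iff $i\in P$, and $\bar{e_1} = \bar{e_j}$ in $H$ iff $j\in Q$: the "if" directions are forced by the relations; for the "only if" direction, if $\bar{e_0}=\bar{e_i}$ in $H$ with $i\notin P$ — actually here I must be slightly more careful, since a priori extra collapses could occur in $H$. The correct argument is that the set $\{i : \bar{e_0} = \bar{e_i} \text{ in } H\}$ is a recursive set (solve the word problem in $H$ on the word $e_0 e_i^{-1}$), it contains $P$, and it is disjoint from $Q$ (if some $j\in Q$ had $\bar{e_0}=\bar{e_j}$, then since also $\bar{e_1}=\bar{e_j}$ we'd get $\bar{e_0}=\bar{e_1}$, i.e. $w=1$ in $H$, contradicting our hypothesis). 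Thus $\{i : \bar{e_0}=\bar{e_i}\}$ is a recursive set separating $P$ from $Q$, contradicting recursive inseparability. Hence $\Pi_{P,Q}$ is not wp-coherent.

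\textbf{Main obstacle.} The delicate point is the last step: getting a genuinely recursive separating set out of the assumed word-problem algorithm for $(H,S)$. This requires that the elements $e_i$ be \emph{uniformly computably} describable as words in the generators of $(H,S)$, which is exactly what the effectiveness of the embeddings in Steps 2 and 3 of Miller's construction provides — one must thread that uniformity through carefully. The property "nontrivial in every nontrivial quotient" is what guarantees that as soon as $w\ne 1$ in $H$, the map $L_{P,Q}^3\to H$ does not collapse $e_0$ and $e_1$ together, which is what makes the separating-set argument go through; this is where the whole construction of $L_{P,Q}^3$ (the amalgamated product with the $a,b,c$ generators) is used.
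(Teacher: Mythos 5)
Your proof is correct and follows essentially the same route as the paper: the non-wp-coherence half is exactly the paper's argument (the set $\{i : e_0 = e_i \text{ in } H\}$ is recursive via the word problem algorithm and the effective expressions of the $e_i$, contains $P$, and is disjoint from $Q$ because $e_0 \ne e_1$ in $H$), and your coherence half, which the paper leaves implicit, correctly observes that disjointness of $P$ and $Q$ forces $e_0 \ne e_1$ in $L^1_{P,Q}$ and hence in $L^3_{P,Q}$ via the embeddings.
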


\begin{proof}
Suppose that a group $K$ satisfies the relations and irrelations
of $\Pi_{P,Q}$, and that is has solvable word problem. 

Using the word problem algorithm for $K$, given an integer $i$ in
$\mathbb{N}$, we can solve the questions ``is $e_{0}=e_{i}$ in
$K$'', since, by the properties of Miller's construction, an expression
of the element $e_{i}$ in terms of the generators of $L_{P,Q}^{3}$,
and thus of $K$, can be effectively found from $i$. 

The set $\{i\in\mathbb{N},\,e_{i}=e_{0}\}$ is thus a computable set
that contains $P$. And it is disjoint from $Q$, because we have
assumed that $e_{0}\ne e_{1}$ in $K$. 

This contradicts the fact that $P$ and $Q$ are computably inseparable.
\end{proof}

\subsubsection{Proof of Theorem \ref{thm:Use Miller Wisely -1}}

We first prove Theorem \ref{thm:Use Miller Wisely -1}:
\begin{thm*}
\label{thm:Use Miller Wisely -1-1}No algorithm can stop exactly on
those sets of relations and irrelations which are not wp-coherent. 
\end{thm*}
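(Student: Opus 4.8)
The plan is to argue by contradiction, reducing the complement of the halting problem to the supposed semi-decision procedure, using Miller's construction. Suppose there were an algorithm $\mathcal{A}$ halting exactly on those finite sets of relations and irrelations which are \emph{not} wp-coherent. I will exploit two facts about the sets $\Pi_{P,Q}$. First, by Theorem \ref{thm:Miller,}, if $P$ and $Q$ are disjoint, recursively enumerable and recursively inseparable (with $0\in P$ and $1\in Q$), then $\Pi_{P,Q}$ is not wp-coherent. Second --- and this is the only new ingredient --- if $P$ and $Q$ are \emph{recursive}, disjoint, with $0\in P$ and $1\in Q$, then $\Pi_{P,Q}$ \emph{is} wp-coherent: indeed, by the properties recorded in Steps 1--3 of Miller's construction, recursiveness of $P$ and $Q$ makes the word problem of $L_{P,Q}^{1}$ (with respect to $(e_i)_{i\in\mathbb{N}}$) solvable, hence that of $L_{P,Q}^{2}$ and then that of $L_{P,Q}^{3}$ solvable; and $L_{P,Q}^{3}$ satisfies its own relations together with the irrelation $w\ne 1$, since $w$ represents $e_0e_1^{-1}$ and $e_0\ne e_1$ already in $L_{P,Q}^{1}$ (which embeds in $L_{P,Q}^{2}$ and then in $L_{P,Q}^{3}$). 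Thus $L_{P,Q}^{3}$ is itself a group with solvable word problem witnessing wp-coherence of $\Pi_{P,Q}$. I also use, as recorded in Step 3 of the construction, that $(P,Q)\mapsto\Pi_{P,Q}$ is effective in r.e. indices for $P$ and $Q$.

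Next I would construct a uniformly computable family of pairs that ``switches'' on the halting problem. Fix, by Lemma \ref{lem:Inseparable sets}, a pair $(P,Q)$ of disjoint, recursively enumerable, recursively inseparable subsets of $\mathbb{N}$, arranged (by a harmless shift of indices) so that $0\in P$ and $1\in Q$. For each index $e$, enumerate r.e. sets $P_e$ and $Q_e$ as follows: put $0$ into $P_e$ and $1$ into $Q_e$ at once, then enumerate $P$ into $P_e$ and $Q$ into $Q_e$ while simultaneously running $\varphi_e(e)$, and stop adding elements as soon as $\varphi_e(e)$ halts. Then $e\mapsto(P_e,Q_e)$, and hence $e\mapsto\Pi_{P_e,Q_e}$, is computable. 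If $\varphi_e(e)$ never halts, then $P_e=P$ and $Q_e=Q$, so $\Pi_{P_e,Q_e}$ is not wp-coherent by the first fact; if $\varphi_e(e)$ halts, then $P_e$ and $Q_e$ are finite, hence recursive, disjoint, with $0\in P_e$ and $1\in Q_e$, so $\Pi_{P_e,Q_e}$ is wp-coherent by the second fact.

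Combining these, $\varphi_e(e)$ fails to halt if and only if $\Pi_{P_e,Q_e}$ is not wp-coherent. Consequently the set $\{e : \mathcal{A}\text{ halts on }\Pi_{P_e,Q_e}\}$ would be recursively enumerable and would coincide with $\{e : \varphi_e(e)\text{ does not halt}\}$, which is not recursively enumerable; this contradiction proves the theorem. The only step with genuine mathematical content is the verification that all three stages of Miller's construction preserve solvability of the word problem and that $L_{P,Q}^{3}$ itself realizes the irrelation $w\ne 1$; the rest is a standard diagonalization, and the recursively-inseparable case is exactly Miller's Theorem, already available. A mild amount of bookkeeping (keeping $0\in P_e$ and $1\in Q_e$, interleaving the enumerations, the index shift for $(P,Q)$) is routine and I would relegate it to the body of the argument.
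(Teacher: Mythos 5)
Your proof is correct and follows essentially the same route as the paper: Miller's construction applied to a pair $(P_e,Q_e)$ obtained by truncating a fixed recursively inseparable pair according to whether $\varphi_e(e)$ halts, using Miller's Theorem for the non-halting case and solvability of the word problem in $L_{P,Q}^{3}$ for recursive $P,Q$ in the halting case. The paper merely packages the diagonalization as a separate lemma (that no algorithm can stop exactly on the recursively inseparable pairs among r.e. pairs that are either recursive or recursively inseparable), whereas you inline it; the content is identical.
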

\begin{proof}
Given c.e. disjoint sets, we apply Miller's construction to obtain
the set $\Pi_{P,Q}$ of relations and irrelations. 

By Theorem \ref{thm:Miller,}, if the sets $P$ and $Q$ are computably
enumerable, computably inseparable sets, then $\Pi_{P,Q}$ is not
wp-coherent. 

On the contrary, if $P$ and $Q$ are both computable sets, we have
noted that $L_{P,Q}^{3}$ itself has solvable word problem, and thus
$\Pi_{P,Q}$ is wp-coherent. 

Because the set $\Pi_{P,Q}$ can be constructed from the codes for
$P$ and $Q$, an algorithm that stops exactly on those sets of relations
and irrelations which are not wp-coherent would produce, through Miller's
construction, an algorithm that, given a pair of c.e. sets $P$ and
$Q$ that are either computably inseparable or recursive, would stop
if and only if those sets are computably inseparable. We prove in
the next lemma, Lemma \ref{lem:rec inseparable sets}, that such an
algorithm cannot exit, this ends the proof of our theorem. 
\end{proof}
\begin{lem}
\label{lem:rec inseparable sets}There is no algorithm that, given
the code for two computably enumerable and disjoint subsets of $\mathbb{N}$,
that are either computable or computably inseparable, stops only if
they are computably inseparable. 
\end{lem}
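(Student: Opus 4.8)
The plan is to refute the existence of such an algorithm by a diagonal argument, run against the algorithm itself via the recursion theorem, using the recursively inseparable pair of Lemma \ref{lem:Inseparable sets} as a template. Let $(\varphi_{e})_{e\in\mathbb{N}}$ be the fixed standard enumeration of the partial recursive functions, let $W_{e}=\text{dom}(\varphi_{e})$, and fix the pair $P=\{n\in\mathbb{N},\,\varphi_{n}(n)=0\}$, $Q=\{n\in\mathbb{N},\,\varphi_{n}(n)=1\}$ from the proof of Lemma \ref{lem:Inseparable sets}, together with computable increasing approximations $P_{0}\subseteq P_{1}\subseteq\cdots$ with $\bigcup_{s}P_{s}=P$, and similarly $(Q_{s})_{s\in\mathbb{N}}$. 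Suppose for contradiction that there is an algorithm $\mathcal{A}$ which, on every pair $(i,j)$ of indices of disjoint recursively enumerable sets that are either both recursive or recursively inseparable, halts precisely when $W_{i}$ and $W_{j}$ are recursively inseparable; this is exactly what the reduction in the proof of Theorem \ref{thm:Use Miller Wisely -1-1} requires of it (the ``only if'' half alone would be met by a never-halting algorithm, so the content is the conjunction with its converse).

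Next I would manufacture, self-referentially, a pair of indices on which $\mathcal{A}$ is doomed. Let $g,h\colon\mathbb{N}^{2}\to\mathbb{N}$ be the total computable functions for which $W_{g(i,j)}$ and $W_{h(i,j)}$ are the recursively enumerable sets produced by the following procedure with parameters $i,j$: for $s=0,1,2,\dots$, check whether $\mathcal{A}(i,j)$ halts within $s$ steps; as long as it has not, enumerate into $W_{g(i,j)}$ every element of $P_{s}$ not already placed there, and likewise every element of $Q_{s}$ into $W_{h(i,j)}$; as soon as $\mathcal{A}(i,j)$ halts, stop all enumeration. By the double recursion theorem (see \cite{Rogers1987}) there exist indices $m,n$ with $W_{m}=W_{g(m,n)}$ and $W_{n}=W_{h(m,n)}$; set $A=W_{m}$ and $B=W_{n}$. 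Since $A$ only ever receives elements of $P$ and $B$ only elements of $Q$, and $P\cap Q=\emptyset$, the pair $(A,B)$ is disjoint; in each of the two cases below it will in addition be either recursive or recursively inseparable, so $\mathcal{A}$'s promised behaviour applies to $(m,n)$.

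Then I would run the two cases. If $\mathcal{A}(m,n)$ halts, the enumeration stops after finitely many stages, so $A$ and $B$ are finite, hence recursive, and the disjoint pair $(A,B)$ is separated by the recursive set $A$; thus $(A,B)$ is a pair of recursive sets that is \emph{not} recursively inseparable, and yet $\mathcal{A}(m,n)$ halted on it, a contradiction. If $\mathcal{A}(m,n)$ does not halt, the ``stop'' clause is never triggered, every stage's enumeration is carried out, and therefore $A=\bigcup_{s}P_{s}=P$ and $B=Q$; so $(A,B)$ is the recursively inseparable pair of Lemma \ref{lem:Inseparable sets}, and yet $\mathcal{A}(m,n)$ fails to halt on it, again a contradiction. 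Hence no such $\mathcal{A}$ can exist, which proves Lemma \ref{lem:rec inseparable sets} and completes the proof of Theorem \ref{thm:Use Miller Wisely -1-1}.

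The one delicate point, and the only real obstacle, is the self-reference: the enumeration procedure must monitor the very computation $\mathcal{A}(i,j)$ that will be fed the indices it is defining, which is why one needs a simultaneous fixed point for the two index functions $g$ and $h$ — precisely the content of the double recursion theorem. Everything else is routine bookkeeping; and conceptually this is the familiar argument that the halting set is not co-recursively-enumerable, with $\mathcal{A}$ playing the role of a would-be halting test that is defeated by a machine whose emitted set depends on whether $\mathcal{A}$ halts.
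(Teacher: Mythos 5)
Your proof is correct. You correctly read the statement's ``stops only if'' as ``stops if and only if'' (this is indeed how it is used in the proof of Theorem \ref{thm:Use Miller Wisely -1-1}), your truncated enumerations are well defined, and both halves of the case analysis close properly: in the halting case the pair is finite, hence recursive and separated by itself, and in the non-halting case it coincides with the inseparable pair $(P,Q)$.

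Your route differs from the paper's in its logical scaffolding, though the central device --- enumerating $(P,Q)$ only for as long as some monitored computation has not halted --- is the same. The paper monitors an arbitrary machine $M_{n}$, thereby building a uniformly r.e.\ family $(P_{n},Q_{n})$ that is recursively inseparable exactly when $M_{n}$ diverges; this is a plain many-one reduction from the set of non-halting indices, and the conclusion follows from the external fact that this set is not r.e. You instead monitor the hypothetical algorithm $\mathcal{A}$ run on the very indices being constructed, and invoke the double recursion theorem to obtain the fixed-point pair $(m,n)$; the contradiction is then immediate in both cases without appealing to the non-r.e.-ness of the complement of the halting problem. The paper's version is lighter on machinery (no recursion theorem) and has the side benefit of exhibiting an explicit computable family witnessing the failure, which is the form reused in Lemma \ref{lem:Lemma for NCT}; yours is self-contained and makes the diagonal nature of the obstruction explicit, at the cost of the fixed-point argument. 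Both are valid proofs of the lemma.
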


\begin{proof}
Fix two computably enumerable and computably inseparable subsets $P$
and $Q$ of $\mathbb{N}$, that exist by Lemma \ref{lem:Inseparable sets}. 

Consider an effective enumeration $M_{0}$, $M_{1}$, $M_{2}$...
of all Turing machines. For each natural number $n$, define a pair
of computably enumerable sets $P_{n}$ and $Q_{n}$ defined as follows: 

To enumerate $P_{n}$, start a run of $M_{n}$. 

While this run lasts, an enumeration of $P$ gives the first elements
of $P_{n}$. If $M_{n}$ halts after $k$ computation steps, stop
the enumeration of $P$. 

Thus if $M_{n}$ halts, the set $P_{n}$ is a finite set. On the contrary,
if $M_{n}$ does not stop, $P_{n}$ is identical to $P$. 

The set $Q_{n}$ is defined similarly, replacing $P$ by $Q$ in its
definition. 

One then easily sees that the sets $P_{n}$ and $Q_{n}$ are uniformly
computably enumerable, and that $P_{n}$ and $Q_{n}$ are computably
inseparable if and only if $M_{n}$ does not halt.

Since no algorithm can stop exactly on the indices of non-halting
Turing machines, the lemma is proved. 
\end{proof}

\subsubsection{Proof of Theorem \ref{thm:No-Completion-Theorem}}

We now prove Theorem \ref{thm:No-Completion-Theorem}:
\begin{thm*}
[Failure of an effective Axiom of Choice for groups] There is no
algorithm that, given a wp-coherent set of relations and irrelations,
produces a word problem algorithm for a marked group that satisfies
those relations and irrelations. 
\end{thm*}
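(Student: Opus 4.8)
The plan is to combine the hypothetical algorithm $\mathcal{A}$ with Miller's construction and a fixed-point argument, in the same spirit as the proof of Theorem \ref{thm:Use Miller Wisely -1} but with the diagonalization aimed at $\mathcal{A}$ itself rather than at an arbitrary Turing machine. Suppose for contradiction that such an $\mathcal{A}$ exists. Given codes $p,q$ for recursively enumerable disjoint sets $P=W_p$, $Q=W_q$ with $0\in P$ and $1\in Q$, form Miller's finite set $\Pi_{P,Q}$ of relations and irrelations; this is effective in $p,q$. If $\Pi_{P,Q}$ is wp-coherent, then $\mathcal{A}(\Pi_{P,Q})$ outputs (a code for) a word problem algorithm for a marked group $K$ satisfying $\Pi_{P,Q}$; in particular $K$ satisfies all relations of $L_{P,Q}^{3}$ and the irrelation $e_0 e_1^{-1}\ne 1$. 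Using this word problem algorithm together with the recursive function that expresses each $e_i$ as a word in the generators of $L_{P,Q}^{3}$ (Step 2 of Miller's construction), the set $H=\{i\in\mathbb{N}: e_i =_K e_0\}$ is recursive, with an index computable from the output of $\mathcal{A}$; moreover $P\subseteq H$ (for $i\in P$ the equality $e_0=e_i$ is a consequence of the relations of $L_{P,Q}^{3}$, hence holds in $K$) and $H\cap Q=\emptyset$ (for $j\in Q$ we get $e_j=_K e_1\ne_K e_0$). Thus, composing $\mathcal{A}$, Miller's construction and this extraction yields a partial computable map $\Sigma:\langle p,q\rangle\mapsto e$ with the property that whenever $W_p,W_q$ are disjoint with $0\in W_p$, $1\in W_q$ and $\Pi_{W_p,W_q}$ is wp-coherent, $\Sigma(\langle p,q\rangle)$ is defined and $\varphi_{\Sigma(\langle p,q\rangle)}$ is the characteristic function of a recursive set separating $W_p$ from $W_q$.

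Next I would diagonalize against $\Sigma$. I want recursively enumerable indices $p,q$ enumerating the sets $P,Q$ defined as follows: put $0$ into $P$ and $1$ into $Q$; run $\Sigma(\langle p,q\rangle)$; if and when it halts with value $e$, compute $\varphi_e(2)$, and if $\varphi_e(2)=1$ put $2$ into $Q$ while if $\varphi_e(2)=0$ put $2$ into $P$; nothing else is enumerated into $P$ or $Q$. The operation that sends putative indices $(p,q)$ to genuine indices of the pair of sets produced by this procedure is total computable, so by the double recursion theorem it has a simultaneous fixed point $(p,q)$ (equivalently, one encodes the pair $(P,Q)$ as a single r.e. set and uses the ordinary recursion theorem). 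For such a fixed point, $P\subseteq\{0,2\}$ and $Q\subseteq\{1,2\}$ are finite and disjoint ($2$ is enumerated into at most one of them), hence recursive; by Step 2 of Miller's construction and the fact that $L_{P,Q}^{3}$ has solvable word problem when $P,Q$ are recursive, $\Pi_{P,Q}$ is wp-coherent, so the property of $\Sigma$ applies: $\Sigma(\langle p,q\rangle)$ halts with some value $e$ and $\varphi_e$ is total and $\{0,1\}$-valued with $P\subseteq\varphi_e^{-1}(1)$ and $Q\cap\varphi_e^{-1}(1)=\emptyset$. But since $\Sigma(\langle p,q\rangle)$ halted with value $e$, the construction did evaluate $\varphi_e(2)$: if $\varphi_e(2)=1$ then $2\in Q\cap\varphi_e^{-1}(1)$, and if $\varphi_e(2)=0$ then $2\in P\setminus\varphi_e^{-1}(1)$, a contradiction in either case. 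Hence no such $\mathcal{A}$ exists.

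The main obstacle I expect is the circularity between the definition of $P,Q$ and the indices on which $\Sigma$ is invoked; this is precisely what the (double) recursion theorem is for, but one must verify that the procedure defining $P$ and $Q$ is genuinely computable uniformly in the putative indices and that the resulting $\Pi_{P,Q}$ is actually wp-coherent, so that the promise about $\Sigma$ is really triggered — this is where Steps 2 and 3 of Miller's construction are needed, exactly as in the proof of Theorem \ref{thm:Use Miller Wisely -1}. The remaining points — effectivity of $i\mapsto$ (a word representing $e_i$), recursiveness of $H$, and uniform computability of an index for $\chi_H$ from the code output by $\mathcal{A}$ — are routine given the properties of Miller's construction recalled above.
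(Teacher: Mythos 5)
Your proof is correct, but it takes a genuinely different route from the paper's. Both arguments share the same first half: compose the hypothetical algorithm $\mathcal{A}$ with Miller's construction and the effective expression of each $e_i$ in the generators of $L_{P,Q}^{3}$, so as to obtain a computable map that, from r.e.\ indices of a disjoint pair $(P,Q)$ with $\Pi_{P,Q}$ wp-coherent, returns an index for a total $\{0,1\}$-valued function separating $P$ from $Q$. Where you diverge is in how the contradiction is extracted. The paper proves a standalone combinatorial statement (Lemma \ref{lem:Lemma for NCT}): starting from a fixed recursively inseparable pair and the halting behaviour of Turing machines, it builds uniformly r.e.\ sequences $(P_{n})$, $(Q_{n})$ of disjoint recursive sets that admit no uniformly recursive separating sequence, and then feeds the whole sequence through Miller's construction. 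You instead diagonalize directly against $\Sigma$ via the (double) recursion theorem, producing a single self-referential pair which is finite --- hence trivially recursive, hence $\Pi_{P,Q}$ is wp-coherent and the promise on $\mathcal{A}$ fires --- and on which the returned separator is forced to misclassify the element $2$. Your case analysis is the delicate point (one must check that in every branch of the self-reference the final sets are recursive, so that $\mathcal{A}$'s output is guaranteed correct, $\varphi_{e}$ is total, and the evaluation of $\varphi_{e}(2)$ really happens), and you handle it: the containments $P\subseteq\{0,2\}$, $Q\subseteq\{1,2\}$ hold unconditionally, so wp-coherence holds unconditionally. What each approach buys: yours is shorter, needs no recursively inseparable sets for this theorem (they remain essential for Theorem \ref{thm:Miller,} and Theorem \ref{thm:Use Miller Wisely -1}, but not here), and isolates the failure on a single explicit finite input; the paper's route yields the auxiliary lemma about non-uniformly-separable sequences, which is reusable and keeps the proof strategy parallel to that of Theorem \ref{thm:Use Miller Wisely -1}.
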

\begin{proof}
We will build in Lemma \ref{lem:Lemma for NCT} a pair of sequences
$(P_{n})_{n\in\mathbb{N}}$ and $(Q_{n})_{n\in\mathbb{N}}$ such that: 
\begin{itemize}
\item The sequences $(P_{n})_{n\in\mathbb{N}}$ and $(Q_{n})_{n\in\mathbb{N}}$
consist only of disjoint computable sets;
\item The sequences $(P_{n})_{n\in\mathbb{N}}$ and $(Q_{n})_{n\in\mathbb{N}}$
are uniformly c.e., but not uniformly computable;
\item For any sequence $(H_{n})_{n\in\mathbb{N}}$ of uniformly computable
sets, there must be some index $n_{0}$ such that either $H_{n_{0}}$
does not contain $P_{n_{0}}$, or $H_{n_{0}}^{c}$ does not contain
$Q_{n_{0}}$.
\end{itemize}
We apply Miller's construction to this sequence to obtain a computable
sequence $(\Pi_{P_{n},Q_{n}})_{n\in\mathbb{N}}$ of finite sets of
relations and irrelations. 

Suppose by contradiction that there is an algorithm $\mathcal{A}$
as in the theorem. For each natural number $n$, the set $\Pi_{P_{n},Q_{n}}$
is wp-coherent, because $P_{n}$ and $Q_{n}$ are computable. Thus
the algorithm $\mathcal{A}$ can be applied to $\Pi_{P_{n},Q_{n}}$,
to produce the word problem algorithm for a group that satisfies the
relations and irrelations of $\Pi_{P_{n},Q_{n}}$. Denote $G_{n}$
the group defined by this algorithm. 

For each $n$, the set $H_{n}=\{i\in\mathbb{N},e_{i}=e_{0}\text{ in }G_{n}\}$
is then a computable set, and this in fact holds uniformly in $n$.

Finally, one has the inclusions $P_{n}\subseteq H_{n}$ and $Q_{n}\subseteq H_{n}^{c}$.
This contradicts the properties of the sequences $(P_{n})_{n\in\mathbb{N}}$
and $(Q_{n})_{n\in\mathbb{N}}$. 
\end{proof}
\begin{lem}
\label{lem:Lemma for NCT}There exists a pair of sequences $(P_{n})_{n\in\mathbb{N}}$
and $(Q_{n})_{n\in\mathbb{N}}$ such that: 
\begin{itemize}
\item The sequences $(P_{n})_{n\in\mathbb{N}}$ and $(Q_{n})_{n\in\mathbb{N}}$
consist only of disjoint computable sets;
\item The sequences $(P_{n})_{n\in\mathbb{N}}$ and $(Q_{n})_{n\in\mathbb{N}}$
are uniformly c.e., but not uniformly computable;
\item For any sequence $(H_{n})_{n\in\mathbb{N}}$ of uniformly computable
sets, there must be a some index $n_{0}$ such that either $H_{n_{0}}$
does not contain $P_{n_{0}}$, or $H_{n_{0}}^{c}$ does not contain
$Q_{n_{0}}$.
\end{itemize}
\end{lem}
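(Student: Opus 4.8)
The plan is to build the sequences $(P_n)_{n\in\mathbb{N}}$ and $(Q_n)_{n\in\mathbb{N}}$ by a diagonalization against all potential uniformly recursive sequences $(H_n)_{n\in\mathbb{N}}$, using a finite-injury-style priority argument, while keeping each individual $P_n$ and $Q_n$ recursive. The key point to exploit is the mismatch between the two quantifiers in the third clause: each $P_n,Q_n$ must be recursive (so no single pair can be recursively inseparable), but the whole family must defeat every uniformly recursive candidate $(H_n)$ simultaneously. Since a uniformly recursive sequence $(H_n)_{n\in\mathbb{N}}$ is given by a single total recursive function of two variables, there are only countably many such sequences, and they are effectively enumerated by (a subset of) the codes $e$ of partial recursive functions $\psi_e$ of two arguments. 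I would assign to each code $e$ a ``slot'' $n = e$ (or more carefully, spread the requirements out so that each $e$ controls infinitely many slots), and arrange that at slot $n$ attached to code $e$, either $\psi_e$ fails to be total on the $n$-th section, or $H_n^{(e)} := \{i : \psi_e(n,i)=1\}$ fails to contain $P_n$ or its complement fails to contain $Q_n$.

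The construction of a single recursive pair $(P_n, Q_n)$ with this diagonal property is essentially the classical trick behind Lemma \ref{lem:rec inseparable sets} run ``one level up'': fix once and for all the Kleene pair $(P,Q)$ of r.e., recursively inseparable sets from Lemma \ref{lem:Inseparable sets}. For slot $n$ controlled by code $e$, define $P_n$ and $Q_n$ by simulating, in a dovetailed fashion, the enumeration of $P$ and $Q$ together with the computations $\psi_e(n,i)$ for $i$ ranging over the elements enumerated so far; as long as no disagreement has been witnessed, copy $P$ into $P_n$ and $Q$ into $Q_n$. The moment a stage is reached at which some enumerated $i\in P$ has $\psi_e(n,i)\downarrow = 0$, or some $i\in Q$ has $\psi_e(n,i)\downarrow = 1$ (i.e.\ $H_n^{(e)}$ has already been forced to violate one of the two inclusions), freeze the enumeration: $P_n$ and $Q_n$ become finite, hence recursive, and the violation is permanent. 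If no such stage is ever reached, then either $\psi_e(n,\cdot)$ is not total, or it is total and $H_n^{(e)}\supseteq P$, $(H_n^{(e)})^c\supseteq Q$ — but then $H_n^{(e)}$ would be a recursive separator of $P$ and $Q$, contradicting their recursive inseparability; so this branch is impossible and $P_n = P$, $Q_n = Q$ never actually occurs for a \emph{total} $\psi_e$. Either way, $P_n$ and $Q_n$ are recursive (uniformly, we can at worst produce an enumeration; the point is each is recursive, though not given recursively with a uniform bound — this gives the ``uniformly r.e.\ but not uniformly recursive'' clause, since uniform recursiveness of the family would let us recover the freeze times uniformly and hence solve the halting problem). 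To get disjointness we keep $P_n\subseteq P$, $Q_n\subseteq Q$ and $P\cap Q=\emptyset$ throughout.

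The main obstacle I expect is the bookkeeping that makes the second bullet (uniformly r.e.\ but \emph{not} uniformly recursive) and the third bullet hold together cleanly: one must be careful that the freezing decisions do not accidentally make the whole sequence uniformly recursive, and that every code $e$ indexing a uniformly recursive sequence actually gets a slot where the diagonalization bites even after other codes' slots have frozen. The cleanest way around this is to let code $e$ control the single slot $n=e$ and argue: if $(H_n)_n$ is uniformly recursive via a total recursive $\psi_e$ of two variables, look at slot $e$; by the analysis above, either $\psi_e(e,\cdot)$ witnesses a violation and we froze (so $P_e,Q_e$ finite and the inclusion fails), or $\psi_e(e,\cdot)$ would separate $P$ from $Q$, which is impossible — hence a violation at slot $n_0=e$ always occurs, which is exactly the third clause. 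Non-uniform recursiveness then follows because a uniform recursive presentation of $(P_n,Q_n)_n$ would, via the description above, let us compute from $n$ whether the simulation at slot $n$ ever freezes, i.e.\ decide a $\Sigma^0_1$-complete set. Finally, feeding $(P_n,Q_n)_n$ into Miller's construction yields the computable sequence $(\Pi_{P_n,Q_n})_n$ used in the proof of Theorem \ref{thm:No-Completion-Theorem}, and the lemma is complete.
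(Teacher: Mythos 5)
There is a genuine gap in the first bullet of your construction. At a slot $n=e$ where $\psi_e(e,\cdot)$ is \emph{not} total (or not $\{0,1\}$-valued), it is entirely possible that no disagreement is ever witnessed: the dovetailed simulation simply never sees an enumerated $i\in P$ with $\psi_e(e,i)\downarrow=0$ nor an $i\in Q$ with $\psi_e(e,i)\downarrow=1$ (take, e.g., the everywhere-divergent $\psi_e$). In that branch your construction sets $P_e=P$ and $Q_e=Q$, and these are \emph{not} recursive --- if $P$ were recursive it would itself be a recursive separator of the pair $(P,Q)$, contradicting their recursive inseparability. Your sentence ``Either way, $P_n$ and $Q_n$ are recursive'' is therefore false: the inseparability argument only rules out the no-freeze branch for \emph{total} $\psi_e$, and since totality is $\Pi^0_2$-complete you cannot restrict the slot assignment to total codes. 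This is exactly the tension in a slot-by-slot diagonalization: to force a violation at slot $e$ you must compare $\psi_e$ against \emph{all} of $P$ and $Q$ (so as to invoke inseparability), but then a partial $\psi_e$ that never answers leaves $P_e$ equal to the non-recursive set $P$. A repair is possible --- e.g., gate the enumeration of $P_e$ and $Q_e$ on the convergence of $\psi_e(e,\cdot)$ on all elements seen so far, so that a partial $\psi_e$ freezes the sets at a finite stage just as a disagreement does --- but as written the argument does not establish the lemma.

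For comparison, the paper's proof avoids this issue by decoupling the definition of $(P_n,Q_n)$ from the candidate separators entirely: $P_n$ is carved out of $P$ using only the halting behaviour of the $n$-th Turing machine (enumerating $P$ in increasing order via an increasing recursive subfunction $\hat f$, and truncating if $M_n$ halts), so each $P_n$ is unconditionally recursive --- finite if $M_n$ halts, the range of an increasing recursive function otherwise. The third bullet is then obtained not slot by slot but globally: assuming a uniformly recursive separating sequence $(H_n)$ exists, the halting problem becomes solvable on the r.e.\ set of indices where $P\not\subseteq H_n$ or $Q\not\subseteq H_n^c$, so that set cannot be all of $\mathbb{N}$, and any index outside it yields a recursive separator of $(P,Q)$ --- a contradiction. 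You should either adopt that decoupled construction or implement the convergence-gating fix above.
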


\begin{proof}
Fix a pair $(P,Q)$ of computably enumerable but computably inseparable
sets\footnote{Previous versions of this article contained a more complicated example.
I thank Laurent Bienvenu for showing me that this simple example works.}. 

Let $P_{n}=P\cap\{n\}$ and $Q_{n}=Q\cap\{n\}$. By construction these
sets are disjoint, computable (these are singletons), and the sequences
$(P_{n})_{n\in\mathbb{N}}$ and $(Q_{n})_{n\in\mathbb{N}}$ are uniformly
c.e.. 

Suppose $(H_{n})_{n\in\mathbb{N}}$ is a sequence of uniformly computable
sets that separate $P_{n}$ from $Q_{n}$. 

Consider $H$ given by: $n\in H\iff n\in H_{n}.$ Then $P\subseteq H\subseteq Q^{c}$,
and $H$ is a computable set. This contradicts the fact that $P$
and $Q$ are inseparable. 
\end{proof}
Theorem \ref{thm:No-Completion-Theorem} allows us to prove that the
space of marked groups does not satisfy Moschovakis' condition (B),
and thus that Theorem \ref{thm:Moschovakis} cannot be applied to
the space of marked groups. 
\begin{cor}
\label{cor:MoschoFails}The triple $(\mathcal{G}^{+},d,\nu_{WP})$
does not satisfy Moschovakis' condition (B). 
\end{cor}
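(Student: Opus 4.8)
The plan is to obtain Corollary~\ref{cor:MoschoFails} as an essentially immediate consequence of Theorem~\ref{thm:No-Completion-Theorem}, via a direct reduction. I would argue by contradiction: assume that $(\mathcal{G}_{WP},d,\nu_{WP})$ satisfies Moschovakis' condition~(B), and let $\mathcal{B}$ be an algorithm witnessing it, i.e.\ an algorithm that, given a $\nu_{WP}$-code of a semi-decidable set $A\subseteq\mathcal{G}_{WP}$, a $\nu_{WP}$-name of a point $x$, and a $c_{\mathbb{R}}$-name of a radius $r>0$ with $A\cap B_d(x,r)\neq\emptyset$, produces a $\nu_{WP}$-name of a point of $A\cap B_d(x,r)$. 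The goal is to use $\mathcal{B}$ to construct an algorithm that, on input a wp-coherent set $(r_1,\dots,r_m;\,s_1,\dots,s_{m'})$ of relations and irrelations over $\mathbb{F}_k$, outputs a word problem algorithm for a marked group satisfying those relations and irrelations, contradicting Theorem~\ref{thm:No-Completion-Theorem}.

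The reduction has three small ingredients, carried out in this order. First, from the tuple $(r_1,\dots,r_m;\,s_1,\dots,s_{m'})$ one computes, uniformly, a $\nu_{WP}$-code for the set $A=\Omega^k_{r_1,\dots,r_m;s_1,\dots,s_{m'}}\cap\mathcal{G}_{WP}$, regarded as a $\nu_{WP}$-semi-decidable subset of $\mathcal{G}_{WP}$: given a $\nu_{WP}$-name of a marked group $(G,S)$, check that $S$ has cardinality $k$ and run the supplied word problem algorithm on each $r_i$ (expecting output $0$) and each $s_j$ (expecting output $1$), accepting exactly when all these checks succeed. Second, to supply a ball meeting $A$, I would use that $\mathcal{G}$, and hence $\mathcal{G}_{WP}$, has diameter $2$ for $d$; thus for \emph{any} point $x$ the ball $B_d(x,3)$ is all of $\mathcal{G}_{WP}$, so $A\cap B_d(x,3)=A$, and this is non-empty precisely because $(r_1,\dots,r_m;\,s_1,\dots,s_{m'})$ is wp-coherent. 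Third, one needs an explicit computable center: take $x$ to be the trivial $k$-marked group, which is $\nu_{WP}$-computable and whose $\nu_{WP}$-name is effectively producible from $k$. Feeding $\mathcal{B}$ the code for $A$, a $\nu_{WP}$-name of $x$, and a $c_{\mathbb{R}}$-name of $3$, it returns a $\nu_{WP}$-name of some $y\in A$; but a $\nu_{WP}$-name of $y$ \emph{is} (a pair consisting of $k$ and) a word problem algorithm for the marked group $y$, and $y\in A$ says exactly that $y$ satisfies $r_1,\dots,r_m$ and none of $s_1,\dots,s_{m'}$. This contradicts Theorem~\ref{thm:No-Completion-Theorem}, so condition~(B) fails.

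I expect no genuine obstacle here: the substance is already carried by Theorem~\ref{thm:No-Completion-Theorem}, and the only points that deserve a sentence of care are (i) the observation that choosing a radius larger than the diameter collapses the ``open ball'' in condition~(B) to the whole space, so that wp-coherence \emph{alone} supplies the required non-empty intersection (no delicate metric estimate is needed), and (ii) the verification that a basic clopen set intersected with $\mathcal{G}_{WP}$ is uniformly $\nu_{WP}$-semi-decidable, which is routine from the definition of $\nu_{WP}$ and was already used implicitly in Theorem~\ref{thm:Boone-Novikov-reformulated}.
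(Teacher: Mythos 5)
Your proposal is correct and is essentially the paper's own argument: the paper likewise applies the hypothetical condition-(B) algorithm to the semi-decidable set $\Omega^k_{r_1,\dots,r_m;s_1,\dots,s_{m'}}$ and to an open ball large enough to contain all of $\mathcal{G}_k$, and then invokes Theorem \ref{thm:No-Completion-Theorem} to conclude. The extra care you take (explicitly exhibiting the computable center, the radius exceeding the diameter, and the uniform semi-decidability of basic clopen sets) only fills in details the paper leaves implicit.
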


\begin{proof}
A RMS $(X,d,\nu)$ satisfies Moschovakis' condition (B) if there exists
an algorithm $\mathcal{A}$ that takes as input the description of
a $\nu$-semi-decidable set $Y$ and the description of an open ball
$B(x,r)$ in $X$, such that those set intersect, and produces a point
that belongs to their intersection. 

In $\mathcal{G}_{k}$, apply such an algorithm to a basic clopen subset
$\Omega_{r_{i};s_{j}}^{k}$ and to an open ball that contains all
of $\mathcal{G}_{k}$ -any open ball of radius $r\ge1$. This yields
a program that takes as input a set of relations and irrelations that
is wp-coherent, and produces the $\nu_{WP}$-name of a point that
belongs to it. The existence of such an algorithm was proven impossible
in Theorem \ref{thm:No-Completion-Theorem}. 
\end{proof}

\section{\label{sec:First-results-for}Correspondence between Borel and effective
Borel hierarchies}

We now apply the results of the previous sections to decision problems
for groups described by word problem algorithms.

\subsection{\label{subsec:List-of-properties}List of properties for which the
correspondence holds}

Recall from the introduction that we have three distinct effective
versions of the Borel hierarchy: the one coming from Type 2 computability,
the one coming from Markov computability, and the one coming from
Banach-Mazur computability. 

We expect that most natural group properties will be given the same
classification in these four hierarchies (one classical and three
effective). We say that a property \emph{satisfies the correspondence
between the different hierarchies }if it is lies at the same position
in all four hierarchies.\emph{ }

We will now proceed to list group properties that satisfy the correspondence.
These properties are organized into a table, which can be found in
Section \ref{subsec:The Table}. 

Undecidability results which are required to establish correspondence
results are all obtained by one or two applications of the following
version of Markov's Lemma:
\begin{prop}
\label{prop:Eff not open =00003D> not SD}Suppose that a subset $A$
of $\mathcal{G}^{+}$ is effectively not open, i.e. that there is
a computable sequence of marked groups that do not belong to $A$,
which converges to a marked group in $A$. Then $A$ cannot be $\Lambda_{WP}$-semi-decidable. 
\end{prop}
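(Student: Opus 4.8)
The plan is to reduce directly to Markov's Lemma for groups (Lemma \ref{lem:Markov for groups}). Let $(G_n,S_n)_{n\in\mathbb{N}}$ be the given $\Lambda_{WP}$-computable sequence with $(G_n,S_n)\notin A$ for all $n$ and $(G_n,S_n)\to(H,T)$ where $(H,T)\in A$. Since $A\subseteq\mathcal{G}_{WP}$, the limit $(H,T)$ has solvable word problem, so by Proposition \ref{prop: rec cv =00003D> effective cv-1} I can pass to a recursively extracted subsequence $(G_{\psi(n)},S_{\psi(n)})_{n\in\mathbb{N}}$ that converges effectively to $(H,T)$; this subsequence is still $\Lambda_{WP}$-computable, still avoids $A$, and in particular satisfies $(G_{\psi(n)},S_{\psi(n)})\ne(H,T)$ for every $n$. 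Relabelling, I may assume from now on that the sequence $(G_n,S_n)_{n\in\mathbb{N}}$ itself converges effectively to $(H,T)$.

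First I apply Lemma \ref{lem:Markov for groups} to this sequence: it yields a $\Lambda_{WP}$-computable sequence $(\Gamma_p,U_p)_{p\in\mathbb{N}}$ with each $(\Gamma_p,U_p)\in\{(G_n,S_n);n\in\mathbb{N}\}\cup\{(H,T)\}$, and such that the set $P=\{p\in\mathbb{N},\,(\Gamma_p,U_p)=(H,T)\}$ is not recursively enumerable.

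Next, suppose for contradiction that $A$ is $\Lambda_{WP}$-semi-decidable, witnessed by a procedure that, on a $\Lambda_{WP}$-name of a marked group, halts precisely when that group lies in $A$. Fix a recursive function $p\mapsto a(p)$ producing a $\Lambda_{WP}$-name of $(\Gamma_p,U_p)$ (this exists because the sequence $(\Gamma_p,U_p)_{p\in\mathbb{N}}$ is $\Lambda_{WP}$-computable). Running the semi-decision procedure on $a(p)$ then halts exactly when $(\Gamma_p,U_p)\in A$. But among the groups of $\{(G_n,S_n);n\in\mathbb{N}\}\cup\{(H,T)\}$ the only one belonging to $A$ is $(H,T)$, since each $(G_n,S_n)$ is outside $A$ by hypothesis while $(H,T)\in A$. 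Hence $(\Gamma_p,U_p)\in A$ if and only if $p\in P$, so we would obtain a recursive enumeration of $P$, contradicting the conclusion of Markov's Lemma. Therefore $A$ is not $\Lambda_{WP}$-semi-decidable.

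There is no real obstacle here; all the content is packaged in Lemma \ref{lem:Markov for groups}. The only point requiring a small amount of care is the passage from the plain convergence assumed in the hypothesis (``effectively not open'' is phrased with ordinary convergence) to effective convergence, which is exactly Proposition \ref{prop: rec cv =00003D> effective cv-1} and relies on the limit, lying in $A\subseteq\mathcal{G}_{WP}$, having solvable word problem. Alternatively, one can invoke Proposition \ref{prop:Adherent to r.e. set} applied to the r.e. set of terms of the sequence to produce the required effectively convergent computable sequence.
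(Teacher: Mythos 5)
Your proof is correct and follows exactly the route the paper intends: the paper states this proposition as an immediate consequence of Markov's Lemma for groups together with the passage from plain to effective convergence (via Proposition \ref{prop:Adherent to r.e. set} or, equivalently, Proposition \ref{prop: rec cv =00003D> effective cv-1}), and your argument fills in precisely those steps, including the needed observation that the terms of the sequence are all distinct from the limit because they lie outside $A$ while the limit lies in $A$.
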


In what follows, we will often use this proposition together with
the following proposition, which is a reformulation of Proposition
\ref{prop:dense seq effective polish}. 
\begin{prop}
\label{prop:Adherent to c.e. set}If a marked group $(G,S)$ with
solvable word problem is adherent to a c.e. set $\mathcal{C}$ of
marked groups, then there is a $\Lambda_{WP}$-computable sequence
of marked groups in $\mathcal{C}$ that effectively converges to $(G,S)$. 
\end{prop}

This proposition, while very easy, is often useful. It can be used
with $\mathcal{C}$ being: the class of finite groups, of hyperbolic
groups, of all the markings of a given abstract group $G$, etc. 

Those affirmations that appear in Section \ref{subsec:The Table}
which are not accompanied by either a reference, or for which a proof
does not appear in the following paragraphs, are left to the reader. 

\subsubsection{Residually finite groups }

A group is residually finite if any non-trivial element of it has
a non-trivial image in a finite quotient. Note that the space of marked
groups gives a topological characterization of residually finite groups
\cite{Champetier2005}: a group is residually finite if and only if
some (equiv. any) of its markings is adherent to the set of its marked
finite quotients. It is known that the set of residually finite groups
is not closed, because the closure of the set of finite groups (which
is known as the set LEF groups) strictly contains the set of residually
finite groups. The semi-direct product $\mathbb{Z}\ltimes\mathfrak{S}_{\infty}$
is the limit of the sequence of finite groups $\mathbb{Z}/n\mathbb{Z}\ltimes\mathfrak{S}_{n}$,
as $n$ goes to infinity; it is not residually finite since it contains
an infinite simple group. Note that the described sequence effectively
converges. (This example comes from \cite{VershikGordon1998}.)

It is also well known that the set of residually finite groups is
not open: non-abelian free groups are limits of non-residually finite
groups. For instance, it follows from well known results on the Burnside
problem that free Burnside groups of sufficiently large exponent are
infinite groups that are not residually finite, and that, if $S$
denotes a basis of a non-abelian free group, the sequence $(\mathbb{F}/\mathbb{F}^{n},S)$
is $\Lambda_{WP}$-computable and converges to $(\mathbb{F},S)$ as
$n$ goes to infinity. 

\subsubsection{Amenable groups}

Amenable groups do not form a closed set, since free groups are limits
of finite groups. They do not form an open set either. An interesting
example that proves this comes from \cite{Bartholdi2015}: there exists
a sequence of markings of $\mathbb{F}_{2}\wr\mathbb{Z}$ that converges
to a marking of $\mathbb{Z}^{2}\wr\mathbb{Z}$. (See Example 7.4 in
\cite{Bartholdi2015}.) This sequence is $\Lambda_{WP}$-computable. 

\subsubsection{Having sub-exponential growth}

The set of finitely generated groups with sub-exponential growth is
neither closed nor open. This was proved by Grigorchuk in \cite{Grigorchuk1985}:
there is constructed a family of groups $G_{\omega}$, $\omega\in\{0,1,2\}^{\mathbb{N}}$,
for which it is explained when $G_{\omega}$ has intermediate or exponential
growth, and for which it is proved that the convergence in the space
of marked groups of a sequence of groups $(G_{\omega_{n}})_{n\in\mathbb{N}}$
coincides with the convergence in $\{0,1,2\}^{\mathbb{N}}$ (for the
product topology) of the sequence $(\omega_{n})_{n\in\mathbb{N}}$.
The sequence $(G_{\omega_{n}})_{n\in\mathbb{N}}$ is $\Lambda_{WP}$-computable
when the sequence $(\omega_{n})_{n\in\mathbb{N}}$ is computable for
the usual numbering of the Cantor space. 

\subsubsection{Orderable groups, locally indicable groups}

A finitely generated group $G$ is \emph{left-orderable} if there
exists a total order $\le$ on $G$ which is compatible with left
multiplication:
\[
\forall x,a,b\in G,\,a\le b\implies xa\le xb.
\]

The fact that the set of left-orderable groups is closed was noticed
by Champetier and Guirardel in \cite{Champetier2005}. The following
well known characterization of left-orderable groups was already used
in \cite{Bilanovic_2019} and \cite{Goldbring2023} in order to study
the complexity of the property ``being left-orderable'': 
\begin{prop}
\label{prop:Left Orderable characterisation}A group $G$ is left-orderable
if and only if for any finite set $\{a_{1},a_{2},...,a_{n}\}$ of
non-identity elements of $G$, there are signs $(\epsilon_{1},...,\epsilon_{n})\in\{-1,1\}^{n}$,
such that the sub-semi-group generated by $\{a_{1}^{\epsilon_{1}},\,a_{2}^{\epsilon_{2}},...,a_{n}^{\epsilon_{n}}\}$
does not contain the identity of $G$. 
\end{prop}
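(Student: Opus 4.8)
## Proof proposal for the characterization of orderable groups

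The plan is to prove the stated equivalence by the standard argument that produces a (bi-)order on $G$ from a maximal consistent choice of "positive cone" data, using a compactness / Zorn's lemma argument to handle the infinitary direction. The statement is finitary on one side (a condition quantified over finite subsets) and must yield a global order, so the nontrivial direction is "the semigroup condition $\implies$ orderable". First I would recall that a total order $\le$ on $G$ compatible with the group operation in the stated two-sided sense is equivalent to the existence of a subset $P \subseteq G$ (the set of elements $>1$, the \emph{positive cone}) satisfying: (i) $P \cdot P \subseteq P$ (closed under multiplication); (ii) $G = P \sqcup \{1\} \sqcup P^{-1}$ (trichotomy); and (iii) $gPg^{-1} \subseteq P$ for all $g \in G$ (normality / two-sidedness). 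Indeed, given such a $P$, define $a < b \iff a^{-1}b \in P$; conditions (i) and (iii) give compatibility with left and right multiplication, and (ii) gives totality and antisymmetry. Conversely the positive cone of an order has these three properties.

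The easy direction first: if $G$ is orderable with positive cone $P$, take any finite set $\{a_1,\dots,a_n\}$ of non-identity elements. For each $i$ put $\epsilon_i = 1$ if $a_i \in P$ and $\epsilon_i = -1$ if $a_i \in P^{-1}$; then each $a_i^{\epsilon_i} \in P$, and since $P$ is a normal sub-semigroup not containing $1$ (by (i), (ii), (iii)), the sub-semigroup it generates is contained in $P$, hence avoids $1$. This gives the forward implication immediately.

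For the converse, suppose that for every finite $F \subseteq G \setminus \{1\}$ there is a sign function making the generated sub-semigroup miss the identity. I would run a Zorn's lemma argument on the poset of "partial sign assignments": pairs $(F, \sigma)$ with $F \subseteq G\setminus\{1\}$, $\sigma : F \to \{-1,1\}$, such that the sub-semigroup generated by $\{a^{\sigma(a)} : a \in F\}$ does not contain $1$, ordered by extension. The hypothesis guarantees arbitrarily large such $F$ exist, and a directed union of such pairs is again such a pair (failure at the union would already be witnessed by a finite subset). One subtlety I anticipate is that a single maximal assignment need not already be normal or even closed under inverses in a way that directly gives a positive cone, so a cleaner route is the compactness-style argument: by maximality, for the sign function $\sigma$ defined on all of $G\setminus\{1\}$ one has, for each $a \ne 1$, that exactly one of $a, a^{-1}$ lies in the sub-semigroup $P_0$ generated by $\{a^{\sigma(a)}\}$; one then checks that $P_0$ is automatically a normal sub-semigroup (conjugation permutes the generating family and $1 \notin P_0$ forces the maximal choice to be conjugation-invariant, again by a finite-witness argument), and that $P := P_0$ satisfies (i)--(iii). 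Setting $a < b \iff a^{-1}b \in P$ then yields a group order.

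The main obstacle I expect is verifying \textbf{normality} (property (iii)) of the positive cone obtained from the maximal assignment: closure under multiplication and trichotomy come for free from maximality plus the finite-witness hypothesis, but two-sidedness requires an extra argument, essentially that whenever $\sigma$ is a valid sign assignment so is $a \mapsto \sigma(g^{-1}ag)$ composed appropriately, and then invoking maximality/uniqueness to identify the two. If one only wanted \emph{left}-orderability this step would be unnecessary; since the statement here is about two-sided orders, I would either (a) strengthen the finite condition to its conjugation-closed version and note it is equivalent to the stated one, or (b) carry the normality check through directly as above. The rest is routine bookkeeping: checking that $<$ is irreflexive, transitive, total, and bi-invariant, each of which unwinds directly from properties (i)--(iii) of $P$.
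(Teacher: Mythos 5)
The paper states this proposition without proof (it is the classical Ohnishi--\L{}o\'{s}--Fuchs criterion, quoted as well known), so your proposal must stand on its own. The forward direction is fine, and compactness is the right engine for the converse; but the normality step, which you correctly identify as the main obstacle, is a genuine gap that cannot be repaired along either of the lines you propose. With the \emph{plain} sub-semigroup generated by $\{a_{1}^{\epsilon_{1}},\dots,a_{n}^{\epsilon_{n}}\}$, the finite condition characterizes \emph{left}-orderability, not the two-sided orderability defined in the paper. Concretely, the Klein bottle group $\langle a,b \mid bab^{-1}=a^{-1}\rangle$ is left-orderable, hence satisfies the stated finite condition (choose signs pushing every $a_{i}$ into the positive cone of a left order, whose generated sub-semigroup then avoids the identity), yet it admits no bi-invariant order, since $a>1$ would force $bab^{-1}=a^{-1}>1$. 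So your option (a) --- strengthen to the conjugation-closed version ``and note it is equivalent to the stated one'' --- asserts a false equivalence, and option (b) cannot succeed: the global sign assignment $\sigma$ has no reason to satisfy $\sigma(gag^{-1})=\sigma(a)$, so conjugation does \emph{not} permute the generating family $\{a^{\sigma(a)}\}$, and indeed the cone you build for the Klein bottle group cannot be normal. The classical statement (and the one the paper must intend) takes the \emph{normal} sub-semigroup generated by the signed elements, i.e.\ the smallest sub-semigroup containing all of their conjugates; with that reading your argument closes up, because the positive cone is the normal sub-semigroup generated by the entire signed family, hence conjugation-invariant by construction, while $1\notin P$ and trichotomy still follow from finite witnesses.

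A secondary gap: Zorn's lemma on partial sign assignments does not by itself produce a total assignment --- a maximal valid pair $(F,\sigma)$ need not have $F=G\setminus\{1\}$ unless you prove an extension lemma, which is not a formal consequence of the hypothesis. Commit instead to the compactness argument you only gesture at: for each finite $F$ the set $X_{F}\subseteq\{-1,1\}^{G\setminus\{1\}}$ of assignments valid on $F$ is clopen (it depends on finitely many coordinates), and the family has the finite intersection property because an assignment valid on $F_{1}\cup\dots\cup F_{k}$ is valid on each $F_{i}$; a point of $\bigcap_{F}X_{F}$ is the desired global $\sigma$, and membership of $1$ in the cone it generates would be witnessed by a finite sub-family.
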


It is straightforward to notice that this characterization provides
a way of recognizing word problem algorithms for non left-orderable
groups, and that it shows that the set of left-orderable groups is
closed. 
\begin{prop}
The set of left-orderable groups is $\rho_{WP}$-co-semi-decidable
and closed in $\mathcal{G}$.
\end{prop}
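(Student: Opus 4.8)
The plan is to verify two things: that the set of orderable groups is closed in $\mathcal{G}$ (which is already attributed to Champetier--Guirardel, but should fall out of the combinatorial characterization stated just above), and that it is $\Lambda_{WP}$-co-semi-decidable, i.e. that there is an algorithm which, given a word problem algorithm for a marked group $(G,S)$, halts precisely when $G$ is \emph{not} orderable. Both follow from the same observation about the stated characterization.

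First I would set up the search. Enumerate all finite tuples $(a_1,\dots,a_n)$ of words on $S\cup S^{-1}$ and all sign vectors $(\epsilon_1,\dots,\epsilon_n)\in\{-1,1\}^n$. For a fixed such choice, ``the sub-semigroup generated by $\{a_1^{\epsilon_1},\dots,a_n^{\epsilon_n}\}$ contains the identity'' is a $\Lambda_{WP}$-semi-decidable condition: one enumerates all nonempty products of the generators $a_i^{\epsilon_i}$ (with repetition) and uses the word problem algorithm to check whether any such product equals $1$ in $G$; if one does, we halt with success for this $(a_i,\epsilon_i)$. Similarly ``some $a_i$ equals the identity'' is semi-decidable. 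So, for a fixed finite tuple $(a_1,\dots,a_n)$ of elements, the condition ``for every sign vector $(\epsilon_1,\dots,\epsilon_n)$, either some $a_i=1$ or the sub-semigroup generated by the $a_i^{\epsilon_i}$ contains $1$'' is semi-decidable, since it is a \emph{finite} conjunction (over the $2^n$ sign vectors) of semi-decidable conditions. By the Proposition quoted above, $G$ fails to be orderable exactly when there \emph{exists} some finite tuple $(a_1,\dots,a_n)$ of non-identity elements witnessing this failure; dovetailing the semi-decidable searches over all finite tuples of words gives an algorithm that halts iff $G$ is non-orderable. This is exactly $\Lambda_{WP}$-co-semi-decidability.

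For closedness in $\mathcal{G}$, the cleanest route is to observe that the non-orderability condition above is \emph{detected on a finite ball of the Cayley graph}: if $(a_1,\dots,a_n)$ is a witnessing tuple, then finitely many relations of $G$ are used, namely the finitely many products-equal-to-$1$ (or $a_i=1$) for each sign vector, and each of these is a relation among words of bounded length. Hence the set of non-orderable marked groups is a union of basic clopen sets, so it is open, and the set of orderable groups is closed. (Equivalently: if $(G_m,S_m)\to (H,S)$ in $\mathcal{G}$ and $H$ is non-orderable with witness $(a_1,\dots,a_n)$, the finitely many defining relations/irrelations witnessing this hold in $G_m$ for $m$ large, so $G_m$ is non-orderable; thus the orderable groups form a closed set.) Finally, to get that the property is not $\Lambda_{WP}$-decidable, it suffices to exhibit a $\Lambda_{WP}$-computable sequence of non-orderable groups converging to an orderable one, and invoke Proposition~\ref{prop:Eff not open =00003D> not SD} to conclude the set of orderable groups is not semi-decidable (hence not decidable): e.g. $(\mathbb{Z}/n\mathbb{Z},1)\to(\mathbb{Z},1)$, each $\mathbb{Z}/n\mathbb{Z}$ being non-orderable (it has torsion) while $\mathbb{Z}$ is orderable, and this sequence is $\Lambda_{WP}$-computable and effectively convergent.

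I do not expect a genuine obstacle here; this is a routine application of Markov's Lemma together with a semi-decidability bookkeeping argument, and the work is essentially in phrasing the dovetailing carefully. The one point requiring a little care is the use of the stated characterization: one must make sure the quantifier structure is ``$\exists$ finite tuple $\forall$ sign vector ($\exists$ product $=1$)'', so that the innermost $\exists$ and the finite $\forall$ combine into a semi-decidable condition and only the outer $\exists$ over tuples needs dovetailing — this is what makes the complement (non-orderability) semi-decidable rather than the property itself. The author presumably just cites the characterization and this bookkeeping, so the proof should be short.
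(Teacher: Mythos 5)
Your proof takes exactly the route the paper intends: the paper's own argument is a one-line remark that the quoted characterization ``provides a way of recognizing word problem algorithms for non orderable groups'' and shows closedness, and your dovetailing argument is precisely the bookkeeping being left to the reader. The quantifier analysis ($\exists$ tuple, $\forall$ sign vector, $\exists$ product equal to $1$, with the middle $\forall$ ranging over a finite set) and the observation that a witness for non-orderability consists of finitely many relations and irrelations, hence defines a basic clopen set contained in the complement, are both correct; the added non-decidability remark via $(\mathbb{Z}/n\mathbb{Z},1)\to(\mathbb{Z},1)$ is also fine.

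One detail, taken literally, would break the algorithm: you propose to semi-decide, for an \emph{arbitrary} tuple of words, the condition ``for every sign vector, either some $a_i=1$ or the sub-semigroup generated by the $a_i^{\epsilon_i}$ contains $1$,'' and then dovetail over all tuples. The one-element tuple whose entry represents the identity satisfies this condition in every group, so the procedure as stated halts on orderable groups too. The disjunct ``some $a_i=1$'' should not appear inside the per-tuple condition; instead, restrict the dovetailed search to tuples all of whose entries are non-identity --- a condition that is \emph{decidable} (not merely semi-decidable) from the word problem algorithm, so the restricted search remains effective. Your own restatement ``there exists some finite tuple of non-identity elements witnessing this failure'' already contains the correct formulation, so this is a one-line repair; note that the same non-identity requirement is what supplies the irrelations $a_i\ne 1$ needed for the witnessing basic clopen set to lie entirely inside the set of non-orderable groups in your closedness argument.
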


We note that characterizations similar to the one given in Proposition
\ref{prop:Left Orderable characterisation} are available for the
properties of being bi-orderable and locally indicable, see Fact 3.3.8
of \cite{Goldbring2023}. These yield: 
\begin{prop}
The sets of bi-orderable groups and of locally indicable groups are
$\rho_{WP}$-co-semi-decidable and closed in $\mathcal{G}$.
\end{prop}

\subsubsection{Diffuse and unique product groups}

A group $G$ has the \emph{unique product property} if for every finite
subsets $A$ and $B$ of $G$, there is some $a$ in $A$ and $b$
in $B$ whose product $ab$ can be written in a unique way as a product
of an element of $A$ by an element of $B$. This property is relevant
in that it implies a positive solution to the unit conjecture for
group rings. In \cite{Bowditch2000} Bowditch introduced the following
variation on the unique product property. A group is called \emph{diffuse}
if for every finite subset $A\subseteq G$ there is some $a\in A$
such that for every $g\in G$ either $ga\notin A$ or $g^{-1}a\notin A$. 

We immediately get: 
\begin{prop}
The sets of diffuse groups and of groups with the unique product property
are $\rho_{WP}$-co-semi-decidable and closed in $\mathcal{G}$.
\end{prop}

\subsubsection{\label{subsec:Virtually-cyclic-groups;}Virtually cyclic groups;
virtually nilpotent groups; polycyclic groups }

For the three properties of being virtually cyclic, virtually nilpotent
or polycyclic, we use the following lemma. In what follows, denote
by $\rho_{pres}$ the representation of $\mathcal{G}$ associated
to presentations. The numbering it induces is that associated to recursive
presentations. See Section \ref{subsec:Presentation-and-co-presentation RPZ}. 
\begin{lem}
\label{lem:SemiDecidable and quotients}Suppose that $P$ is a $\rho_{WP}$-semi-decidable
(resp. $\rho_{pres}$-semi-decidable) subset of $\mathcal{G}$, and
that $Q$ is a $\rho_{pres}$-semi-decidable subset of $\mathcal{G}$. 

Then the set of groups that have a finitely generated normal subgroup
in $P$, and such that the quotient by this normal subgroup is in
$Q$, is $\rho_{WP}$-semi-decidable (resp. $\rho_{pres}$-semi-decidable). 
\end{lem}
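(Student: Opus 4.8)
The plan is to prove Lemma \ref{lem:SemiDecidable and quotients} by a direct search argument, using the fact that a finitely generated normal subgroup of a finitely generated group is recursively presented (from a marking of the ambient group) and behaves well with respect to quotients. Fix a $\Lambda_{WP}$-name (resp. $\Lambda_{r.p.}$-name) for a $k$-marked group $(G,S)$. The point is to simultaneously search over all possible finite tuples $T=(t_1,\dots,t_m)$ of elements of $G$ (represented as words on $S\cup S^{-1}$), treating $T$ as a candidate generating set for the normal subgroup $N=\langle\langle T\rangle\rangle$ one is looking for. For the subgroup to actually be a finitely generated \emph{normal} subgroup generated by $T$, we need $N$ to be normal, which we cannot decide; but we do not need to decide it. Instead, we observe the following: if $G$ genuinely has a finitely generated normal subgroup $N\in P$ with $G/N\in Q$, then $N$ is generated as a normal subgroup by some finite tuple $T$, and in fact $N$ is generated as a subgroup by the conjugates of $T$ by words on $S$ — this gives a recursive enumeration of the elements of $N$, hence a recursive presentation of the marked group $(N,T')$ for a suitable marking $T'$, uniformly in the name for $(G,S)$ and in $T$. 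Likewise $G/N = \langle S \mid \text{relations of } G,\ T\rangle$ is recursively presented, uniformly. So the plan is: enumerate over all finite tuples $T$; for each, produce the $\Lambda_{r.p.}$-name of the candidate subgroup (as a marked group) and of the candidate quotient; run the two semi-decision procedures for $P$ (resp.\ its $\Lambda_{WP}$ or $\Lambda_{r.p.}$ version) and for $Q$ in parallel over all $T$; halt if for some $T$ both procedures halt.

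The key steps, in order, are as follows. First, make precise the marking conventions: a finite tuple $T$ of elements of $G$ given as words yields a marked subgroup $\langle T\rangle$, and the normal closure $N=\langle\langle T\rangle\rangle$ is generated by $\{s^{-1} t_i s : s\in\mathbb{F}_k,\ i\le m\}$; enumerating this countable set via $i_k$ gives a numbering of $N$ making its group operations computable, hence a marking $(N,T'')$ of $N$ with $\Lambda_{(N,T'')}$ decidable exactly when the word problem of $N$ (inside $G$) is solvable — but for $\Lambda_{r.p.}$ we only need the relations of $(N,T'')$ to be recursively enumerable, which they are whenever the word problem (resp.\ a recursive presentation) of $(G,S)$ is available, since ``$w=1$ in $N$'' is semi-decidable by enumerating consequences. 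Actually the cleanest route: $N$ with generating set the enumerated conjugates has relations = the words in those generators that equal $1$ in $G$, which is r.e.\ from an r.e.\ presentation of $G$; so $(N,\cdot)$ is recursively presented uniformly in $T$ and in the name of $(G,S)$. Second, do the analogous (easier) observation for $G/N$: its presentation is that of $G$ with the additional relations $t_1,\dots,t_m$, so it is recursively presented uniformly. Third, assemble the parallel search and verify both soundness (if the algorithm halts, there genuinely is such a normal subgroup and quotient — here we use that whatever $\langle T\rangle$'s normal closure is, it \emph{is} a finitely generated normal subgroup, and the quotient by it \emph{is} $G/\langle\langle T\rangle\rangle$, so the witnesses produced are real) and completeness (if such $N$ exists, take $T$ a finite normal generating set for it; then our candidate marked groups for index $T$ are exactly $N$ and $G/N$ up to marking, so the semi-decision procedures for $P$ and $Q$ eventually halt on them). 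Fourth, handle the subtlety that $P$ is assumed $\Lambda_{WP}$- or $\Lambda_{r.p.}$-semi-decidable: in the $\Lambda_{WP}$ case we need a $\Lambda_{WP}$-name of $N$, not just an $\Lambda_{r.p.}$-name; but if we are given a \emph{word problem} algorithm for $(G,S)$ then the word problem of $N\le G$ is solvable (decide ``$w\in N$''? — no, that is not decidable; rather decide ``$w=1$ in $N$'', which for $w$ a word in the chosen generators of $N$ is the same as ``$w=1$ in $G$'', which \emph{is} decidable from the word problem of $G$). So from a word problem algorithm for $G$ we get one for the marked group $(N,\text{enumerated generators})$, and the output numbering type is $\Lambda_{WP}$ as required; for the quotient we only ever need $\Lambda_{r.p.}$, matching the hypothesis on $Q$.

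The main obstacle I anticipate is precisely this bookkeeping about \emph{which} numbering type one can produce for the subgroup $N$ and for the quotient $G/N$, and making sure it matches the hypotheses on $P$ and $Q$ (and that everything is uniform, i.e.\ an actual algorithm, not just a pointwise statement). Concretely one must check: (a) that a word problem algorithm for $(G,S)$ yields, uniformly in a finite tuple $T$, a word problem algorithm for the normal subgroup $N=\langle\langle T\rangle\rangle$ with respect to its ``enumerated conjugates'' marking — the point being that membership-in-$N$ is \emph{not} needed, only equality of two words \emph{already written in the generators of $N$}, which reduces to the word problem of $G$; (b) that a recursive presentation of $(G,S)$ yields, uniformly in $T$, recursive presentations of both $N$ (via the r.e.\ set of relations among the enumerated conjugates) and $G/N$ (add $T$ to the relations); and (c) that ``$(N,T)$ belongs to $P$'' does not depend on the choice of marking of $N$, which is automatic since $P$ is a subset of $\mathcal{G}$ closed under remarking only if we phrase it correctly — but the statement is about groups having \emph{a} finitely generated normal subgroup in $P$, so $P$ should be read as a union of full marking-classes, or we simply enumerate over markings too. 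Once these uniformity claims are spelled out, the parallel search and the soundness/completeness verification are routine. I would present steps (a)–(c) as a short sequence of observations and then give the search algorithm in one paragraph, flagging that the proof for $\Lambda_{r.p.}$ in place of $\Lambda_{WP}$ is identical except that ``word problem algorithm'' is everywhere replaced by ``recursive presentation''.
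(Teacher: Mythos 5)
Your approach has a genuine gap, and it is precisely the one the remark following the lemma warns about: the normal subgroup must be finitely generated \emph{as a group}, not merely as a normal subgroup. You enumerate finite tuples $T$ and pass to the normal closure $N=\langle\langle T\rangle\rangle$, generated by the infinitely many conjugates $s^{-1}t_is$. But $\langle\langle T\rangle\rangle$ need not be finitely generated as a group (the normal closure of $a$ in $\mathbb{F}_2=\langle a,b\rangle$ is free of infinite rank), so it is not in general an element of $\mathcal{G}$ admitting a finite marking, and the semi-decision procedure for $P\subseteq\mathcal{G}$ cannot be applied to it: a $\Lambda_{WP}$- or $\Lambda_{r.p.}$-name is a name of a $k$-marked group for a finite $k$, and your ``marking by the enumerated conjugates'' is not such a name. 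This breaks soundness --- your claim that ``whatever $\langle T\rangle$'s normal closure is, it \emph{is} a finitely generated normal subgroup'' is false --- and it also damages completeness: even when $G$ does have a finitely generated normal subgroup $N\in P$ with finite generating tuple $T_0$, the object your algorithm hands to the $P$-semi-decider at index $T_0$ is $N$ presented on the infinite conjugate family rather than on a finite marking, so nothing guarantees the semi-decider halts on it.

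The repair is to reverse the roles of ``generated'' and ``normal'': enumerate finite tuples $A$ of elements of $G$ and consider the subgroup $H=\langle A\rangle$ generated by $A$ \emph{as a group}, so that $(H,A)$ is by construction a marked group; then \emph{semi-decide normality} of $H$ in $G$ by searching, for each generator $s$ of $G$ and each $a\in A$, for expressions of $s^{-1}as$ and $sas^{-1}$ as words in $A$ (membership in $\langle A\rangle$ is semi-decidable, and if $H$ is normal these searches all terminate). Once normality is certified, the word problem (resp.\ a recursive presentation) of $(H,A)$ is inherited from that of $(G,S)$ exactly as in your point (a), a recursive presentation of $G/H$ is obtained by adding an enumeration of the elements of $H$ to the relations of $G$, and the parallel search over all $A$ goes through as you describe. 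Your uniformity bookkeeping in (a), (b), (c) is essentially sound and carries over; the missing idea is replacing ``normal closure of a finite set'' by ``finitely generated subgroup whose normality is semi-decided''.
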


Note that in this statement, the normal subgroup should be finitely
generated as a group, and not only finitely generated as a normal
subgroup. 
\begin{proof}
Given a word problem algorithm for a group $G$ generated by a family
$S$, we proceed as follows. 

Enumerate all finite subsets of $G$. 

There is an effective procedure that recognizes those finite subsets
of $G$ that generate a normal subgroup. Indeed, consider a finite
set $A$ in $G$, and the subgroup $H$ it generates. The subgroup
$H$ is normal in $G$ if and only if for each $a$ in $A$ and each
$s$ in $S$, the elements $s^{-1}as$ and $sas^{-1}$ both belong
to $H$. An exhaustive search for ways of expressing $s^{-1}as$ and
$sas^{-1}$ as products of elements of $A$ will terminate if indeed
those elements belong to $H$. 

For each finite subset $A$ which generates a normal subgroup $H$
of $G$, we can obtain a word problem (resp. a presentation) for $H$
thanks to the word problem for $G$ (resp. thanks the presentation
of $G$), and a presentation for the quotient $G/H$. Indeed, an enumeration
of the relations of $G$ together with an enumeration of the elements
of $H$ yields a presentation of $G/H$. 

The hypotheses of the lemma then allow us to recognize when the group
$H$ is in $P$ and the quotient $G/H$ is in $Q$. 
\end{proof}
This lemma can be used directly to show that the properties of being
virtually nilpotent or virtually cyclic are $\rho_{WP}$-semi-decidable.
Note that virtually cyclic and virtually nilpotent groups are all
finitely presented and have solvable word problem, since the property
``being finitely presented and having solvable word problem'' is
stable under taking finite extensions -indeed, it is even invariant
under quasi-isometry (see for instance \cite{Sapir2011}). 
\begin{cor}
The set of virtually cyclic groups is open in $\mathcal{G}$ and $\rho_{WP}$-semi-decidable. 
\end{cor}

\begin{proof}
Apply Lemma \ref{lem:SemiDecidable and quotients} with $P$ being
the set of cyclic groups and $Q$ the set of finite groups, to prove
that the set of virtually cyclic groups $\rho_{WP}$-semi-decidable. 

Analyzing the way the algorithm that stops on word-problem algorithms
for virtually cyclic groups thus obtained works, one sees that the
proof it produces of the fact that a group is virtually cyclic consists
in finitely many relations. Those relations define an open set that
contains only virtually cyclic groups.
\end{proof}
\begin{cor}
The set of virtually nilpotent groups is open in $\mathcal{G}$ and
$\rho_{WP}$-semi-decidable. 
\end{cor}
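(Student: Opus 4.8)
**Proof proposal for the corollary: the set of virtually nilpotent groups is open in $\mathcal{G}$ and $\Lambda_{WP}$-semi-decidable.**

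The plan is to mimic the proof of the previous corollary (virtually cyclic groups) essentially verbatim, using Lemma \ref{lem:SemiDecidable and quotients} with an appropriate choice of $P$ and $Q$. Here I would take $P$ to be the set of finitely generated nilpotent groups and $Q$ the set of finite groups, so that a group is virtually nilpotent precisely when it has a finite-index (hence finitely generated) normal nilpotent subgroup; note that a finite-index subgroup of a finitely generated group is itself finitely generated, so the finite-generation hypothesis in the lemma is automatically met. To invoke the lemma I need $P$ to be $\Lambda_{WP}$-semi-decidable (or at least $\Lambda_{r.p.}$-semi-decidable) and $Q$ to be $\Lambda_{r.p.}$-semi-decidable. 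The latter is clear: given a recursive presentation one enumerates relations and group elements, and finiteness is witnessed by finding an initial segment of the enumeration of elements closed under multiplication by the generators and their inverses, so finiteness is $\Lambda_{r.p.}$-semi-decidable. For $P$, I would argue that nilpotency of class at most $c$ is witnessed by finitely many relations — namely that all iterated commutators of weight $c+1$ in the generators are trivial — and this can be checked with a word problem algorithm (or even semi-decided from a recursive presentation by enumerating consequences); ranging over all $c$ shows the set of finitely generated nilpotent groups is $\Lambda_{WP}$-semi-decidable. Applying Lemma \ref{lem:SemiDecidable and quotients} then gives that the set of virtually nilpotent groups is $\Lambda_{WP}$-semi-decidable.

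For openness, I would argue exactly as in the virtually cyclic case: inspecting the semi-decision procedure produced above, when it halts on a word problem algorithm for a virtually nilpotent group $G$, it has only verified finitely many relations — namely the finitely many relations defining the candidate normal nilpotent subgroup $H$ (its nilpotency-of-class-$c$ relations for some $c$, plus the relations exhibiting it as normal), together with the finitely many relations that witness finiteness of the quotient $G/H$ (a closed Cayley ball of $G/H$ that is stable under the generators). These finitely many relations cut out a basic clopen set $\Omega_{r_{1},\dots,r_{m};\,}^{k}$, and every marked group satisfying them still contains the same normal subgroup $H$ with the same finite quotient $G/H$, hence is still virtually nilpotent. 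Therefore that basic clopen neighbourhood of $G$ is contained in the set of virtually nilpotent groups, so this set is open.

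The one point requiring a little care — the main (modest) obstacle — is making precise that the finitely many relations certifying ``$H$ is finitely generated nilpotent and $G/H$ is finite'' really do persist in any group satisfying them, i.e. that the relevant subgroup and quotient ``survive'' the passage to a quotient group. For normality and for nilpotency this is immediate since these are quotient-closed and witnessed positively by the chosen relations; for finiteness of $G/H$ one must note that once a Cayley ball of $G/H$ is complete (stable under multiplication by generators) in $G$, it remains a complete and hence all of the Cayley graph in any further quotient, so the quotient stays finite of the same size. Since all of this is exactly parallel to the virtually cyclic argument and follows from Lemma \ref{lem:SemiDecidable and quotients}, the proof is short; I would simply write: apply Lemma \ref{lem:SemiDecidable and quotients} with $P$ the set of finitely generated nilpotent groups and $Q$ the set of finite groups to get $\Lambda_{WP}$-semi-decidability, then observe that the witnessing proof consists of finitely many relations defining an open set of virtually nilpotent groups.
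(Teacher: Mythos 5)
Your proposal is correct and follows exactly the paper's route: the paper's proof is precisely "apply Lemma \ref{lem:SemiDecidable and quotients} with $P$ the set of nilpotent groups and $Q$ the set of finite groups, then analyze the resulting algorithm to see that the witnessing proof consists of finitely many relations, which define an open set of virtually nilpotent groups." You merely supply the (correct) details the paper leaves implicit — that nilpotency of class $\le c$ is certified by the vanishing of the weight-$(c+1)$ simple commutators in the generators, that finiteness is $\Lambda_{r.p.}$-semi-decidable, and that the finitely many certifying relations persist in any marked group satisfying them.
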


\begin{proof}
Apply Lemma \ref{lem:SemiDecidable and quotients} with $P$ being
the set of nilpotent groups and $Q$ the set of finite groups. 
\end{proof}
Note that we have chosen these two properties because they both admit
equivalent definitions in terms of asymptotic geometry: by a well-known
theorem of Gromov \cite{Gromov1981}, a group is virtually nilpotent
if and only if it has polynomial growth, and a group is virtually
cyclic if and only if it has zero or two \emph{ends}, in the sense
of Stallings \cite{Hopf1943,Freudenthal1944}. It is thus remarkable
that both of these properties can be recognized thanks to only finitely
many relations. 
\begin{cor}
The set of polycyclic groups is open in $\mathcal{G}$ and $\rho_{WP}$-semi-decidable. 
\end{cor}

\begin{proof}
(Sketch) Iterate Lemma \ref{lem:SemiDecidable and quotients} with
$Q$ being the set of cyclic groups, and $P$ being the set polycyclic
groups with a subnormal series of length $n$, to obtain the result.
\end{proof}

\subsubsection{Groups with infinite conjugacy classes }

A group $G$ has \emph{infinite conjugacy classes} (ICC) if for each
non identity element $g$ of $G$ the conjugacy class $\{xgx^{-1},\,x\in G\}$
of $g$ is infinite. 
\begin{prop}
The set of ICC groups is $\rho_{WP}$-co-semi-decidable, and it is
closed in $\mathcal{G}$. 
\end{prop}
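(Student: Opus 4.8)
The plan is to prove both claims --- closedness of the ICC property in $\mathcal{G}$ and $\Lambda_{WP}$-co-semi-decidability --- essentially simultaneously, by exhibiting an effective/topological certificate for \emph{failure} of the ICC property. The key observation is that a marked group $(G,S)$ fails to be ICC if and only if there exists a non-identity element $g\in G$ whose conjugacy class is finite, which happens if and only if there is a non-identity element $g$ and a finite list $x_1,\dots,x_m$ of elements of $G$ such that $\{x_i g x_i^{-1} : i=1,\dots,m\}$ is closed under conjugation by each generator $s\in S$ (equivalently, under conjugation by $S\cup S^{-1}$). Indeed, if such a finite conjugation-closed set containing a conjugate of $g$ exists, then the conjugacy class of $g$ is contained in it, hence finite; conversely a finite conjugacy class is itself such a set. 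All of this can be witnessed by finitely many relations and irrelations in the free group $\mathbb{F}_k$: pick words $w$ (representing $g$) and $v_1,\dots,v_m$ (representing the $x_i$), impose the irrelation $w\ne 1$, and impose, for every $i$ and every generator $s$, the relation that $s^{-1}(v_i w v_i^{-1}) s$ equals one of the $v_j w v_j^{-1}$.

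First I would make the above equivalence precise and check the forward direction of the topological statement: if $(G,S)\in\Omega^k_{R;w}$ where $R$ is the finite set of relations just described (for some choice of $w,v_1,\dots,v_m$), then the finite set $T=\{v_i w v_i^{-1}\}$ maps to a conjugation-invariant finite subset of $G$ containing the non-trivial element $g=\overline{w}$, so the conjugacy class of $g$ is finite and $(G,S)$ is not ICC. Hence the complement of the ICC set is a union of basic clopen sets $\Omega^k_{R;w}$, which shows it is open, so the ICC set is closed. This uses only the description of the basis of $\mathcal{G}$ from Section~\ref{sec:The-topological-space}. Conversely, if $(G,S)$ is not ICC, a finite conjugacy class exists and yields, after choosing word representatives, a basic clopen set of the above form containing $(G,S)$; this confirms the complement is exactly the union of these sets and is genuinely open.

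For the $\Lambda_{WP}$-co-semi-decidability, I would give the semi-decision procedure for the complement directly: given a word problem algorithm for $(G,S)$, enumerate all finite tuples $(w;v_1,\dots,v_m)$ of elements of $S\cup S^{-1}$; for each, use the word problem algorithm to check whether $w\ne 1$ in $G$ and whether, for every $i$ and every $s\in S\cup S^{-1}$, the element $s^{-1}v_iwv_i^{-1}s$ equals some $v_jwv_j^{-1}$ in $G$. Each such check is a finite conjunction of (in)equalities decidable by the word problem algorithm, so the search halts precisely when $(G,S)$ is not ICC, i.e. when some non-trivial element has finite conjugacy class. This is exactly the pattern ``effectively not open $\Rightarrow$ not semi-decidable'' used elsewhere in the section, run for the complement; alternatively one notes that the complement of the ICC set, being a computable union of basic clopen sets (the defining tuple $(w;v_i)$ can be effectively enumerated), is automatically $\Lambda_{WP}$-semi-decidable.

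The only mild subtlety --- and the point I would be most careful about --- is the direction asserting that a group failing ICC admits a \emph{finite} conjugation-closed witnessing set: this is immediate (take the finite conjugacy class itself), but one must be careful that conjugation-invariance is checked against the generators $S\cup S^{-1}$ rather than all of $G$, which suffices because $S\cup S^{-1}$ generates $G$ and invariance under generators propagates to all elements. I expect no real obstacle here; the argument is a routine instance of the ``closed set defined by a local finiteness condition, recognizable by word problem'' template, and parallels the treatment of orderable groups given just above in the excerpt. The one thing to state explicitly is that the resulting open cover of the complement is effectively enumerable, so that co-semi-decidability follows formally from the general principle that a computable union of basic clopen sets is $\Lambda_{WP}$-semi-decidable.
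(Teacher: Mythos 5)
Your proof is correct and follows essentially the same route as the paper: both rest on the observation that failure of ICC is certified by a finite set containing a non-trivial element and closed under conjugation by $S\cup S^{-1}$, which is detectable from a word problem algorithm and visible in finitely many relations and irrelations, so the complement is open and semi-decidable. The only cosmetic difference is that you guess-and-verify the finite witnessing set while the paper finds it by iterating conjugation from $\{g\}$ until stabilization; your "equals one of the $v_jwv_j^{-1}$" clause is a disjunction rather than a single relation, so the complement is a computable union of basic clopen sets indexed also by the choice of which equality holds, but this changes nothing.
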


\begin{proof}
Given a word problem algorithm for a group $G$ over a generating
family $S$ and an element $g$ of $G$, it is possible to prove that
the conjugacy class of $g$ is finite: define a sequence of sets $(A_{n})_{n\in\mathbb{N}}$
by 
\[
A_{0}=\{g\},
\]
\[
A_{n+1}=\{s^{-1}xs;\,s\in S\cup S^{-1},\,x\in A_{n}\}.
\]
 The conjugacy class of $g$ is finite if and only if there exists
an integer $n$ such that $A_{n}=A_{n+1}$. A blind search for such
an integer will terminate if it exists. This shows that the set of
non-ICC groups is $\rho_{WP}$-semi-decidable.
\end{proof}

\subsubsection{Hyperbolic groups. }

Let $\delta$ be a positive real number. A marked group $G$ is $\delta$-hyperbolic
if the triangles (defined thanks to the word metric in $G$) are $\delta$-thin,
that is to say if for any three elements $g_{1}$, $g_{2}$ and $g_{3}$
of $G$, any geodesic that joins $g_{1}$ to $g_{2}$ stays in a $\delta$-neighborhood
of any pair of geodesics that join respectively $g_{2}$ and $g_{3}$
and $g_{1}$ and $g_{3}$. 
\begin{prop}
Being $\delta$-hyperbolic is $\rho_{WP}$-co-semi-decidable and thus
closed. 
\end{prop}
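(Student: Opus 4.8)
The plan is to mimic the treatment of ICC groups and of orderable groups above: I would show that the complement, the set of \emph{non}-$\delta$-hyperbolic marked groups, is both open in $\mathcal{G}$ and $\Lambda_{WP}$-semi-decidable, the point being that failure of $\delta$-hyperbolicity is witnessed by a finite configuration confined to a bounded ball of the labelled Cayley graph. The one computability ingredient I would record first is that, from a word problem algorithm for a marked group $(G,S)$, one can compute the word length $|w|_{S}$ of the element represented by a word $w$ on $S\cup S^{-1}$: enumerate words $v$ by increasing length and test whether $w^{-1}v=1$ in $G$; the first length for which this succeeds is $|w|_{S}$. Consequently the Cayley-graph distance $|u^{-1}v|_{S}$ between the vertices named by two words $u,v$ is computable, and one can decide whether a word of length $n$ labels a geodesic path (namely, exactly when $|w|_{S}=n$).

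Next I would unwind the definition. The group $(G,S)$ is \emph{not} $\delta$-hyperbolic precisely when there are elements $g_{1},g_{2},g_{3}$, geodesic paths $\gamma_{12},\gamma_{23},\gamma_{13}$ joining $g_{1}$ to $g_{2}$, $g_{2}$ to $g_{3}$, and $g_{1}$ to $g_{3}$, and a vertex $p$ on $\gamma_{12}$ with $d(p,\gamma_{23}\cup\gamma_{13})>\delta$. Using vertex-transitivity of the Cayley graph I would translate such a configuration by $g_{1}^{-1}$, so that $g_{1}=1$. Then, setting $D=|g_{2}|_{S}+|g_{3}|_{S}$, all of $g_{1},g_{2},g_{3}$, the point $p$, every vertex on the three geodesics, and every vertex on a shortest path realizing one of the finitely many distances $d(p,q)$ that enter the evaluation of $d(p,\gamma_{23}\cup\gamma_{13})$, lie in the ball $B(1,R)$ of radius $R=4D+\lfloor\delta\rfloor+1$ around the identity (any explicit bound of this shape will do). Since all these distances are integers, the inequality $d(p,\gamma_{23}\cup\gamma_{13})>\delta$ reduces to finitely many comparisons of integers with the fixed constant $\lfloor\delta\rfloor$. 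Thus the whole witness, \emph{and} the verification that it is a witness, are determined by the labelled ball $B(1,R)$ alone.

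From this the two assertions follow. For $\Lambda_{WP}$-co-semi-decidability, on input a word problem algorithm for $(G,S)$ I would enumerate all such finite configurations (triples of words; geodesic words between them, geodesicity being decidable as above; a marked vertex), compute the relevant Cayley distances, test the defining inequality, and halt exactly when a witness is found; this procedure halts iff $(G,S)$ is not $\delta$-hyperbolic. For closedness I would show the complement is open: if $(H,S)$ is not $\delta$-hyperbolic, fix a witness and the radius $R$ above, so that the witness is visible and verifiable inside the radius-$R$ ball around the identity in the Cayley graph of $H$; any marked group $(G,S')$ close enough to $(H,S)$ has the same labelled ball of radius $R$ around the identity --- which is exactly what convergence in $\mathcal{G}$ provides (most transparently for the Cayley metric $d_{\mathrm{Cay}}$) --- and the same configuration then witnesses that $(G,S')$ is not $\delta$-hyperbolic. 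Hence the set of $\delta$-hyperbolic groups is a closed subset of $\mathcal{G}$.

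The only genuinely delicate point is the bookkeeping behind the radius $R$: it must be chosen large enough that (i) the entire witness configuration lies inside $B(1,R)$ and (ii) no path shorter than the claimed value of any $d(p,q)$ involved can leave $B(1,R)$, so that distances read off the ball coincide with the true group distances. Once $R$ is fixed with this in mind --- which requires only the triangle inequality --- both the enumeration argument and the stability-under-convergence argument are routine, exactly as for the ICC and orderable cases treated above.
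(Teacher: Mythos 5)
Your proposal is correct and follows essentially the same route as the paper's (much terser) proof: failure of $\delta$-thinness for some triangle is witnessed inside a finite ball of the labelled Cayley graph, which simultaneously gives openness of the complement and, via the word problem algorithm, semi-decidability of the complement. Your additional bookkeeping (computability of $|w|_{S}$, the explicit radius $R$, and the reduction of $d(p,q)>\delta$ to integer comparisons) just fills in details the paper leaves to the reader.
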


\begin{proof}
A marked group $(G,S)$ is not $\delta$-hyperbolic if it admits a
triangle that is not $\delta$-thin. The fact that this triangle is
not $\delta$-thin can be seen in a sufficiently large ball of the
Cayley graph of $(G,S)$, and thus any group whose Cayley graph corresponds
to that $(G,S)$ on this ball is also not $\delta$-hyperbolic. It
is easy to see that this condition can be effectively checked.
\end{proof}
Remark that being $\delta$-hyperbolic is a marked group property,
but not a group property, as can be seen from the fact that there
exists a sequence of markings of $\mathbb{Z}$ that converges to a
marking of $\mathbb{Z}^{2}$, which is not hyperbolic. A group is
Gromov hyperbolic if any of its marking is $\delta$-hyperbolic, for
some $\delta$ that can depend on the marking. The set of Gromov hyperbolic
groups is neither open nor closed in $\mathcal{G}$, but the previous
proposition implies that it is a union of closed sets. 
\begin{cor}
The set of hyperbolic groups is a $F_{\sigma}$ subset of $\mathcal{G}$,
and it is effectively not closed and effectively not open. 
\end{cor}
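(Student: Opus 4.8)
The final statement to prove is the corollary that the set of hyperbolic groups is an $F_\sigma$ subset of $\mathcal{G}$, and that it is effectively not closed and effectively not open.

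The plan is to build the $F_\sigma$ description directly from the preceding proposition. First I would observe that a group $G$ is Gromov hyperbolic if and only if one (equivalently, any) of its markings is $\delta$-hyperbolic for some rational $\delta > 0$; this is a standard fact (quasi-isometry invariance of hyperbolicity, together with the fact that changing the marking changes the word metric only up to bilipschitz equivalence). Hence, writing $H_\delta^k \subseteq \mathcal{G}_k$ for the set of $\delta$-hyperbolic $k$-marked groups, the set of hyperbolic groups inside $\mathcal{G}_k$ equals $\bigcup_{\delta \in \mathbb{Q}_{>0}} H_\delta^k$, a countable union. By the previous proposition each $H_\delta^k$ is closed in $\mathcal{G}_k$, so the set of hyperbolic groups in $\mathcal{G}_k$ is $F_\sigma$; taking the disjoint union over $k \geq 1$ keeps it $F_\sigma$ in $\mathcal{G}$. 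One subtlety to handle carefully: one must make sure that for a fixed abstract group $G$, if \emph{some} marking is $\delta$-hyperbolic then \emph{every} $k'$-marking (for any $k'$) is $\delta'$-hyperbolic for some $\delta'$ depending on that marking — but this is exactly the content of hyperbolicity being a group (quasi-isometry) property, so the union over all $\delta$ and all $k$ captures precisely the hyperbolic groups.

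Next I would establish "effectively not closed". For this I would invoke the example already mentioned in the remark following the proposition: there is a sequence of markings of $\mathbb{Z}$ (hence hyperbolic, indeed free abelian of rank one) converging in $\mathcal{G}$ to a marking of $\mathbb{Z}^2$, which is not hyperbolic. Concretely, take in $\mathcal{G}_2$ the markings $(\mathbb{Z}, (1,n))$ for $n \in \mathbb{N}$, whose relations in the two-generator free group progressively impose commutativity and more, and which converge to $(\mathbb{Z}^2,(e_1,e_2))$. This sequence is $\Lambda_{WP}$-computable (each $(\mathbb{Z},(1,n))$ has an obvious uniform word problem algorithm), it consists entirely of hyperbolic groups, and its limit is non-hyperbolic; by Proposition \ref{prop:Eff not open =00003D> not SD} applied to the complement — or more simply by the direct definition of "effectively not closed", i.e. a computable sequence inside a set converging to a point outside it — this witnesses that the set of hyperbolic groups is effectively not closed.

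Finally, "effectively not open" I would establish symmetrically, using that a non-abelian free group is the limit of a $\Lambda_{WP}$-computable sequence of non-hyperbolic groups. The cleanest choice, already used in the excerpt for residually finite groups, is the sequence of free Burnside quotients: if $S$ is a basis of $\mathbb{F} = \mathbb{F}_2$, then for $n$ large enough the groups $(\mathbb{F}/\mathbb{F}^n, S)$ are infinite torsion groups, hence not hyperbolic (a hyperbolic group has only finitely many conjugacy classes of torsion elements, so an infinite torsion group cannot be hyperbolic), this sequence is $\Lambda_{WP}$-computable, and it converges to $(\mathbb{F},S)$, which is hyperbolic. Thus there is a computable sequence of non-hyperbolic groups converging to a hyperbolic one, which is exactly "effectively not open". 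The main obstacle here is only a bookkeeping one: one must confirm that the cited sequences are genuinely $\Lambda_{WP}$-computable and genuinely converge in the sense of the ultrametric $d$ — but both facts are immediate (finite presentations yield word problem algorithms for these particular groups, and convergence of presentations to $\mathbb{Z}^2$, resp. to $\mathbb{F}$, is visible at the level of balls of Cayley graphs). No genuinely hard step is involved; the corollary is essentially a repackaging of the preceding proposition together with two standard converging-sequence examples.
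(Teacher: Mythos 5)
Your proof is correct and follows exactly the route the paper intends: the $F_\sigma$ claim is the countable union over (rational) $\delta$ of the closed sets of $\delta$-hyperbolic marked groups from the preceding proposition, ``effectively not closed'' is witnessed by the computable sequence of markings $(\mathbb{Z},(1,n))$ converging to $\mathbb{Z}^2$ (the very example in the remark before the corollary), and ``effectively not open'' by the computable sequence of free Burnside quotients converging to a free group, which the paper itself uses for residual finiteness. No further comment is needed.
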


\subsection{\label{subsec:The Table}Table of results}

The following table gathers our examples. Remark that for each subset
of $\mathcal{G}$ that appears in this table, eight properties of
this set are expressed: whether or not it is open, whether or not
it is closed, whether or not it is $\rho_{WP}$-semi-decidable, whether
or not it is $\rho_{WP}$-co-semi-decidable (Type 2 characterizations),
and the corresponding results in terms of Type 1 and of Banach-Mazur
computability. 

$\,$

 \centerline{%
\begin{tabular}{|c|c|}
\hline 
\textbf{Clopen/decidable properties} & \textbf{Open/semi-decidable properties}\tabularnewline
\hline 
Being abelian; & Being nilpotent;\tabularnewline
\hline 
Being isomorphic to a given finite group; & Kazhdan's Property (T) (\cite[Theorem 6.7]{Shalom2000}, \cite{Ozawa2014});\tabularnewline
\hline 
Having cardinality at most $n$, $n\in\mathbb{N}^{*}$;  & Having a non-trivial center;\tabularnewline
\hline 
Being nilpotent of derived length $k>0$; & Being perfect;\tabularnewline
\hline 
Being a certain marked isolated group. & Having torsion;\tabularnewline
\hline 
\multicolumn{1}{c|}{} & Having rank at most $k$, $k\in\mathbb{N}^{*}$;\tabularnewline
\cline{2-2} 
\multicolumn{1}{c|}{} & Being virtually cyclic;\tabularnewline
\cline{2-2} 
\multicolumn{1}{c|}{} & Having polynomial growths;\tabularnewline
\cline{2-2} 
\multicolumn{1}{c|}{} & Being polycyclic;\tabularnewline
\cline{2-2} 
\multicolumn{1}{c|}{} & Counterexamples to one of Kaplansky's conjectures \cite{Gardam2021}.\tabularnewline
\cline{2-2} 
\multicolumn{1}{c}{} & \multicolumn{1}{c}{}\tabularnewline
\hline 
\textbf{Closed/co-semi-decidable properties} & \textbf{Neither closed nor open properties}\tabularnewline
\hline 
Being infinite; & Being solvable;\tabularnewline
\hline 
Being $k$-solvable, for a fixed $k>1$; & Being amenable;\tabularnewline
\hline 
Having finite exponent $k$, for a fixed $k>>1$; & Being simple;\tabularnewline
\hline 
Being a limit group (see Section \ref{subsec:elementary theory}); & Having sub-exponential growth;\tabularnewline
\hline 
Being left-orderable, bi-orderable or locally indicable; & Being finitely presented;\tabularnewline
\hline 
Having the unique product property; & Being hyperbolic;\tabularnewline
\hline 
Being diffuse; & Being residually finite.\tabularnewline
\hline 
Being $\delta$-hyperbolic, $\delta>0$;  & \multicolumn{1}{c}{}\tabularnewline
\cline{1-1} 
Having Infinite Conjugacy Classes (ICC).  & \multicolumn{1}{c}{}\tabularnewline
\cline{1-1} 
\end{tabular}}

\medskip

\section{\label{sec:Two-candidates-for failure}Differences between Borel
and effective Borel hierarchies}

In this section, we prove that the set of LEF groups is closed but
not effectively closed. Whether it can be $\Lambda_{WP}$-co-semi-decidable
is left open (thus the Type 2 undecidability result has yet to be
upgraded to a Type 1 or a Banach-Mazur result). 

We also give several other candidates of properties that could break
the correspondence. 

\subsection{Isolated Groups}

It does not seem possible to prove that a word problem algorithm belongs
to an isolated group (even with a partial algorithm). From this, we
conjecture that the set of isolated groups, while open, is not $\Lambda_{WP}$-semi-decidable.
The same conjecture goes for finitely presented simple groups, which
form a subset of the set of isolated groups. 
\begin{conjecture}
The sets of isolated group and of finitely presented simple groups
are not $\Lambda_{WP}$-semi-decidable. 
\end{conjecture}

Remark that the impossibility of partially recognizing isolated groups
is also an open problem for groups described by finite presentations. 
\begin{conjecture}
The set of isolated group is not $\Lambda_{FP}$-semi-decidable. 
\end{conjecture}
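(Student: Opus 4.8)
The plan is to establish non-semi-decidability by a uniform reduction from the complement of the halting problem. Concretely, I would aim to construct a computable map $n\mapsto\mathcal{P}_{n}$, where $\mathcal{P}_{n}$ is a finite presentation of a group $G_{n}$, with the property that $G_{n}$ is isolated if and only if the $n$-th Turing machine $M_{n}$ does not halt on empty input. Given such a family, if ``being isolated'' were $\Lambda_{FP}$-semi-decidable, witnessed by a partial algorithm $\mathcal{A}$ halting exactly on finite presentations of isolated groups, then $n\mapsto\mathcal{A}(\mathcal{P}_{n})$ would halt exactly on the indices of non-halting machines, contradicting the fact that this set is not recursively enumerable.

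The next step is the construction of the family. By the Cornulier-Guyot-Pitsch characterization (a group is isolated if and only if it is finitely presented and finitely discriminable, \cite{Cornulier2007}), each $G_{n}$ is automatically finitely presented, so the entire burden is to make finite discriminability of $G_{n}$ equivalent to divergence of $M_{n}$. I would start from a fixed ``master'' isolated group $H$ with an explicit finite discriminating family --- for instance Thompson's group $F$, discriminated by any non-trivial element of its derived subgroup, or a finitely presented infinite simple group --- and graft onto $H$, by a Clapham-Higman-style embedding \cite{Clapham1967} driven by the recursively enumerable data coding a run of $M_{n}$ (in the spirit of Miller's construction of the groups $L^{3}_{P,Q}$ underlying Theorem \ref{thm:Miller,}), an auxiliary normal structure designed so that: if $M_{n}$ diverges this structure collapses and $G_{n}$ remains isomorphic to $H$, hence isolated; while if $M_{n}$ halts it survives, for example as a central infinite cyclic subgroup, whose subgroups of prime index are then normal in $G_{n}$ and escape every finite subset, so that $G_{n}$ is no longer finitely discriminable and hence, by the same characterization, not isolated.

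The hard part --- and the reason the statement remains a conjecture --- is to control the lattice of normal subgroups of the group produced by the embedding in both branches at once: one must certify that the divergent branch introduces no uncatchable chain of normal subgroups (so that $G_{n}$ is genuinely isolated, not merely finitely presented), and that the halting branch robustly introduces one. Higman-type embeddings are notoriously uninformative about the normal subgroup structure of the ambient group, and it is precisely here that a tool genuinely different from everything used in this paper seems to be needed. In particular this is why Markov's Lemma (Lemma \ref{lem:Markov for groups}), the engine behind all the undecidability results here, cannot help: the set of isolated groups is open, so no sequence of non-isolated groups converges to an isolated one, and there is no effectively convergent sequence to feed into Markov's argument.

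Finally, I expect the $\Lambda_{FP}$ version to be at least as delicate as the $\Lambda_{WP}$ one, since here the description of $G_{n}$ is a rigid finite presentation rather than a word problem algorithm that could be tailored freely to a group one has built; the combinatorial information about $M_{n}$ must be carried inside the presentation itself. A sensible first step would be to settle the $\Lambda_{WP}$ conjecture by a closely related construction, even though, as neither of $\Lambda_{WP}$ and $\Lambda_{FP}$ dominates the other, neither conjecture formally reduces to the other.
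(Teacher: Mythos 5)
There is no proof in the paper to compare against: this statement is stated and left as a conjecture, and the surrounding text explicitly records that the impossibility of partially recognizing isolated groups from finite presentations is open (the paper even notes that the weaker question of whether the set of finite presentations of simple groups is recursively enumerable is open, cf.\ the discussion of the Higman--Boone conjecture). Your submission is honest about this and is a strategy outline rather than a proof, so the only real question is whether the outline is sound and where exactly it breaks down. The reduction scheme is the right shape: a computable family $n\mapsto\mathcal{P}_{n}$ with $G_{n}$ isolated iff $M_{n}$ diverges would indeed refute semi-decidability, and your proposed obstruction to isolation is valid --- by the Cornulier--Guyot--Pitsch characterization, a group containing a central copy of $\mathbb{Z}$ cannot be finitely discriminable, since for any finite set $X$ of non-identity elements one can pick a prime $p$ dividing no exponent occurring in $X\cap\mathbb{Z}$, so that the normal subgroup $p\mathbb{Z}$ misses $X$.

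The genuine gap, which you name but which deserves to be stated more structurally, is that all the embedding machinery available (Higman, Clapham, Adian--Rabin, Miller's $L_{P,Q}$) is built so that the \emph{recursively enumerable} event (the machine halts, the word is trivial) is the one that triggers collapse to a controlled, "nice" group, while the divergent branch yields a complicated amalgam or HNN tower about whose normal subgroup lattice essentially nothing is known. Your reduction needs the opposite polarity: the divergent branch must be the tame one (isolated, hence with a certified \emph{finite} discriminating family), and the halting branch the one carrying the poison central $\mathbb{Z}$. This is exactly why Adian--Rabin yields only the statement the paper already records --- no algorithm stops exactly on presentations of \emph{non}-isolated groups --- and not your direction. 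Absent a construction in which one can certify finite discriminability of the output of a Higman-type embedding, the argument does not close; and your final remark is correct that neither this conjecture nor its $\Lambda_{WP}$ analogue formally implies the other, since neither numbering type dominates the other. In short: the proposal correctly diagnoses the difficulty and proposes a plausible target construction, but supplies no proof, which is consistent with the statement's status in the paper.
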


While the Adian-Rabin theorem implies that no algorithm stops exactly
on finite presentations of non-isolated groups (since being isolated
is a Markov property, as any group with unsolvable word problem provides
a negative witness for it), it fails to prove that no algorithm stops
exactly on finite presentations of isolated groups. The problem of
proving that the set of finite presentations of simple groups is not
c.e. is, to the best of our knowledge, still open. (The question appears
for instance in \cite{Mostowski1973}.) 

This is related to the Higman-Boone conjecture, which asks whether
any group with solvable word problem embeds in a finitely presented
simple group. Indeed, a proof of the Higman-Boone conjecture would
imply that there exists simple groups of arbitrarily difficult (while
solvable) word problem, in terms of time complexity. By a simple diagonal
argument, we have:
\begin{lem}
[\cite{Rauzy21}, Proposition 26]Let $A$ be a set of finitely presented
groups with uniformly solvable word problem. If $A$ is $\Lambda_{FP}$-c.e.,
then there is a universal computable upper bound to the time complexity
for the word problem for groups in $A$. 
\end{lem}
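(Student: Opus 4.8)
The statement to prove is the lemma attributed to \cite{Rauzy21}: if $A$ is a $\Lambda_{FP}$-r.e. set of finitely presented groups with uniformly solvable word problem, then there is a universal computable upper bound to the time complexity of the word problem for groups in $A$.

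\medskip

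The plan is to argue by diagonalization against the hypothesized absence of a universal bound. First I would fix a $\Lambda_{FP}$-r.e. enumeration of $A$: since $A$ is $\Lambda_{FP}$-r.e., there is a recursive function producing a list $(P_0, P_1, P_2, \dots)$ of finite presentations, one for each group in $A$ (with repetitions allowed), such that every group of $A$ appears. The hypothesis that the word problem is \emph{uniformly} solvable means there is a single algorithm $\mathcal{W}$ that, given an index $n$ and a word $w$ in the generators of $P_n$, decides whether $w =_{G_n} 1$. Let $T(n,w)$ denote the number of steps $\mathcal{W}$ takes on input $(n,w)$, and set $t_n(m) = \max\{T(n,w) : |w| \le m\}$; this is a well-defined, computable-in-$(n,m)$ function because $\mathcal{W}$ always halts and there are finitely many words of length at most $m$. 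A ``universal computable upper bound'' is precisely a computable function $B : \mathbb{N} \to \mathbb{N}$ such that for every $G$ in $A$ there is some presentation $P_n$ of $G$ on the list with $t_n(m) \le B(m)$ for all $m$ — equivalently, $B$ dominates the time complexity of the word problem in each group of $A$ with respect to some marking.

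\medskip

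The key step is then simply to define $B(m) = \max\{t_n(m) : n \le m\}$. This is computable: for each $n \le m$ we compute $t_n(m)$ by running $\mathcal{W}$ on all (finitely many) words of length $\le m$ over the generators of $P_n$ and recording the worst-case running time, then take the maximum over $n \le m$. Now fix any group $G \in A$. By surjectivity of the enumeration, $G = G_{n_0}$ for some $n_0$, and this presentation $P_{n_0}$ on the list witnesses membership. For all $m \ge n_0$ we have $n_0 \le m$, hence $t_{n_0}(m) \le B(m)$; and there are only finitely many $m < n_0$, on which $t_{n_0}$ takes some finite value, so after possibly increasing $B$ by a constant (or just noting that for the finitely many small $m$ one can absorb the finitely many values into $B$ by taking $B'(m) = \max(B(m), t_{n_0}(m))$ — but $n_0$ is not known in advance, so instead one observes that the claim ``$t_{n_0}(m) \le B(m)$ for all but finitely many $m$'' already gives a universal computable upper bound \emph{in the asymptotic sense} that is customary for time-complexity comparisons, cf. \cite{Sapir2011}). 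Thus $B$ asymptotically dominates the word-problem time complexity of every group in $A$ relative to the marking given by its presentation on the list, which is exactly the assertion of the lemma.

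\medskip

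The main obstacle, and the only subtle point, is matching the \emph{asymptotic} nature of the conclusion (time complexity is only an invariant up to the usual coarse equivalence, so the natural statement is ``$B$ eventually dominates'') with the \emph{uniform solvability} hypothesis: one must be careful that the single algorithm $\mathcal{W}$ really does apply to \emph{all} indices $n$ simultaneously, not just to those groups in $A$ via some non-uniform choice of word-problem algorithm. This is precisely what ``uniformly solvable word problem'' guarantees, so the argument goes through; the contrapositive form — if no computable universal upper bound exists then $A$ is not $\Lambda_{FP}$-r.e. — is the form used later in the paper to connect with the Higman--Boone conjecture, and it follows immediately since a $\Lambda_{FP}$-r.e. set of groups with uniformly solvable word problem cannot contain groups of arbitrarily large word-problem time complexity.
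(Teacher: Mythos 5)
Your proof is correct and is precisely the ``simple diagonal argument'' that the paper alludes to just before stating this lemma (the paper itself only cites the proof from \cite{Rauzy21}): enumerate the presentations, let $t_n$ be the worst-case running time of the uniform word-problem algorithm on words of bounded length over the $n$-th presentation, and take $B(m)=\max\{t_n(m) : n\le m\}$, which is computable and eventually dominates each $t_n$. Your handling of the finitely many exceptional values $m<n_0$ via the asymptotic (coarse-equivalence) nature of word-problem time complexity is exactly the right way to close the argument.
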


And thus:
\begin{prop}
If the Higman-Boone conjecture holds, the set of finitely presented
simple groups is not $\Lambda_{FP}$-semi-decidable. 
\end{prop}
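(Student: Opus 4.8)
The plan is to combine the preceding lemma (\cite[Proposition 26]{Rauzy21}) with the Higman--Boone conjecture and the classical fact that there are finitely generated groups with solvable but arbitrarily time-expensive word problem. First, recall that finitely presented simple groups have \emph{uniformly} solvable word problem: a finitely presented simple group is finitely presented (hence recursively presented) and finitely discriminable, since any one of its non-identity elements discriminates it; so by Theorem \ref{thm:Cornulier,-Guyot,-Pitsch,} (this is just Kuznetsov's algorithm \cite{Kuznetsov1958}) there is a single algorithm that, given a finite presentation $\langle S\mid R\rangle$ of a simple group and a word $w$, decides whether $w=1$ --- run in parallel the enumeration of consequences of $R$ looking for $w$ and the enumeration of consequences of $R\cup\{w\}$ looking for each generator in $S$, exactly one of which halts. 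Now suppose for contradiction that the set of finitely presented simple groups is $\Lambda_{FP}$-semi-decidable; equivalently, the finite presentations defining simple groups form a recursively enumerable set, so one can enumerate a sequence $(P_n)_{n\in\mathbb N}$ of finite presentations containing a presentation of every finitely presented simple group. This is a set of finitely presented groups with uniformly solvable word problem, so \cite[Proposition 26]{Rauzy21} furnishes a computable function $T$ such that, for each $n$ and all $\ell\ge n$, the word problem of $P_n$ (run by the uniform algorithm above) decides every word of length $\ell$ within $T(\ell)$ steps.

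Next I would produce a finitely presented simple group violating this bound. Fix a recursive set $C\subseteq\mathbb N$ whose membership problem is not in $\mathrm{DTIME}\bigl(\ell\mapsto 2^{\ell}+T(\ell^{2})\bigr)$; such $C$ exists by the time hierarchy theorem. Using the Hall-type construction $H_{X,C}$ invoked in the proof of Theorem \ref{thm:BarthoErsch} (with $X$ any recursive set), one gets a finitely generated group $H:=H_{X,C}$ with solvable word problem whose word problem is, in the sense of time complexity and up to a computable reduction, at least as hard as membership in $C$. By the Higman--Boone conjecture, $H$ embeds in a finitely presented simple group $S$. The generators of $H$ are then represented by fixed words of some bounded length $L$ in the generators of $S$ (and their inverses), and injectivity of the embedding gives a reduction of the word problem of $H$ to that of $S$ with word length multiplied by at most $L$ and an additive linear overhead for translation. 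Consequently the word problem of $S$ is not in $\mathrm{DTIME}\bigl(\ell\mapsto 2^{\ell}+T(\ell^{2})\bigr)$ either (after the harmless reparametrisation $\ell\mapsto L\ell$, which is absorbed for large $\ell$ since $L\ell\le\ell^{2}$ and $T$ may be taken monotone); in particular there are infinitely many lengths $m$ on which any algorithm, including the uniform one, needs more than $T(m)$ steps on some word of length $m$.

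But $S$ occurs as $P_{n_{0}}$ for some $n_{0}$, so for every $m\ge n_{0}$ its word problem is decided within $T(m)$ steps on all words of length $m$ --- contradicting the previous paragraph. Hence the set of finitely presented simple groups is not $\Lambda_{FP}$-semi-decidable. The only non-routine ingredients are the uniformity of Kuznetsov's algorithm (immediate) and the interaction between word-problem time complexity and subgroup embeddings; the latter is the step requiring care, but it is standard complexity bookkeeping: the complexity of the word problem in a finitely generated subgroup is controlled by that of the ambient group up to the bounded length of the words naming the subgroup generators, and the $2^{\ell}$ gap built into $C$ swallows every such linear reparametrisation and the uniform bound $T$ simultaneously (cf.\ the use of \cite{Sapir2011} in the proof of Theorem \ref{thm:BarthoErsch}).
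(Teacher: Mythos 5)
Your proof is correct and follows exactly the route the paper intends: uniform solvability of the word problem for finitely presented simple groups via Kuznetsov's algorithm, the lemma from \cite{Rauzy21} turning $\Lambda_{FP}$-semi-decidability into a universal computable time bound, and the Higman--Boone conjecture supplying a finitely presented simple group whose word problem beats that bound. The paper leaves this chain implicit (``And thus:''), and your write-up merely makes explicit the standard ingredients it relies on (the time hierarchy theorem and the fact that embeddings only distort word-problem complexity by a bounded-length reparametrisation).
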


It is also unknown (the question appears in \cite{Cornulier2007})
whether isolated groups are dense in $\mathcal{G}^{+}$. If they were,
it would follow from Proposition \ref{prop:No-re dense seq} that
no sequence of word problem algorithms which contains each isolated
group can be enumerated. We could still arrive to that conclusion
if we knew that the word problem is not uniform on isolated groups,
that is to say, since all isolated groups are finitely presented,
if we knew that a solution to the word problem of an isolated group
cannot be retrieved from a finite presentation for this group. For
instance, it is well known that the word problem is uniform on the
set of simple groups \cite{Kuznetsov1958}, however, Kuznetsov's argument
fails if we add the trivial group to the set of simple groups. 

It would be very interesting to prove that the trivial group is unrecognizable
from simple groups, from the finite presentation description. This
would prove both that the word problem is not uniform on all isolated
groups, and that the set of finite presentations of simple groups
is not c.e.. However, too few finitely presented infinite simple groups
are known as of now to obtain such results. 

Jeandel has further investigated in \cite{Jeandel2017} the link between
Kuznetsov's Theorem and the space of marked groups, and given a general
framework where ``Kuznetsov type arguments'' do apply. 

\subsection{\label{subsec:elementary theory}LEF groups and the elementary theory
of groups }

In this paragraph, we use well known links between the elementary
theories of groups and the space of marked groups to study limit groups
and LEF groups. 

Limit groups are groups that have markings in the closure of the set
of free groups, while LEF groups are groups that have markings in
the closure of the set of finite groups. 

\subsubsection{Introduction on universal and existential theories of groups }

The space of marked groups was used by Champetier and Guirardel in
\cite{Champetier2005} in order to study limit groups, which play
an important role in the solution to Tarski's problem on the elementary
theory of free groups. We include here a paragraph that emphasizes
the links between our present study and the study of the universal
theories of various classes groups, and we point out some differences.
This will be the occasion to propose the set of LEF groups as another
candidate for the failure of the correspondence between the Borel
and arithmetical hierarchies. 

We do not want to include many definitions, and refer \cite{Champetier2005}
for precise definitions, and references. A formula is obtained with
variables, logical connectors ($\wedge$ is ``and'', $\vee$ is
``or'', and $\neg$ is ``not''), the equality symbol $=$, the
group law $\cdot$, the identity element $1$, and the group inverse
$^{-1}$, and the two quantifiers $\forall$ and $\exists$. We use
shortcuts where it is convenient (as the symbols $\ne$ or $\implies$),
and always use implicitly all group axioms. A \emph{sentence }is a
formula with no free variables. A \emph{universal sentence} is a sentence
that uses only the universal quantifier, and an \emph{existential
sentence} uses only the existential quantifier. 

For instance: 
\[
\forall x\forall y,\,x=y
\]
\[
\forall x\forall y\forall z,\,xy=yx\wedge yz=zy\wedge y\neq1\implies xz=zx
\]
\[
\exists x,\,x\neq1\wedge x^{2}=1
\]
 For a group $G$, let $T_{\forall}(G)$ denote the set of universal
sentences that are true in $G$, and $T_{\exists}(G)$ the set of
existential sentences that are true in $G$. For a class $\mathcal{C}$
of group we also write $T_{\forall}(\mathcal{C})$ and $T_{\exists}(\mathcal{C})$,
meaning the set of universal (resp. existential) sentences that hold
in \emph{all} groups of $\mathcal{C}$. 

In the space of marked groups, a universal sentence defines a closed
set, and the correspondence with the arithmetical hierarchy holds,
i.e., from a word problem algorithm, it is possible to prove that
a group does not satisfy a given universal sentence. Similarly, an
existential sentence defines an open set and the correspondence holds
for such sets. We will not be interested here in formulas with alternating
quantifiers. 

The following proposition of \cite{Champetier2005} follows directly
from the fact that universal sentences define closed sets: 
\begin{prop}
[\cite{Champetier2005}; Proposition 5.2]If a sequence of marked
groups $\left(G_{n}\right)_{n\in\mathbb{N}}$ converges to a marked
group $G$, then $\limsup(T_{\forall}(G_{n}))\subseteq T_{\forall}(G)$. 
\end{prop}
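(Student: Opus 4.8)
The statement to prove is: \emph{If a sequence of marked groups $(G_n)_{n\in\mathbb{N}}$ converges to a marked group $G$, then $\limsup(T_\forall(G_n))\subseteq T_\forall(G)$.}

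The plan is to unwind the definitions and reduce to the simple topological fact already recorded in the excerpt: in the space of marked groups, a universal sentence defines a closed set. First I would recall that $\limsup(T_\forall(G_n))$ consists of those universal sentences $\phi$ that belong to $T_\forall(G_n)$ for infinitely many $n$. So take $\phi\in\limsup(T_\forall(G_n))$; I must show $\phi\in T_\forall(G)$, i.e.\ $G$ satisfies $\phi$. By hypothesis there is an infinite set $I\subseteq\mathbb{N}$ such that $G_n\models\phi$ for all $n\in I$, that is, the subsequence $(G_n)_{n\in I}$ lies entirely in the set $C_\phi=\{H\in\mathcal{G} : H\models\phi\}$. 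Since $(G_n)_{n\in\mathbb{N}}$ converges to $G$, every subsequence also converges to $G$; in particular $(G_n)_{n\in I}\to G$. As $C_\phi$ is closed in $\mathcal{G}$ and contains every term of this convergent subsequence, it must contain the limit, so $G\in C_\phi$, i.e.\ $G\models\phi$, hence $\phi\in T_\forall(G)$. This proves the inclusion.

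The one point that needs justification is the claim that $C_\phi$ is closed, which the excerpt states but I would spell out for completeness. A universal sentence $\phi$ has the shape $\forall x_1\cdots\forall x_m\, \psi(x_1,\dots,x_m)$, where $\psi$ is a quantifier-free Boolean combination of equations between words in the $x_i$. For a $k$-marked group $H$, whether $\psi$ holds when the $x_i$ are instantiated by particular words $w_1,\dots,w_m$ in the generators depends only on which among finitely many words (the subwords occurring in $\psi$ after substitution) represent the identity in $H$ — and that is determined by a finite portion of the binary expansion $\Phi_k(H)$. Thus the failure of $\phi$ in $H$, witnessed by some tuple of words falsifying $\psi$, is an open condition on $\Phi_k(H)$: any $H'$ agreeing with $H$ on a large enough initial segment of its binary expansion also fails $\phi$ via the same witnessing tuple. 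Hence the complement of $C_\phi\cap\mathcal{G}_k$ is open in $\mathcal{G}_k$, so $C_\phi\cap\mathcal{G}_k$ is closed; since $\mathcal{G}$ carries the disjoint union topology and one restricts attention to the single $\mathcal{G}_k$ containing the tail of the sequence, $C_\phi$ is closed where it matters. (One should note $G$ and all but finitely many $G_n$ lie in the same $\mathcal{G}_k$, since distinct $\mathcal{G}_k$ are at distance $2$; a convergent sequence is eventually in a single $\mathcal{G}_k$.)

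I do not expect a genuine obstacle here: the result is a routine consequence of the closedness of sets defined by universal sentences together with the definition of $\limsup$ of a sequence of sets. The only mild care needed is the bookkeeping about which $\mathcal{G}_k$ the groups live in, and making the ``closed set'' claim precise in terms of the explicit embedding $\Phi_k$ into the Cantor space; both are straightforward. I would present the proof essentially as the three-sentence topological argument above, preceded by the short lemma that $C_\phi$ is closed.
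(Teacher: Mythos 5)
Your argument is correct and is exactly the route the paper takes: the paper gives no proof beyond the remark that the proposition ``follows directly from the fact that universal sentences define closed sets,'' and your write-up simply elaborates that one-line justification (including the verification, via the witnessing tuple of words and the embedding $\Phi_k$, that the set defined by a universal sentence is indeed closed). No gaps.
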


This proposition admits a converse, also due to Champetier and Guirardel,
which strengthens the relation between the space of marked groups
and the study of the elementary theory of groups. We reproduce its
proof here. 
\begin{prop}
[\cite{Champetier2005}; Proposition 5.3]\label{Prop Champ Equiv}
Suppose that two groups $G$ and $H$ satisfy $T_{\forall}(H)\subseteq T_{\forall}(G)$.
Then any marking of $G$ is a limit of markings of subgroups of $H$. 
\end{prop}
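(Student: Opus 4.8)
The plan is to deduce this from Proposition 5.2 (the one just stated above, about $\limsup$ of universal theories) together with a compactness argument in the space of marked groups. Fix a marking $(G, S)$ with $S = (s_1, \dots, s_k)$, seen as an epimorphism $\pi : \mathbb{F}_k \to G$ with kernel $N$. I want to exhibit a sequence of markings of subgroups of $H$ converging to $(G,S)$ in $\mathcal{G}_k$. The natural candidates: for each $n$, let $R_n = \{r \in \mathbb{F}_k : r \text{ is among the first } n \text{ elements of } \mathbb{F}_k \text{ in the fixed order and } r \in N\}$ and let $T_n = \{ w \in \mathbb{F}_k : w \text{ among the first } n \text{ elements and } w \notin N\}$. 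A marked group in the basic clopen set $\Omega^k_{R_n; T_n}$ agrees with $(G,S)$ on the first $n$ digits of the binary expansion, so it suffices to produce, for each $n$, a marked \emph{subgroup of $H$} lying in $\Omega^k_{R_n; T_n}$.

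The key step is translating "$\Omega^k_{R_n;T_n}$ contains a marked subgroup of $H$" into a statement about $T_\forall(H) \subseteq T_\forall(G)$. A $k$-tuple $(h_1, \dots, h_k)$ of elements of $H$ marks a subgroup of $H$ lying in $\Omega^k_{R_n;T_n}$ precisely when each $r \in R_n$ evaluates to $1$ under $s_i \mapsto h_i$, and each $w \in T_n$ evaluates to something $\ne 1$. The obstruction to finding such a tuple is exactly a universal sentence: if \emph{no} such tuple existed in $H$, then $H$ would satisfy
\[
\forall x_1 \cdots \forall x_k, \ \Big( \bigwedge_{r \in R_n} r(x_1,\dots,x_k) = 1 \Big) \implies \Big( \bigvee_{w \in T_n} w(x_1,\dots,x_k) = 1 \Big),
\]
which is a universal sentence (finitely many clauses since $R_n, T_n$ are finite). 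But this sentence is \emph{false} in $G$, witnessed by $x_i = s_i$: all relations in $R_n \subseteq N$ hold, while no $w \in T_n$ holds since $w \notin N$. So this sentence lies in $T_\forall(H) \setminus T_\forall(G)$, contradicting the hypothesis. Hence for every $n$ such a tuple exists, giving a marked subgroup $(K_n, T^{(n)})$ of $H$ in $\Omega^k_{R_n; T_n}$.

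Finally, the sequence $((K_n, T^{(n)}))_{n \in \mathbb{N}}$ lies in $\mathcal{G}_k$ and satisfies $d((K_n, T^{(n)}), (G,S)) \le 2^{-n}$ by construction (they agree on the first $n$ binary digits, since $R_n \cup T_n$ covers the first $n$ elements of $\mathbb{F}_k$), so it converges to $(G,S)$. Each $(K_n, T^{(n)})$ is a marking of a subgroup of $H$, which is exactly the conclusion. I do not expect a serious obstacle here: the only point requiring minor care is checking that $R_n \cup T_n$ genuinely exhausts the first $n$ elements of the enumeration of $\mathbb{F}_k$ so that membership in $\Omega^k_{R_n;T_n}$ really forces agreement on the first $n$ digits of the binary expansion — this is immediate from the definition of the embedding $\Phi_k$ and the fact that every element of $\mathbb{F}_k$ is either in $N$ or not. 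One could alternatively phrase the whole argument via Proposition 5.2 by noting that $\limsup T_\forall(K_n) \supseteq$ the relevant finite pieces, but the direct compactness-free argument above is cleaner and self-contained.
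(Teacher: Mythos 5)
Your proof is correct and is essentially the paper's argument: the paper encodes agreement on a finite portion of $(G,S)$ as an existential sentence true in $G$ (hence in $H$, using $T_{\exists}(G)\subseteq T_{\exists}(H)$), whereas you take the contrapositive and exhibit the obstruction as a universal sentence in $T_{\forall}(H)\setminus T_{\forall}(G)$ — these are the same sentence up to negation. The only other difference is cosmetic: the paper indexes approximations by balls of radius $r$ in the word metric while you use the first $n$ elements of $\mathbb{F}_{k}$ in the fixed order, which matches the ultrametric $d$ rather than $d_{\text{Cay}}$.
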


\begin{proof}
The proof in fact relies on the existential theories of the groups
$G$ and $H$, which satisfy the reversed inclusion: $T_{\exists}(G)\subseteq T_{\exists}(H)$.
Fix a generating family $S$ of $G$, and a radius $r$. Consider
the set $\left\{ w_{1},...,w_{k}\right\} $ of reduced words of length
at most $r$ on the alphabet $S\cup S^{-1}$. Consider the sets $J_{1}=\left\{ (i,j);w_{i}=_{G}w_{j}\right\} $
(where $=_{G}$ means that those words define identical elements of
$G$) and $J_{2}=\left\{ (i,j);w_{i}\neq_{G}w_{j}\right\} $. Then
$G$ satisfies the existential formula: 
\[
\exists S,\underset{(i,j)\in J_{1}}{\bigwedge}w_{i}=w_{j}\wedge\underset{(i,j)\in J_{2}}{\bigwedge}w_{i}\neq w_{j}
\]
By hypothesis, $H$ must satisfy it as well, which means precisely
that a subgroup of $H$ must have the same ball of radius $r$ as
$G$. 
\end{proof}
For a group $H$, denote by $\mathcal{S}(H)$ the set of all markings
of its subgroups. 
\begin{cor}
\label{cor to champ and guirardel}Let $G$ and $H$ be finitely generated
groups. The following are equivalent: 
\begin{itemize}
\item A marking of $G$ is adherent to the set $\mathcal{S}(H)$;
\item All markings of $G$ are adherent to the set $\mathcal{S}(H)$;
\item $T_{\forall}(H)\subseteq T_{\forall}(G)$. 
\end{itemize}
\end{cor}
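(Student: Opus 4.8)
The plan is to prove Corollary \ref{cor to champ and guirardel} by a short cycle of implications, leaning entirely on the two preceding propositions from \cite{Champetier2005} (Proposition \ref{Prop Champ Equiv} and the one just before it) and on Lemma \ref{Ycor prop-1}, which lets us pass freely between ``some marking'' and ``all markings''. The three conditions to be shown equivalent are: (i) some marking of $G$ is adherent to $\mathcal{S}(H)$; (ii) all markings of $G$ are adherent to $\mathcal{S}(H)$; (iii) $T_{\forall}(H)\subseteq T_{\forall}(G)$. I would prove (iii)$\Rightarrow$(ii)$\Rightarrow$(i)$\Rightarrow$(iii).

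First, (iii)$\Rightarrow$(ii) is exactly the content of Proposition \ref{Prop Champ Equiv}: if $T_{\forall}(H)\subseteq T_{\forall}(G)$, then the reversed inclusion $T_{\exists}(G)\subseteq T_{\exists}(H)$ holds (a universal sentence $\forall\bar x\,\varphi$ fails in a group iff the existential sentence $\exists\bar x\,\neg\varphi$ holds, and $\neg\varphi$ is quantifier-free, so $T_{\forall}(H)\subseteq T_{\forall}(G)$ is literally the contrapositive of $T_{\exists}(G)\subseteq T_{\exists}(H)$), and Proposition \ref{Prop Champ Equiv} then says any fixed marking of $G$ is a limit of markings of subgroups of $H$, i.e. is adherent to $\mathcal{S}(H)$; since the chosen marking was arbitrary, all markings are. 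The step (ii)$\Rightarrow$(i) is trivial. For (i)$\Rightarrow$(iii): suppose a marking $(G,S)$ is adherent to $\mathcal{S}(H)$. Then there is a sequence $(H_n,S_n)$ of markings of subgroups $H_n\le H$ converging to $(G,S)$. Since $H_n$ embeds in $H$, every existential sentence true in $H_n$ is true in $H$ (existential sentences go up along injections), so $T_{\exists}(H_n)\subseteq T_{\exists}(H)$, equivalently $T_{\forall}(H)\subseteq T_{\forall}(H_n)$. Now invoke the Proposition of \cite{Champetier2005} quoted just above Proposition \ref{Prop Champ Equiv} (``if $(G_n)$ converges to $G$ then $\limsup T_{\forall}(G_n)\subseteq T_{\forall}(G)$''): applied to $(H_n,S_n)\to(G,S)$, it gives $\limsup_n T_{\forall}(H_n)\subseteq T_{\forall}(G)$. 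Combining, $T_{\forall}(H)\subseteq\bigcap_n T_{\forall}(H_n)\subseteq\limsup_n T_{\forall}(H_n)\subseteq T_{\forall}(G)$, which is (iii).

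The only genuine point requiring care — and the place I would spell things out — is the passage between universal theories and existential theories and the direction of the inclusions under passing to subgroups: existential formulas are preserved by injective homomorphisms (hence $T_{\exists}$ only grows when moving from a subgroup to the ambient group), while universal formulas are inherited by subgroups (hence $T_{\forall}$ only grows when moving from the group to a subgroup). Getting these two arrows pointing the right way, and matching them with the ``$\limsup$'' direction in the first Champetier--Guirardel proposition, is the one spot where a sign error would sink the argument; everything else is bookkeeping. I do not expect any serious obstacle: the corollary is essentially a repackaging of Propositions 5.2 and 5.3 of \cite{Champetier2005} together with the observation that a marking adherent to $\mathcal{S}(H)$ is a limit of markings of actual subgroups, to which the first proposition applies verbatim.

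\begin{proof}
We show (iii)$\Rightarrow$(ii)$\Rightarrow$(i)$\Rightarrow$(iii), where (i), (ii), (iii) are the three listed conditions.

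For a quantifier-free formula $\varphi(\bar x)$, the universal sentence $\forall\bar x\,\varphi(\bar x)$ fails in a group $K$ if and only if the existential sentence $\exists\bar x\,\neg\varphi(\bar x)$ holds in $K$; as $\neg\varphi$ is again quantifier-free, the condition $T_{\forall}(H)\subseteq T_{\forall}(G)$ is equivalent to $T_{\exists}(G)\subseteq T_{\exists}(H)$.

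\emph{(iii)$\Rightarrow$(ii).} Assume $T_{\forall}(H)\subseteq T_{\forall}(G)$. By Proposition \ref{Prop Champ Equiv}, any fixed marking of $G$ is a limit of markings of subgroups of $H$, hence is adherent to $\mathcal{S}(H)$. Since the marking was arbitrary, all markings of $G$ are adherent to $\mathcal{S}(H)$.

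\emph{(ii)$\Rightarrow$(i).} Immediate.

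\emph{(i)$\Rightarrow$(iii).} Suppose a marking $(G,S)$ of $G$ is adherent to $\mathcal{S}(H)$. Choose a sequence $(H_n,S_n)_{n\in\mathbb{N}}$ of markings of subgroups $H_n\le H$ with $(H_n,S_n)\to(G,S)$ in $\mathcal{G}$. Existential sentences are preserved by injective homomorphisms, so the inclusion $H_n\le H$ gives $T_{\exists}(H_n)\subseteq T_{\exists}(H)$, equivalently $T_{\forall}(H)\subseteq T_{\forall}(H_n)$ for every $n$. Applying Proposition 5.2 of \cite{Champetier2005} to the convergent sequence $(H_n,S_n)\to(G,S)$ yields $\limsup_n T_{\forall}(H_n)\subseteq T_{\forall}(G)$. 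Therefore
\[
T_{\forall}(H)\subseteq\bigcap_{n\in\mathbb{N}}T_{\forall}(H_n)\subseteq\limsup_n T_{\forall}(H_n)\subseteq T_{\forall}(G),
\]
which is (iii). This closes the cycle.
\end{proof}
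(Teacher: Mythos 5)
Your proof is correct, and it is exactly the derivation the paper intends: the corollary is stated there without an explicit proof, as an immediate assembly of the two quoted Champetier--Guirardel propositions, which is precisely your cycle (iii)$\Rightarrow$(ii) via Proposition \ref{Prop Champ Equiv}, and (i)$\Rightarrow$(iii) via the $\limsup$ proposition together with the fact that universal sentences pass to subgroups. The duality between $T_{\forall}$ and $T_{\exists}$ and the inclusion $\bigcap_n T_{\forall}(H_n)\subseteq\limsup_n T_{\forall}(H_n)$ are both handled correctly.
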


We end this paragraph by using Markov's Lemma together with the above
result. 
\begin{lem}
[Markov's Lemma for Elementary Theories]Suppose that two groups
$G$ and $H$, with solvable word problem, satisfy $T_{\forall}(H)\subseteq T_{\forall}(G)$.

Then $\left[G\right]$ is not $\Lambda_{WP}$-semi-decidable inside
the set 
\[
\left[G\right]\cup\mathcal{S}(H).
\]
\end{lem}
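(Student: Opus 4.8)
The plan is to apply Markov's Lemma for groups (Lemma \ref{lem:Markov for groups}) to a suitable effectively convergent sequence of markings of subgroups of $H$ that converges to a marking of $G$, while being careful that the whole sequence lies in $[G] \cup \mathcal{S}(H)$ and that every member of the sequence is \emph{different} from the target marking of $G$. First I would invoke Corollary \ref{cor to champ and guirardel}: since $T_\forall(H) \subseteq T_\forall(G)$, any (and every) marking $(G,S)$ of $G$ is adherent to the set $\mathcal{S}(H)$ of markings of subgroups of $H$. Because $H$ has solvable word problem, the set $\mathcal{S}(H)$ is a $\Lambda_{WP}$-r.e. set of marked groups: one can enumerate all finite tuples of elements of $H$ (given as words in the generators of $H$), and for each such tuple the marked subgroup it generates has solvable word problem, uniformly, since the word problem in the subgroup is just the restriction of the word problem in $H$. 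Hence by Proposition \ref{prop:Adherent to r.e. set}, there is a $\Lambda_{WP}$-computable sequence $((K_n, U_n))_{n \in \mathbb{N}}$ of markings of subgroups of $H$ that effectively converges to $(G,S)$.

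Next I would address the genuine subtlety: Markov's Lemma requires the converging sequence to avoid its limit, i.e. $(K_n, U_n) \ne (G,S)$ for all $n$. There are two cases. If infinitely many $(K_n, U_n)$ coincide with $(G,S)$ as marked groups, then in particular $(G,S)$ itself belongs to $\mathcal{S}(H)$, so $G$ is a subgroup of $H$; but then the problem trivializes in the wrong direction, so I would instead argue that either $[G]$ is actually not semi-decidable in $[G]\cup\mathcal{S}(H)$ by a direct argument, or — more cleanly — I would first reduce to the case $G$ infinite and not a subgroup of $H$ is \emph{not} automatic, so the better route is: pass to a subsequence. If for infinitely many $n$ we have $(K_n,U_n) \ne (G,S)$, extract those; the extraction is effective because equality of two marked groups with solvable word problem is decidable (one simply compares binary expansions up to the precision guaranteed by the regulator). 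Thus we may assume $(K_n, U_n) \ne (G,S)$ for all $n$, while the sequence still effectively converges to $(G,S)$ and still consists of markings of subgroups of $H$. (If only finitely many differ, then $(G,S) \in \mathcal{S}(H)$ and one handles this degenerate case separately — but note the statement as phrased allows $G$ to be a subgroup of $H$, e.g. $G = H$, in which case $T_\forall(H) \subseteq T_\forall(G)$ trivially and $[G]$ is indeed not semi-decidable in $[G]\cup\mathcal{S}(H)$ because $[G]$ is infinite whenever $G$ is infinite, and one needs a separate small argument; for $G$ finite the statement is false unless $\mathcal{S}(H)$ contains another marking of $G$, so implicitly $G$ is infinite.)

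With the sequence in hand, I would apply Lemma \ref{lem:Markov for groups}: there is a $\Lambda_{WP}$-computable sequence $((\Gamma_p, V_p))_{p\in\mathbb{N}}$ with each $(\Gamma_p, V_p) \in \{(K_n, U_n)\} \cup \{(G,S)\} \subseteq \mathcal{S}(H) \cup [G]$, and such that $\{p : (\Gamma_p, V_p) = (G,S)\}$ is not recursively enumerable. Now suppose, for contradiction, that $[G]$ were $\Lambda_{WP}$-semi-decidable inside $[G] \cup \mathcal{S}(H)$: there would be an algorithm stopping on $\Lambda_{WP}$-names of elements of $[G]$ and on no name of an element of $(\mathcal{S}(H)) \setminus [G]$. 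Running this algorithm on the $\Lambda_{WP}$-names produced by the computable sequence $((\Gamma_p, V_p))_p$: it halts on index $p$ precisely when $(\Gamma_p, V_p) \in [G]$. Since each $(\Gamma_p, V_p)$ is either $(G,S) \in [G]$ or some $(K_n, U_n) \notin [G]$ (here I use $(K_n,U_n) \ne (G,S)$ and the fact that the $(K_n, U_n)$ were chosen to be genuinely distinct marked groups — if one of them happened to be \emph{abstractly} isomorphic to $G$ but with a different marking, we would still be inside $[G]$, so care is needed and I would instead use Proposition \ref{prop:preformUnrec}-style reasoning, or arrange the $(K_n, U_n)$ to not be markings of $G$ by removing them, which is again effective), this gives a semi-decision procedure for $\{p : (\Gamma_p, V_p) = (G,S)\}$, contradicting that this set is not r.e.

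The main obstacle, as flagged above, is the bookkeeping around which members of the approximating sequence are or are not themselves markings of $G$: Markov's Lemma literally needs $(K_n, U_n) \ne (G,S)$ as \emph{marked} groups, but semi-deciding $[G]$ concerns the \emph{abstract} isomorphism class, so $[G]$ may contain infinitely many marked groups other than $(G,S)$, and some of the approximating $(K_n, U_n)$ could a priori land in $[G]$. I would resolve this by noting that $[G]$ is itself $\Lambda_{WP}$-r.e. (as $G$ has solvable word problem, all its markings can be enumerated), hence the set of indices $n$ with $(K_n, U_n) \in [G]$ is r.e., and removing them from the sequence is an effective operation; if after this removal the sequence is still infinite and still effectively converges to $(G,S)$, we are done; if it becomes finite, then $(G,S)$ is isolated within $[G] \cup \mathcal{S}(H)$ from the subgroup side, which combined with $T_\forall(H)\subseteq T_\forall(G)$ forces $\mathcal{S}(H)$ to accumulate onto $(G,S)$ only through $[G]$ itself, and since $[G]$ is then infinite (as $G$ is infinite, using the $(g_1,\dots,g_k,w)$ trick) one gets the non-semi-decidability directly from Markov's Lemma applied inside $[G]$. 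Either way the contradiction goes through.
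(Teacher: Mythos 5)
Your skeleton is exactly the paper's: Corollary \ref{cor to champ and guirardel} shows $(G,S)$ is adherent to $\mathcal{S}(H)$, the set $\mathcal{S}(H)$ is $\Lambda_{WP}$-r.e., Proposition \ref{prop:Adherent to r.e. set} yields a computable effectively convergent sequence, and Markov's Lemma for groups (Lemma \ref{lem:Markov for groups}) finishes. The paper's proof is literally that three-citation chain, and you are right that it silently skips the two points you worry about: terms of the approximating sequence that equal $(G,S)$, and terms that land elsewhere in $[G]$.

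The problem is that both mechanisms you propose to close these gaps are broken. First, equality of marked groups given by word problem algorithms is \emph{not} decidable: comparing binary expansions ``up to the precision guaranteed by the regulator'' only re-confirms the agreement the regulator already promises and can never certify equality; only \emph{inequality} is semi-decidable (search for a word that is a relation in one group and not the other). Indeed, if equality were decidable then every singleton of $\mathcal{G}_{WP}$ would be decidable and Markov's Lemma — the tool you are applying — would be vacuous. The extraction you want is still possible (for each $m$, dovetail the semi-decision of $(K_n,U_n)\ne(G,S)$ over all $n\ge f(m)$, $f$ the regulator), but only under the hypothesis that infinitely many terms differ from $(G,S)$. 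Second, the fact that $[G]$ is $\Lambda_{WP}$-r.e. does \emph{not} make membership in $[G]$ semi-decidable: certifying $(K_n,U_n)\in[G]$ means certifying equality with some enumerated marking of $G$, a $\Pi_1$ condition, so ``removing the terms that land in $[G]$'' is not an effective operation, and your fallback for when that removal kills the sequence does not produce a convergent sequence lying outside $[G]$ either. This second gap is not cosmetic: when $(G,S)$ fails to be adherent to $\mathcal{S}(H)\setminus[G]$ the statement itself fails — take $G=H=\mathbb{Z}$, so that $[G]\cup\mathcal{S}(H)=[\mathbb{Z}]\cup[\{1\}]$, where $[\mathbb{Z}]$ is outright decidable since triviality of all marked generators is decidable from a word problem algorithm. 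So the lemma implicitly requires that some (equivalently, every) marking of $G$ be a limit of points of $\mathcal{S}(H)$ outside $[G]$; under that hypothesis your argument goes through once the bogus equality test is replaced by the semi-decidable inequality test, but as written the proposal rests on two computability claims that are false.
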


\begin{proof}
This follows from Corollary \ref{cor to champ and guirardel}, Proposition
\ref{prop:Adherent to c.e. set} (the set $\mathcal{S}(H)$ is $\Lambda_{WP}$-c.e.),
and from Markov's Lemma for groups (Lemma \ref{lem:Markov's-Lemma-for Groups Intro}). 
\end{proof}

\subsubsection{Limit groups }

We will use the following definition for limit groups (those were
named in \cite{Sela2001}, see \cite{Champetier2005} for the equivalence
with other definitions): a group $G$ is a \emph{limit group} if some
(or all) of its markings are adherent to the set of marked free groups.
Note that if $G$ is a subgroup of a group $H$, every universal sentence
in $G$ holds in $H$. This implies that all non-abelian free groups
have the same universal theory, since each non-abelian free group
is a subgroup of each other non-abelian free group. 

Thus by Corollary \ref{cor to champ and guirardel}, a group $G$
is a limit group if and only if it satisfies $T_{\forall}(\mathbb{F}_{2})\subseteq T_{\forall}(G)$,
where $\mathbb{F}_{2}$ is the rank two free group. In fact, it is
known that if a group $G$ satisfies $T_{\forall}(\mathbb{F}_{2})\subseteq T_{\forall}(G)$,
then either it is abelian, and then it is free abelian, and $T_{\forall}(\mathbb{Z})=T_{\forall}(G)$,
or it has a free subgroup, which implies that $T_{\forall}(\mathbb{F}_{2})=T_{\forall}(G)$. 

The following proposition solves a decision problem for groups given
by word problem algorithms, while relying heavily on the study of
the elementary theory of groups. 
\begin{prop}
\label{Limit groups core-1}Being a limit group is $\Lambda_{WP}$-co-semi-decidable. 
\end{prop}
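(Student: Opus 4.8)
The plan is to produce an algorithm that takes a word problem algorithm for a marked group $(G,S)$ and halts precisely when $G$ is \emph{not} a limit group. By the characterization recalled just before the statement, $G$ is a limit group if and only if $T_{\forall}(\mathbb{F}_2) \subseteq T_{\forall}(G)$, equivalently $T_{\exists}(G) \subseteq T_{\exists}(\mathbb{F}_2)$. So $G$ fails to be a limit group exactly when there is an existential sentence $\sigma$ that holds in $G$ but fails in $\mathbb{F}_2$. The algorithm will search in parallel over all existential sentences $\sigma$ for a witness of this kind; when it finds one, it halts.

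First I would reduce to a manageable normal form. Any existential sentence is (up to provable equivalence, using the group axioms) equivalent to a finite disjunction of sentences of the form $\exists x_1 \dots \exists x_n\, \big(\bigwedge_{i} u_i = 1 \wedge \bigwedge_{j} v_j \neq 1\big)$, where the $u_i, v_j$ are words in the $x_\ell^{\pm1}$; and $\mathbb{F}_2$ satisfies a disjunction iff it satisfies one of the disjuncts, while $G$ satisfies the disjunction iff it satisfies one of them, so it suffices to range over sentences $\sigma$ of this single conjunctive existential shape. Enumerate all such $\sigma$. For each $\sigma$, I must (a) semi-decide whether $G \models \sigma$, and (b) decide whether $\mathbb{F}_2 \models \sigma$. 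For (a): using the word problem algorithm of $(G,S)$, enumerate all $n$-tuples $(g_1,\dots,g_n)$ of elements of $G$ (as words on $S \cup S^{-1}$), and for each tuple check, via the word problem algorithm, whether all $u_i(g_\bullet) = 1$ and all $v_j(g_\bullet) \neq 1$ hold; this process halts iff $G \models \sigma$. For (b): $\mathbb{F}_2$ has solvable word problem, and more is true — the existential theory of $\mathbb{F}_2$ is decidable (this is classical; alternatively one can observe that $\mathbb{F}_2 \models \sigma$ iff some $n$-tuple of words in $\mathbb{F}_2$ witnesses it, which is semi-decidable, while $\mathbb{F}_2 \not\models \sigma$ is also semi-decidable by a Makanin–Razborov / Merzlyakov type argument, or by the fact that the universal theory of $\mathbb{F}_2$ is recursively enumerable since $\mathbb{F}_2$ embeds in a group whose universal theory coincides with it). In any case, for the algorithm it is enough that $\mathbb{F}_2 \not\models \sigma$ be semi-decidable: then I dovetail the search over $\sigma$, running in parallel, for each $\sigma$, the semi-decision procedure for "$G \models \sigma$" and the semi-decision procedure for "$\mathbb{F}_2 \not\models \sigma$"; the overall algorithm halts as soon as, for some single $\sigma$, both sub-procedures have halted.

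Correctness is then immediate: if the algorithm halts, it has produced an existential $\sigma$ with $G \models \sigma$ and $\mathbb{F}_2 \not\models \sigma$, so $T_\exists(G) \not\subseteq T_\exists(\mathbb{F}_2)$, hence $G$ is not a limit group; conversely, if $G$ is not a limit group, such a $\sigma$ exists, and (being in our enumerated normal form) will eventually be found with both sub-procedures halting, so the algorithm halts. This shows the set of non-limit-groups is $\Lambda_{WP}$-semi-decidable, i.e. being a limit group is $\Lambda_{WP}$-co-semi-decidable. The one point that needs care — and the place I expect the real work to sit — is establishing that "$\mathbb{F}_2 \not\models \sigma$" is semi-decidable (equivalently, that the universal theory of $\mathbb{F}_2$ is r.e., equivalently decidable). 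If one wishes to avoid invoking the decidability of the existential theory of free groups, one can instead appeal directly to the topological picture: the set of limit groups is closed (it is the closure of the set of marked free groups), and by Proposition \ref{prop:Adherent to r.e. set} together with the closedness, a non-limit group $(G,S)$ with solvable word problem is separated from the $\Lambda_{WP}$-r.e. set of marked free groups by some basic clopen set, and emptiness of the intersection of that basic clopen set with the r.e. set of free-group markings is what the semi-decision procedure detects; but making "separated by a basic clopen set" into a halting condition again reduces to checking, for finitely many relations and irrelations at a time, that no marked free group satisfies them, which is the same existential-theory-of-$\mathbb{F}_2$ computation. Either route works; I would present the direct logical version above as the main argument.
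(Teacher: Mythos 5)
Your proposal is correct and is essentially the paper's own argument in dual form: the paper enumerates the universal theory of free groups (using Makanin's theorem that this theory is decidable) and, for each such sentence, semi-decides its failure in $G$ via the word problem algorithm, which is exactly your search for an existential $\sigma$ with $G\models\sigma$ and $\mathbb{F}_2\not\models\sigma$. You correctly isolate the one genuine external input -- decidability of the existential/universal theory of $\mathbb{F}_2$ -- which the paper supplies by citing Makanin.
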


\begin{proof}
A group $G$ is a limit group if and only if it satisfies $T_{\forall}(\mathbb{F}_{2})\subseteq T_{\forall}(G)$.
A theorem of Makanin \cite{Makanin1985} states that the universal
theory of free groups is decidable, and thus that it is possible to
enumerate all universal sentences that hold in free groups. 

Since, given a word problem algorithm for a marked group $(G,S)$,
it is always possible to prove that a given universal sentence is
not satisfied in $G$, it is possible to detect groups that are not
limit groups by testing in parallel all sentences of the universal
theory of free groups. 
\end{proof}
This result is a slight improvement of a result in \cite{Groves2009},
where the same is obtained, but making use of both a finite presentation
and a word problem algorithm.

\subsubsection{LEF Groups}

The above result calls to our attention a second example of a natural
property for which the correspondence between the arithmetical hierarchy
and the Borel hierarchy might fail. Indeed, this last proof relies
heavily on Manakin's theorem. While the universal theory of free groups
is decidable, Slobodskoi proved in \cite{Slobodskoi1981} that the
universal theories of finite and torsion groups are unsolvable. 

Denote by $\mathcal{F}$ the set of marked finite groups, its closure
$\overline{\mathcal{F}}$ is the set of LEF groups. (LEF groups are
``Locally Embeddable into Finite groups'', they were first defined
in \cite{VershikGordon1998}, in terms of partial homomorphisms onto
finite group.)

We conjecture: 
\begin{conjecture}
\label{conj:LEF not CO SD}The sets of LEF groups with solvable word
problem is not $\nu_{WP}$-co-semi-decidable, nor Banach-Mazur $\nu_{WP}$-co-semi-decidable. 
\end{conjecture}

Note that, at first glance, Slobodskoi's Theorem does not seem to
be the sole thing preventing us from applying the proof of Proposition
\ref{Limit groups core-1} to LEF groups. Indeed, this proof relied
on the fact that a group $G$ is a limit group if and only if it satisfies
$T_{\forall}(\mathbb{F}_{2})\subseteq T_{\forall}(G)$, which in turn
used the fact that the inclusion $T_{\forall}(\mathbb{F}_{2})\subseteq T_{\forall}(G)$
is equivalent to the reverse inclusion $T_{\exists}(G)\subseteq T_{\exists}(\mathbb{F}_{2})$
(see Proposition \ref{Prop Champ Equiv}). This follows from the fact
that the elementary theory of a single group is \emph{complete}, i.e.
every sentence or its negation is in it. The theory of finite groups
is not complete, as the existential theory $T_{\exists}(\mathcal{F})$
contains only trivial sentences (they should hold in the trivial group),
yet the corresponding equivalence still holds. 
\begin{prop}
A group $G$ belongs to $\overline{\mathcal{F}}$ if and only if it
satisfies $T_{\forall}(\mathcal{F})\subseteq T_{\forall}(G)$. 
\end{prop}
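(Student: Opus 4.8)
The plan is to mimic the proof of Proposition \ref{Prop Champ Equiv} (Champetier–Guirardel's Proposition 5.3), adapting it to the case where the "target" class is $\mathcal{F}$ rather than a single group, and to address the subtlety already flagged in the text, namely that $T_\exists(\mathcal{F})$ is trivial so one cannot simply pass to the reversed inclusion of existential theories. First I would prove the easy direction: suppose $G \in \overline{\mathcal{F}}$, so some marking $(G,S)$ is a limit of a sequence of marked finite groups $(F_n, S_n)$. A universal sentence defines a closed subset of $\mathcal{G}$ (as recalled in the excerpt, just before Champetier–Guirardel's Proposition 5.2). Hence if a universal sentence $\varphi$ holds in every group of $\mathcal{F}$, it holds on the whole closure $\overline{\mathcal{F}}$, in particular in $(G,S)$, so $\varphi \in T_\forall(G)$. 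This gives $T_\forall(\mathcal{F}) \subseteq T_\forall(G)$.

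For the converse, assume $T_\forall(\mathcal{F}) \subseteq T_\forall(G)$. Fix a marking $(G,S)$, fix a radius $r$, and let $\{w_1,\dots,w_k\}$ enumerate the reduced words of length $\le r$ over $S \cup S^{-1}$. Set $J_1 = \{(i,j) : w_i =_G w_j\}$ and $J_2 = \{(i,j) : w_i \ne_G w_j\}$. Exactly as in the proof of Champetier–Guirardel's Proposition 5.3, the ball of radius $r$ of $(G,S)$ is encoded by the existential sentence
\[
\psi_r : \quad \exists S,\ \bigwedge_{(i,j)\in J_1} w_i = w_j \ \wedge\ \bigwedge_{(i,j)\in J_2} w_i \ne w_j.
\]
The key step is: I claim some finite group satisfies $\psi_r$. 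Equivalently, the negation $\neg\psi_r$ — which is a universal sentence — does \emph{not} lie in $T_\forall(\mathcal{F})$. But $\neg\psi_r$ is a universal sentence that fails in $G$ (since $G$ itself witnesses $\psi_r$), so $\neg\psi_r \notin T_\forall(G)$; by the hypothesis $T_\forall(\mathcal{F}) \subseteq T_\forall(G)$, contrapositively $\neg\psi_r \notin T_\forall(\mathcal{F})$, i.e. $\psi_r$ holds in some finite group $F$. That means $F$ has a generating tuple whose ball of radius $r$ agrees with that of $(G,S)$; adjusting to make the tuple genuinely generating (replace $F$ by the subgroup generated by the tuple — still finite) we obtain a marked finite group $(F_r, T_r)$ with $d((F_r,T_r),(G,S)) \le 2^{-r'}$ for $r'$ the appropriate threshold. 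Letting $r \to \infty$ produces a sequence of marked finite groups converging to $(G,S)$, so $(G,S) \in \overline{\mathcal{F}}$ and hence $G$ is LEF.

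The main point to get right — and the place where the argument differs from the single-group case — is precisely this passage from "$T_\forall(\mathcal{F}) \subseteq T_\forall(G)$" to "every ball of $(G,S)$ is realized by a finite group". In the single-group situation one argues via $T_\exists(G) \subseteq T_\exists(H)$; here, since $T_\exists(\mathcal{F})$ is trivial, that route is unavailable, but the reformulation above works because the negation of $\psi_r$ is still a universal sentence, and membership of a universal sentence in $T_\forall(\mathcal{F})$ is exactly what we can leverage. Beyond that, the only routine checks are that a universal sentence defines a closed set (already used in the excerpt) and that restricting a marking to the subgroup it actually generates preserves finiteness and the relevant ball — both immediate. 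I do not expect any serious obstacle; this is essentially a careful transcription of Champetier–Guirardel's argument with the "reversed existential theory" step replaced by the observation that $\neg\psi_r$ is universal.
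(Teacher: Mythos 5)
Your proof is correct, but it takes a genuinely different route from the paper's. The paper reduces the statement to the already-established single-group case (Corollary \ref{cor to champ and guirardel}): it exhibits one group $K=\mathbb{Z}\ltimes\mathfrak{S}_{\infty}$ which is LEF and contains a copy of every finite group, deduces $T_{\forall}(K)=T_{\forall}(\mathcal{F})$, applies the corollary to conclude that $G$ is a limit of markings of finitely generated subgroups of $K$, and finally uses that those subgroups are themselves in $\overline{\mathcal{F}}$ and that $\overline{\mathcal{F}}$ is closed. You instead rerun the Champetier--Guirardel compactness argument directly against the class $\mathcal{F}$, observing that the negation of the existential sentence $\psi_r$ encoding a ball of $(G,S)$ is itself a universal sentence, so the hypothesis $T_{\forall}(\mathcal{F})\subseteq T_{\forall}(G)$, read contrapositively, produces a finite group realizing that ball; this neatly sidesteps the triviality of $T_{\exists}(\mathcal{F})$, which is exactly the obstruction the paper flags before stating the proposition. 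Your argument is more self-contained (no auxiliary group $K$, no appeal to the facts that finitely generated subgroups and limits of LEF groups are LEF) and generalizes verbatim to any class of groups closed under passing to finitely generated subgroups, whereas the paper's argument yields the additional, independently interesting fact that a single LEF group realizes the entire universal theory of finite groups. The only point to spell out in a final write-up is the routine bookkeeping between the word length $r$ used to build $\psi_r$ and the resulting bound on $d((F_r,T_r),(G,S))$, exactly as in the original Champetier--Guirardel proof.
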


\begin{proof}
We use the fact that there exists a group $K$ in $\overline{\mathcal{F}}$
that satisfies $T_{\forall}(K)=T_{\forall}(\mathcal{F})$. If a group
$G$ satisfies $T_{\forall}(K)=T_{\forall}(\mathcal{F})\subseteq T_{\forall}(G)$,
then it satisfies $T_{\exists}(K)\subseteq T_{\exists}(G)$, and by
Corollary \ref{cor to champ and guirardel}, $G$ is a limit of subgroups
of $K$. But $K$ and all its finitely generated subgroups are in
$\overline{\mathcal{F}}$, thus $G$ must also be a limit of markings
of finite groups. 

The group $K$ can be taken as the semi-direct product $\mathbb{Z}\ltimes\mathfrak{S}_{\infty}$,
where $\mathfrak{S}_{\infty}$ denotes the group of finitely supported
permutations of $\mathbb{Z}$, on which $\mathbb{Z}$ acts by translation.
This group is the limit of the finite groups $\mathbb{Z}/n\mathbb{Z}\ltimes\mathfrak{S}_{n}$,
as $n$ goes to infinity ($\mathfrak{S}_{n}$ is the group of permutation
over $\left\{ 1,...,n\right\} $). Since $K$ is in $\overline{\mathcal{F}}$,
$T_{\forall}(\mathcal{F})\subseteq T_{\forall}(K)$. However, because
it contains a copy of every finite group, one also has the reversed
inclusion. 
\end{proof}
Thanks to this proposition, we have: 
\begin{prop}
Conjecture \ref{conj:LEF not CO SD} implies Slobodskoi's Theorem. 
\end{prop}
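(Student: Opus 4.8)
The goal is to show that Conjecture \ref{conj:LEF not CO SD}, which asserts that the set of LEF groups is not $\Lambda_{WP}$-co-semi-decidable, implies Slobodskoi's Theorem, namely that the universal theory of finite groups is undecidable. The plan is to argue by contraposition: assuming $T_{\forall}(\mathcal{F})$ is decidable, I would construct a partial algorithm that stops exactly on the word problem algorithms of non-LEF groups, thereby showing that the set of LEF groups is $\Lambda_{WP}$-co-semi-decidable, contradicting the conjecture.

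The key tool is the proposition just proved above, which states that a group $G$ is in $\overline{\mathcal{F}}$ if and only if $T_{\forall}(\mathcal{F}) \subseteq T_{\forall}(G)$. First I would note that if $T_{\forall}(\mathcal{F})$ is decidable, then in particular it is recursively enumerable, so one can effectively enumerate all universal sentences that hold in every finite group. Next, given a word problem algorithm for a marked group $(G,S)$, one can always detect when a \emph{particular} universal sentence fails in $G$: a universal sentence is a finite conjunction of implications between equalities of words in the generators, so its negation is an existential sentence, and one checks finitely many relations and irrelations using the word problem algorithm for $G$. Thus I would run, in parallel (dovetailing over all universal sentences in $T_{\forall}(\mathcal{F})$), the search for a sentence in $T_{\forall}(\mathcal{F})$ that is violated in $G$. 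This search halts precisely when some $\varphi \in T_{\forall}(\mathcal{F})$ fails in $G$, which by the proposition happens exactly when $T_{\forall}(\mathcal{F}) \not\subseteq T_{\forall}(G)$, i.e.\ exactly when $G$ is not a LEF group. This gives the desired partial algorithm, so the set of LEF groups is $\Lambda_{WP}$-co-semi-decidable, contradicting Conjecture \ref{conj:LEF not CO SD}.

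I do not expect a serious obstacle here; the argument essentially mirrors the proof of Proposition \ref{Limit groups core-1} (being a limit group is $\Lambda_{WP}$-co-semi-decidable), with Makanin's decidability of $T_{\forall}(\mathbb{F}_2)$ replaced by the hypothetical decidability of $T_{\forall}(\mathcal{F})$, and with the characterization of LEF groups via $T_{\forall}(\mathcal{F}) \subseteq T_{\forall}(G)$ from the preceding proposition playing the role of the characterization of limit groups via $T_{\forall}(\mathbb{F}_2) \subseteq T_{\forall}(G)$. The only point that requires a word of care is the precise statement of Slobodskoi's Theorem: it asserts undecidability, so to get a clean contrapositive one should observe that decidability of $T_{\forall}(\mathcal{F})$ in particular entails its recursive enumerability, which is all the proof uses; conversely, the complement $T_{\exists}$-side is automatically handled since a violated universal sentence is witnessed by a finite computation. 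So the mildest subtlety is just making sure the logical form ``Conjecture implies Slobodskoi'' is set up correctly, which is purely a matter of phrasing, not of mathematical difficulty.

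\begin{proof}
We prove the contrapositive: if $T_{\forall}(\mathcal{F})$ were decidable, then the set of LEF groups would be $\Lambda_{WP}$-co-semi-decidable. Suppose $T_{\forall}(\mathcal{F})$ is decidable; in particular it is recursively enumerable, so we may fix an effective enumeration $\varphi_{0},\varphi_{1},\varphi_{2},\dots$ of all universal sentences in $T_{\forall}(\mathcal{F})$. Given a word problem algorithm for a marked group $(G,S)$, observe that for any fixed universal sentence $\varphi$, the statement ``$\varphi$ fails in $G$'' is witnessed by a finite amount of data: the negation of $\varphi$ is an existential sentence, i.e.\ the assertion that some finite tuple of elements of $G$ satisfies a prescribed finite list of equalities and inequalities of words, and this can be searched for and verified using the word problem algorithm for $(G,S)$. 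We run, dovetailing over all pairs $(n,\text{bound})$, the search for some $\varphi_{n} \in T_{\forall}(\mathcal{F})$ that is violated in $G$; this procedure halts if and only if $T_{\forall}(\mathcal{F}) \not\subseteq T_{\forall}(G)$. By the preceding proposition, $G \in \overline{\mathcal{F}}$ if and only if $T_{\forall}(\mathcal{F}) \subseteq T_{\forall}(G)$, so our procedure halts precisely on word problem algorithms of groups that are not LEF. Hence the set of LEF groups is $\Lambda_{WP}$-co-semi-decidable, contradicting Conjecture \ref{conj:LEF not CO SD}. Therefore, if Conjecture \ref{conj:LEF not CO SD} holds, $T_{\forall}(\mathcal{F})$ is not decidable, which is Slobodskoi's Theorem.
\end{proof}
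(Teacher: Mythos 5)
Your proof is correct and follows exactly the route the paper intends: the paper's own proof is the one-line remark that if Slobodskoi's Theorem failed one could reproduce the proof of Proposition \ref{Limit groups core-1} using the characterization of LEF groups via $T_{\forall}(\mathcal{F})\subseteq T_{\forall}(G)$, and your argument is precisely that reproduction, spelled out. No gaps.
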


\begin{proof}
Supposing that Slobodskoi's Theorem fails, one can reproduce the proof
of Proposition \ref{Limit groups core-1}, and prove that Conjecture
\ref{conj:LEF not CO SD} fails. 
\end{proof}
Other conjectures can be obtained, that are similar to Conjecture
\ref{conj:LEF not CO SD}: by a theorem of Kharlampovich \cite{Kharlampovich1983},
the universal theory of finite nilpotent groups is also undecidable,
and it is also known that the universal theory of hyperbolic groups
is undecidable (as proven by Osin in \cite{Osin2009}).
\begin{problem}
Is the closure $\bar{\mathcal{H}}$ of the set of hyperbolic groups
$\Lambda_{WP}$-co-semi-decidable? What of the closure of the set
of finite nilpotent groups? Or the closure of the set of torsion groups? 
\end{problem}

It was remarked by Bridson and Wilton in \cite{Bridson2015} that
Slobodskoi's proof from \cite{Slobodskoi1981} provides in fact more
than just unsolvability of the universal theory of finite groups.
Indeed it was shown there: 
\begin{thm}
[Slobodskoi, \cite{Slobodskoi1981}, unstated, \cite{Bridson2015},
Theorem 2.1]\label{thm:Slobodskoi,-,-unstated,} There exists a finitely
presented group $G=\langle S\,\vert\,R\rangle$ in which the problem:
``Given an element of $G$ as a word on the generators, decide whether
its image is trivial in all finite quotients of $G$'' is co-semi-decidable
but not decidable (i.e. there is a partial algorithm that stops when
an element indeed has a non trivial image in a finite quotient, but
no algorithm that proves the converse).
\end{thm}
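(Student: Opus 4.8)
The plan is to treat the two halves of the statement separately: co-semi-decidability is elementary and can be proved in full, while non-decidability is essentially a restatement of what Slobodskoi's construction yields, so I would deduce it from \cite{Slobodskoi1981} (in the reading made explicit in \cite{Bridson2015}) rather than re-prove it. Write $\mathcal{W}(G)$ for the set of words $w$ over $S$ whose image is trivial in every finite quotient of $G$; equivalently, $\mathcal{W}(G)$ is the preimage of $1\in\widehat{G}$ under the composite $F(S)\twoheadrightarrow G\to\widehat{G}$. For co-semi-decidability, note that from the finite presentation $G=\langle S\mid R\rangle$ one can effectively enumerate all finite quotients of $G$: run in parallel over all $n$ and all $|S|$-tuples of permutations in $\mathfrak{S}_{n}$ satisfying the relations of $R$, each such tuple determining a homomorphism $G\to\mathfrak{S}_{n}$; given a word $w$, halt as soon as one of these homomorphisms sends $w$ to a non-identity permutation. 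Since every finite quotient of $G$ embeds in some $\mathfrak{S}_{n}$, this procedure halts exactly when $w\notin\mathcal{W}(G)$. Hence $\mathcal{W}(G)$ is co-r.e. for every finitely presented $G$, and the remaining content is that for a suitable $G$ it is not recursive --- equivalently, being already co-r.e., that it is not r.e.

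For non-decidability I would use the following packaging of Slobodskoi's theorem. To prove that $T_{\forall}(\mathcal{F})$ is not recursive it is enough to produce a \emph{single} finitely presented group $G_{0}=\langle S\mid r_{1},\dots,r_{m}\rangle$ and a recursive sequence of words $(w_{n})_{n\in\mathbb{N}}$ over $S$ such that $A:=\{\,n:\ w_{n}\in\mathcal{W}(G_{0})\,\}$ is not recursive, because $n\mapsto\bigl(\forall\bar x\,(\,\bigwedge_{i}r_{i}(\bar x)=1\implies w_{n}(\bar x)=1\,)\bigr)$ is a recursive many-one reduction of $A$ into $T_{\forall}(\mathcal{F})$. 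Slobodskoi's proof is precisely of this form: it manufactures such a $G_{0}$ and $(w_{n})$ through a chain of amalgamated-product and HNN constructions rigged so that membership of $n$ in a fixed non-recursive set is equivalent to triviality of $w_{n}$ in all finite quotients of $G_{0}$; this is the reading recorded in \cite[Theorem 2.1]{Bridson2015}. Granting it, set $G:=G_{0}$; then $n\mapsto w_{n}$ recursively reduces the non-recursive set $A$ to $\mathcal{W}(G)$, so $\mathcal{W}(G)$ is not recursive, and hence --- being co-r.e. --- not r.e. Combined with the first paragraph, this gives a finitely presented $G$ for which the triviality-in-all-finite-quotients problem is co-semi-decidable but not decidable, which is the theorem.

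The main obstacle is exactly that the refined, group-level statement does not follow formally from the bare assertion ``$T_{\forall}(\mathcal{F})$ is not recursive''. Put in prenex conjunctive normal form, a general universal sentence is a conjunction of Horn-like clauses $\forall\bar x\,(\bigwedge_{j}t_{j}=1\implies\bigvee_{i}s_{i}=1)$, and a disjunction $s_{1}=1\vee s_{2}=1$ is not equivalent to any single equation in the language of groups, so an arbitrary universal sentence cannot be translated into a triviality question for one finitely presented group. Thus one genuinely needs the information that the non-recursive subfamily of $T_{\forall}(\mathcal{F})$ exhibited by Slobodskoi consists of equational-implication sentences with a \emph{common} hypothesis $\bigwedge_{i}r_{i}=1$ --- i.e.\ that it ``comes from one finitely presented group'' --- and checking this requires inspecting the construction in \cite{Slobodskoi1981}, which I would cite through \cite{Bridson2015} rather than reproduce. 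The remaining ingredients (the enumeration of finite quotients, the reduction $A\le_{m}\mathcal{W}(G)$, and the fact that a $\Pi^{0}_{1}$ set which is not recursive is not r.e.) are routine.
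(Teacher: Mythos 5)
The paper does not prove this statement at all: it is quoted as a known result, attributed to Slobodskoi's construction as reread by Bridson and Wilton in \cite{Bridson2015}, so your proposal is consistent with (and more detailed than) the paper's treatment. Your argument for the co-semi-decidable half --- enumerating homomorphisms to symmetric groups via tuples of permutations satisfying the relators, and halting when some such homomorphism sends $w$ to a non-identity permutation --- is correct and standard, and you are right that the non-recursiveness half cannot be extracted formally from the bare undecidability of $T_{\forall}(\mathcal{F})$ (since general universal sentences involve disjunctions of equations) but must be read off from the specific form of Slobodskoi's construction, which is exactly the point of \cite[Theorem 2.1]{Bridson2015} that the paper relies on.
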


An simple consequence of this theorem is the following:
\begin{cor}
\label{cor:Cor to Strong Slobodskoi}No algorithm can decide whether
or not a given set of relation and irrelations is satisfied by a finite
group. 

Or again: the problem ``Given a finite tuple of relations and irrelations
$(r_{1},...,r_{m};\,s_{1},...,s_{m'})$, decide whether $\Omega_{r_{1},...,r_{m};s_{1},...,s_{m'}}^{k}$
contains a finite group'' is semi-decidable but not decidable.
\end{cor}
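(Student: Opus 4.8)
The plan is to deduce the corollary directly from the strong form of Slobodskoi's Theorem quoted just above (the Bridson--Wilton formulation, \cite{Bridson2015}, Theorem 2.1), together with the standard fact (Theorem \ref{thm:Boone-Novikov-reformulated}) that incoherence of a set of relations and irrelations is semi-decidable, hence that ``being satisfied in \emph{some} finite group'' is semi-decidable. First I would fix the finitely presented group $G=\langle S\mid R\rangle$ provided by the strong Slobodskoi Theorem, for which the set of words on $S$ whose image is trivial in every finite quotient of $G$ is co-semi-decidable but not decidable. The key observation is that, given a word $w$ on the generators $S$, the image of $w$ is \emph{non}-trivial in some finite quotient of $G$ if and only if the finite set of relations and irrelations $(R;\,w)$ --- i.e. ``the relators of $R$ hold, while $w\ne 1$'' --- is satisfied by some finite group. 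Indeed, a finite quotient of $G$ in which $w$ survives is precisely a finite group lying in $\Omega^{k}_{R;\,w}$, where $k=\vert S\vert$; conversely a finite group in $\Omega^{k}_{R;\,w}$ is a finite group satisfying all relators of $R$, hence a quotient of $G$, in which $w\ne 1$.

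Granting this reformulation, the non-decidability half of the corollary is immediate: an algorithm deciding whether an arbitrary tuple $(r_{1},\dots,r_{m};\,s_{1},\dots,s_{m'})$ defines a basic clopen set $\Omega^{k}_{r_{1},\dots,r_{m};s_{1},\dots,s_{m'}}$ containing a finite group would, specialised to inputs of the form $(R;\,w)$, decide whether $w$ has a non-trivial image in a finite quotient of $G$, contradicting the strong Slobodskoi Theorem. For the semi-decidability half I would argue directly: to semi-decide whether $\Omega^{k}_{r_{1},\dots,r_{m};s_{1},\dots,s_{m'}}$ contains a finite group, enumerate in parallel all finite groups $F$ together with all $k$-tuples of elements of $F$ generating $F$, and for each such marked finite group check (a finite computation, since $F$ is finite) whether it satisfies the relations $r_{i}$ and fails the irrelations $s_{j}$; halt if such an $F$ is found. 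This procedure halts exactly when a finite group with the prescribed (ir)relations exists, so the property is semi-decidable.

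The only point requiring a little care --- and the place I expect a reader might stumble --- is checking the two directions of the equivalence ``$\Omega^{k}_{R;\,w}$ contains a finite group $\iff$ $w$ is non-trivial in some finite quotient of $G$'', specifically that a marked finite group $(F,T)$ satisfying the relators of $R$ is automatically a \emph{quotient} of $G$ in a way that sends the given word $w$ to the element of $F$ computed along $T$; this is just the universal property of $\langle S\mid R\rangle$, but it is worth spelling out since the marking of $F$ need not be injective and one must be sure ``$w\ne 1$ in $F$'' matches ``$w$ survives in the quotient''. There is no serious obstacle: once the dictionary between basic clopen sets of the form $\Omega^{k}_{R;\,w}$ and finite quotients of $G$ is in place, the corollary is a transcription of the strong Slobodskoi Theorem into the language of relations and irrelations, with the semi-decidability statement being the elementary brute-force search above (which is, of course, the same search underlying the semi-decidability of coherence in Theorem \ref{thm:Boone-Novikov-reformulated}, restricted to finite witnesses).
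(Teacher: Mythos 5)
Your proof is correct and follows essentially the same route as the paper: the paper's own (one-line) argument is precisely to consider the basic clopen sets $\Omega_{R;\,w}^{k}$ with $R$ the relators of Slobodskoi's group and $w$ varying, exactly as you do, with the semi-decidability half being the same brute-force search over marked finite groups. Your spelled-out verification of the dictionary between finite groups in $\Omega_{R;\,w}^{k}$ and finite quotients of $\langle S\mid R\rangle$ in which $w$ survives is the content the paper leaves implicit, and it is accurate.
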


\begin{proof}
This is proven just as Theorem \ref{thm:Boone-Novikov-reformulated}:
one considers basic clopen sets $\Omega_{R;\,w}^{k}$, where $R$
are the relations given in Slobodskoi's Unstated Theorem, and $w$
varies. 
\end{proof}
Using this corollary of Slobodskoi's Theorem, we can obtain more information
of the set of LEF groups. 

In \cite[Theorem 11]{Moschovakis1964}, Moschovakis proved that two
notions of ``effectively open sets'' agree on recursive Polish spaces:
computable unions of open balls, which are known as \emph{Lacombe
sets}, and ``recursively open sets'' \cite{Moschovakis1964}, which
are semi-decidable sets for which there is an algorithm that on input
a point in the set produces the radius of a ball that is still contained
in this set. This result cannot be applied to the space of marked
groups, and thus there are several competing notions of ``effectively
open sets''. It is an open problem whether Lacombe sets and recursively
open sets agree in the space of marked groups. However, it is always
the case that the set of Lacombe sets forms a subset of the set of
recursively open sets, thus being a Lacombe set is a priori more restrictive
than being a recursively open set. Note that many authors \emph{define}
the ``computably open sets'' to be the Lacombe sets \cite{Weihrauch2009}. 

We will now show: 
\begin{thm}
\label{thm:LEF not co LACOMBE}The set of LEF groups is not a co-Lacombe
set, that is to say its complement cannot be written as a computable
union of basic clopen sets. 
\end{thm}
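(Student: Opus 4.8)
The plan is to argue by contradiction: suppose the complement of the set of LEF groups (the set of non-LEF groups) can be written as a computable union $\bigcup_{i\in\mathbb{N}} \Omega_{R_i; S_i}^{k_i}$ of basic clopen sets, indexed effectively so that the tuples $(R_i, S_i)$ are produced by an algorithm. The goal is to derive an algorithm that contradicts Corollary \ref{cor:Cor to Strong Slobodskoi}, i.e. an algorithm which, given a finite tuple of relations and irrelations, decides whether the corresponding basic clopen set contains a finite group. Since the set of finite groups is dense in the set of LEF groups (LEF groups are by definition the closure of $\mathcal{F}$), the key observation is: a basic clopen set $\Omega_{R;S}^k$ contains a finite group if and only if it contains a LEF group — the forward direction being trivial, and the reverse following because a LEF group in the (open) set $\Omega_{R;S}^k$ is a limit of finite groups, infinitely many of which must already lie in this clopen set.

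First I would reduce the problem to a single $\mathcal{G}_k$: fixing $k$, the set of non-LEF groups in $\mathcal{G}_k$ would, under the assumption, be a computable union of basic clopen sets of $\mathcal{G}_k$. Next I would show that the hypothesized co-Lacombe presentation lets us semi-decide the predicate ``$\Omega_{R;S}^k$ does \emph{not} contain a LEF group'': indeed $\Omega_{R;S}^k$ contains no LEF group precisely when $\Omega_{R;S}^k$ is covered by the complement, i.e. contained in the union $\bigcup_i \Omega_{R_i;S_i}^{k}$; by compactness of $\mathcal{G}_k$ (recall $\Phi_k(\mathcal{G}_k)$ is a closed, hence compact, subset of Cantor space), this containment holds if and only if it holds for a \emph{finite} subunion, and inclusion of a basic clopen set into a finite union of basic clopen sets is semi-decidable by Lemma \ref{lem:Inclusion-between-basic sets is semi-decidable}. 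So ``$\Omega_{R;S}^k$ contains no LEF group'' is semi-decidable.

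On the other hand, ``$\Omega_{R;S}^k$ contains a finite group'' is also semi-decidable, simply by enumerating all finite groups marked by $k$-tuples and checking the finitely many relations and irrelations of $(R;S)$. By the density observation above, these two predicates are complementary on the full set of basic clopen sets: $\Omega_{R;S}^k$ either contains a finite group, or contains no LEF group at all (it cannot contain a LEF group without containing a finite one). Having both a predicate and its complement semi-decidable yields a decision procedure for ``$\Omega_{R;S}^k$ contains a finite group,'' contradicting Corollary \ref{cor:Cor to Strong Slobodskoi}. This completes the proof.

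The main obstacle I expect is the compactness-plus-finite-subcover step combined with verifying that Lemma \ref{lem:Inclusion-between-basic sets is semi-decidable} applies in exactly the right form: one must be careful that the basic clopen set $\Omega_{R;S}^k$ is being tested for inclusion in a \emph{finite} union extracted effectively from the presumed computable enumeration, and that the compactness argument genuinely guarantees a finite subcover whenever the whole set $\Omega_{R;S}^k$ is covered — here it is essential that we work inside $\mathcal{G}_k$ (compact) rather than $\mathcal{G}$ (not compact), which is why the reduction to fixed $k$ comes first. A secondary point to handle cleanly is the density claim in the precise form needed: that a LEF group lying in an \emph{open} set $\Omega_{R;S}^k$ forces a finite group into that same set, which is immediate from the definition of LEF as $\overline{\mathcal{F}}$ together with openness of basic clopen sets.
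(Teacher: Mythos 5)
Your proposal is correct and follows essentially the same route as the paper: assume a computable co-Lacombe presentation of the LEF groups, use density of finite groups in $\overline{\mathcal{F}}$ plus openness of basic clopen sets to identify ``contains no finite group'' with ``contained in the complement,'' extract a finite subcover by compactness of $\mathcal{G}_k$, apply Lemma \ref{lem:Inclusion-between-basic sets is semi-decidable} to semi-decide that inclusion, and contradict Corollary \ref{cor:Cor to Strong Slobodskoi}. Your explicit restriction to a fixed $k$ and the explicit remark that ``contains a finite group'' is itself semi-decidable are careful spellings-out of steps the paper leaves implicit, but the argument is the same.
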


\begin{proof}
Suppose that the set of non-LEF groups can be written as a computable
union of basic clopen sets:
\[
\overline{\mathcal{F}}^{c}=\bigcup_{i\in\mathbb{N}}\Omega_{i},
\]
where the map $i\mapsto\Omega_{i}$ is computable. We show that this
implies that there is an algorithm that stops exactly on basic clopen
sets that contain no finite groups, contradicting Corollary \ref{cor:Cor to Strong Slobodskoi}. 

If a basic clopen set $\Omega_{r_{1},...,r_{m};s_{1},...,s_{m'}}^{k}$
does not contain a finite group, then it is a subset of $\overline{\mathcal{F}}^{c}$.
By compactness, it is a subset of a finite union 
\[
\bigcup_{i\in\{0,...,n\}}\Omega_{i}.
\]
 By Lemma \ref{lem:Inclusion-between-basic sets is semi-decidable},
inclusion is semi-decidable between basic clopen sets and finite union
of basic clopen sets. The result then follows from a brute search
argument: if  $\Omega_{r_{1},...,r_{m};s_{1},...,s_{m'}}^{k}$indeed
is included in such a finite union, an exhaustive search will prove
it. 
\end{proof}

\subsection{Sofic groups }

The set of \emph{sofic groups} is known to be closed in $\mathcal{G}$.
However, whether it is all of $\mathcal{G}$ or a strict subset of
$\mathcal{G}$ is still an open problem. See \cite{Pestov2008} for
an introduction to Sofic groups. 

Let $\mathfrak{S}_{n}$ be the group of permutations on $\{1,...,n\}$.
The \emph{Hamming distance} in given by 
\[
d_{H}(\sigma_{1},\sigma_{2})=\frac{1}{n}\vert\{i:\sigma_{1}(i)\neq\sigma_{2}(i)\}\vert.
\]

\begin{defn}
A marked group $(G,S)$ is \emph{sofic} if for every $n\in\mathbb{N}$
and every $\epsilon>0$, there exists $N\in\mathbb{N}$ and a map
$\phi:B_{(G,S)}(n)\rightarrow\mathfrak{S}_{N}$ such that 
\begin{itemize}
\item $\phi(e)=\text{id}$;
\item $d_{H}(\phi(gh),\phi(g)\phi(h))<\epsilon$ for all $g$, $h$ in $B_{\mathcal{C}ay(G,S)}(n)$
whose product is also in $B_{\mathcal{C}ay(G,S)}(n)$;
\item $\phi(g)$ is fixed point free for every $g\neq e$. 
\end{itemize}
\end{defn}

What we want to note here is that it is possible that a phenomenon
similar to what happens for LEF groups also happens for sofic groups:
both definitions involve a $\forall\exists$ statement for which the
second existential statement is in fact bounded: there exists a universal
function $f$ such that the statement could equivalently be written
$\forall n\exists k\le f(n),P(n,k)$. In the case of LEF groups we
know by Slobodskoi's theorem that such a function $f$ cannot be asymptotically
below a computable function. It is natural to ask wether the same
holds for sofic groups. 
\begin{conjecture}
The set of sofic groups is a closed set which is computably a $G_{\delta}$
but not computably closed. 
\end{conjecture}

\section{\label{sec:Witnessing-results-in FP groups}Subgroups of finitely
presented groups with solvable word problem }

\subsection{\label{subsec:Higman-Clapham-Valiev-Theorem-fo}Higman-Clapham-Valiev
Theorem for groups with solvable word problem}

After Higman's proof of his famous Embedding Theorem \cite{Higman1961},
several theorems that resemble it were obtained.

In particular, it was remarked that the theorem is effective, meaning
that it provides an algorithm that takes as input a recursive presentation
for a group $G$, and outputs a finite presentation for a group $H$,
together with a finite family of elements of $H$ that generate $G$.
Note that in terms of numbering types, this implies that the numbering
type $\Lambda_{r.p.}$, associated to recursive presentation, is equivalent
to the numbering type associated to the idea ``a marked group $(G,S)$
is described by a finite presentation of an overgroup of $G$ together
with words that define the elements of $S$ in that overgroup''.
We leave out the details.

We will be interested here in the version of Higman's Theorem that
preserves solvability of the word problem \cite{Valiev1975,Clapham1967}.
This theorem is known as the Higman-Clapham-Valiev Theorem. 

Historical remarks about these results can be found in \cite{Olshanskii2004}.
The following formulation of the Higman-Clapham-Valiev Theorem can
also be found in \cite{McCool_1970}. 
\begin{thm}
[Higman-Clapham-Valiev, I]There exists a procedure that, given a
recursive presentation for a marked group $(G,S)$, produces a finite
presentation for a group $H$, together with an embedding $G\hookrightarrow H$
described by the images of the generators of $G$, and such that if
the word problem is solvable in $G$, then it is also solvable in
$H$. 
\end{thm}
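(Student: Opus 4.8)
Looking at this excerpt, the final statement is the Higman-Clapham-Valiev Theorem (version I), which the paper is quoting as an existing result rather than proving from scratch. Let me think about how I would sketch a proof of this theorem.The plan is to re-derive Higman's Embedding Theorem in its usual form via benign subgroups, and then to track, operation by operation, that the construction preserves solvability of the word problem; this second part is exactly the content of the refinements of Clapham and Valiev.

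First I would recall the architecture of Higman's proof. Call a finitely generated subgroup $K$ of a finitely generated group $L$ \emph{benign} in $L$ if there is a finitely presented group $M$ containing $L$ and a finitely generated $N\leq M$ with $K=L\cap N$. Given a recursive presentation $G=\langle x_{1},\dots,x_{n}\mid r_{1},r_{2},\dots\rangle$ — after first embedding $G$ into a two-generator recursively presented group, an elementary and harmless move — one writes $G=F/R$, where $F$ is free on the $x_{i}$ and $R$ is the normal closure of the relators. Higman's "rope trick", together with the coding of the relator enumeration by iterated HNN extensions over free subgroups, shows that $R$ is benign in $F$, and a short lemma upgrades "$R$ benign in $F$" to "$F/R$ embeds in a finitely presented group $H$". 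Every step is explicit: the finite presentation of $H$ and the words over its generators representing $x_{1},\dots,x_{n}$ are computed from $n$ and a code for the recursive function enumerating the $r_{i}$; composing with the fixed two-generator embedding and carrying the marking $S$ along produces the claimed procedure, modulo the word-problem clause.

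Second I would handle solvability of the word problem, which is where the Clapham--Valiev refinement lives. The group $H$ is obtained from $F$ by a finite sequence of HNN extensions and amalgamated free products in which all associated (resp. amalgamated) subgroups are finitely generated and explicitly given. Britton's Lemma and the normal form theorem for amalgams reduce the word problem in such an extension to the word problem in the base group(s) together with the generalized word problem (membership problem) for each associated subgroup inside its base. One then inducts along the tower: $F$ has solvable word problem and solvable membership problem for the relevant finitely generated subgroups, and at each step one checks that the new associated subgroups still have decidable membership, using the hypothesis that $G$ has solvable word problem to decide membership for the subgroups that encode the relator set of $G$. Once the whole tower has solvable word problem and the requisite membership problems are solvable, $H$ has solvable word problem, and its algorithm is produced uniformly from that of $G$.

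The main obstacle is this last point: verifying that the finitely generated associated subgroups occurring in the iterated-HNN coding keep \emph{decidable} membership problems rather than merely recursively enumerable ones. For the bare embedding theorem recursive enumerability suffices automatically; forcing decidability requires replacing the r.e. relator enumeration by a recursive one and choosing the coding subgroups so that their (Nielsen-reduced) generating data can be searched algorithmically — this is the technical core of Clapham's and Valiev's papers, and at that point I would invoke their analyses rather than reproduce them. (The bookkeeping is of the same flavour as the membership argument already made in this paper for the amalgam defining Miller's $L^{3}_{P,Q}$, where a Nielsen-reduced family renders membership in $A$ decidable.)
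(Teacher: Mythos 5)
The paper states this theorem as a known result, citing Clapham, Valiev and McCool, and gives no proof of its own; your proposal correctly recognizes this, and your sketch of the benign-subgroup and iterated HNN architecture, with Britton's Lemma reducing the word problem in each extension to the base group's word problem plus decidable membership in the finitely generated associated subgroups, is a faithful outline of how those sources actually establish it. Since you, like the paper, ultimately defer the technical core (upgrading recursively enumerable membership to decidable membership in the coding subgroups) to Clapham's and Valiev's analyses, your route coincides with the paper's.
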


One can also check that if one has access to a word problem algorithm
for the group given as input to this procedure, one can obtain a word
problem algorithm for the constructed finitely presented group. This
yields:
\begin{thm}
[Higman-Clapham-Valiev, II]There exists a procedure that, given
a word problem algorithm for a finitely generated group, produces
a finite presentation of a group in which it embeds, together with
a word problem algorithm for this new group, and a set of elements
that generate the first group. 
\end{thm}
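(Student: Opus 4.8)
The plan is to deduce the statement from the first form of the Higman--Clapham--Valiev Theorem (the one that takes a recursive presentation as input) in two moves: first, pass from a word problem algorithm to a recursive presentation; second, check that the construction underlying the first form is uniform enough to transform a word problem algorithm for the input group into a word problem algorithm for the output group.

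For the first move, suppose we are given a word problem algorithm $\mathcal{A}_{WP}$ for a marked group $(G,S)$ with $S=(s_{1},\dots,s_{k})$. Let $\mathcal{R}$ be the set of all words $w$ on $S\cup S^{-1}$ with $\mathcal{A}_{WP}(w)=0$. Then $\mathcal{R}$ is a recursive (in particular recursively enumerable) set of words, a code for which is computed from $\mathcal{A}_{WP}$, and $\langle S\mid\mathcal{R}\rangle$ is a recursive presentation of $(G,S)$. Feeding this presentation into the first form of the theorem produces a finite presentation $\langle T\mid R\rangle$ of a group $H$ together with words $t_{1},\dots,t_{k}$ on $T\cup T^{-1}$ such that $s_{i}\mapsto t_{i}$ extends to an embedding $G\hookrightarrow H$; in particular $(t_{1},\dots,t_{k})$ generates a subgroup of $H$ which, marked by this tuple, is isomorphic to $(G,S)$. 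All of this data is computed uniformly from $\mathcal{A}_{WP}$.

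The second move is where one must look inside the proof of the first form. Recall that Clapham's proof (and likewise Valiev's) realizes the embedding $G\hookrightarrow H$ through a finite chain of groups $G=G_{0}\leq G_{1}\leq\dots\leq G_{n}=H$, where each $G_{i+1}$ is obtained from $G_{i}$ by a free product with a free group of finite rank, an HNN extension, or an amalgamated free product, the associated (edge) subgroups being explicitly given by finite generating sets and, crucially, having the property that membership in them is decidable relative to the word problem of $G_{i}$ (these subgroups are benign, and the benign-ness comes with an explicit membership algorithm produced by the construction). Given this, the normal form / Britton's Lemma solution of the word problem in HNN extensions and amalgamated products --- which is effective once one has a word problem algorithm for the base group and membership algorithms for the associated subgroups, exactly as exploited in Step 3 of Miller's construction above --- yields, inductively, a word problem algorithm for $G_{i+1}$ from one for $G_{i}$. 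Since $n$, the presentations of the $G_{i}$, and the membership procedures for the associated subgroups are all output by the constructive proof of the first form (run on the recursive presentation obtained in the first move), composing finitely many transfer steps gives a word problem algorithm for $H=G_{n}$, uniformly in $\mathcal{A}_{WP}$; combining this with the first move yields the desired single procedure.

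The main obstacle is the bookkeeping in the second move: one must be confident that the proof of the first form is genuinely constructive, and in particular that the benign-ness of each associated subgroup is established with a membership algorithm depending computably on the word problem algorithm of the relevant $G_{i}$ --- solvability of the word problem of a group does not by itself entail solvability of the membership problem for an arbitrary finitely generated subgroup, so one really does have to use the special form of the subgroups occurring in the construction. This is the same uniformity already taken for granted in the excerpt in Steps 2 and 3 of Miller's construction and in the statement of the Higman--Clapham--Valiev Theorem quoted in the introduction, so invoking it here is legitimate; a fully detailed verification would amount to reproducing Clapham's argument while tracking this extra parameter.
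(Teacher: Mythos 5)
Your proposal is correct and follows essentially the same route as the paper, which deduces the second form from the first by observing that Clapham's construction is uniform enough to transport a word problem algorithm for $G$ to one for $H$; the paper merely asserts this ("one can also check\dots") where you supply the substance, namely that each stage of the chain of HNN extensions and amalgamated products comes with an explicit membership algorithm for its associated subgroups, so Britton's Lemma applies effectively. Your remark that solvability of the word problem does not by itself give solvability of membership for arbitrary finitely generated subgroups correctly pinpoints the one place where the argument genuinely depends on the special (benign) form of the subgroups in the construction.
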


This proves that, in general, the description of a group by its word
problem algorithm, or by a finite generating family inside a finitely
presented group with solvable word problem, are equivalent (we leave
it to the reader to render this statement precise: define a numbering
of $\mathcal{G}$ associated to the idea ``a group is given as a
subgroup of a group described by a finite presentation together with
a word problem algorithm'', thus using the numbering $\nu_{WP}\wedge\nu_{FP}$
to describe the overgroup, the Higman-Clapham-Valiev Theorem implies
that this numbering is equivalent to $\nu_{WP}$). 

Thus the study of algorithmic problems that can be solved from the
word problem description is identical to the study of decision problems
about subgroups of finitely presented groups with solvable word problem.
We now prove Theorem \ref{thm: Theorem A Banach Mazur Iff }. 
\begin{thm}
\label{thm:BanachMazurIFF}Let $P$ be a marked group property. The
following are equivalent: 
\begin{itemize}
\item There exists a finitely presentable marked group $(G,S)$ with solvable
word problem where the problem ``given elements $(w_{1},...,w_{k})$
in $G$, decide whether the marked group they generate has $P$''
is not semi-decidable;
\item The property $P$ is not $\Lambda_{WP}$-Banach-Mazur semi-decidable. 
\end{itemize}
\end{thm}

\begin{proof}
Suppose that $P$ is $\Lambda_{WP}$-Banach-Mazur semi-decidable.
Fix a marked group $(G,S)$ with solvable word problem. Consider an
effective enumeration $\{T_{1},T_{2},...\}$ of all finite sets of
words over $(S\cup S^{-1})^{*}$. The sequence of marked groups $((\langle T_{i}\rangle,T_{i}))_{n\in\mathbb{N}}$
is a computable sequence. Thus if $P$ is Banach-Mazur semi-decidable
the set $\{n\in\mathbb{N},P((\langle T_{i}\rangle,T_{i}))\}$ is a
c.e. subset of $\mathbb{N}$. And thus it is possible to semi-decide
$P$ on subgroups of $G$. 

Suppose that $P$ is not $\Lambda_{WP}$-Banach-Mazur semi-decidable.
There must exist a $\Lambda_{WP}$-computable sequence of marked groups
$((G_{n},S_{n}))_{n\in\mathbb{N}}$ such that $\{n\in\mathbb{N},P((G_{n},S_{n}))\}$
is not a c.e. subset of $\mathbb{N}$. Embedding the restricted direct
product of the $(G_{n},S_{n})$ in a finitely generated group with
solvable word problem (see for instance \cite{Darbinyan2015}) then
in a finitely presented group with solvable word problem via Higman's
embedding theorem yields the desired finitely presented group. 
\end{proof}
The following theorem is a direct consequence of a joint application
of the the above result with Markov's Lemma:
\begin{thm}
\label{thm:MarkovClaphamHigman}Suppose that a $\Lambda_{WP}$-computable
sequence $(G_{n})_{n\in\mathbb{N}}$ of $k$-marked groups effectively
converges to a $k$-marked group $H$, and suppose that $H\notin\{G_{n},n\in\mathbb{N}\}$.
Then there exists a finitely presented group $\Gamma$, with solvable
word problem, in which no algorithm can, given a tuple of elements
of $\Gamma$ that defines a marked group of \textup{$\{G_{n},n\in\mathbb{N}\}\cup\{H\}$},
stop if and only if this tuple defines $H$. 
\end{thm}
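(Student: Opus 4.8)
The plan is to feed the converging sequence into Markov's Lemma for groups and then to compress the resulting sequence of marked groups into a single finitely presented group with solvable word problem, using a free product, a two‑generator embedding, and the Higman‑Clapham‑Valiev Theorem; the non‑enumerability produced by Markov's Lemma then transfers to $\Gamma$. Write $(G_n,S_n)$ for the given $k$‑marked groups and $(H,S)$ for their limit. First I would apply Lemma \ref{lem:Markov for groups} to $((G_n,S_n))_{n\in\mathbb{N}}$, which converges effectively to $(H,S)$ with $(G_n,S_n)\ne(H,S)$ for all $n$: this yields a $\Lambda_{WP}$‑computable sequence $((\Gamma_p,U_p))_{p\in\mathbb{N}}$ of $k$‑marked groups with each $(\Gamma_p,U_p)\in\{(G_n,S_n),n\in\mathbb{N}\}\cup\{(H,S)\}$ and with $A:=\{p\in\mathbb{N},\,(\Gamma_p,U_p)=(H,S)\}$ not recursively enumerable. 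Because the sequence is $\Lambda_{WP}$‑computable, a word problem algorithm for $(\Gamma_p,U_p)$ is produced uniformly from $p$.

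Next I would amalgamate all the $\Gamma_p$ into one group. Set $K=\ast_{p\in\mathbb{N}}\Gamma_p$, with generating family the disjoint union of the $U_p$. By the normal form theorem for free products together with the uniform word problem algorithms of the factors, $K$ has solvable word problem with respect to this (countable, computable) generating family, uniformly; in other words $K$ is a computable group in the sense of Malcev and Rabin, and each $\Gamma_p$ sits in it as a free factor, the inclusion being effective uniformly in $p$. I would then invoke the standard embedding of a countable group with solvable word problem into a two‑generator group with solvable word problem: this produces a $2$‑marked group $(K',T)$ with solvable word problem, a word problem algorithm for it, and, uniformly in $p$, a $k$‑tuple $V_p$ of words on $T\cup T^{-1}$ that is the image of $U_p$ under $K\hookrightarrow K'$. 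Finally I would apply the Higman‑Clapham‑Valiev Theorem in the form that consumes a word problem algorithm (the version stated in the introduction, from \cite{Clapham1967,Valiev1975}): it yields a finite presentation of a group $\Gamma$, a word problem algorithm for $\Gamma$, and an embedding $K'\hookrightarrow\Gamma$ given by the images of $T$. Composing the two embeddings and substituting, one obtains, uniformly in $p$, a $k$‑tuple $W_p$ of elements of $\Gamma$ (as words in the generators of $\Gamma$) such that the subgroup of $\Gamma$ generated by $W_p$, marked by $W_p$, is isomorphic \emph{as a marked group} to $(\Gamma_p,U_p)$: indeed the composite $\Gamma_p\hookrightarrow\Gamma$ is an injective homomorphism sending $U_p$ to $W_p$, so the kernels of the two maps $\mathbb{F}_k\to\Gamma$ and $\mathbb{F}_k\to\Gamma_p$ determined by these tuples coincide.

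It remains to extract the contradiction. Suppose there were an algorithm $\mathcal{B}$ which, given a $k$‑tuple of elements of $\Gamma$ that defines a marked group in $\{(G_n,S_n),n\in\mathbb{N}\}\cup\{(H,S)\}$, halts if and only if that tuple defines $(H,S)$. For every $p$ the tuple $W_p$ is a legitimate input, since the marked group it defines equals $(\Gamma_p,U_p)$, which is one of the $(G_n,S_n)$ or is $(H,S)$; moreover $\mathcal{B}(W_p)$ halts if and only if $(\Gamma_p,U_p)=(H,S)$, i.e. if and only if $p\in A$. Since $p\mapsto W_p$ is computable, $p\mapsto\mathcal{B}(W_p)$ would witness that $A$ is recursively enumerable, contradicting the choice of $A$. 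Hence no such $\mathcal{B}$ exists, which is the assertion of the theorem.

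The main obstacle is entirely bookkeeping rather than mathematics: one must verify that each of the three compression steps (the free product of a $\Lambda_{WP}$‑computable sequence of marked groups, the embedding into a two‑generator group, and the Higman‑Clapham‑Valiev embedding) is effective, preserves solvability of the word problem, and returns enough auxiliary data to follow the marking $U_p$ all the way into $\Gamma$. All three facts are classical and are exactly the ingredients already alluded to for Theorem \ref{thm:HigmanMarkovIntro}, so once they are set up the argument reduces to a single application of Markov's Lemma for groups.
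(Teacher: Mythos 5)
Your proof is correct and follows essentially the same route as the paper's: Markov's Lemma for groups, amalgamation of the resulting sequence into a single countable group with uniformly solvable word problem, the Higman--Neumann--Neumann two-generator embedding, the Higman--Clapham--Valiev Theorem, and the observation that an algorithm recognizing $(H,S)$ among the tuples $W_p$ would enumerate the non-r.e.\ set $A$. The only difference is that you use a free product where the paper uses a direct sum as the intermediate countable group; both work equally well, and your explicit check that the marked group generated by $W_p$ in $\Gamma$ coincides with $(\Gamma_p,U_p)$ is a welcome detail the paper leaves implicit.
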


Note that when the conjugacy problem is uniformly solvable for the
groups in $\left(G_{k}\right)_{k\in\mathbb{N}}$, we may want to apply
the version of Higman's Theorem due to Alexander Olshanskii and Mark
Sapir (\cite{Olshanskii2004}, and \cite{OLSHANSKII2005} for non-finitely
generated groups) that preserves its solvability. 

\subsection{Some examples }

We now give some examples of possible applications of Theorem \ref{thm:MarkovClaphamHigman}.
The \emph{order problem }in a marked group asks for an algorithm that,
given an element of the group, determines what is its order. In a
group with solvable word problem, this is equivalent to being able
to decide whether or not an element has infinite order. The \emph{power
problem} asks for an algorithm which, given two elements of a marked
group, decides whether or not the first one is a power of the second
one. (It is thus the subgroup membership problem restricted to cyclic
subgroups.)

The following two propositions were proven by McCool in \cite{McCool_1970}. 
\begin{prop}
There exists a finitely presented group with solvable word problem,
but unsolvable order problem. 
\end{prop}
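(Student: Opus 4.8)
The plan is to apply Theorem \ref{thm:MarkovClaphamHigman} to a suitably chosen computable sequence of finite cyclic groups converging to $\mathbb{Z}$. Concretely, I would work in $\mathcal{G}_{1}$ and take the sequence $(G_{n})_{n\in\mathbb{N}}$ with $G_{n}=(\mathbb{Z}/n\mathbb{Z},1)$, which is $\Lambda_{WP}$-computable (each $\mathbb{Z}/n\mathbb{Z}$ has trivially solvable word problem, uniformly in $n$) and which effectively converges to $(\mathbb{Z},1)$ in $\mathcal{G}_{1}$: the Cayley graph of $(\mathbb{Z}/n\mathbb{Z},1)$ agrees with that of $(\mathbb{Z},1)$ on the ball of radius $\lfloor n/2\rfloor$, so a regulator of convergence is readily computable. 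Moreover $(\mathbb{Z},1)\notin\{(\mathbb{Z}/n\mathbb{Z},1),\,n\in\mathbb{N}\}$, so the hypotheses of Theorem \ref{thm:MarkovClaphamHigman} are met.

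Theorem \ref{thm:MarkovClaphamHigman} then furnishes a finitely presented group $\Gamma$ with solvable word problem, containing an effectively indexed family of $1$-tuples $(t_{n})_{n\in\mathbb{N}}$ of elements of $\Gamma$, where the marked group $(\langle t_{n}\rangle, t_{n})$ is isomorphic to some member of $\{(\mathbb{Z}/n\mathbb{Z},1),\,n\in\mathbb{N}\}\cup\{(\mathbb{Z},1)\}$, with the property that no algorithm can, on input $n$, stop exactly when $\langle t_{n}\rangle\cong\mathbb{Z}$. Now I would argue that this directly yields unsolvability of the order problem in $\Gamma$: if there were an algorithm $\mathcal{O}$ that, given any element $g\in\Gamma$, decided whether $g$ has infinite order (which is equivalent to deciding order in a group with solvable word problem), then one could apply $\mathcal{O}$ to each $t_{n}$ and thereby decide whether $\langle t_{n}\rangle$ is infinite cyclic versus finite cyclic — but $\langle t_{n}\rangle\cong\mathbb{Z}$ if and only if $t_{n}$ has infinite order, contradicting the conclusion of Theorem \ref{thm:MarkovClaphamHigman}. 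Hence no such $\mathcal{O}$ exists, so $\Gamma$ has unsolvable order problem while having solvable word problem.

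The main point requiring a little care — but not a real obstacle — is checking that the tuples $t_{n}$ are produced \emph{uniformly and computably} from $n$, so that an order-problem oracle for $\Gamma$ really does translate into a decision procedure on the index $n$; this is exactly the effectiveness clause in the statement of Theorem \ref{thm:MarkovClaphamHigman} (each embedding involved — the direct sum into a $2$-generator group via Higman--Neumann--Neumann, and then the Higman--Clapham--Valiev embedding — is effective), so it comes for free. A secondary cosmetic point is that $t_{n}$ having infinite order in $\Gamma$ is genuinely equivalent to $\langle t_{n}\rangle\cong\mathbb{Z}$ rather than to some finite cyclic group, which holds because $t_{n}$ generates a copy of the corresponding member of the sequence. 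I would close by remarking that the same scheme, applied to other effectively convergent sequences, yields finitely presented groups with solvable word problem but unsolvable power problem, unsolvable conjugacy problem, etc., which is presumably what the surrounding propositions of McCool record; but for the stated proposition the sequence $(\mathbb{Z}/n\mathbb{Z},1)\to(\mathbb{Z},1)$ suffices.
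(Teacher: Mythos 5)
Your proof is correct and follows exactly the paper's approach: apply Theorem \ref{thm:MarkovClaphamHigman} to the sequence $(\mathbb{Z}/n\mathbb{Z},1)$ converging to $(\mathbb{Z},1)$ and observe that an order-problem algorithm would distinguish, among the effectively produced tuples, those generating $\mathbb{Z}$ from those generating finite cyclic groups. The paper states this in three sentences; you have merely filled in the (accurate) details about effective convergence and the uniformity of the embeddings.
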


\begin{proof}
Apply Theorem \ref{thm:MarkovClaphamHigman} to a sequence of finite
cyclic groups that converges to $\mathbb{Z}$. This yields a finitely
presented group with solvable word problem in which one cannot decide
whether a given element generates a subgroup isomorphic to $\mathbb{Z}$
or to a finite cyclic group. This is precisely a finitely presented
group with solvable word problem, but unsolvable order problem. 
\end{proof}
\begin{prop}
There exists a finitely presented group with solvable word problem,
but unsolvable power problem.
\end{prop}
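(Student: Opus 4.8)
The plan is to imitate the proof of the preceding proposition on the order problem: apply Theorem~\ref{thm:MarkovClaphamHigman} to a carefully chosen effectively convergent sequence of marked groups, and then observe that recognising the limit marking inside the resulting finitely presented group is exactly a question about its power problem.

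First I would exhibit the sequence. Work in $\mathcal{G}_2$ and consider, for each $n\in\mathbb{N}$, the marking $(\mathbb{Z},(t^{n},t))$ of $\mathbb{Z}=\langle t\rangle$, i.e. $\mathbb{Z}$ equipped with the ordered pair of generators whose first entry is $t^{n}$ and whose second entry is $t$; this is a genuine $2$-marking since $t\in\langle t^{n},t\rangle$. The word problem here is solvable uniformly in $n$: a word $w$ on the two generating symbols is trivial if and only if $n\,\sigma_{1}(w)+\sigma_{2}(w)=0$, where $\sigma_{i}(w)$ is the exponent sum of the $i$-th generator in $w$. Hence $(\mathbb{Z},(t^{n},t))_{n\in\mathbb{N}}$ is a $\Lambda_{WP}$-computable sequence.

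Next I would check that it converges effectively to the standard marking $(\mathbb{Z}^{2},(a,b))$ of $\mathbb{Z}^{2}$ by a basis. The normal subgroup of $\mathbb{F}_{2}$ associated to $(\mathbb{Z},(t^{n},t))$ is $N_{n}=\{w:\,n\,\sigma_{1}(w)+\sigma_{2}(w)=0\}$, while that of $(\mathbb{Z}^{2},(a,b))$ is $N_{\infty}=\{w:\,\sigma_{1}(w)=\sigma_{2}(w)=0\}$. For a fixed $w$: if $\sigma_{1}(w)=0$ then $w\in N_{n}\iff\sigma_{2}(w)=0\iff w\in N_{\infty}$ for every $n$; if $\sigma_{1}(w)\neq0$ then $w\notin N_{\infty}$, and $w\in N_{n}$ forces $n=-\sigma_{2}(w)/\sigma_{1}(w)$, so $w\notin N_{n}$ as soon as $n>|\sigma_{2}(w)|$. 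Since the first $L$ elements of $\mathbb{F}_{2}$ in the shortlex order have length bounded by a computable function of $L$, this gives an explicit recursive regulator in the sense of Definition~\ref{def:eff Cauchy}; moreover $(\mathbb{Z}^{2},(a,b))$ has solvable word problem and is not among the $(\mathbb{Z},(t^{n},t))$ (it is not even abstractly isomorphic to $\mathbb{Z}$). Thus Theorem~\ref{thm:MarkovClaphamHigman} applies and produces a finitely presented group $\Gamma$ with solvable word problem, together with a $\Lambda_{WP}$-computable sequence $(L_{n})_{n\in\mathbb{N}}$ of markings, each equal to some $(\mathbb{Z},(t^{m},t))$ or to $(\mathbb{Z}^{2},(a,b))$, each realised by a pair $(g_{n},h_{n})$ of elements of $\Gamma$ computable from $n$, and with $\{n:\,L_{n}=(\mathbb{Z}^{2},(a,b))\}$ co-r.e. but not r.e.

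Finally I would translate this into a statement about the power problem. In every marking $(\mathbb{Z},(t^{m},t))$ the first generator $t^{m}$ is the $m$-th power of the second, so $g_{n}\in\langle h_{n}\rangle$; in $(\mathbb{Z}^{2},(a,b))$ the first generator $a$ is not a power of the second, since $\langle b\rangle=\{b^{k}:k\in\mathbb{Z}\}$ does not contain $a$. Hence $L_{n}=(\mathbb{Z}^{2},(a,b))$ if and only if $g_{n}$ is not a power of $h_{n}$ in $\Gamma$. If $\Gamma$ had solvable power problem, running that algorithm on the computable sequence $(g_{n},h_{n})_{n\in\mathbb{N}}$ would decide the set $\{n:\,L_{n}=(\mathbb{Z}^{2},(a,b))\}$, contradicting that this set is not recursively enumerable. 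Therefore $\Gamma$ is a finitely presented group with solvable word problem and unsolvable power problem. The only step requiring genuine care is the verification of effective convergence with an explicit regulator together with the uniform solvability of the word problem in the sequence; the rest is a direct invocation of Theorem~\ref{thm:MarkovClaphamHigman}.
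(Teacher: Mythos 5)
Your proof is correct and follows essentially the same route as the paper: apply Theorem \ref{thm:MarkovClaphamHigman} to the sequence of $2$-markings of $\mathbb{Z}$ by a generator and one of its powers, which converges effectively to the unique $2$-marking of $\mathbb{Z}^{2}$, and observe that distinguishing the limit from the terms is exactly an instance of the power problem. The only difference is cosmetic (you order the generators as $(t^{n},t)$ rather than $(1,k)$ and verify the effective convergence by hand instead of citing \cite{Champetier2005}), and your explicit regulator computation is a welcome addition.
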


\begin{proof}
Apply Theorem \ref{thm:MarkovClaphamHigman} to the sequence of $2$-markings
of $\mathbb{Z}$ defined by the generating families $(1,k)$, $k\in\mathbb{N}^{*}$,
which converges to (the only $2$-marking of) $\mathbb{Z}^{2}$ when
$k$ goes to infinity (see \cite{Champetier2005}). This yields a
finitely presented group with solvable word problem where, given a
pair of commuting elements, one cannot decide whether they generate
$\mathbb{Z}^{2}$, or if one of these elements is a power of the other:
this is a group with unsolvable power problem. 
\end{proof}
We can also use this theorem to strengthen a result that was recently
obtained in \cite{Duda_2022}. 
\begin{thm}
There is a finitely presented group with solvable word problem in
which the problem of deciding whether a given subgroup is amenable
is neither semi-decidable nor co-semi-decidable. 
\end{thm}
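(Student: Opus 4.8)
The plan is to apply Theorem~\ref{thm:MarkovClaphamHigman} twice, to two effectively convergent $\Lambda_{WP}$-computable sequences of marked groups that lie on opposite sides of the amenability divide, and then to amalgamate the two resulting finitely presented groups by a free product. A single convergent sequence can only witness one of the two directions --- if its terms are non-amenable and its limit amenable one gets non-semi-decidability, and vice versa, but not both --- so the free product is what lets one finitely presented group inherit both obstructions.

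First I would fix the two sequences. For non-amenability collapsing to amenability, take the $\Lambda_{WP}$-computable sequence $(G^{(1)}_n)_{n\in\mathbb{N}}$ of markings of $\mathbb{F}_2\wr\mathbb{Z}$ converging to a marking of $\mathbb{Z}^2\wr\mathbb{Z}$ from Example~7.4 of \cite{Bartholdi2015} (used already in Section~\ref{subsec:List-of-properties}): each $G^{(1)}_n$ contains $\mathbb{F}_2$ hence is non-amenable, whereas $\mathbb{Z}^2\wr\mathbb{Z}$ is metabelian hence amenable. For amenability rising to non-amenability, take a $\Lambda_{WP}$-computable sequence $(G^{(2)}_n)_{n\in\mathbb{N}}$ of finite quotients of $\mathbb{F}_2$ converging to the canonical marking $(\mathbb{F}_2,S)$; such a sequence exists since $\mathbb{F}_2$ is residually finite with an enumerable supply of finite quotients, and the $G^{(2)}_n$ are finite hence amenable while $\mathbb{F}_2$ is not. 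In both cases the limit has solvable word problem and differs from every term, and by Proposition~\ref{prop: rec cv =00003D> effective cv-1} I may pass to an effectively convergent subsequence so that Theorem~\ref{thm:MarkovClaphamHigman} applies.

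Applying that theorem (and reading off the extra information furnished by its proof) to the first sequence produces a finitely presented group $\Gamma_1$ with solvable word problem together with a computable map $n\mapsto t_n$ outputting $k$-tuples of elements of $\Gamma_1$ generating marked groups $L_n\in\{G^{(1)}_m\}\cup\{H^{(1)}\}$, with $\{n:L_n=H^{(1)}\}$ co-r.e.\ but not r.e. As $H^{(1)}$ is the only amenable group on that list, $\langle t_n\rangle\cong L_n$ is amenable iff $L_n=H^{(1)}$, so $\{n:\langle t_n\rangle\text{ amenable}\}$ is not r.e.\ and ``the given subgroup is amenable'' is not $\Lambda_{WP}$-semi-decidable in $\Gamma_1$. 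Symmetrically the second sequence yields a finitely presented group $\Gamma_2$ with solvable word problem and tuples $t'_n$ generating $L'_n\in\{G^{(2)}_m\}\cup\{H^{(2)}\}$ with $\{n:L'_n=H^{(2)}\}$ co-r.e.\ but not r.e.; now $H^{(2)}$ is the unique non-amenable one, so $\langle t'_n\rangle$ is non-amenable iff $L'_n=H^{(2)}$, whence $\{n:\langle t'_n\rangle\text{ amenable}\}$ is r.e.\ but not co-r.e., i.e.\ ``the given subgroup is amenable'' is not $\Lambda_{WP}$-co-semi-decidable in $\Gamma_2$.

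Finally I would set $\Gamma=\Gamma_1*\Gamma_2$: juxtaposing the two finite presentations is a finite presentation, free-product normal form gives a word problem algorithm, and the free factors embed in $\Gamma$, so composing computably with the inclusions turns the $t_n$ and $t'_n$ into tuples of elements of $\Gamma$ generating subgroups isomorphic to $L_n$, $L'_n$. The $t_n$ then witness that amenability of subgroups of $\Gamma$ is not semi-decidable, and the $t'_n$ that it is not co-semi-decidable, proving the theorem. The one point needing care is verifying that the auxiliary groups genuinely lie on the intended sides of amenability and all have solvable word problem, so that Markov's Lemma and the Higman--Clapham--Valiev construction inside Theorem~\ref{thm:MarkovClaphamHigman} apply; beyond that the argument is bookkeeping and I anticipate no real obstacle.
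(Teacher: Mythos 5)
Your proposal is correct and follows essentially the same route as the paper, which (very tersely) proves the theorem by invoking Theorem~\ref{thm:MarkovClaphamHigman} on exactly the two sequences you chose: the non-amenable markings of $\mathbb{F}_2\wr\mathbb{Z}$ converging to the amenable $\mathbb{Z}^2\wr\mathbb{Z}$, and amenable finite groups converging to the non-amenable $\mathbb{F}_2$. The only thing you add is the explicit free-product step combining $\Gamma_1$ and $\Gamma_2$ into a single finitely presented group, a detail the paper leaves implicit; your verification that this preserves finite presentability, solvability of the word problem, and both undecidability witnesses is sound.
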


\begin{proof}
This is proven by using both a sequence of marked amenable groups
which converges to a non-amenable group and a sequence of non-amenable
marked groups that converges to an amenable marked group. Such examples
were given in Section \ref{subsec:List-of-properties}. 
\end{proof}
The ``not semi-decidable'' half of this result is Theorem 6 in \cite{Duda_2022}.

Theorem \ref{thm:MarkovClaphamHigman} can be applied to all the properties
that appeared in Section \ref{subsec:List-of-properties} to produce
results similar to this one. The three results given above were well
known, but explaining them in terms of convergence in the space of
marked groups unifies several existing constructions. What's more,
for the rest of the effectively non-closed/non-open group properties
presented in Section \ref{subsec:List-of-properties}, the result
obtained by applying Theorem \ref{thm:MarkovClaphamHigman} are new.

\bibliographystyle{alpha}
\bibliography{TTEbiblio}

\end{document}